\let\mod=\undefined
\DeclareMathOperator{\Id}{Id}
\DeclareMathOperator{\ql}{ql}
\DeclareMathOperator{\tr}{tr}
\DeclareMathOperator{\Aut}{Aut}
\DeclareMathOperator{\End}{End}
\DeclareMathOperator{\Hom}{Hom}
\DeclareMathOperator{\mod}{mod}
\DeclareMathOperator{\cone}{cone}
\DeclareMathOperator{\ldeg}{ldeg}
\DeclareMathOperator{\proj}{proj}
\DeclareMathOperator{\gldim}{gl.dim}
\DeclareMathOperator{\bdeg}{\mathbf{deg}}
\DeclareMathOperator{\bdim}{\mathbf{dim}}
\newcommand{\bbA}{\mathbb{A}}
\newcommand{\bbF}{\mathbb{F}}
\newcommand{\bbN}{\mathbb{N}}
\newcommand{\bbQ}{\mathbb{Q}}
\newcommand{\bbZ}{\mathbb{Z}}
\newcommand{\cA}{\mathcal{A}}
\newcommand{\cB}{\mathcal{B}}
\newcommand{\cC}{\mathcal{C}}
\newcommand{\cD}{\mathcal{D}}
\newcommand{\cH}{\mathcal{H}}
\newcommand{\cI}{\mathcal{I}}
\newcommand{\cK}{\mathcal{K}}
\newcommand{\cT}{\mathcal{T}}
\newcommand{\cX}{\mathcal{X}}
\newcommand{\cZ}{\mathcal{Z}}
\newcommand{\bd}{\mathbf{d}}
\newcommand{\bs}{\mathbf{s}}
\newcommand{\bt}{\mathbf{t}}
\newcommand{\frakq}{\mathfrak{q}}
\newcounter{claim}[section]
\newtheorem{coro}[claim]{Corollary}
\newtheorem{lemm}[claim]{Lemma}
\newtheorem{prop}[claim]{Proposition}
\newtheorem{theo}[claim]{Theorem}
\numberwithin{equation}{section}
\renewcommand{\theequation}{\thesection.\arabic{equation}}
\newcommand{\ol}{\overline}
\newcommand{\vertexD}[1]{\bullet \save*+!D{\scriptstyle #1} \restore}
\newcommand{\vertexL}[1]{\bullet \save*+!L{\scriptstyle #1} \restore}
\newcommand{\vertexR}[1]{\bullet \save*+!R{\scriptstyle #1} \restore}
\newcommand{\vertexU}[1]{\bullet \save*+!U{\scriptstyle #1} \restore}
\title[Derived Hall algebras of one-cycle gentle algebras]{Derived Hall algebras of one-cycle gentle algebras: The infinite global dimension case}
\author{Grzegorz Bobi\'nski}
\address{Faculty of Mathematics and Computer Science \newline Nicolaus Copernicus University \newline ul.\ Chopina 12/18 \newline 87-100 Toru\'n \newline Poland}
\email{gregbob@mat.umk.pl}
\author{Janusz Schmude}
\address{Institute of Informatics \newline University of Warsaw \newline ul.\ Banacha 2 \newline 02-097 Warsaw \newline Poland}
\email{j.schmude@mimuw.edu.pl}
\keywords{one cycle gentle algebra, derived Hall algebra, derived category}
\subjclass[2010]{18E30, 16G20}
\begin{document}

\begin{abstract}
We describe, in terms of generators and relations, the derived Hall algebras associated to the one-cycle gentle algebras of infinite global dimension.
\end{abstract}

\maketitle

\section{Introduction}

Given an abelian category $\cA$ satisfying some finitary conditions, one associates to $\cA$ the Hall algebra $\cH (\cA)$, whose basis is formed by the isomorphism classes of objects in $\cA$ and the multiplication is given by counting the exact sequences with given end terms (see~\cite{Macdonald} for a classical situation of modules over a discrete valuation ring). In the context of finite dimensional algebras the Hall algebras have been first studied by Ringel~\cite{Ringel}. Since then they have attracted a lot of interest and showed connections to different topics, including quantum group (see the survey article~\cite{DengXiao} and the references therein).

The Hall algebras do not generalize trivially to triangulated categories. A proper way to construct such a generalization was found by To\"{e}n~\cite{Toen}. An explicit form of such algebras have been calculated in only few cases, including the derived categories of hereditary algebras~\cite{Toen} and the triangulated categories generated by spherical objects~\cite{KellerYangZhou}. The aim of this paper is to calculate the Hall algebras associated to the derived categories of the one-cycle gentle algebras of infinite global dimension.

The gentle algebras have been introduced by Assem and Skowro\'nski~\cite{AssemSkowronski}. They are the algebras Morita equivalent to the path algebras of bound quivers satisfying some combinatorial conditions (see Section~\ref{sect gentle} for details). A gentle algebra is called one-cycle if there is exactly one cycle in the corresponding quiver. We say that a one-cycle gentle algebra satisfies the clock condition if the number of the clockwise oriented relations on the cycle coincides with the number of the anticlockwise oriented ones. Assem and Skowro\'nski~\cite{AssemSkowronski} have proved that the one-cycle gentle algebras satisfying the clock condition are precisely the algebras derived equivalent to the hereditary algebras of Euclidean type $\tilde{\bbA}$. One should also mention an earlier result of Assem and Happel~\cite{AssemHappel} stating that the tree (the corresponding quiver is a tree) gentle algebras are the algebras derived equivalent to the hereditary algebras of Dynkin type $\bbA$.

According to a result of Vossieck~\cite{Vossieck}, over an algebraically closed base field the one-cycle gentle algebras not satisfying the clock condition are precisely the derived discrete algebras, which are not derived representation finite. Recall that an algebra $\Lambda$ is derived discrete if for each homology dimension vector there are only finitely many isomorphism classes of complexes in the derived category of $\Lambda$. Moreover, an algebra $\Lambda$ is derived representation finite if up to the suspension functor there are only finitely many isomorphism classes of the indecomposable complexes in the derived category of $\Lambda$. Vossieck's results has been an inspiration for studying the derived categories of the one-cycle gentle algebras not satisfying the clock condition~\cites{Bobinski, BobinskiKrause, Broomhead, BroomheadPauksztelloPloogI, BroomheadPauksztelloPloogII}. In this paper we continue this line of research by describing the Hall algebras associated to the derived categories of the one-cycle gentle algebras of infinite global dimension. One should be aware that if a gentle one-cycle algebras has an infinite global dimension, then it does not satisfy the clock condition.

The paper is organized as follows. In Section~\ref{sect prelim} we introduce tools we use throughout the paper, including a method of calculating cones. Next, in Section~\ref{sect category} we describe the category for which we calculate the Hall algebra. In Sections~\ref{sect res1} and~\ref{sect res2} the actual calculations of Hall algebras are performed. Finally, in Section~\ref{sect gentle} we explain how these calculations apply to a description of the Hall algebras associated to the derived categories of the one-cycle gentle algebras of infinite global dimension.

Throughout the paper $\bbF$ is a fixed finite field. All considered categories are $\bbF$-linear. By $\bbZ$, $\bbN$ and $\bbN_+$ we denote the sets of integers, nonnegative integers and positive integers, respectively. If $i, j \in \bbZ$, then $[i, j] := \{ k \in \bbZ : \text{$i \leq k \leq j$} \}$. Finally, if $I$ is a set, then $\bbN^{(I)}$ denotes the set of sequences of nonnegative integers indexed by $I$, whose all but finite number of elements are $0$.

The first named author acknowledges the support of the National Science Center grant no.\ 2015/17/B/ST1/01731. 

\section{Preliminaries} \label{sect prelim}

Throughout this section $\cT$ is a triangulated category with a suspension functor $\Sigma$.

\subsection{Cones} \label{subsect cones}

In this subsection we assume that $\cT$ has split idempotents.

Let $X$ be an object of $\cT$. Following~\cite[Section~I.2]{AuslanderReitenSmalo} we denote by $X \backslash \cT$ the class of morphisms $f \colon X \to Y$ with $Y \in \cT$. If $f \colon X \to Y$ and $f' \colon X \to Y'$ are two objects of $X \backslash \cT$, then by $\Hom (f, f')$ we denote the set of $g \colon Y \to Y'$ such that $f' = g \circ f$, i.e.\ the following diagram commutes
\[
\vcenter{\xymatrix{X \ar[r]^f \ar[rd]_{f'} & Y \ar[d]^g \\ & Y'}}.
\]
For $f, f' \in X \backslash \cT$, we write $f \sim f'$ if $\Hom (f, f') \neq \emptyset \neq \Hom (f', f)$. This is an equivalence relation. For $f \in X \backslash \cT$, we denote by $[f]$ the equivalence class of $f$ in the relation $\sim$. For $f, f' \in X \backslash \cT$, we write $[f] \leq [f']$ if $\Hom (f, f') \neq \emptyset$. Obviously, $\leq$ is an order relation.

Now let $f \colon X \to Y$ be a morphism. By $f \backslash \cT$ we denote the subcategory of $Y \backslash \cT$ consisting of $g$ such that $g \circ f = 0$. Let $[f \backslash \cT]$ denote the set of the corresponding equivalence classes. We have the following observation.

\begin{lemm} \label{lemm minimal}
If
\[
X \xrightarrow{f} Y \xrightarrow{g} Z \to \Sigma X
\]
is an exact triangle, then $[g]$ is the least element of $[f \backslash \cT]$.
\end{lemm}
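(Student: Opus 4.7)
The plan is to verify the two assertions that together say $[g]$ is the least element of $[f \backslash \cT]$: first, that $g$ actually lies in $f \backslash \cT$; second, that for every $h \in f \backslash \cT$ one has $[g] \leq [h]$, i.e.\ $\Hom(g, h) \neq \emptyset$.

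The first point is immediate from the axioms of a triangulated category: the composition of two consecutive morphisms in an exact triangle vanishes, so $g \circ f = 0$, which is exactly the condition for $g$ to belong to $f \backslash \cT$.

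For the second point, suppose $h \colon Y \to W$ satisfies $h \circ f = 0$. I would apply the cohomological functor $\Hom(-, W)$ to the given exact triangle to obtain the exact sequence
\[
\Hom(Z, W) \xrightarrow{\;-\circ g\;} \Hom(Y, W) \xrightarrow{\;-\circ f\;} \Hom(X, W).
\]
The hypothesis $h \circ f = 0$ places $h$ in the kernel of the right-hand map, hence by exactness in the image of the left-hand map. Therefore there exists $k \colon Z \to W$ with $k \circ g = h$, and this morphism $k$ is precisely an element of $\Hom(g, h)$, giving $[g] \leq [h]$.

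There is no serious obstacle here; the lemma is essentially a repackaging of the standard cokernel-like universal property of the third term in an exact triangle, rephrased in the language of the preorder on $Y \backslash \cT$ defined just before the statement. The only thing requiring care is to unwind the definitions — in particular, to notice that ``$[g]$ is the least element of $[f \backslash \cT]$'' decomposes cleanly into the membership statement $g \in f \backslash \cT$ and the universal factorization statement, each of which is handled by one of the two triangulated-category facts used above.
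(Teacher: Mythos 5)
Your proof is correct and follows essentially the same route as the paper, which simply cites the exact sequence $\Hom_\cT(Z,-) \to \Hom_\cT(Y,-) \to \Hom_\cT(X,-)$ induced by the triangle. You have merely unwound the two ingredients (the vanishing $g \circ f = 0$ and the factorization through $g$) that the paper leaves implicit.
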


\begin{proof}
This follows from the exact sequence
\[
\Hom_\cT (Z, -) \xrightarrow{\Hom_\cT (g, -)} \Hom_\cT (Y, -) \xrightarrow{\Hom_\cT (f, -)} \Hom_\cT (X, -). \qedhere
\]
\end{proof}

We explore now the above observation in order to identify the cone of a map in some cases.

Recall that a map $f \colon X \to Y$ is called left minimal if $\Hom (f, f)$ (in $X \backslash \cT$) consists of automorphisms of $Y$, i.e.\ if $g \circ f = f$, then $g$ is an automorphism.

The following is the main result of this section.

\begin{prop} \label{prop cone}
Let $f \colon X \to Y$ be a nonzero map with $X$ indecomposable. Assume $g \colon Y \to Z$ is a left minimal map such that $[g]$ is the least element of $[f \backslash \cT]$. Then there exists $h \colon Z \to \Sigma X$ such that
\[
X \xrightarrow{f} Y \xrightarrow{g} Z \xrightarrow{h} \Sigma X
\]
is an exact triangle. In particular, $\cone (f) \simeq Z$.
\end{prop}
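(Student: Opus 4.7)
The plan is to start from an arbitrary exact triangle completing $f$, say
\[
X \xrightarrow{f} Y \xrightarrow{g'} Z' \xrightarrow{h'} \Sigma X,
\]
and then match it up with the given pair $(g,Z)$. By Lemma~\ref{lemm minimal}, $[g']$ is the minimum of $[f \backslash \cT]$, so the hypothesis forces $[g'] = [g]$. Picking witnesses $\alpha \colon Z' \to Z$ with $\alpha g' = g$ and $\beta \colon Z \to Z'$ with $\beta g = g'$, the composition $\alpha\beta \in \End_\cT(Z)$ fixes $g$ and hence is an automorphism by left minimality; from $\alpha\beta \cdot g = g$ I also get $(\alpha\beta)^{-1} g = g$, so replacing $\beta$ by $\beta(\alpha\beta)^{-1}$ preserves the relation $\beta g = g'$ while achieving $\alpha\beta = \Id_Z$.

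Next, $\beta\alpha \in \End_\cT(Z')$ becomes an idempotent, and splitting it (allowed since $\cT$ has split idempotents) produces a decomposition $Z' = Z_1 \oplus K$ in which $\alpha|_{Z_1}$ and $\beta$ are mutually inverse isomorphisms between $Z_1$ and $Z$, and $g' = \beta g$ takes the form $(g, 0)$. The whole statement then reduces to showing $K = 0$; this is the heart of the argument.

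To eliminate $K$, I observe that the projection $p_K \colon Z' \to K$ satisfies $p_K g' = 0$, so applying $\Hom_\cT(-, K)$ to the exact triangle yields a factorisation $p_K = \tilde{p} \circ h'$ for some $\tilde{p} \colon \Sigma X \to K$. Since $p_K$ admits the section $\iota_K$, the map $\tilde{p}$ is split epi with section $h' \iota_K$, realising $K$ as a direct summand of the indecomposable object $\Sigma X$. Hence either $K = 0$, as desired, or $K \cong \Sigma X$ and $h' \iota_K$ is an isomorphism; in the latter case $h'$ itself is split epi with section $s := \iota_K (h' \iota_K)^{-1}$, and exactness of the rotated triangle $Z' \xrightarrow{h'} \Sigma X \xrightarrow{-\Sigma f} \Sigma Y$ forces $-\Sigma f = (-\Sigma f) \circ h' \circ s = 0$, contradicting $f \neq 0$. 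Therefore $K = 0$, $\alpha$ is an isomorphism, and transporting the triangle along $\alpha$ delivers the required exact triangle with middle map $g$ and connecting morphism $h := h' \circ \beta$.
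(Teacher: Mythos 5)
Your proof is correct, and while it follows the same overall skeleton as the paper's (complete $f$ to a triangle $X \xrightarrow{f} Y \xrightarrow{g'} Z' \xrightarrow{h'} \Sigma X$, use Lemma~\ref{lemm minimal} to get $[g'] = [g]$, split off $Z' \simeq Z \oplus K$ with $g' = \left[\begin{smallmatrix} g \\ 0 \end{smallmatrix}\right]$, then kill $K$), the way you eliminate $K$ is genuinely different. The paper invokes Lemma~\ref{lemm leftminimal} for the decomposition (you re-derive it by hand via the idempotent $\beta\alpha$, which is exactly the "standard argument" alluded to there) and then applies Lemma~\ref{lemm cone} — an octahedral-axiom computation — to get $\Sigma X \simeq \cone(g') \simeq \cone(g) \oplus K$, so that indecomposability of $\Sigma X$ forces $K = 0$ or $\cone(g) = 0$, the latter being excluded because it would make $g$ an isomorphism and hence $f = 0$. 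You instead apply $\Hom_\cT(-, K)$ to the original triangle to factor the projection $p_K$ through $h'$, exhibiting $K$ directly as a summand of the indecomposable object $\Sigma X$, and then rule out $K \simeq \Sigma X$ by rotating the triangle: $h'$ split epi forces $\Sigma f = 0$. Your route avoids the octahedral axiom and the auxiliary cone computation entirely, at the cost of a slightly longer case analysis at the end; both arguments use indecomposability of $X$ and $f \neq 0$ in the same essential places.
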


We need additional observations before we give the proof of the proposition. The first one is the following.

\begin{lemm} \label{lemm cone}
Let
\[
X \xrightarrow{f} Y \xrightarrow{g} Z \xrightarrow{h} \Sigma X
\]
be an exact triangle. If $M$ is an object of $\cT$ and $f' :=  \left[
\begin{smallmatrix}
f \\ 0
\end{smallmatrix}
\right] \colon X \to Y \oplus M$, then $\cone (f') \simeq Z \oplus M$.
\end{lemm}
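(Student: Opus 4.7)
The plan is to realize the desired triangle as the direct sum of two exact triangles and invoke the standard fact that in any triangulated category the direct sum of two exact triangles is again an exact triangle.

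First I would write down the trivial exact triangle
\[
0 \to M \xrightarrow{\Id_M} M \to \Sigma 0 = 0,
\]
which is exact in $\cT$. Next I would form the direct sum of this triangle with the given exact triangle
\[
X \xrightarrow{f} Y \xrightarrow{g} Z \xrightarrow{h} \Sigma X.
\]
Componentwise this yields the candidate triangle
\[
X \xrightarrow{\left[\begin{smallmatrix} f \\ 0 \end{smallmatrix}\right]} Y \oplus M \xrightarrow{\left[\begin{smallmatrix} g & 0 \\ 0 & \Id_M \end{smallmatrix}\right]} Z \oplus M \xrightarrow{\left[\begin{smallmatrix} h & 0 \end{smallmatrix}\right]} \Sigma X,
\]
whose first arrow is precisely $f'$. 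Since the direct sum of exact triangles is exact, this triangle is exact, and consequently $\cone (f') \simeq Z \oplus M$, as required.

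The only subtle point is to verify that the first map of the direct sum triangle really coincides with $f'$ under the canonical identifications $X \oplus 0 = X$ and $\Sigma X \oplus \Sigma 0 = \Sigma X$; this is a matter of unwinding definitions, so no genuine obstacle is expected. All other ingredients (existence of the trivial triangle and closure of exact triangles under direct sums) are standard axioms of a triangulated category, so the argument requires nothing beyond what has already been introduced.
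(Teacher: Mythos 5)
Your argument is correct, but it takes a genuinely different route from the one in the paper. You realize the desired triangle as the direct sum of the given triangle with the (rotated) trivial triangle $0 \to M \xrightarrow{\Id_M} M \to 0$, and then invoke the fact that a direct sum of exact triangles is exact. That fact is indeed standard, though one small caveat: it is not literally an axiom of a triangulated category but a consequence of the axioms (one completes $f \oplus f'$ to an exact triangle, builds comparison morphisms from each summand via (TR3), and checks the resulting map of triangles is an isomorphism by the five lemma); if you rely on it you should either cite it or be prepared to prove it. The paper instead writes $f' = \mu_1 \circ f$ with $\mu_1 = \left[\begin{smallmatrix}\Id_Y \\ 0\end{smallmatrix}\right]$ and applies the octahedral axiom to the composition, obtaining a map between the exact triangle on $\mu_1$ (with cone $M$) and the one on $g$; the commutativity of the first square forces the connecting map $\Sigma^{-1} M \to Z$ to vanish, whence $\cone(f') \simeq Z \oplus M$. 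Your approach is arguably cleaner and more self-contained once the direct-sum lemma is granted, while the paper's approach trades that lemma for a single application of the octahedral axiom. Both yield the same conclusion, and your identification of the first arrow of the summed triangle with $f'$ under the canonical identifications $X \oplus 0 = X$ and $\Sigma X \oplus 0 = \Sigma X$ is indeed routine.
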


\begin{proof}
This follows from~\cite[Lemma~2.5]{PengXiao}, however we include a short proof for completeness. Observe that $f' = \mu_1 \circ f$, where $\mu_1 := \left[
\begin{smallmatrix}
\Id_Y \\ 0
\end{smallmatrix}
\right] \colon Y \to Y \oplus M$. If
\[
X \xrightarrow{f'} Y \oplus M \xrightarrow{g'} Z' \xrightarrow{h'} \Sigma X
\]
is an exact triangle, then the octahedral axiom implies that we have a commutative diagram
\[
\xymatrix{
\Sigma^{-1} M \ar[r]^-0 \ar[d]^{\Id} & Y \ar[r]^-{\mu_1} \ar[d]^g & Y \oplus M \ar[r] \ar[d]^{g'} & M \ar[d]^{\Id}
\\
\Sigma^{-1} M \ar[r]^-i & Z \ar[r] & Z' \ar[r] & M}
\]
for some $i \colon \Sigma^{-1} M \to Z$, whose rows are exact triangles. The commutativity of the first square implies $i = 0$, hence $\cone (f') \simeq Z' \simeq Z \oplus M$.
\end{proof}

The second one is the following variant of~\cite[Theorem~2.2]{AuslanderReitenSmalo}.

\begin{lemm} \label{lemm leftminimal}
If $f \colon X \to Y$, $f' \colon X \to Y'$, $[f] = [f']$ and $f$ is left minimal, then, up to isomorphism, $Y' = Y \oplus M$, for some $M$, and $f' = \left[
\begin{smallmatrix}
f \\ 0
\end{smallmatrix}
\right]$.
\end{lemm}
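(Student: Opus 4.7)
The plan is to unpack the equivalence $[f]=[f']$ to obtain two maps connecting $Y$ and $Y'$, use left minimality of $f$ to promote one of them to a split monomorphism, and then invoke the split idempotents hypothesis to extract the desired direct sum decomposition.

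First, since $[f]=[f']$, by definition of the equivalence relation $\sim$ we pick $g\colon Y\to Y'$ with $g\circ f=f'$ and $g'\colon Y'\to Y$ with $g'\circ f'=f$. Composing these, $(g'\circ g)\circ f=g'\circ f'=f$, so left minimality of $f$ forces $g'\circ g$ to be an automorphism of $Y$. Setting $r:=(g'\circ g)^{-1}\circ g'\colon Y'\to Y$ we have $r\circ g=\Id_Y$, i.e.\ $g$ is a split monomorphism.

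Next I would produce the direct summand. The composite $e:=g\circ r\colon Y'\to Y'$ is an idempotent ($e^2=g\circ r\circ g\circ r=g\circ r=e$). Since $\cT$ has split idempotents by the standing assumption of the subsection, the complementary idempotent $\Id_{Y'}-e$ also splits: there exist an object $M$ and morphisms $j\colon M\to Y'$, $p\colon Y'\to M$ with $p\circ j=\Id_M$ and $j\circ p=\Id_{Y'}-e$. Then the pair of morphisms $(g,j)\colon Y\oplus M\to Y'$ and $\left[\begin{smallmatrix}r\\ p\end{smallmatrix}\right]\colon Y'\to Y\oplus M$ are mutually inverse, which gives the required isomorphism $Y'\simeq Y\oplus M$.

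Finally I would verify that under this isomorphism $g$ corresponds to $\left[\begin{smallmatrix}\Id_Y\\ 0\end{smallmatrix}\right]$: $r\circ g=\Id_Y$ by construction, and $p\circ g=p\circ(g\circ r)\circ g=p\circ e\circ g$; since $p\circ e=p\circ(\Id_{Y'}-j\circ p)\cdot(-1)+p=p-p=0$ (equivalently, $p\circ e=p-p\circ j\circ p=p-p=0$ using $j\circ p=\Id_{Y'}-e$), this component vanishes. Composing with $f$ then yields $f'=g\circ f=\left[\begin{smallmatrix}f\\ 0\end{smallmatrix}\right]$, as desired. The only delicate point is the last routine check that $p\circ e=0$; everything else is immediate from left minimality plus the splitting of idempotents, so I do not anticipate a real obstacle beyond bookkeeping.
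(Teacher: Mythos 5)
Your proof is correct and follows exactly the route the paper takes: extract $g$ and $g'$ from $[f]=[f']$, use left minimality to make $g'\circ g$ an automorphism, and then split the resulting idempotent. The paper compresses the second half into the phrase ``the claim follows by standard arguments,'' and your write-up is precisely those standard arguments carried out in full (modulo a garbled intermediate expression in your computation of $p\circ e$, which your parenthetical restatement fixes).
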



\begin{proof}
Since $[f] = [f']$, there exist maps $g \colon Y \to Y'$ and $g' \colon Y' \to Y$ such that $g \circ f = f'$ and $g' \circ f' = f$. Then $g' \circ g \in \Hom (f, f)$, hence $g' \circ g$ is an automorphism. Since the idempotents split in $\cT$, the claim follows by standard arguments.
\end{proof}

\begin{proof}[Proof of Proposition~\ref{prop cone}]
Let
\[
X \xrightarrow{f} Y \xrightarrow{g'} Z' \xrightarrow{h'} \Sigma X
\]
be an exact triangle. Our assumptions and Lemma~\ref{lemm minimal} imply that $[g] = [g']$. Now Lemma~\ref{lemm leftminimal} implies that (up to isomorphism) $Z' = Z \oplus M$, for some $M$, and $g' = \left[
\begin{smallmatrix}
g \\ 0
\end{smallmatrix}
\right]$. Consequently, $\Sigma X \simeq \cone (g') \simeq \cone (g) \oplus M$ by Lemma~\ref{lemm cone}. Since $X$ is indecomposable, either $\cone (g) = 0$ or $M = 0$. However, $\cone (g) = 0$ means $g$ is an isomorphism, hence $f = 0$, a contradiction. Thus $M = 0$, i.e.\ $g' = g$ and $Z' = Z$, hence the claim follows if we take $h := h'$.
\end{proof}

We conclude this subsection with the following well-known easy observation.

\begin{lemm} \label{lemm not minimal}
Let $f \colon M \to N_0 \oplus N_1 \oplus N_2$ with $N_1 \neq 0 \neq N_2$. Let $f_i \colon M \to N_i$, $i = 0, 1, 2$, be the induced maps. If $[f_1] \leq [f_2]$, then $[f] = [f']$, where $f' := \left[
\begin{smallmatrix}
f_0 \\ f_1
\end{smallmatrix}
\right] \colon M \to N_0 \oplus N_1$.
\end{lemm}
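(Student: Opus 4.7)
The plan is to verify $[f] = [f']$ directly from the definition, which requires constructing morphisms $\alpha \colon N_0 \oplus N_1 \oplus N_2 \to N_0 \oplus N_1$ and $\beta \colon N_0 \oplus N_1 \to N_0 \oplus N_1 \oplus N_2$ satisfying $\alpha \circ f = f'$ and $\beta \circ f' = f$, so that $\Hom(f,f')$ and $\Hom(f',f)$ are both nonempty.

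First I would unpack the hypothesis $[f_1] \leq [f_2]$: by definition this produces a morphism $\varphi \colon N_1 \to N_2$ with $\varphi \circ f_1 = f_2$. This $\varphi$ is the only nontrivial datum available, so the two matrices have to be built from it together with identities and zeros.

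Next, for the easy direction I would take $\alpha$ to be the obvious projection
\[
\alpha = \begin{bmatrix} \Id_{N_0} & 0 & 0 \\ 0 & \Id_{N_1} & 0 \end{bmatrix},
\]
so that $\alpha \circ f$ records the first two components of $f$, which is exactly $f'$. For the other direction I would take
\[
\beta = \begin{bmatrix} \Id_{N_0} & 0 \\ 0 & \Id_{N_1} \\ 0 & \varphi \end{bmatrix} \colon N_0 \oplus N_1 \to N_0 \oplus N_1 \oplus N_2,
\]
and then $\beta \circ f'$ has components $f_0$, $f_1$, and $\varphi \circ f_1 = f_2$, hence equals $f$.

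There is no real obstacle here; the statement is essentially a formal consequence of the definition of $\sim$ and of the inequality $[f_1] \leq [f_2]$. The assumptions $N_1 \neq 0 \neq N_2$ are not actually used in the construction — they only serve to make the conclusion nontrivial in the intended application, where the lemma will be invoked to argue that a map $f$ failing the minimality condition factors essentially through the smaller target $N_0 \oplus N_1$.
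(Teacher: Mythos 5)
Your proof is correct and is essentially identical to the paper's: the paper writes $f_2 = g \circ f_1$ and uses precisely the same two matrices (its $h$ and $h'$ are your $\alpha$ and $\beta$, with $g$ playing the role of your $\varphi$). Your closing remark about the hypotheses $N_1 \neq 0 \neq N_2$ is also an accurate observation about how the lemma is used.
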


\begin{proof}
Put $N' := N_0 \oplus N_1$ and write $f_2 = g \circ f_1$. Then we have $f' = h \circ f$ and $f = h' \circ f'$, where
\[
h := \left[
\begin{smallmatrix}
\Id_{N_0} & 0 & 0
\\
0 & \Id_{N_1} & 0
\end{smallmatrix}
\right] \colon N \to N' \qquad \text{and} \qquad h' := \left[
\begin{smallmatrix}
\Id_{N_0} & 0
\\
0 & \Id_{N_1}
\\
0 & g
\end{smallmatrix}
\right] \colon N' \to N. \qedhere
\]
\end{proof}

\subsection{Hall algebras} \label{subsect Hall}

In this subsection we assume that $\Hom (M, N)$ is finite, for all objects $M$ and $N$ of $\cT$, $\End (X)$ is local, for each indecomposable object $X$ of $\cT$, the isomorphism classes of objects in $\cT$ form a set, and if $M$ and $N$ are objects of $\cT$, then $\Hom (M, \Sigma^{-n} N) = 0$, for $n \gg 0$. Note that the first two conditions imply that $\cT$ is a Krull--Schmidt category. We denote by $\cT / \simeq$ a fixed set of representatives of the isomorphisms classes of objects in $\cT$. We may (and do) assume that $\cT / \simeq$ is closed under the action of $\Sigma$.

If $M$, $N$ and $L$ are objects of $\cT$, then we denote by $\Hom (M, L)_N$ the set of $f \in \Hom (M, L)$ such that $\cone (f) \simeq N$, and put
\[
F_{M, N}^L := \tfrac{|\Hom (M, L)_N|}{|\Aut (M)|} \cdot \tfrac{\{ M, L \}}{\{ M, M \}},
\]
where
\[
\{ X, Y \} := \prod_{n \in \bbN_+} |\Hom (X, \Sigma^{-n} Y)|^{(-1)^n},
\]
for objects $X$ and $Y$ of $\cT$. We define the Hall algebra $\cH := \cH (\cT)$ associated to $\cT$ in the following way: $\cH$ is the $\bbQ$-space with basis $\cT / \simeq$ and the multiplication is given by the formula
\begin{equation}
\renewcommand{\theequation}{$*$}
M \cdot N := \sum_{L \in \cT / \simeq} F_{M, N}^L L.
\end{equation}
It has been proved in~\cites{Toen, XiaoXu} that $\cH$ is an associative algebra with the identity element given by the zero object. If $\cT$ is the bounded derived category of an algebra $\Lambda$, then we call $\cH$ the derived Hall algebra of $\Lambda$.

We present now a slight modification of the definition of $F_{M, N}^L$. Let
\[
[X, Y] := \prod_{n \in \bbN} |\Hom (X, \Sigma^{-n} Y)|^{(-1)^n}.
\]
Moreover, recall that~\cite[Proposition 2.5']{XiaoXu} says
\[
F_{M, N}^L = \tfrac{|\Hom (L, N)_{\Sigma M}|}{|\Aut (N)|} \cdot \tfrac{\{ L, N \}}{\{ N, N \}}.
\]
We have the following easy observation.

\begin{lemm} \label{lemm F}
If $M, N, L \in \cT$, then
\[
F_{M, N}^L = \tfrac{|\End (M)|}{|\Aut (M)|} \cdot \tfrac{|\Hom (M, L)_N|}{|\Hom (M, L)|} \cdot \tfrac{[M, L]}{[M, M]} = \tfrac{|\End (N)|}{|\Aut (N)|} \cdot \tfrac{|\Hom (L, N)_{\Sigma M}|}{|\Hom (L, N)|} \cdot \tfrac{[L, N]}{[N, N]}.
\]
Moreover, if $[M, L] = v \cdot [M, M] \cdot [M, N]$, for some $v \in \bbQ$, then
\[
F_{M, N}^L = v \cdot \tfrac{|\End (M)|}{|\Aut (M)|} \cdot \tfrac{|\Hom (M, L)_N|}{|\Hom (M, L)|} \cdot [M, N].
\]
Similarly, if $[L, N] = v \cdot [M, N] \cdot [N, N]$, for some $v \in \bbQ$, then
\[
F_{M, N}^L = v \cdot \tfrac{|\End (N)|}{|\Aut (N)|} \cdot \tfrac{|\Hom (L, N)_{\Sigma M}|}{|\Hom (L, N)|} \cdot [M, N].
\]
\end{lemm}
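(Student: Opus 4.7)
The plan is to observe that the only difference between the definitions of $[X,Y]$ and $\{X,Y\}$ is whether the $n = 0$ factor is included. Since $(-1)^0 = 1$, I would first record the identity
\[
[X,Y] = |\Hom (X,Y)| \cdot \{X,Y\},
\]
equivalently $\{X,Y\} = [X,Y] / |\Hom (X,Y)|$. This is the engine of the whole lemma, and in particular applies to $[M,M] = |\End (M)| \cdot \{M,M\}$ and $[N,N] = |\End (N)| \cdot \{N,N\}$.

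With this identity in hand, I would substitute it into the definitional expression
\[
F_{M,N}^L = \tfrac{|\Hom (M,L)_N|}{|\Aut (M)|} \cdot \tfrac{\{M,L\}}{\{M,M\}}.
\]
The numerator $\{M,L\}$ becomes $[M,L]/|\Hom (M,L)|$, and the denominator $\{M,M\}$ becomes $[M,M]/|\End (M)|$; regrouping the factors yields the first displayed formula of the lemma. Applying the same substitution to the expression recalled from \cite[Proposition~2.5']{XiaoXu} gives the second displayed formula by an entirely symmetric computation.

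For the two conditional formulas, the hypothesis $[M,L] = v \cdot [M,M] \cdot [M,N]$ lets me replace $[M,L]/[M,M]$ by $v \cdot [M,N]$ inside the first formula, and the hypothesis $[L,N] = v \cdot [M,N] \cdot [N,N]$ similarly simplifies $[L,N]/[N,N]$ inside the second. Both reductions are one-line substitutions.

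I do not anticipate a genuine obstacle: the entire statement is a bookkeeping exercise translating between the two indexing conventions in $\{\cdot,\cdot\}$ and $[\cdot,\cdot]$. The only point requiring care is that $\bbN_+$ omits the index $n = 0$ while $\bbN$ includes it, so that the discrepancy between the two products is precisely the factor $|\Hom (X,Y)|$.
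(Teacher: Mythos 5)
Your proposal is correct and coincides with the paper's own argument: the paper's entire proof is the single identity $\{X,Y\} = [X,Y]/|\Hom(X,Y)|$, which is exactly the engine you identify, and the rest is the same substitution and regrouping you describe.
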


\begin{proof}
This follows immediately from the obvious formula
\[
\{ X, Y \} = \tfrac{[X, Y]}{|\Hom (X, Y)|}. \qedhere
\]
\end{proof}

We formulate now some conditions for the multiplicative behavior of $[M, L]$.

\begin{lemm} \label{lemm mult}
Let $M, N, L \in \cT$.
\begin{enumerate}

\item \label{point one}
If $L = M \oplus N$, then
\[
[M, L] = [M, M] \cdot [M, N] \qquad \text{and} \qquad [L, N] = [M, N] \cdot [N, N].
\]

\item \label{point two}
If $F_{M, N}^L \neq 0$ and $\Hom (M, \Sigma M) = 0$, then
\[
[M, L] = [M, M] \cdot [M, N] \qquad \text{and} \qquad [L, M] = \tfrac{|\Hom (L, \Sigma M)|}{|\Hom (N, \Sigma M)|} \cdot [M, M] \cdot [M, N].
\]

\item \label{point three}
If $F_{M, N}^L \neq 0$ and $\Hom (N, \Sigma N) = 0$, then
\[
[N, L] = \tfrac{|\Hom (N, \Sigma L)|}{|\Hom (N, \Sigma M)|} \cdot [N, M] \cdot [N, N] \qquad \text{and} \qquad [L, N] = [M, N] \cdot [N, N].
\]

\end{enumerate}
\end{lemm}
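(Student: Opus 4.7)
The plan is to derive each identity by applying an appropriate $\Hom$ functor to a relevant distinguished triangle and analyzing the alternating product of cardinalities coming out of the long exact sequence. Part~\eqref{point one} is almost immediate: the functors $\Hom(M, -)$ and $\Hom(-, N)$ split over direct sums, so $|\Hom(M, \Sigma^{-n} L)| = |\Hom(M, \Sigma^{-n} M)| \cdot |\Hom(M, \Sigma^{-n} N)|$ for every $n \geq 0$, and taking $\prod_n (\cdot)^{(-1)^n}$ gives the two asserted identities on the nose.

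For parts~\eqref{point two} and~\eqref{point three}, the hypothesis $F_{M,N}^L \neq 0$ yields a morphism $f \colon M \to L$ with $\cone(f) \simeq N$, hence an exact triangle $M \to L \to N \to \Sigma M$. The first identity of~\eqref{point two} is obtained by applying $\Hom(M, -)$ to this triangle: writing $|\Hom(M, \Sigma^{-n} L)|$ as the product of the cardinalities of the images of the two adjacent maps in the long exact sequence
\[
\cdots \to \Hom(M, \Sigma^{-n-1} N) \to \Hom(M, \Sigma^{-n} M) \to \Hom(M, \Sigma^{-n} L) \to \Hom(M, \Sigma^{-n} N) \to \Hom(M, \Sigma^{-n+1} M) \to \cdots
\]
and then taking the alternating product over $n \geq 0$, the interior image terms telescope out, leaving only one surviving boundary contribution, namely $I := |\mathrm{image}(\Hom(M, N) \to \Hom(M, \Sigma M))|$. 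The hypothesis $\Hom(M, \Sigma M) = 0$ forces $I = 1$, giving $[M, L] = [M, M] \cdot [M, N]$. The first identity of~\eqref{point three} is the formal dual using $\Hom(-, N)$ and the vanishing $\Hom(N, \Sigma N) = 0$.

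The remaining identities are proved by the same technique applied to the opposite variance: for~\eqref{point two} apply $\Hom(-, M)$, for~\eqref{point three} apply $\Hom(N, -)$. The telescoping now leaves a nontrivial boundary term; the key point is to identify it via the vanishing hypothesis. For instance, in~\eqref{point two} the residual factor is $|\mathrm{image}(\Hom(M, M) \to \Hom(N, \Sigma M))|$, and the tail of the long exact sequence reads $\Hom(N, \Sigma M) \to \Hom(L, \Sigma M) \to \Hom(M, \Sigma M) = 0$, which forces the first map to be surjective and so expresses the residual factor as $|\Hom(N, \Sigma M)|/|\Hom(L, \Sigma M)|$; inverting gives the ratio in the claimed formula. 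The main obstacle, and the only nontrivial accounting, is this asymmetric bookkeeping in the alternating product: because the summation starts at $n = 0$ rather than running doubly infinitely, exactly one boundary term survives the telescoping, and the whole point of each vanishing hypothesis is to control precisely that term, either killing it outright or turning it into the stated quotient of Hom-space cardinalities.
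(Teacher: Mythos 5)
Your method is the paper's method: part (1) is the splitting of Hom over direct sums, and for (2) and (3) one fixes a triangle $M \to L \to N \to \Sigma M$ (which exists because $F_{M,N}^L \neq 0$), applies the relevant Hom functors, and uses that the alternating product of cardinalities over a bounded exact sequence of finite groups is $1$; each vanishing hypothesis terminates the sequence at one end, the other end being handled by $\Hom(-,\Sigma^{-n}-) = 0$ for $n \gg 0$. Your telescoping bookkeeping for the surviving boundary term, and its identification via surjectivity of $\Hom(N,\Sigma M) \to \Hom(L,\Sigma M)$, is exactly right for the second identity of (2). The paper's proof is just a terser rendering of the same computation.

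There is, however, one concrete cross-wiring in part (3): you attach each identity to the wrong functor. Applying $\Hom(-,N)$ — the formal dual of your computation of $[M,L]$ — the long exact sequence dies immediately at $\Hom(\Sigma^{-1}N,N) = \Hom(N,\Sigma N) = 0$ with no extra terms, so it yields the \emph{second} identity $[L,N] = [M,N]\cdot[N,N]$, clean and without a correction factor. It is the \emph{first} identity, $[N,L] = \tfrac{|\Hom(N,\Sigma L)|}{|\Hom(N,\Sigma M)|}\cdot[N,M]\cdot[N,N]$, that comes from $\Hom(N,-)$: there the sequence only terminates at $\Hom(N,\Sigma N)$, two steps past $\Hom(N,N)$, so the two terms $\Hom(N,\Sigma M)$ and $\Hom(N,\Sigma L)$ survive and produce the ratio. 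The fix is a pure relabelling, so nothing essential is lost. One last remark: carried out to the end, your computation of the second identity of (2) (like the paper's own) produces the factor $[N,M]$, not the $[M,N]$ printed in the statement; that appears to be a typo in the statement, and your phrase ``inverting gives the ratio in the claimed formula'' silently glosses over it.
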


\begin{proof}
The formulas in~\eqref{point one} are clear. For~\eqref{point two} we fix an exact triangle $M \to L \to N \to \Sigma M$. Applying the functor $\Hom (M, -)$ to this triangle, we get a long exact sequence
\begin{multline*}
\cdots \to \Hom (M, \Sigma^{-n} M) \to \Hom (M, \Sigma^{-n} L) \to \Hom (M, \Sigma^{-n} N) \to \cdots
\\
\to \Hom (M, M) \to \Hom (M, L) \to \Hom (M, N) \to \Hom (M, \Sigma M) = 0,
\end{multline*}
which immediately implies the former formula. Similarly, we get the latter formula applying the functor $\Hom (-, M)$ and using that $\Hom (\Sigma^n Z, M) = \Hom (Z, \Sigma^{-n} M)$. In particular, $\Hom (\Sigma^{-1} M, M) = \Hom (M, \Sigma M) = 0$.
\end{proof}

We formulate the following consequence.

\begin{coro} \label{coro direct sum}
If $X \not \simeq Y$ are indecomposable, then
\[
F_{X, Y}^{X \oplus Y} = [X, Y].
\]
\end{coro}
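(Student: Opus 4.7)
By Lemma~\ref{lemm mult}\eqref{point one} we have $[X, X \oplus Y] = [X, X] \cdot [X, Y]$, so the second formula of Lemma~\ref{lemm F} applies with $v = 1$. Combined with $|\Hom(X, X \oplus Y)| = |\End(X)| \cdot |\Hom(X, Y)|$, the desired identity $F_{X, Y}^{X \oplus Y} = [X, Y]$ is equivalent to
\[
|\Hom(X, X \oplus Y)_Y| = |\Aut(X)| \cdot |\Hom(X, Y)|.
\]

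I would prove this by showing that $\cone\bigl(\left[\begin{smallmatrix} a \\ b \end{smallmatrix}\right]\bigr) \simeq Y$ if and only if $a \in \Aut(X)$. For the ``if'' direction, since $X \not\simeq Y$ are indecomposable with local endomorphism rings, every composition $X \to Y \to X$ or $Y \to X \to Y$ lies in the Jacobson radical of the corresponding local ring, and a Schur-complement argument shows that an endomorphism of $X \oplus Y$ is invertible precisely when both diagonal entries are. Thus for $a \in \Aut(X)$ the matrix $\left[\begin{smallmatrix} a & 0 \\ b & \Id_Y \end{smallmatrix}\right]$ is an automorphism of $X \oplus Y$ that carries the canonical inclusion $\left[\begin{smallmatrix} \Id_X \\ 0 \end{smallmatrix}\right]$---whose cone is $Y$ via the split triangle---to $\left[\begin{smallmatrix} a \\ b \end{smallmatrix}\right]$. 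This produces the required $|\Aut(X)| \cdot |\Hom(X, Y)|$ maps with cone $Y$.

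The main obstacle is the ``only if'' direction. Applying $\Hom(-, X)$ to the exact triangle with $\left[\begin{smallmatrix} a \\ b \end{smallmatrix}\right]$ as first map identifies the image of the induced map into $\End(X)$ as the left ideal $\End(X) \cdot a + \Hom(Y, X) \cdot b$; since any composition $X \to Y \to X$ factors through the indecomposable $Y \not\simeq X$, the second summand sits in $\mathrm{rad}(\End(X))$. So if $a \notin \Aut(X)$ the whole image lies in $\mathrm{rad}(\End(X))$, $\Id_X$ is not in it, and the triangle cannot split. To derive a contradiction with the assumption $\cone \simeq Y$, I would factor
\[
\left[\begin{smallmatrix} a \\ b \end{smallmatrix}\right] = (a \oplus \Id_Y) \circ \left[\begin{smallmatrix} \Id_X \\ b \end{smallmatrix}\right],
\]
whose right factor has cone $Y$ (by the ``if'' direction with $a$ replaced by $\Id_X$) and whose left factor has cone $\cone(a) \oplus \cone(\Id_Y) = \cone(a)$ by additivity; the octahedral axiom then produces an exact triangle $Y \to \cone\bigl(\left[\begin{smallmatrix} a \\ b \end{smallmatrix}\right]\bigr) \to \cone(a) \to \Sigma Y$, and granted the assumption $\cone\bigl(\left[\begin{smallmatrix} a \\ b \end{smallmatrix}\right]\bigr) \simeq Y$ with $a \notin \Aut(X)$ (so $\cone(a) \neq 0$ by locality of $\End(X)$), a Krull--Schmidt analysis of the resulting triangle $Y \to Y \to \cone(a) \to \Sigma Y$---using that the first map cannot be an isomorphism since $\cone(a) \neq 0$---yields the contradiction.
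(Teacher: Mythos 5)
Your reduction via Lemma~\ref{lemm F} and Lemma~\ref{lemm mult}\eqref{point one}, and your ``if'' direction, coincide with the paper's argument: the whole point is to show $\Hom(X, X\oplus Y)_Y = \Aut(X)\times\Hom(X,Y)$, and the automorphism $\left[\begin{smallmatrix} a & 0\\ b & \Id_Y\end{smallmatrix}\right]$ of $X\oplus Y$ settles one inclusion. The gap is in the ``only if'' direction, at the very last step. Your octahedron correctly produces, under the assumption $\cone\bigl(\left[\begin{smallmatrix} a\\ b\end{smallmatrix}\right]\bigr)\simeq Y$, an exact triangle $Y \xrightarrow{h} Y \to \cone(a) \to \Sigma Y$ with $h$ non-invertible; but no ``Krull--Schmidt analysis'' of this triangle by itself can yield a contradiction. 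Triangles $Y \xrightarrow{h} Y \to C \to \Sigma Y$ with $h$ a nonzero radical endomorphism and $C = \cone(h) \neq 0$ exist whenever $\End(Y)$ is not a division ring, and nothing in your argument rules out $\cone(a)$ being isomorphic to the cone of such an $h$. Krull--Schmidt gives information only when some map in the triangle is zero or split, and here you have neither.

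The ingredient you are missing --- and which the paper uses --- is a counting argument based on the finiteness of the Hom-sets: applying $\Hom(-,X)$ to any exact triangle $X \xrightarrow{f} X\oplus Y \to Y \to \Sigma X$ and comparing $|\Hom(X\oplus Y,X)| = |\Hom(X,X)|\cdot|\Hom(Y,X)|$ with exactness at the middle term forces $f^*\colon \Hom(X\oplus Y,X)\to\End(X)$ to be surjective, i.e.\ $f$ is a section. This is exactly the implication ``$\cone(f)\simeq Y$ $\Rightarrow$ $f$ split'' that you correctly identify as the crux but do not establish. Once it is available, your own observation that the image of $f^*$ for $f=\left[\begin{smallmatrix} a\\ b\end{smallmatrix}\right]$ lies in $\End(X)\,a + \mathrm{rad}(\End(X))$ immediately forces $a\in\Aut(X)$, and the proof closes. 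As written, your argument never uses finiteness of the Hom-sets for this purpose, so it would have to prove the claim in an arbitrary Krull--Schmidt triangulated category, which the octahedron-plus-Krull--Schmidt step does not do.
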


\begin{proof}
Let $X \xrightarrow{f} X \oplus Y \to Y \to \Sigma X$ be an exact triangle. Since $\Hom (M, N)$ is finite, for all $M, N \in \cT$, it follows that the induced sequence
\[
0 \to \Hom (Y, X) \to \Hom (X \oplus Y, X) \to \Hom (X, X) \to 0
\]
is exact, thus $f$ is a section. This implies $\Hom (X, X \oplus Y)_Y = \Aut (X) \times \Hom (X, Y)$, since $X \not \simeq Y$ (we use here that $\End (X)$ is a local ring). Now the claim follows from and Lemmas~\ref{lemm F} and~\ref{lemm mult}\eqref{point one}.
\end{proof}

\subsection{Dimension vectors} \label{subsect dimvect}

Let $I$ and $J$ be sets. In each of the sets $\bbN^{(I)}$ and $\bbN^{(J)}$ we compare the vectors componentwise. Next, if $\bd_1, \bd_1' \in \bbN^{(I)}$ and $\bd_2, \bd_2' \in \bbN^{(J)}$, then we write $(\bd_1, \bd_2) \leq (\bd_1', \bd_2')$ if either $\bd_1 < \bd_1'$ or $\bd_1 = \bd_1'$ and $\bd_2 \leq \bd_2'$.

Let $\cT$ be a triangulated category and assume we have functions $\bdim_I \colon \cT \to \bbN^{(I)}$ and $\bdim_J \colon \cT \to \bbN^{(J)}$ such that $\bdim M = 0$ if and only if $M = 0$, where $\bdim M := (\bdim_I M, \bdim_J M)$. Our aim is to give conditions, which guarantee that for each exact triangle $M \to L \to N \to \Sigma M$,
\[
\bdim L \leq \bdim M + \bdim N.
\]
In such a situation we call $(\bdim_I, \bdim_J)$ a subadditive pair of functions. If additionally $I = \emptyset$, then we call $\bdim_J$ a subadditive function. Observe that if $(\bdim_I, \bdim_J)$ is a subadditive pair of functions, then $\bdim M = \bdim N$ provided $M \simeq N$.

Let
\begin{gather*}
\ql_I (M) := |\bdim_I M|, \qquad \ql_J (M) := |\bdim_J M|,
\\
\intertext{and}
\ql (M) := \ql_I (M) + \ql_J (M).
\end{gather*}
Here, if $\bd \in \bbN^{(K)}$, for a set $K$, then $|\bd| := \sum_{k \in K} \bd (k)$. Observe that $\ql (M) = 0$ if and only if $M = 0$.

We have the following.

\begin{prop} \label{prop sub}
Assume that the following conditions hold:
\begin{enumerate}

\addtocounter{enumi}{-1}

\item \label{cond sub0}
If $M \simeq N$, then $\bdim M = \bdim N$.

\item \label{cond sub1}
If $\ql (M) = 1$, then for each exact triangle $M \to L \to N \to \Sigma M$,
\[
\bdim L \leq \bdim M + \bdim N.
\]

\item \label{cond sub2}
If $\ql (M) > 1$, then there exists an exact triangle $M' \to M \to M'' \to \Sigma M'$ such that $M' \neq 0 \neq M''$ and
\[
\bdim M = \bdim M' + \bdim M''.
\]

\end{enumerate}
Then $(\bdim_I, \bdim_J)$ is a subadditive pair of functions.
\end{prop}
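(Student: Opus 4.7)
The plan is to prove the inequality $\bdim L \le \bdim M + \bdim N$ for an arbitrary exact triangle $M \to L \to N \to \Sigma M$ by strong induction on $\ql (M)$.

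For the base cases, if $\ql (M) = 0$ then $M = 0$, so the triangle rotates to show $L \simeq N$ and assumption~\eqref{cond sub0} gives $\bdim L = \bdim N \le \bdim M + \bdim N$; the case $\ql (M) = 1$ is exactly~\eqref{cond sub1}. For the inductive step, assume $\ql (M) > 1$ and fix a triangle $M \xrightarrow{u} L \to N \to \Sigma M$. By~\eqref{cond sub2} there is an exact triangle
\[
M' \xrightarrow{p} M \to M'' \to \Sigma M'
\]
with $M', M'' \neq 0$ and $\bdim M = \bdim M' + \bdim M''$. Since $\bdim = 0 \iff 0$, both $\ql (M')$ and $\ql (M'')$ are at least $1$, and their sum equals $\ql (M)$, so each is strictly smaller than $\ql (M)$ and the induction hypothesis applies.

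Now apply the octahedral axiom to the composable pair $M' \xrightarrow{p} M \xrightarrow{u} L$: it produces an object $L'$ and two exact triangles
\[
M' \xrightarrow{u \circ p} L \to L' \to \Sigma M' \qquad \text{and} \qquad M'' \to L' \to N \to \Sigma M''.
\]
The induction hypothesis, applied to each of these in turn, gives $\bdim L \le \bdim M' + \bdim L'$ and $\bdim L' \le \bdim M'' + \bdim N$. Adding $\bdim M'$ to the second and using transitivity yields $\bdim L \le \bdim M' + \bdim M'' + \bdim N = \bdim M + \bdim N$, as required.

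The main thing to justify along the way is that the (lexicographic-style) order on $\bbN^{(I)} \times \bbN^{(J)}$ is compatible with coordinatewise addition. This is straightforward from the definition: if $\bd_1 < \bd_1'$ componentwise (with $\bd_1 \neq \bd_1'$), then after adding any $\bd_1''$ the first coordinates still satisfy $\bd_1 + \bd_1'' \le \bd_1' + \bd_1''$ and are still unequal; if instead $\bd_1 = \bd_1'$ and $\bd_2 \le \bd_2'$, then translation preserves both equalities and the second-coordinate inequality. The only conceptual step that is not entirely routine is choosing to combine condition~\eqref{cond sub2} with the octahedral axiom, which is what lets a decomposition of the source $M$ be transported into a two-step filtration of the middle term $L$.
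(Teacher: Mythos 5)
Your proof is correct and follows essentially the same route as the paper's: induction on $\ql(M)$, with the octahedral axiom applied to $M' \to M \to L$ to produce the two triangles $M' \to L \to L' \to \Sigma M'$ and $M'' \to L' \to N \to \Sigma M''$, then chaining the inductive inequalities. Your explicit justification that $\ql(M'), \ql(M'') < \ql(M)$ and that the order is compatible with addition fills in details the paper leaves implicit, but the argument is the same.
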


\begin{proof}
Let $M \xrightarrow{f} L \to N \to \Sigma M$ be an exact triangle. We prove by induction on $\ql (M)$ that
\[
\bdim L \leq \bdim M + \bdim  N.
\]

If $\ql (M) = 0$, then $M = 0$, thus $L \simeq N$, and the claim follows from condition~\eqref{cond sub0}. Next, if $\ql (M) = 1$, then the claim follows from condition~\eqref{cond sub1}.

Now assume $\ql (M) > 1$. Condition~\eqref{cond sub2} implies that there exists an exact triangle $M' \xrightarrow{g} M \to M'' \to \Sigma N'$ such that $M' \neq 0 \neq M''$ and
\[
\bdim M = \bdim M' + \bdim M''.
\]
Using the octahedral axiom for the maps $g$, $f$ and $f \circ g$, we obtain exact triangles
\[
M' \to L \to L' \to \Sigma M' \qquad \text{and} \qquad M'' \to L' \to N \to \Sigma M''.
\]
Since $\ql (M'), \ql (M'') < \ql (M)$, by the inductive hypothesis used for the above triangles, we get
\[
\bdim L \leq \bdim M' + \bdim L' \leq \bdim M' + \bdim M'' + \bdim N = \bdim M + \bdim N. \qedhere
\]
\end{proof}

We formulate the following consequence.

\begin{coro} \label{coro sub prim}
Let $(\bdim_I, \bdim_J)$ be a subadditive pair of functions. Let $\cT'$ be the full subcategory of $\cT$ formed by the objects $M$ such that $\bdim_I M = 0$. If $\cT'$ is closed with respect to $\Sigma$, then $\cT'$ is a triangulated subcategory of $\cT$ and $\bdim_J |_{\cT'}$ is a subadditive function on $\cT'$.
\end{coro}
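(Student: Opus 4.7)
The plan is to verify the two assertions in order: first that $\cT'$ is a triangulated subcategory, and second that the subadditivity inequality for the pair $(\bdim_I, \bdim_J)$ collapses, once restricted to $\cT'$, to the subadditivity inequality for $\bdim_J$ alone. The whole argument is bookkeeping about the lexicographic order on $\bbN^{(I)} \times \bbN^{(J)}$, and I do not foresee any serious obstacle. The only tacit caveat is that ``closed with respect to $\Sigma$'' must be read as closure under the shift auto-equivalence, and hence under $\Sigma^{-1}$ as well, so that $\cT'$ fits the standard definition of a triangulated subcategory.

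For the first assertion, I would start by observing that $\cT'$ is a full subcategory containing the zero object (since $\bdim_I 0 = 0$), closed under isomorphism (since a subadditive pair assigns equal vectors to isomorphic objects, a point noted right after the definition), and closed under $\Sigma^{\pm 1}$ by hypothesis. The only remaining point is closure under cones. Given an exact triangle $M \to L \to N \to \Sigma M$ with $M, N \in \cT'$, subadditivity of $(\bdim_I, \bdim_J)$ yields $\bdim L \leq \bdim M + \bdim N$ in the lex order of Section~\ref{subsect dimvect}. Since $\bdim_I M + \bdim_I N = 0$ and $\bdim_I L$ has nonnegative entries, the strict first-coordinate inequality is impossible, so the lex inequality forces $\bdim_I L = 0$, that is, $L \in \cT'$.

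For the second assertion, the very same application of subadditivity to an exact triangle with all three vertices in $\cT'$ simultaneously delivers $\bdim_J L \leq \bdim_J M + \bdim_J N$, which is the second-coordinate part of the lex inequality once both first coordinates vanish. Combined with the invariance of $\bdim_J$ under isomorphism (inherited from the pair), this is exactly the content of $\bdim_J|_{\cT'}$ being a subadditive function on $\cT'$ in the sense of Section~\ref{subsect dimvect}.
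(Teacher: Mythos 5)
Your argument is correct and coincides with the paper's own proof: both reduce the claim to the observation that for a triangle with $M, N \in \cT'$ the lexicographic inequality $\bdim L \leq \bdim M + \bdim N$ forces $\bdim_I L = 0$ (since the first coordinate cannot be strictly negative) and then delivers $\bdim_J L \leq \bdim_J M + \bdim_J N$ from the second coordinate. The extra remarks on closure under zero, isomorphism, and $\Sigma^{\pm 1}$ are routine and consistent with what the paper leaves implicit.
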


\begin{proof}
We have to show that if $M \to L \to N \to \Sigma M$ is an exact triangle with $M, N \in \cT'$, then $L \in \cT'$. By assumption, $\bdim_I M = 0 = \bdim_I N$. Since $\bdim L \leq \bdim M + \bdim  N$, $0 \leq \bdim_I L \leq \bdim_I M + \bdim_I N = 0$. Consequently, $\bdim_I L = 0$, thus in particular $L \in \cT'$. Moreover, since $\bdim_I L = \bdim_I M + \bdim_I N$, $\bdim_J L \leq \bdim_J M + \bdim_J N$.
\end{proof}

\subsection{Polynomials}

Let $\cT$ be a Krull--Schmidt triangulated category. Suppose we have a subadditive pair of functions $\bdim = (\bdim_I, \bdim_J) \colon \cT \to \bbN^{(I)} \times \bbN^{(J)}$ such that $\bdim (M \oplus N) = \bdim M + \bdim N$, for any $M, N \in \cT$. Moreover, we assume that for each $i \in I$, there exists a unique (up to isomorphism) object $Z_i$ such that $(\bdim_I Z_i) (i') = \delta_{i, i'}$ and $\bdim_J Z_i = 0$, where $\delta_{x, y}$ is the Kronecker delta. Similarly, for each $j \in J$, there exists a unique (up to isomorphism) object $X_j$ such that $(\bdim_J X_j) (j') = \delta_{j, j'}$ and $\bdim_I X_j = 0$.

Let $\cA$ be the free algebra in the generators $z_i$, $i \in I$, and $x_j$, $j \in J$. If $\Phi \in \cA$, then we define the degree vector $\bdeg \Phi \in \bbN^{(I)} \times \bbN^{(J)}$ by the formula
\[
\bdeg \Phi := (\bdeg_I \Phi, \bdeg_J \Phi),
\]
where $\bdeg_I \Phi \in \bbN^{(I)}$ and $\bdeg_J \Phi \in \bbN^{(J)}$ are given by
\begin{gather*}
(\bdeg_I \Phi) (i) := \deg_{z_i} \Phi \qquad \text{and} \qquad (\bdeg_J \Phi) (j) := \deg_{x_j} \Phi.
\end{gather*}

Assume now that $\cT$ satisfies the conditions from subsection~\ref{subsect Hall} and let $\cH$ be the Hall algebra of $\cT$. Without loss of generality we may assume that, for each $i \in I$, $Z_i \in \cT / \simeq$ and, for each $j \in J$, $X_j \in \cT / \simeq$. Consequently, if $\Phi \in \cA$, then we view $\Phi ((Z_i), (X_j))$ as an element of $\cH$.

\begin{prop} \label{prop poly}
Assume that for each indecomposable object $M \in \cT / \simeq$ there exists a polynomial $\Phi_M \in \cA$ such that
\[
\Phi_M ((Z_i), (X_j)) = M \qquad \text{and} \qquad \bdeg \Phi_M \leq \bdim M.
\]
Then for each object $M \in \cT / \simeq$ there exists a polynomial $\Phi_M \in \cA$ with the above properties.
\end{prop}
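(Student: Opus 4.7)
The plan is to proceed by induction on $\bdim M$ with respect to the well-founded partial order on $\bbN^{(I)}\times\bbN^{(J)}$ introduced in Subsection~\ref{subsect dimvect}. The base case $\bdim M=0$ forces $M=0$, and one takes $\Phi_M=0$. Whenever $M$ is indecomposable the hypothesis of the proposition supplies $\Phi_M$ directly, regardless of $\bdim M$, so the actual work concerns the decomposable case.

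For the inductive step, suppose $\bdim M=\bd$ and $M$ is decomposable. Fix a decomposition $M=M'\oplus M''$ with both summands nonzero. Since $\bdim$ is additive on direct sums, $\bdim M'+\bdim M''=\bd$, and because neither summand is zero both $\bdim M'$ and $\bdim M''$ are strictly smaller than $\bd$ in the order; the inductive hypothesis therefore furnishes $\Phi_{M'}$ and $\Phi_{M''}$ with $\bdeg \Phi_{M'}\leq\bdim M'$ and $\bdeg \Phi_{M''}\leq\bdim M''$. I would then form the product $\Phi_{M'}\cdot \Phi_{M''}\in\cA$, whose degree is bounded by $\bd$, and whose evaluation in $\cH$ equals $\sum_L F_{M',M''}^L L$. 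Each $L$ with a nonzero contribution fits into an exact triangle $M'\to L\to M''\to\Sigma M'$, so subadditivity gives $\bdim L\leq\bd$, and the split triangle guarantees that the coefficient $F_{M',M''}^M$ is strictly positive. Replacing those $L$ with $\bdim L<\bd$ by the corresponding $\Phi_L$ from the inductive hypothesis, and those indecomposable $L$ with $\bdim L=\bd$ by the $\Phi_L$ supplied by the proposition's hypothesis, one arrives at a polynomial identity of the form $p_M=\sum_K c_{M,K}\, K$, where $K$ ranges over decomposable objects with $\bdim K=\bd$, $c_{M,K}=F_{M',M''}^K$, and $c_{M,M}>0$.

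Performing this construction uniformly for every decomposable $M$ with $\bdim M=\bd$ yields a linear system over $\bbQ$ in the variables $\{K\in\cT/\simeq : \bdim K=\bd,\ K\text{ decomposable}\}$. The main obstacle is to argue that this system can be solved. The natural route is to order the decomposable objects of fixed $\bdim$ by a secondary invariant (for instance, the number of indecomposable summands, or a finer invariant suited to the category at hand) so that the coefficient matrix $(c_{M,K})$ becomes triangular with positive diagonal entries, and therefore invertible over $\bbQ$; back-substitution then produces each $\Phi_M\in\cA$ as a $\bbQ$-linear combination of the $p_M$'s and of already-known polynomials. Throughout this procedure the degree bound $\bdeg\Phi_M\leq\bd$ is preserved, since every ingredient---the products $\Phi_{M'}\cdot \Phi_{M''}$ and the previously constructed polynomials $\Phi_L$---has degree bounded componentwise by $\bd$.
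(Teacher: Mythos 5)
Your reduction of the problem is fine up to the last step, but that last step is where the entire content of the decomposable case lies, and you have not supplied it. You acknowledge that the "main obstacle is to argue that this system can be solved" and then only gesture at a solution: you need a well-founded secondary invariant with respect to which the matrix $(c_{M,K})$ is triangular, and the candidate you name, the number of indecomposable summands, does not work. For example, an Auslander--Reiten triangle $X \to Y_1 \oplus Y_2 \to Z \to \Sigma X$ with two indecomposable middle summands (these occur already in the bounded derived category of a path algebra of type $\bbA_3$, which satisfies all the hypotheses of subsection~\ref{subsect Hall}) gives $F_{X, Z}^{Y_1 \oplus Y_2} \neq 0$ with $Y_1 \oplus Y_2 \not\simeq X \oplus Z$, where both objects have exactly two indecomposable summands and the same dimension vector. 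So triangularity with respect to that invariant fails, and since Proposition~\ref{prop poly} is stated for an abstract $\cT$, falling back on "a finer invariant suited to the category at hand" is not an option; exhibiting the right invariant and proving the strict decrease is precisely the missing idea.

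The paper avoids the linear system altogether by inducting on $|\End (M)|$ rather than on $\bdim M$. The standard count with the exact Hom-sequences attached to a triangle $M_1 \to L \to M_2 \to \Sigma M_1$ gives $|\End (L)| \leq |\Hom (M_1 \oplus M_2, L)| \leq |\End (M_1 \oplus M_2)|$, with equality forcing the triangle to split; hence every $L \neq M$ occurring with $v_L \neq 0$ in $M_1 \cdot M_2$ satisfies $|\End (L)| < |\End (M)|$ and is already covered by the inductive hypothesis, whether $\bdim L$ equals $\bdim M$ or not. Subadditivity of $\bdim$ then enters only to guarantee $\bdeg \Phi_L \leq \bdim L \leq \bdim M$, i.e.\ the degree bound on the resulting $\Phi_M$. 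If you replace your unspecified secondary invariant by $|\End|$ (and prove the strict inequality above), your argument collapses into the paper's proof.
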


\begin{proof}
We prove the claim by induction on $|\End (M)|$. If $|\End (M)| = 1$, then $M = 0$, and we take $\Phi_M = 0$. Next, if $M$ is indecomposable, then the claim follows from the assumption.

Now assume $M \simeq M_1 \oplus M_2$ with $M_1 \neq 0 \neq M_2$ and $M_1, M_2 \in \cT / \simeq$. If
\[
M_1 M_2 = \sum_{L \in \cT / \simeq} v_L L,
\]
then $v_M \neq 0$. By the inductive hypothesis we know
\[
M_1 = \Phi_{M_1} ((Z_i), (X_j)) \qquad \text{and} \qquad M_2 = \Phi_{M_2} ((Z_i), (X_j)),
\]
for $\Phi_{M_1}, \Phi_{M_2} \in \cA$ such that $\bdeg \Phi_{M_1} \leq \bdim M_1$ and $\bdeg \Phi_{M_2} \leq \bdim M_2$, since $|\End (M_1)|, |\End (M_2)| < |\End (M)|$. Moreover, standard arguments using exact Hom-sequences induced by an exact triangle $M_1 \to L \to M_2 \to \Sigma M_1$ imply that if $v_L \neq 0$ and $L \neq M$, then $|\End (L)| < |\End (M)|$, thus again by the inductive hypothesis $L = \Phi_L ((Z_i), (X_j))$, for $\Phi_L \in \cA$ such that $\bdim \Phi_L \leq \bdeg L$. Observe that for such $L$,
\[
\bdeg \Phi_L \leq \bdim L \leq \bdim M_1 + \bdim M_2 = \bdim M
\]
by assumptions of $\bdim$. Moreover,
\[
\bdeg (\Phi_{M_1} \Phi_{M_2}) \leq \bdeg \Phi_{M_1} + \bdim \Phi_{M_2} \leq  \bdim M_1 + \bdim M_2 = \bdim M.
\]
Consequently, the claim follows if we take
\[
\Phi_M = \tfrac{1}{v_M} \Bigl( \Phi_{M_1} \Phi_{M_2} - \sum_{\substack{L \neq M \\ v_L \neq 0}} v_L \Phi_L \Bigr). \qedhere
\]
\end{proof}

\section{The category} \label{sect category}

In this section we introduce the category, which we are interested in.

\subsection{Description}

Throughout this section we fix $r, m \in \bbN_+$. We describe a category $\cC := \cC (r, m)$, which is the path category of a quiver $\Gamma$ modulo some relations, i.e.\ the objects of $\cC$ are formal direct sums of vertices of $\Gamma$, and the homomorphism spaces between indecomposable objects are formed by the $\bbF$-linear combinations of paths with appropriate starting and terminating vertices modulo the ideal generated by the relations.

The vertices of the quiver $\Gamma$ are:
\[
X_{i, j}^{(k)}, \; k \in [1, r], \; i, j \in \bbZ, \; i \leq j, \qquad \text{and} \qquad  Z_i^{(k)}, \; k \in [1, r], \; i \in \bbZ.
\]
In order to shorten notation we put $X_i^{(k)} := X_{i, i}^{(k)}$, for $k \in [1, r]$ and $i \in \bbZ$. We also put $X_{i, i - 1}^{(k)} := 0$. We always compute the upper index modulo $r$, i.e.\ for example $r + 1 = 1$. Moreover, $k + 1 = k$ if $r = 1$.

Now we describe the arrows in $\Gamma$. Moreover, in order to easier describe relations, we associate to each arrow a number called degree.

We begin with describing the arrows starting at $X_{i, j}^{(k)}$, for $k \in [1, r]$ and $i \leq j$. First, if $s \in [i, j]$ and $t \geq j$, then there is an arrow
\[
\alpha_{(s, t), (i, j)}^{(k)} \colon X_{i, j}^{(k)} \to X_{s, t}^{(k)}
\]
of degree $0$. Next, if $s \in [i, j]$, then there is an arrow
\[
\beta^{(k)}_{s, (i, j)} \colon X_{i, j}^{(k)} \to Z_s^{(k)}
\]
of degree $1$. Finally, if $s \leq i + \delta_{k, r} m - 1$ and $t \in [i + \delta_{k, r} m - 1, j + \delta_{k, r} m - 1]$, then there is an arrow
\[
\gamma^{(k)}_{(s, t), (i, j)} \colon X_{i, j}^{(k)} \to X_{s, t}^{(k + 1)}
\]
of degree $2$.

Now we describe the arrows starting at $Z_i^{(k)}$, for $k \in [1, r]$ and $i \in \bbZ$. If $s \geq i$, then there is an arrow
\[
\alpha_{s, i}'^{(k)} \colon Z_i^{(k)} \to Z_s^{(k)}
\]
of degree $0$. Next, if $s \leq i + \delta_{k, r} m - 1 \leq t$, then there is an arrow
\[
\beta'^{(k)}_{(s, t), i} \colon Z_i^{(k)} \to X_{s, t}^{(k + 1)}
\]
o degree $1$.

Finally we describe the relations. First, if $k \in [1, r]$, $i, j \in \bbZ$, and $i \leq j$, then
\[
\alpha^{(k)}_{(i, j), (i, j)} = \Id_{X_{i, j}^{(k)}}.
\]
Similarly, if $k \in [1, r]$ and $i \in \bbZ$, then
\[
\alpha_{i, i}'^{(k)} = \Id_{Z_i^{(k)}}.
\]
Finally, let $f \colon X \to Y$ and $g \colon Y \to Z$ be arrows of degree $d$ and $d'$, respectively. If there exists an arrow $h \colon X \to Z$ of degree $d + d'$, then $g \circ f = h$. Otherwise, $g \circ f = 0$. For example, $\beta_{s, (i, j)}^{(k)} \circ \alpha_{(i, j), (i, i)}^{(k)}$ equals $\beta_{s, (i, i)}^{(k)}$, if $s = i$, and equals $0$ otherwise.

The degrees which we have attached to the arrows of $\Gamma$ extend to the maps in the path category of $\Gamma$. This means that a map $\bigoplus_i X_i \to \bigoplus Y_j$ in the path category of $\Gamma$ is homogenous of degree $d$ provided every component map is a linear combinations of maps of the form $\lambda \alpha_1 \cdots \alpha_n$, where $\lambda \in \bbF$ and $\alpha_1$, \ldots, $\alpha_n$ are arrows of degrees $d_1$, \ldots, $d_n$, respectively, such that $d_1 + \cdots + d_n = d$. Moreover, every map is a sum $\sum_d f_d$, where $f_d$ is a map of degree $d$. Finally, if $f$ and $g$ are maps of degrees $d$ and $d'$, respectively, then $g \circ f$ (if defined) is a map of degree $d + d'$.

Since the relations are homogeneous with respect to the above degree function, the degree function descends to the maps in $\cC$. Note that if $f$ is a homogenous map in $\cC$ of degree $d$, then $d \leq 2$. Moreover, if $X$ and $Y$ are indecomposable, then the space of maps $X \to Y$ of degree $d$ is at most one dimensional and, if nonzero, is spanned by an arrow. Moreover, the space of all maps $X \to Y$ is at most two dimensional. We use these observations freely.

For the rest of the section we assume that $\cC$ has a triangulated structure such that
\[
\Sigma X_{i, j}^{(k)} = X_{i + \delta_{k, r} m, j + \delta_{k, r} m}^{(k + 1)} \qquad \text{and} \qquad \Sigma Z_i^{(k)} = Z_{i + \delta_{k, r} m}^{(k + 1)}.
\]
One easily checks that $\cC$ satisfies the conditions of subsection~\ref{subsect Hall}, hence one may study the Hall algebra $\cH (\cC)$.

\subsection{Cones} \label{subsect conesC}

We want to use results of subsection~\ref{subsect cones} in order to describe some cones in $\cC$. For this we need to describe some left minimal maps. We first formulate a criterion, which we use later on.

Let $f \colon M \to N$ be a nonzero map and write $f = f_0 + f_1 + f_2$, where $f_d$ is homogeneous of degree $d$. We call the minimal $d$ such that $f_d \neq 0$ the lower degree of $f$ and denote it $\ldeg f$.

\begin{prop} \label{prop min crit}
Let $Y_1$, \ldots, $Y_n$ be indecomposable objects and
\[
f = \left[
\begin{smallmatrix}
f_1 & \cdots & f_n
\end{smallmatrix}
\right]^{\tr} \colon M \to Y_1 \oplus \cdots \oplus Y_n
\]
be such that $f_i \neq 0$, for each $i \in [1, n]$. Assume that, for $i \neq j$, $\ldeg h > \ldeg f_i - \ldeg f_j$, for each nonzero $h \in \Hom (Y_j, Y_i)$. Then $f$ is left minimal.
\end{prop}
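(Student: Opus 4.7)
To prove left minimality, the plan is to take an arbitrary $g \in \End (\bigoplus_i Y_i)$ with $g \circ f = f$, write it as a matrix $g = [g_{ij}]$ with $g_{ij} \in \Hom (Y_j, Y_i)$, and show that each diagonal entry $g_{ii}$ is an automorphism. A preliminary observation is that the hypothesis forces the $Y_i$ to be pairwise non-isomorphic: an isomorphism between $Y_i$ and $Y_j$ for $i \neq j$ would give a nonzero element of $\Hom (Y_j, Y_i)$ of degree $0$, and the hypothesis would then yield $0 > \ldeg f_i - \ldeg f_j$ as well as $0 > \ldeg f_j - \ldeg f_i$, which is impossible. With the $Y_i$'s pairwise non-isomorphic, the Krull--Schmidt structure identifies $\End (\bigoplus_i Y_i)$ modulo its Jacobson radical with $\prod_i \End (Y_i) / \operatorname{rad} \End (Y_i)$, so invertibility of $g$ will reduce to invertibility of each $g_{ii}$.

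The heart of the argument is to extract, from the equation $\sum_j g_{ij} \circ f_j = f_i$, the homogeneous component of degree $d_i := \ldeg f_i$. On the right-hand side this component is $(f_i)_{d_i}$, which is nonzero by definition of $\ldeg$. A typical left-hand term $(g_{ij})_e \circ (f_j)_d$ with $e + d = d_i$ requires $d \geq d_j = \ldeg f_j$ and hence $e \leq d_i - d_j$; for $j \neq i$, the assumption $\ldeg g_{ij} > d_i - d_j$ (whenever $g_{ij} \neq 0$) makes every $(g_{ij})_e$ with $e \leq d_i - d_j$ vanish, killing all such terms. Only the $j = i$ contribution survives, giving $(g_{ii})_0 \circ (f_i)_{d_i} = (f_i)_{d_i}$. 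Thus $\Id_{Y_i} - (g_{ii})_0$ annihilates the nonzero map $(f_i)_{d_i}$; in particular it is not invertible, and since $\End (Y_i)$ is local it must lie in the Jacobson radical, forcing $(g_{ii})_0$ itself to be an automorphism.

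It then remains to upgrade automorphism of $(g_{ii})_0$ to automorphism of $g_{ii}$, and then to automorphism of $g$. Writing $g_{ii} = (g_{ii})_0 + h$ with $h$ of strictly positive lower degree, I would use that in $\cC$ every nonzero homogeneous map has degree at most $2$, so $h^3$ has lower degree at least $3$ and hence vanishes. Therefore $h$ is nilpotent, lies in $\operatorname{rad} \End (Y_i)$, and does not affect invertibility, so $g_{ii}$ is an automorphism. The Krull--Schmidt reduction from the first paragraph then yields that $g$ is an automorphism. The only really delicate step is the degree accounting in the middle paragraph; the rest is formal bookkeeping with Krull--Schmidt and local endomorphism rings.
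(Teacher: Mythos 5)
Your proof is correct and its core step is exactly the paper's: extracting the homogeneous component of degree $\ldeg f_i$ from $g\circ f=f$ and using the hypothesis to kill all off-diagonal contributions, leaving $(g_{ii})_0\circ (f_i)_{\ldeg f_i}=(f_i)_{\ldeg f_i}$. The only (harmless) divergence is in the final bookkeeping: the paper notes that $(g_{ii})_0$ is a scalar multiple of $\Id_{Y_i}$, hence equals $\Id_{Y_i}$, and then reorders the summands so that the degree-$0$ part of $g$ is upper triangular unipotent, whereas you deduce that the $Y_i$ are pairwise non-isomorphic and invoke locality of $\End(Y_i)$ together with the standard Krull--Schmidt radical argument.
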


\begin{proof}
For each $i \in [1, n]$, let $f_i'$ be the homogenous component of $f_i$ of degree $\ldeg f_i$. Let $g \in \End (Y_1 \oplus \cdots \oplus Y_n)$ and, for each $i \in [1, n]$, let $g_i$ be the homogenous component of degree $0$ of the induced map $Y_i \to Y_i$. Note that each $g_i$ is a multiplicity of $\Id_{Y_i}$. If $g \circ f = f$, then our assumptions imply that the homogenous component of degree $\ldeg f_i$ of the map $M \to Y_i$ induced by $g \circ f$ is $g_i \circ f_i'$. Thus $g_i \circ f_i' = f_i'$. Since $f_i' \neq 0$, this implies $g_i = \Id_{Y_i}$, for each $i \in [1, n]$.

Without loss of generality we may assume $\ldeg f_1 \leq \cdots \leq \ldeg f_n$. This implies the homogenous component of degree $0$ of $g$ is upper triangular. Since it has identities on the diagonal, $g$ is an automorphism, and the claim follows.
\end{proof}

As a first application of Proposition~\ref{prop min crit} we get the following.

\begin{lemm} \label{lemm arrowmin}
If $\alpha \colon X \to Y$ is an arrow in $\Gamma$, then $\alpha$ is a left minimal map in $\cC$. \qed
\end{lemm}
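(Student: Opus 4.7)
The plan is to invoke Proposition~\ref{prop min crit} in the degenerate case $n = 1$. An arrow $\alpha \colon X \to Y$ in $\Gamma$ has $Y$ equal to a single vertex of $\Gamma$, and each such vertex is an indecomposable object of $\cC$: inspection of the arrows of $\Gamma$ shows that the only degree-zero endomorphism of any vertex is its identity, and the general description of morphisms in $\cC$ ensures that every endomorphism of such a vertex differs from a scalar multiple of the identity only by a nilpotent summand, so the endomorphism ring is local. Moreover, $\alpha$ itself is nonzero in $\cC$, since the defining relations never force a single arrow to vanish.

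With this set-up I would simply take $n := 1$, $Y_1 := Y$, and $f_1 := \alpha$ in Proposition~\ref{prop min crit}. The hypothesis concerning pairs of distinct indices is vacuously satisfied because there are no such pairs, and the nonvanishing condition reduces to $\alpha \neq 0$. The proposition then yields directly that $\alpha$ is left minimal, which is precisely the assertion of the lemma. There is no real obstacle to address, because the substantive content—the degree-based upper-triangularity argument that forces any $g$ with $g \circ f = f$ to be an automorphism—has already been carried out inside Proposition~\ref{prop min crit}; the present lemma is the shortest possible application of that criterion.
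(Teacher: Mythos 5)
Your proof is correct and is exactly the paper's intended argument: the lemma is stated with a \qed immediately after Proposition~\ref{prop min crit} precisely because it is the $n = 1$ instance of that criterion, where the cross-term hypothesis is vacuous and one only needs that an arrow is nonzero with indecomposable target. Nothing further is required.
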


Now we concentrate on the case $X = X_i^{(k)}$. Observe that if $\alpha_1 \colon X \to Y_1$ and $\alpha_2 \colon X \to Y_2$ are arrows, then either there exists an arrow $\beta \colon Y_1 \to Y_2$ such that $\alpha_2 = \beta \circ \alpha_1$, or there exists an arrow $\beta \colon Y_2 \to Y_1$ such that $\alpha_1 = \beta \circ \alpha_2$. The above extends to maps.

\begin{lemm} \label{lemm factor}
Let $X := X_i^{(k)}$, for $k \in [1, r]$ and $i \in \bbZ$. If $Y_1$ and $Y_2$ are indecomposable, and $f_1 \colon X \to Y_1$ and $f_2 \colon X \to Y_2$, then either $[f_1] \leq [f_2]$ or $[f_2] \leq [f_1]$.
\end{lemm}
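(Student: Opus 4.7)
The plan is to extend the observation preceding the lemma (that any two arrows from $X$ factor one through the other via an arrow) to arbitrary nonzero maps $X \to Y_i$. The trivial case is immediate: if $f_1 = 0$, the zero morphism $Y_2 \to Y_1$ shows $[f_2] \leq [f_1]$, and symmetrically if $f_2 = 0$. So assume both $f_1, f_2$ are nonzero.

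Since the degree-$d$ component of $\Hom (X, Y_i)$ is at most one-dimensional (spanned by an arrow when nonzero), and $\dim \Hom (X, Y_i) \leq 2$, each map decomposes as $f_i = c_i \alpha_i + c_i' \alpha_i'$ for arrows $\alpha_i, \alpha_i' \colon X \to Y_i$ of distinct degrees $d_i < d_i'$, with $c_i \in \bbF^\times$ and $c_i' \in \bbF$ (the primed summand absent when $\dim \Hom (X, Y_i) = 1$). Applying the observation to $\alpha_1, \alpha_2$ and, if necessary, exchanging $(f_1, Y_1) \leftrightarrow (f_2, Y_2)$ so that $d_1 \leq d_2$, I may assume there is an arrow $\beta \colon Y_1 \to Y_2$ of degree $d_2 - d_1 \geq 0$ satisfying $\alpha_2 = \beta \circ \alpha_1$.

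Next I would construct $g \colon Y_1 \to Y_2$ with $g \circ f_1 = f_2$ by assembling its homogeneous pieces in increasing degree, beginning with $(c_2/c_1) \beta$ at degree $d_2 - d_1$. Each later piece is again forced up to a scalar to be an arrow $Y_1 \to Y_2$, whose existence is guaranteed by a further application of the observation to the arrows still in play; its coefficient is determined by requiring the corresponding homogeneous component of $g \circ f_1$ to match that of $f_2$ while cancelling stray contributions of earlier pieces composed with the higher-degree summand $\alpha_1'$ of $f_1$. The main obstacle is the bookkeeping at degrees above $d_2 - d_1$ when $\dim \Hom (X, Y_1) = 2$: the composition $\beta \circ \alpha_1'$ lands in degree $d_1' + d_2 - d_1$, and its contribution must either vanish (when no arrow $X \to Y_2$ of that degree exists) or be cancelled by a later piece of $g$. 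A short case analysis, organised by $\dim \Hom (X, Y_i)$ (the two-dimensional case arising essentially only when $r = 1$ and an $\alpha$-target coincides with a $\gamma$-target) and by the relative order of the degrees $d_1, d_1', d_2, d_2'$, verifies in each case that $g$ can be assembled, completing the proof.
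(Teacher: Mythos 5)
Your argument follows essentially the same route as the paper's proof: decompose each $f_i$ into homogeneous components spanned by arrows, invoke the factorization observation for arrows out of $X_i^{(k)}$ to get $\beta$ with $\alpha_2 = \beta \circ \alpha_1$ (after relabelling), and then build the factoring map $g$ with a possible degree-$2$ correction term, the only delicate point being the two-dimensional Hom spaces, where either $Y_1 = Y_2$ (both two-dimensional) or the stray composite $\beta \circ \gamma \circ \alpha$ vanishes because $Y_1 \neq Y_2$. The case analysis you defer at the end is precisely the content of the paper's proof, which organizes it by the pair $(\dim \Hom (X, Y_1), \dim \Hom (X, Y_2))$ instead of by your normalization $d_1 \leq d_2$, so the proposal is correct in substance.
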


\begin{proof}
We may assume $f_1 \neq 0$ and $f_2 \neq 0$.

If $\dim \Hom (X, Y_1) = 1 = \dim \Hom (X, Y_2)$, then $f_1 = \lambda_1 \alpha_1$ and $f_2 = \lambda_2 \alpha_2$, for $\lambda_1, \lambda_2 \in \bbF^\times$ and arrows $\alpha_1 \colon X \to Y_1$ and $\alpha_2 \colon X \to Y_2$. By symmetry, $\alpha_2 = \beta \circ \alpha_1$, for an arrow $\beta \colon Y_1 \to Y_2$, hence $[f_1] \leq [f_2]$.

Now assume $\dim \Hom (X, Y_1) = 1$, but $\dim \Hom (X, Y_2) = 2$. In this case again $f_1 = \lambda_1 \alpha_1$, but $f_2 = \lambda_2 \alpha_2 + \mu (\gamma \circ \alpha_2)$, for $\lambda_1, \lambda_2, \mu \in \bbF$ and arrows $\alpha_1 \colon X \to Y_1$, $\alpha_2 \colon X \to Y_2$ and $\gamma \colon Y_2 \to Y_2$, such that $\lambda_1 \neq 0$, $\deg \alpha_2 = 0$ and $\deg \gamma = 2$. If $\lambda_2 = 0$ or $\mu = 0$, then we proceed as in the previous case, thus we assume $\lambda_2 \neq 0 \neq \mu$. Moreover, if $\alpha_2 = \beta \circ \alpha_1$, for an arrow $\beta \colon Y_1 \to Y_2$, then again $[f_1] \leq [f_2]$, thus assume $\alpha_1 = \beta \circ \alpha_2$, for an arrow $\beta \colon Y_2 \to Y_1$. Since $Y_1 \neq Y_2$, one checks that $\beta \circ \gamma \circ \alpha_2 = 0$. Consequently, $[f_2] \leq [f_1]$.

Finally assume $\dim \Hom (X, Y_1) = 2 = \dim \Hom (X, Y_2)$. In this case one checks that $Y_1 = Y_2$, thus $f_1 = \lambda_1 \alpha + \mu_1 (\gamma \circ \alpha)$ and $f_2 = \lambda_2 \alpha + \mu_2 (\gamma \circ \alpha)$, for $\lambda_1, \lambda_2, \mu_1, \mu_2 \in \bbF$ and arrows $\alpha \colon X \to Y_1$ and $\gamma \colon Y_1 \to Y_1$, such that $\deg \alpha = 0$ and $\deg \gamma = 2$. If either $\lambda_1 = 0$ or $\lambda_2 = 0$, then we proceed as in the previous cases. If $\lambda_1 \neq 0 \neq \lambda_2$, then
\[
f_2 = \bigl( \tfrac{\lambda_2}{\lambda_1} \cdot \Id_{Y_2} + \tfrac{\lambda_1 \mu_2 - \lambda_2 \mu_1}{\lambda_1^2} \cdot \gamma \bigr) \circ f_1,
\]
i.e.\ $[f_1] \leq [f_2]$.
\end{proof}

\begin{lemm} \label{lemm leftminX}
Let $X := X_i^{(k)}$, for $k \in [1, r]$ and $i \in \bbZ$. If $f \colon X \to M$ is nonzero, then there exists an arrow $\alpha \colon X \to Y$ in $\Gamma$ such that $[f] = [\alpha]$.
\end{lemm}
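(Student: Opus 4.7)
The plan is to reduce $f$ to a map with indecomposable target by eliminating redundant summands, and then to recognize that a nonzero map from $X$ to an indecomposable object must be equivalent to an arrow.

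First, decompose $M = Y_1 \oplus \cdots \oplus Y_n$ using that $\cC$ is Krull--Schmidt, and let $f_i \colon X \to Y_i$ denote the components of $f$. Indices with $f_i = 0$ are redundant and can be removed one at a time using Lemma~\ref{lemm not minimal} (with $N_2$ taken to be a summand with zero component and $N_1$ any summand with nonzero component; the relation $[f_1] \leq [f_2]$ holds trivially since $0 = 0 \circ f_1$), so we may assume all $f_i$ are nonzero. By Lemma~\ref{lemm factor} the classes $[f_1], \ldots, [f_n]$ are linearly ordered under $\leq$; after re-indexing (which only changes $M$ up to isomorphism, hence does not affect $[f]$) assume $[f_1] \leq [f_2] \leq \cdots \leq [f_n]$. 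Applying Lemma~\ref{lemm not minimal} with $N_0 := Y_1 \oplus \cdots \oplus Y_{n-2}$, $N_1 := Y_{n-1}$, $N_2 := Y_n$ discards $Y_n$ without changing the class of $f$. Iterating this process eventually reduces to the case $[f] = [f_1]$ with $f_1 \colon X \to Y_1$ and $Y_1$ indecomposable.

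It remains to show $[f_1] = [\alpha]$ for some arrow $\alpha$ of $\Gamma$ starting at $X$. By the observations at the end of the description of $\cC$, $\Hom (X, Y_1)$ has dimension at most $2$. If $\dim \Hom (X, Y_1) = 1$, then $f_1 = \lambda \alpha$ for some arrow $\alpha \colon X \to Y_1$ and $\lambda \in \bbF^\times$, so $[f_1] = [\alpha]$. If $\dim \Hom (X, Y_1) = 2$, the analysis in the proof of Lemma~\ref{lemm factor} shows that $f_1 = \lambda \alpha + \mu (\gamma \circ \alpha)$, where $\alpha \colon X \to Y_1$ has degree $0$ and $\gamma \colon Y_1 \to Y_1$ has degree $2$. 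When $\lambda \neq 0$, the endomorphism $\lambda \Id_{Y_1} + \mu \gamma$ is an automorphism with inverse $\tfrac{1}{\lambda} \bigl( \Id_{Y_1} - \tfrac{\mu}{\lambda} \gamma \bigr)$, since $\gamma \circ \gamma$ has degree $4$ and therefore vanishes (no map in $\cC$ has degree exceeding $2$); hence $f_1 = (\lambda \Id_{Y_1} + \mu \gamma) \circ \alpha$ and $[f_1] = [\alpha]$. When $\lambda = 0$, necessarily $\mu \neq 0$ and $\gamma \circ \alpha$ is a nonzero composition of degree $2$, which by the composition relations defining $\cC$ coincides with the unique degree $2$ arrow $\alpha' \colon X \to Y_1$; thus $[f_1] = [\mu \alpha'] = [\alpha']$.

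The main obstacle is the $2$-dimensional $\Hom$ case: one must use the degree grading on $\cC$ both to invert $\lambda \Id_{Y_1} + \mu \gamma$ (via nilpotency of $\gamma$) and to identify a degree $2$ composition with an honest arrow through the composition relations. The remaining steps are essentially bookkeeping, combining the linear order provided by Lemma~\ref{lemm factor} with the summand-elimination mechanism of Lemma~\ref{lemm not minimal}.
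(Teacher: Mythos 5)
Your proof is correct and takes essentially the same route as the paper: reduce to an indecomposable target by combining the total ordering from Lemma~\ref{lemm factor} with the summand elimination of Lemma~\ref{lemm not minimal}, and then write the remaining nonzero map as an automorphism composed with an arrow. The paper compresses the last step into the single assertion that $f = g \circ \alpha$ with $g \in \Aut(M)$, whereas you spell out the two-dimensional $\Hom$ case using the nilpotency of the degree-$2$ endomorphism and the composition relations; this is exactly the intended justification.
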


\begin{proof}
Write $M = \bigoplus_{j = 1}^n Y_j$ for $Y_1$, \ldots, $Y_n$ indecomposable. Then $f = \left[
\begin{smallmatrix}
f_1 & \ldots & f_n
\end{smallmatrix}
\right]^{\tr}$. It follows from Lemma~\ref{lemm factor}, that we may assume that $[f_1] \leq [f_j]$, for each $j \in [1, n]$, hence $[f] = [f_1]$ by Lemma~\ref{lemm not minimal}. Thus we may assume $M$ is indecomposable. Since $f \neq 0$, $f = g \circ \alpha$, for $g \in \Aut (M)$ and an arrow $\alpha \colon X \to M$, hence the claim follows.
\end{proof}

According to Lemmas~\ref{lemm cone} and~\ref{lemm leftminimal}, Lemma~\ref{lemm leftminX} together with the following give a description of the cones of the maps starting at $X_i^{(k)}$.

\begin{prop} \label{prop coneX}
Let $k \in [1, r]$ and $i \in \bbZ$.
\begin{enumerate}

\item
If $j \geq i$, then $\cone (\alpha_{(i, j), (i, i)}^{(k)}) = X_{i + 1, j}^{(k)}$.

\item
$\cone (\beta_{i, (i, i)}^{(k)}) = Z_{i + 1}^{(k)}$.

\item
If $j \leq i + \delta_{k, r} m - 1$, then $\cone (\gamma_{(j, i + \delta_{k, r} m - 1), (i, i)}^{(k)}) = X_{j, i + \delta_{k, r} m}^{(k + 1)}$.

\end{enumerate}
\end{prop}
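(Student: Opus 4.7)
The plan is to apply Proposition~\ref{prop cone} in each of the three cases. For each arrow $f$ in the statement I choose a candidate arrow $g$ of $\Gamma$ starting at the target of $f$ and ending at the proposed cone: in~(1), $g := \alpha_{(i+1, j), (i, j)}^{(k)}$; in~(2), $g := \alpha_{i+1, i}'^{(k)}$; and in~(3), $g := \alpha_{(j, i + \delta_{k, r} m), (j, i + \delta_{k, r} m - 1)}^{(k+1)}$. In each case the codomain of $g$ is exactly the claimed cone, and since $g$ is an arrow, it is left minimal by Lemma~\ref{lemm arrowmin}. The domain $X_i^{(k)}$ of $f$ is indecomposable, so once I check $g \circ f = 0$ and that $[g]$ is the least element of $[f \backslash \cC]$, Proposition~\ref{prop cone} yields the exact triangle and thereby the description of the cone.

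The equality $g \circ f = 0$ is immediate from the defining relations: in each case $g \circ f$ is homogeneous of a specific degree, and the only candidate arrow of that degree from $X_i^{(k)}$ to the codomain of $g$ does not exist, because it would require an index outside the allowed range. For example, in~(1) the putative composition $\alpha_{(i + 1, j), (i, i)}^{(k)}$ is excluded by $i + 1 \notin [i, i]$, and the other two cases are analogous.

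To show that $[g]$ is least in $[f \backslash \cC]$, I reduce to the case $g' \colon Y \to N$ with $N$ indecomposable by splitting $N$ into its indecomposable summands, and then decompose $g' = \lambda_0 h_0 + \lambda_1 h_1 + \lambda_2 h_2$ into homogeneous pieces, each $h_d$ either $0$ or a scalar multiple of an arrow of degree $d$. The condition $g' \circ f = 0$ amounts to $h_d \circ f = 0$ whenever $\lambda_d \neq 0$. For each such surviving arrow $h_d$ I produce an explicit arrow $\tilde{h}_d$ from the target of $g$ to the codomain of $h_d$ with $\tilde{h}_d \circ g = h_d$, obtained by replacing the source index pair of $h_d$ by the target index pair of $g$: in~(1), $\alpha_{(s, t), (i, j)}^{(k)}$ with $s \geq i + 1$ factors through $\alpha_{(s, t), (i + 1, j)}^{(k)}$, $\beta_{s, (i, j)}^{(k)}$ with $s \geq i + 1$ through $\beta_{s, (i + 1, j)}^{(k)}$, and $\gamma_{(s, t), (i, j)}^{(k)}$ with $t \geq i + \delta_{k, r} m$ through $\gamma_{(s, t), (i + 1, j)}^{(k)}$; the surviving arrows in~(2) and~(3) are handled by entirely analogous index replacements. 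The only real obstacle is bookkeeping: in each case one must confirm that the range of indices for which $h_d \circ f = 0$ coincides with the range for which $\tilde{h}_d$ still lies within the admissible index bounds of $\Gamma$, so that $\tilde{h}_d$ exists as an arrow. Once this verification is carried out for the three cases in parallel, Proposition~\ref{prop cone} concludes the proof.
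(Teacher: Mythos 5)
Your proposal is correct and follows essentially the same route as the paper's proof: it invokes Proposition~\ref{prop cone} together with Lemma~\ref{lemm arrowmin}, verifies $g \circ f = 0$ via the nonexistence of the relevant arrow, and establishes that $[g]$ is least in $[f \backslash \cC]$ by reducing to indecomposable, homogeneous components and exhibiting the factorizing arrow degree by degree. The paper writes out only case~(1) and the degree-$0$ subcase in detail, declaring the rest analogous, so your explicit listing of the candidate arrows and index ranges in all three cases is just a fuller account of the same argument.
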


\begin{proof}
We only prove the first formula, the remaining are proved similarly. Using Proposition~\ref{prop cone} and Lemma~\ref{lemm arrowmin} is suffices to show that $[\alpha']$, where $\alpha' := \alpha_{(i + 1, j), (i, j)}^{(k)}$, is the least element in $[\alpha \backslash \cC]$, where $\alpha := \alpha_{(i, j), (i, i)}^{(k)}$. Let $X' := X_{i, j}^{(k)}$ and fix $f \colon X' \to Y$ such that $f \circ \alpha = 0$. We may assume $Y$ is indecomposable, and $f$ is nonzero and homogeneous.

Assume first $f$ is of degree $0$. Then $Y = X_{s, t}^{(k)}$, for $s \in [i, j]$ and $t \geq j$, and without loss of generality $f = \alpha_{(s, t), (i, j)}^{(k)}$. If $s = i$, then $f \circ \alpha = \alpha_{(s, t), (i, i)}^{(k)} \neq 0$. Thus, $s \geq i + 1$ and
\[
f = \alpha_{(s, t), (i + 1, j)}^{(k)} \circ \alpha_{(i + 1, j), (i, j)}^{(k)},
\]
hence the claim follows in this case.

The cases, when $f$ is of degree $1$ and $2$ are treated similarly.
\end{proof}

We need a similar description of the cones of the maps starting at $Z_i^{(k)}$. We first introduce some notation. Assume $k \in [1, r]$ and $i \in \bbZ$. Choose
\[
\bs = (s_n < \cdots < s_1) \qquad \text{and} \qquad \bt = (t_1 < \cdots < t_n)
\]
such that $s_1 \leq i + \delta_{k, r} m - 1 \leq t_1$. Let
\[
f_{\bs, \bt, i}^{(k)} \colon Z_i^{(k)} \to \bigoplus_{p = 1}^n X_{s_p, t_p}^{(k + 1)}
\]
be the map, whose $p$'s component is given by $\beta_{(s_p, t_p), i}'^{(k)}$. If, additionally, $j \geq i$ is such that $t_n < j + \delta_{k, r} m - 1$ (in the case $n > 0$), then we extend $f_{\bs, \bt, i}^{(k)}$ to the map
\[
f_{\bs, \bt, j, i}'^{(k)} \colon Z_i^{(k)} \to \bigl( \bigoplus_{p = 1}^n X_{s_p, t_p}^{(k + 1)} \bigr) \oplus Z_j^{(k)},
\]
such that the induced map $Z_i^{(k)} \to Z_j^{(k)}$ is given by $\alpha_{j, i}'^{(k)}$.

We start with the following.

\begin{lemm}
In the above situation the maps $f_{\bs, \bt, i}^{(k)}$ and $f_{\bs, \bt, j, i}'^{(k)}$ are left minimal.
\end{lemm}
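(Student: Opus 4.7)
The plan is to apply Proposition~\ref{prop min crit} to both maps, so I would begin by recording the lower degrees of the components: every $\beta'^{(k)}_{(s_p, t_p), i} \colon Z_i^{(k)} \to X_{s_p, t_p}^{(k+1)}$ has lower degree $1$, while the extra component $\alpha_{j, i}'^{(k)} \colon Z_i^{(k)} \to Z_j^{(k)}$ of the second map has lower degree $0$. Since between two indecomposables each homogeneous piece of $\Hom$ is at most one-dimensional and, when nonzero, spanned by an arrow, checking the criterion reduces to an arrow-existence check for each ordered pair of distinct summands in the codomain.

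For $f_{\bs, \bt, i}^{(k)}$ only pairs of $X$-type summands $X_{s_q, t_q}^{(k+1)}$, $X_{s_p, t_p}^{(k+1)}$ with $p \neq q$ occur; the criterion then requires $\ldeg h > 0$ for every nonzero $h$ between them, i.e.\ that no degree-$0$ arrow $\alpha_{(s_p, t_p), (s_q, t_q)}^{(k+1)}$ exists. Such an arrow would demand $s_p \in [s_q, t_q]$ together with $t_p \geq t_q$, but the strict nesting forced by $s_n < \cdots < s_1$ and $t_1 < \cdots < t_n$ breaks exactly one of these (the first when $p > q$, the second when $p < q$).

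For $f_{\bs, \bt, j, i}'^{(k)}$ the $X$-to-$X$ pairs are dispatched as above. For a map from an $X$-summand into $Z_j^{(k)}$ the required bound is $\ldeg h > -1$, which any nonzero $h$ satisfies automatically. The only substantive new case is $h \colon Z_j^{(k)} \to X_{s_p, t_p}^{(k+1)}$, where one needs $\ldeg h > 1$. A degree-$0$ arrow of this shape does not exist because degree-$0$ arrows leaving a $Z$-vertex stay among $Z$-vertices, and the candidate degree-$1$ arrow $\beta'^{(k)}_{(s_p, t_p), j}$ exists only if $s_p \leq j + \delta_{k, r} m - 1 \leq t_p$, which is ruled out by $t_p \leq t_n < j + \delta_{k, r} m - 1$.

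I expect the main obstacle to be purely bookkeeping: making sure each numerical assumption on $\bs$, $\bt$, $i$, $j$ is deployed at the right pair. In particular the condition $t_n < j + \delta_{k, r} m - 1$, inert for $f_{\bs, \bt, i}^{(k)}$, is the entire content of the new $Z$-to-$X$ comparison for $f_{\bs, \bt, j, i}'^{(k)}$; once that mapping is made explicit, Proposition~\ref{prop min crit} closes both claims.
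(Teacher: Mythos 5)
Your proposal is correct and follows exactly the paper's route: both apply Proposition~\ref{prop min crit}, reducing the left-minimality of $f_{\bs, \bt, i}^{(k)}$ to the absence of degree-$0$ maps between distinct summands of the codomain, and for $f_{\bs, \bt, j, i}'^{(k)}$ additionally to the absence of degree-$\leq 1$ maps from $Z_j^{(k)}$ to the $X$-summands, the latter ruled out precisely by $t_n < j + \delta_{k, r} m - 1$. The paper states these verifications in one line; you have merely spelled out the arrow-existence bookkeeping that it leaves implicit.
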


\begin{proof}
Observe that there are no nonzero maps of degree $0$ between different indecomposable direct summands of the codomain. Moreover, in the case of $f_{\bs, \bt, j, i}'^{(k)}$ there are no nonzero maps of degree $1$ from $Z_j^{(k)}$ to the other indecomposable direct summands of the codomain. Thus the claim follows from Proposition~\ref{prop min crit}.
\end{proof}

A more difficult observation is the following.

\begin{lemm}
Let $k \in [1, r]$ and $i \in \bbZ$. If $f \colon Z_i^{(k)} \to M$ is nonzero, then there exist $\bs$ and $\bt$ as above such that either $[f] = [f_{\bs, \bt, i}^{(k)}]$ or $[f] = [f_{\bs, \bt, j, i}'^{(k)}]$, for some $j \geq i$ such that $t_n < j + \delta_{k, r} m - 1$.
\end{lemm}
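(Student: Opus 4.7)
The plan is to decompose $M$ into indecomposables, reduce the resulting column of components to the prescribed form by iterating Lemma~\ref{lemm not minimal}, and then read off the two cases from whether a $Z$-component survives.

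Write $M = \bigoplus_{l = 1}^N Y_l$ with each $Y_l$ indecomposable and $f = \left[ f_1, \ldots, f_N \right]^{\tr}$. By inspection of the arrows leaving $Z_i^{(k)}$, together with the relation forcing any composition of total degree exceeding $2$ (or not matching a parallel arrow) to vanish, one checks that $\Hom (Z_i^{(k)}, Y) = 0$ unless $Y \simeq Z_u^{(k)}$ for some $u \geq i$ or $Y \simeq X_{s, t}^{(k + 1)}$ for some $s \leq i + \delta_{k, r} m - 1 \leq t$; in each such case the hom space is one-dimensional, spanned respectively by $\alpha_{u, i}'^{(k)}$ or $\beta'^{(k)}_{(s, t), i}$. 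After discarding those $l$ with $f_l = 0$ and absorbing the remaining scalars by a diagonal automorphism of the codomain, we may assume each surviving $f_l$ equals the corresponding arrow.

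Next, I compute the preorder $\leq$ on pairs of such arrow components. A witnessing map has degree equal to the difference of the degrees of the components, hence nonnegative; combined with the one-dimensionality of the relevant degree-$d$ hom spaces between the indecomposables involved, this yields: (i) $[\alpha_{u_1, i}'^{(k)}] \leq [\alpha_{u_2, i}'^{(k)}]$ iff $u_1 \leq u_2$, witnessed by $\alpha_{u_2, u_1}'^{(k)}$; (ii) $[\beta'^{(k)}_{(s_1, t_1), i}] \leq [\beta'^{(k)}_{(s_2, t_2), i}]$ iff $s_1 \leq s_2$ and $t_1 \leq t_2$, witnessed by $\alpha^{(k + 1)}_{(s_2, t_2), (s_1, t_1)}$; (iii) $[\alpha_{u, i}'^{(k)}] \leq [\beta'^{(k)}_{(s, t), i}]$ iff $t \geq u + \delta_{k, r} m - 1$, witnessed by $\beta'^{(k)}_{(s, t), u}$; and (iv) $[\beta'^{(k)}_{(s, t), i}] \not\leq [\alpha_{u, i}'^{(k)}]$, because no morphism of negative degree exists.

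Now iterate Lemma~\ref{lemm not minimal}, each time dropping from $f$ a component that strictly dominates another in $\leq$. By (i) the $Z$-components are totally ordered, so they collapse to at most one, which we call $Z_j^{(k)}$ when present. By (ii) the surviving $X$-components are pairwise incomparable in the componentwise order on $(s, t)$, so enumerated with $s$ decreasing their $t$'s strictly increase, matching the shape $\bs = (s_n < \cdots < s_1)$, $\bt = (t_1 < \cdots < t_n)$. If no $Z$-component survives, this already gives $[f] = [f_{\bs, \bt, i}^{(k)}]$. If $Z_j^{(k)}$ survives, then by (iv) it cannot be dropped, and its incomparability with each surviving $X$-component combined with (iii) forces $t_p < j + \delta_{k, r} m - 1$ for every surviving $(s_p, t_p)$; in particular $t_n < j + \delta_{k, r} m - 1$, and $[f] = [f'^{(k)}_{\bs, \bt, j, i}]$.

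The main obstacle is the case analysis underlying (i)--(iv); this is a direct bookkeeping of compositions in $\Gamma$ under the degree bound $\leq 2$ and the convention that non-arrow compositions vanish. The qualitatively crucial input is (iv), which prevents $\beta'$-components from ever being replaced by the $Z$-component and so enforces the asymmetric split into the two cases in the statement.
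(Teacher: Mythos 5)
Your argument is correct and takes essentially the same route as the paper's proof: both decompose $M$ into indecomposables, note that the only indecomposables admitting nonzero maps from $Z_i^{(k)}$ are the $Z_u^{(k)}$ with $u \geq i$ and the $X_{s, t}^{(k + 1)}$ with $s \leq i + \delta_{k, r} m - 1 \leq t$ (each with one-dimensional Hom space), and then repeatedly apply Lemma~\ref{lemm not minimal} to discard dominated components until the survivors have the prescribed shape. Your explicit tabulation of the preorder in (i)--(iv) is just a more systematic rendering of the case analysis the paper carries out inline (dropping the larger of two $Z$-summands, the componentwise-larger of two $X$-summands, and any $X_{s_p, t_p}^{(k+1)}$ with $t_p \geq j + \delta_{k, r} m - 1$).
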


\begin{proof}
Put $Z := Z_i^{(k)}$. Let $f \colon Z \to M$. We prove the claim by induction on the number of indecomposable direct summands of $M$. First, if $M = M' \oplus M''$ and $f = \left[
\begin{smallmatrix}
f' \\ 0
\end{smallmatrix}
\right]$ with respect to this decomposition, then $[f] = [f']$ by Lemma~\ref{lemm not minimal}. Consequently, we may assume that for each indecomposable direct summand $Y$ of $M$ the induced map $Z \to Y$ is nonzero. Then
\[
M = \bigl( \bigoplus_{p = 1}^n X_{s_p, t_p}^{(k + 1)} \bigr) \oplus \bigl(  \bigoplus_{q = 1}^l Z_{j_q}^{(k)} \bigr),
\]
for some $s_1, \ldots, s_n, t_1, \ldots, t_n, j_1, \ldots, j_l \in \bbZ$ such that $s_p \leq i + \delta_{k, r} m - 1 \leq t_p$ and $j_q \geq i$. We show that we may assume $s_n < \cdots < s_1$, $t_1 < \cdots < t_n$, $l \leq 1$, $j_1 + \delta_{k, r} m - 1 > t_n$ (if $l = 1$ and $n > 0$), and, up to isomorphism, the induced maps are given by the corresponding arrows.

First assume $l > 1$. Without loss of generality we may assume $j_1 \leq j_2$. If $f_1 \colon Z \to Z_{j_1}^{(k)}$ and $f_2 \colon Z \to Z_{j_2}^{(k)}$ are the induced maps, then $[f_1] \leq [f_2]$ and we may remove the summand $Z_{j_2}^{(k)}$ by Lemma~\ref{lemm not minimal}. Thus by induction we may assume $l = 1$. Next assume $s_p \leq s_q$ and $t_p \leq t_q$, for some $p \neq q$. If $f_1 \colon Z \to X_{s_p, t_p}^{(k + 1)}$ and $f_2 \colon Z \to X_{s_q, t_q}^{(k + 1)}$ are the induced maps, then again $[f_1] \leq [f_2]$ and we use Lemma~\ref{lemm not minimal}. Thus we may assume $s_n < \cdots < s_1$ and $t_n > \cdots > t_1$. We argue similarly, if $l = 1$, $n > 0$ and $t_n \geq j_1 + \delta_{k, r} m - 1$. Finally, by using an appropriate automorphism of $M$, we may assume that the induced maps $Z \to X_{s_p, t_p}^{(k + 1)}$ and $Z \to Z_{j_1}^{(k)}$ are given by arrows.
\end{proof}

In order to describe the cones of the maps $f_{\bs, \bt, i}^{(k)}$ and $f_{\bs, \bt, j, i}'^{(k)}$ we introduce additional notation. Let $g_{\bs, \bt, i}^{(k)}$ be the map
\[
\bigoplus_{p = 1}^n X_{s_p, t_p}^{(k + 1)} \to \bigl( \bigoplus_{p = 1}^n X_{s_{p - 1}, t_p}^{(k + 1)} \bigr) \oplus Z_{s_n}^{(k + 1)}
\]
given by the matrix
\[
\left[
\begin{smallmatrix}
\alpha_{(s_0, t_1), (s_1, t_1)}^{(k + 1)} & 0 & \cdots & 0 & 0
\\
- \alpha_{(s_1, t_2), (s_1, t_1)}^{(k + 1)} & \alpha_{(s_1, t_2), (s_2, t_2)}^{(k + 1)} & \ddots & \vdots & \vdots
\\
0 & - \alpha_{(s_2, t_3), (s_2, t_2)}^{(k + 1)} & \ddots & 0 & 0
\\
\vdots & \ddots & \ddots & \alpha_{(s_{n - 2}, t_{n - 1}), (s_{n - 1}, t_{n - 1})}^{(k + 1)} & 0
\\
0 & \cdots & 0 &  - \alpha_{(s_{n - 1}, t_n), (s_{n - 1}, t_{n - 1})}^{(k + 1)} & \alpha_{(s_{n - 1}, t_n), (s_n, t_n)}^{(k + 1)}
\\
0 & \cdots & 0 & 0 & - \beta_{s_n, (s_n, t_n)}^{(k + 1)}
\end{smallmatrix}
\right],
\]
where $s_0 := i + \delta_{k, r} m$. Recall that if $t_1 = i + \delta_{k, r} m - 1$, then $X_{s_0, t_1}^{(k + 1)} = 0$. Moreover, in this case we put $\alpha_{(s_0, t_1), (s_1, t_1)}^{(k + 1)} := 0$. Similarly, $g_{\bs, \bt, j, i}'^{(k)}$ is the map
\[
\bigl( \bigoplus_{p = 1}^n X_{s_p, t_p}^{(k + 1)} \bigr) \oplus Z_j^{(k)} \to \bigoplus_{p = 1}^{n + 1} X_{s_{p - 1}, t_p}
\]
given by the matrix
\[
\left[
\begin{smallmatrix}
\alpha_{(s_0, t_1), (s_1, t_1)}^{(k + 1)} & 0 & \cdots & 0 & 0
\\
- \alpha_{(s_1, t_2), (s_1, t_1)}^{(k + 1)} & \alpha_{(s_1, t_2), (s_2, t_2)}^{(k + 1)} & \ddots & \vdots & \vdots
\\
0 & - \alpha_{(s_2, t_3), (s_2, t_2)}^{(k + 1)} & \ddots & 0 & 0
\\
\vdots & \ddots & \ddots & \alpha_{(s_{n - 1}, t_n), (s_n, t_n)}^{(k + 1)} & 0
\\
0 & \cdots & 0 &  - \alpha_{(s_n, t_{n + 1}), (s_n, t_n)}^{(k + 1)} & \beta_{(s_n, t_{n + 1}), j}'^{(k + 1)}
\end{smallmatrix}
\right],
\]
where $t_{n + 1} := j + \delta_{k, r} m - 1$. In particular, if $n = 0$, then $g_{\bs, \bt, j, i}'^{(k)} \colon Z_j^{(k)} \to X_{i + \delta_{k, r} m, j + \delta_{k, r} m - 1}^{(k + 1)}$ is given by $\beta_{(i + \delta_{k, r} m, j + \delta_{k, r} m) - 1, j}'^{(k + 1)}$.

\begin{lemm}
In the above situation the maps $g_{\bs, \bt, i}^{(k)}$ and $g_{\bs, \bt, j, i}'^{(k)}$ are left minimal.
\end{lemm}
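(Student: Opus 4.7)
The plan is to apply Proposition~\ref{prop min crit} to each of the two maps, viewing the codomain as the direct sum of indecomposables indexed by the rows of the displayed matrix, and then verify the lower-degree inequality for every pair of distinct rows.

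For $g_{\bs, \bt, i}^{(k)}$ the targets are $Y_p := X_{s_{p-1}, t_p}^{(k+1)}$ for $p \in [1, n]$ together with $Y_{n+1} := Z_{s_n}^{(k+1)}$, where $s_0 := i + \delta_{k, r} m$ and the convention $\alpha_{(s_0, t_1), (s_1, t_1)}^{(k+1)} := 0$ disposes of the degenerate first summand when $t_1 = s_0 - 1$. Reading off the matrix gives $\ldeg f_p = 0$ for $p \in [1, n]$ and $\ldeg f_{n+1} = 1$. Combined with $s_n < \cdots < s_1 < s_0$ (using $s_1 \leq i + \delta_{k, r} m - 1$) and $t_1 < \cdots < t_n$, the verification reduces to three cases. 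Between two $X$-type targets, the only candidate arrow is a degree-zero $\alpha$; its existence would require both $s_{p-1} \in [s_{q-1}, t_q]$ and $t_p \geq t_q$, and the monotonicity of $\bs$ and $\bt$ forces one of these to fail for $p \neq q$, so $\Hom (Y_q, Y_p) = 0$. The case $q = n+1$ is automatic because lower degrees are nonnegative while $\ldeg f_p - \ldeg f_{n+1} = -1$. Finally, the case $p = n+1$, $q \in [1, n]$ requires $\ldeg h > 1$ for every nonzero $h \colon X_{s_{q-1}, t_q}^{(k+1)} \to Z_{s_n}^{(k+1)}$; from the inventory of arrows in $\Gamma$ together with the degree bound $d \leq 2$, the only candidate is the degree-one arrow $\beta_{s_n, (s_{q-1}, t_q)}^{(k+1)}$, which fails to exist because $s_n < s_{n-1} \leq s_{q-1}$, so again $\Hom (Y_q, Y_{n+1}) = 0$.

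For $g_{\bs, \bt, j, i}'^{(k)}$ all $n+1$ targets are $X$-type objects $Y_p = X_{s_{p-1}, t_p}^{(k+1)}$, now with $t_{n+1} := j + \delta_{k, r} m - 1 > t_n$ by assumption. Every row of the matrix contains at least one degree-zero $\alpha$, so $\ldeg f_p = 0$ uniformly, reducing the required check to the absence of nonzero degree-zero maps between distinct $Y_p$ and $Y_q$. The same monotonicity argument as in the first case, extended one step using $t_n < t_{n+1}$ and $s_n < s_{n-1}$, settles all pairs.

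The main obstacle is purely bookkeeping: one has to carefully track the strict inequalities in $\bs$ and $\bt$ together with the endpoint conditions $s_1 \leq i + \delta_{k, r} m - 1 \leq t_1$ and $t_n < j + \delta_{k, r} m - 1$, and translate them into precise non-existence statements for arrows in $\Gamma$. Once those are laid out, Proposition~\ref{prop min crit} delivers both claims with no further input.
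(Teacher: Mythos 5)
Your proof is correct and takes exactly the route the paper intends: the paper's own proof consists of the single sentence ``The claim follows again from Proposition~\ref{prop min crit}; we leave details to the reader,'' and your case analysis (degree-zero maps between the $X$-summands ruled out by the monotonicity of $\bs$ and $\bt$, the automatic case $\ldeg f_p - \ldeg f_{n+1} = -1$, and the non-existence of $\beta_{s_n,(s_{q-1},t_q)}^{(k+1)}$ since $s_n < s_{q-1}$) supplies precisely those omitted details.
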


\begin{proof}
The claim follows again from Proposition~\ref{prop min crit}. We leave details to the reader.
\end{proof}

The following observation will be crucial.

\begin{lemm} \label{lemm least}
\begin{enumerate}

\item
$[g_{\bs, \bt, i}^{(k)}]$ is the least element in $[f_{\bs, \bt, i}^{(k)} \backslash \cC]$.

\item
$[g_{\bs, \bt, j, i}'^{(k)}]$ is the least element in $[f_{\bs, \bt, j, i}'^{(k)} \backslash \cC]$.

\end{enumerate}
\end{lemm}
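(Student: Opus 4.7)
The plan is to prove (1); statement (2) follows by essentially the same argument, with the extra $Z_j^{(k)}$-summand in the domain of $g'^{(k)}_{\bs,\bt,j,i}$ handled via the $\beta'$-arrow in its last column in place of the terminal $\beta$-arrow of $g^{(k)}_{\bs,\bt,i}$. Write $f := f^{(k)}_{\bs,\bt,i}$ and $g := g^{(k)}_{\bs,\bt,i}$. The argument has two parts: verifying $g \circ f = 0$, and showing that any $h$ with $h \circ f = 0$ factors through $g$.

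First I would verify $g \circ f = 0$ by a direct computation. For each $p \in \{2, \ldots, n\}$, the $p$-th row of $g \circ f$ (with codomain $X^{(k+1)}_{s_{p-1}, t_p}$) equals
\[
-\alpha^{(k+1)}_{(s_{p-1}, t_p), (s_{p-1}, t_{p-1})} \circ \beta'^{(k)}_{(s_{p-1}, t_{p-1}), i} + \alpha^{(k+1)}_{(s_{p-1}, t_p), (s_p, t_p)} \circ \beta'^{(k)}_{(s_p, t_p), i}.
\]
Since $s_{p-1} \leq s_1 \leq i + \delta_{k,r} m - 1 \leq t_1 \leq t_p$, there is a unique degree-$1$ arrow $Z_i^{(k)} \to X^{(k+1)}_{s_{p-1}, t_p}$, namely $\beta'^{(k)}_{(s_{p-1}, t_p), i}$; by the composition relations each summand above reduces to this arrow, so they cancel. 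The first row entry is $\alpha^{(k+1)}_{(s_0, t_1), (s_1, t_1)} \circ \beta'^{(k)}_{(s_1, t_1), i}$, which vanishes either because its codomain $X^{(k+1)}_{s_0, t_1}$ is zero (when $t_1 = i + \delta_{k,r}m - 1$) or because no degree-$1$ arrow exists from $Z_i^{(k)}$ to $X^{(k+1)}_{s_0, t_1}$ (since $s_0 = i + \delta_{k,r}m > i + \delta_{k,r}m - 1$). The last row is $-\beta^{(k+1)}_{s_n, (s_n, t_n)} \circ \beta'^{(k)}_{(s_n, t_n), i}$, a degree-$2$ composition from $Z_i^{(k)}$; it vanishes because $\Gamma$ has no degree-$2$ arrow out of $Z_i^{(k)}$.

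Next I would show that any $h \colon M \to N$ with $h \circ f = 0$ factors through $g$, where $M = \bigoplus_{p=1}^n X^{(k+1)}_{s_p, t_p}$. Since the relations in $\cC$ are graded by degree and $f$ is homogeneous of degree $1$, we may decompose $h$ into homogeneous components and treat each separately; splitting $N$, we may also assume $N$ is indecomposable. Writing $h = (h_1, \ldots, h_n)$, each $h_p$ is a linear combination of arrow-words out of $X^{(k+1)}_{s_p, t_p}$, and the condition $\sum_p h_p \circ \beta'^{(k)}_{(s_p, t_p), i} = 0$ becomes a system of scalar identities in the (at most one-dimensional) graded pieces of $\Hom(Z_i^{(k)}, N)$. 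Using these identities, one builds $h' = (h'_1, \ldots, h'_n, h'_Z)$ by working from the $Z_{s_n}^{(k+1)}$-summand backwards through the $X$-summands of the codomain of $g$: at each step, the residue left in $h$ after subtracting the already-chosen contributions of $h'$ lies in the image of pre-composition with the relevant matrix entry of $g$ (namely $\beta^{(k+1)}_{s_n, (s_n, t_n)}$ initially, then each $\alpha^{(k+1)}_{(s_{p-1}, t_p), (s_p, t_p)}$ in turn), supplying the next component of $h'$.

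The principal obstacle is this inductive verification. For each combinatorial type of indecomposable $N$ (either $N = X^{(k')}_{s,t}$ or $N = Z^{(k')}_s$, with varying indices) and each degree $d \in \{0, 1, 2\}$, one must check that the scalar identities imposed by $\sum_p h_p \circ \beta'^{(k)}_{(s_p, t_p), i} = 0$ are exactly those required to factor the successive residues through the arrows prescribed by $g$. While this case analysis is extensive, in each case the verification reduces to inspecting a handful of arrow-composition identities in $\Gamma$, using the classification of homogeneous maps between indecomposables of $\cC$ already employed in Lemmas~\ref{lemm factor} and~\ref{lemm leftminX}.
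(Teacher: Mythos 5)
Your proposal follows the same strategy as the paper's proof, and the first half is fine: the direct verification that $g_{\bs, \bt, i}^{(k)} \circ f_{\bs, \bt, i}^{(k)} = 0$ (telescoping cancellation in rows $2, \ldots, n$, no arrow $\beta'^{(k)}_{(s_0, t_1), i}$ since $s_0 > i + \delta_{k, r} m - 1$, and no degree-$2$ arrow out of $Z_i^{(k)}$) is correct and in fact more explicit than the paper, which dismisses this with ``one easily verifies.'' The reduction to an indecomposable, homogeneous $g$ with $g \circ f_{\bs, \bt, i}^{(k)} = 0$ is also how the paper proceeds.

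The issue is with the second half: what you call ``the principal obstacle'' is not an obstacle to be noted but the entire content of the lemma, and your proposal only announces the verification rather than performing it. The paper carries it out as an induction on the minimal index $q$ with $g_q \neq 0$, separately for each degree of $g$. In the degree-$0$ case with target $X_{s, t}^{(k + 1)}$, the terminal subcase ($q = n$, or $q < n$ and $t < t_{q + 1}$) is where the hypothesis $g \circ f_{\bs, \bt, i}^{(k)} = 0$ is actually used: it forces $s \geq s_0$, which is exactly what makes $g_q$ factor through the first row of $g_{\bs, \bt, i}^{(k)}$; the factoring map is then built from the telescoping identities $\alpha_{(s, t), (s_p, t_{p + 1})}^{(k + 1)} \circ \alpha_{(s_p, t_{p + 1}), (s_p, t_p)}^{(k + 1)} = \alpha_{(s, t), (s_{p - 1}, t_p)}^{(k + 1)} \circ \alpha_{(s_{p - 1}, t_p), (s_p, t_p)}^{(k + 1)}$ together with the vanishing $\alpha_{(s, t), (s_{q' - 1}, t_{q'})}^{(k + 1)} \circ \alpha_{(s_{q' - 1}, t_{q'}), (s_{q' - 1}, t_{q' - 1})}^{(k + 1)} = 0$ when $s > t_{q' - 1}$, which is what lets the partial sum of columns terminate. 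In the non-terminal subcase one subtracts a single column contribution to kill $g_q$ and recurses. These are the ``handful of arrow-composition identities'' you allude to, and your backward substitution from the $Z_{s_n}^{(k + 1)}$-summand would also work, but until one of these bookkeeping schemes is actually executed (including the point where the hypothesis $g \circ f = 0$ enters), the proof is a plan rather than an argument.
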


\begin{proof}
We only prove the former statement, the proof of the later one is analogous.

Put
\[
M := \bigoplus_{p = 1}^n X_{s_p, t_p}^{(k + 1)} \qquad \text{and} \qquad N := \bigl( \bigoplus_{p = 1}^n X_{s_{p - 1}, t_p}^{(k + 1)} \bigr) \oplus Z_{s_n}^{(k + 1)}.
\]
One easily verifies that $g_{\bs, \bt, i}^{(k)} \circ f_{\bs, \bt, i}^{(k)} = 0$. We need to show that if $g \circ f_{\bs, \bt, i}^{(k)} = 0$, for some $g \colon M \to X$, then $g$ factors through $g_{\bs, \bt, i}^{(k)}$. Without loss of generality we may assume $X$ is indecomposable, and $g$ is nonzero and homogeneous. Write $g = \left[
\begin{smallmatrix}
g_1 & \cdots & g_n
\end{smallmatrix}
\right]$, for $g_p \colon X_{s_p, t_p}^{(k + 1)} \to X$.

Assume first $g$ is of degree $0$. Then $X = X_{s, t}^{(k + 1)}$, for some $s \leq t$. Let $q$ be the minimal $p$ such that $g_p \neq 0$. Then $s \in [s_q, t_q]$ and $t \geq t_q$. Without loss of generality we may assume $g_q = \alpha_{(s, t), (s_q, t_q)}^{(k + 1)}$.

If either $q = n$ or $q < n$ and $t < t_{q + 1}$, then $g_p = 0$, for $p \neq q$. Consequently,
\[
g \circ f_{\bs, \bt, i}^{(k)} = \alpha_{(s, t), (s_q, t_q)}^{(k + 1)} \circ \beta_{(s_q, t_q), i}'^{(k)},
\]
hence the condition $g \circ f_{\bs, \bt, i}^{(k)} = 0$ implies $s \geq i + \delta_{k, r} m = s_0$, since otherwise $g \circ f_{\bs, \bt, i}^{(k)} = \beta_{(s, t), i}'^{(k)} \neq 0$. Thus
\[
g_q = \alpha_{(s, t), (s_q, t_q)}^{(k + 1)} = \alpha_{(s, t), (s_{q - 1}, t_q)}^{(k + 1)} \circ \alpha_{(s_{q - 1}, t_q), (s_q, t_q)}^{(k + 1)}.
\]
Moreover, if $q'$ is the minimal $p'$ such that $s \leq t_{p'}$, then, for each $p \in [q', q]$,
\[
\alpha_{(s, t), (s_p, t_{p + 1})}^{(k + 1)} \circ \alpha_{(s_p, t_{p + 1}), (s_p, t_p)}^{(k + 1)} = \alpha_{(s, t), (s_p, t_p)}^{(k + 1)} = \alpha_{(s, t), (s_{p - 1}, t_p)}^{(k + 1)} \circ \alpha_{(s_{p - 1}, t_p), (s_p, t_p)}^{(k + 1)}.
\]
Finally, if $q' > 1$, then
\[
\alpha_{(s, t), (s_{q' - 1}, t_{q'})}^{(k + 1)} \circ \alpha_{(s_{q' - 1}, t_{q'}), (s_{q' - 1}, t_{q' - 1})}^{(k + 1)} = 0,
\]
since $s > t_{q' - 1}$. Consequently, if
\[
h := \left[
\begin{smallmatrix}
0 & \cdots & 0 & \alpha_{(s, t), (s_{q' - 1}, t_{q'})}^{(k + 1)} & \cdots & \alpha_{(s, t), (s_{q - 1}, t_q)}^{(k + 1)} & 0 & \cdots & 0
\end{smallmatrix}
\right] \colon N \to X,
\]
then $g = h \circ g_{\bs, \bt, i}^{(k)}$.

Next assume $q < n$ and $t \geq t_{q + 1}$. Then
\[
g_q = \alpha_{(s, t), (s_q, t_q)}^{(k + 1)} = \alpha_{(s, t), (s_q, t_{q + 1})}^{(k + 1)} \circ \alpha_{(s_q, t_{q + 1}), (s_q, t_q)}^{(k + 1)}.
\]
Put
\[
h := \left[
\begin{smallmatrix}
0 & \cdots & 0 & \alpha_{(s, t), (s_q, t_{q + 1})}^{(k + 1)} & 0 & \cdots & 0
\end{smallmatrix}
\right] \colon N \to X,
\]
and $g' := g - h \circ g_{\bs, \bt, i}^{(k)}$. Then $g' \circ f_{\bs, \bt, i}^{(k)} = 0$. Moreover, if $g' = \left[
\begin{smallmatrix}
g_1' & \cdots & g_n'
\end{smallmatrix}
\right]$, then $g_1' = 0$, \ldots, $g_q' = 0$. Consequently, by the induction hypothesis there exists $h' \colon N \to X$ such that $g' = h' \circ g_{\bs, \bt, i}^{(k)}$, hence $g = (h + h') \circ g_{\bs, \bt, i}^{(k)}$.

Now assume $g$ is of degree $1$. Then $X = Z_j^{(k + 1)}$, for some $j$. Let $q$ be the minimal $p$ such that $g_p \neq 0$. Then $s_q \leq j$. Without loss of generality we may assume $g_q = \beta_{j, (s_q, t_q)}^{(k + 1)}$.

If $q = n$, then $g_1 = 0$, \ldots, $g_{n - 1} = 0$. Moreover,
\[
g_n = \beta_{j, (s_n, t_n)}^{(k + 1)} = \alpha_{j, s_n}'^{(k + 1)} \circ \beta_{s_n, (s_n, t_n)}^{(k + 1)},
\]
thus if we put
\[
h := \left[
\begin{smallmatrix}
0 & \cdots & 0 & \alpha_{j, s_n}'^{(k + 1)}
\end{smallmatrix}
\right] \colon N \to X,
\]
then $g = h \circ g_{\bs, \bt, i}^{(k)}$.

Now assume $q < n$. Again $g_1 = 0$, \ldots, $g_{q - 1} = 0$, and
\[
g_q = \beta_{j, (s_q, t_q)}^{(k + 1)} = \beta_{j, (s_q, t_{q + 1)}}^{(k + 1)} \circ \alpha_{(s_q, t_{q + 1}), (s_q, t_q)}^{(k + 1)}.
\]
Put
\[
h := \left[
\begin{smallmatrix}
0 & \cdots & 0 & \beta_{j, (s_q, t_{q + 1})}^{(k + 1)} & 0 & \cdots & 0
\end{smallmatrix}
\right] \colon N \to X
\]
and $g' := g - h \circ g_{\bs, \bt, i}^{(k)}$. Then $g' \circ f_{\bs, \bt, i}^{(k)} = 0$. Moreover, if $g' = \left[
\begin{smallmatrix}
g_1' & \cdots & g_n'
\end{smallmatrix}
\right]$, then $g_1' = 0$, \ldots, $g_q' = 0$. Consequently, by the induction hypothesis there exists $h' \colon N \to X$ such that $g' = h' \circ g_{\bs, \bt, i}^{(k)}$, hence $g = (h + h') \circ g_{\bs, \bt, i}^{(k)}$.

We proceed similarly, when $g$ is of degree $2$.
\end{proof}

As a consequence of the above we obtain.

\begin{prop} \label{prop coneZ}
\begin{enumerate}

\item
$\cone (f_{\bs, \bt, i}^{(k)}) = \bigl( \bigoplus_{p = 1}^n X_{s_{p - 1}, t_p}^{(k + 1)} \bigr) \oplus Z_{s_n}^{(k + 1)}$.

\item
$\cone (f_{\bs, \bt, j, i}'^{(k)}) = \bigoplus_{p = 1}^{n + 1} X_{s_{p - 1}, t_p}^{(k + 1)}$.

\end{enumerate}
\end{prop}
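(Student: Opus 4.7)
The plan is to apply Proposition~\ref{prop cone} directly, separately to each of the two maps $f_{\bs,\bt,i}^{(k)}$ and $f_{\bs,\bt,j,i}'^{(k)}$. In both cases the domain is $Z_i^{(k)}$, which is a vertex of $\Gamma$ and therefore indecomposable, so the proposition is applicable. The codomain predicted by Proposition~\ref{prop cone} is exactly the codomain of the auxiliary left-minimal map (that is, of $g_{\bs,\bt,i}^{(k)}$ or $g_{\bs,\bt,j,i}'^{(k)}$), and reading these codomains off of the definitions above yields precisely the two direct sum decompositions claimed.

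Concretely, to invoke Proposition~\ref{prop cone} with $X := Z_i^{(k)}$, $f := f_{\bs,\bt,i}^{(k)}$, and $g := g_{\bs,\bt,i}^{(k)}$, three hypotheses must be checked. First, $f_{\bs,\bt,i}^{(k)}$ is nonzero because each of its components $\beta_{(s_p,t_p),i}'^{(k)}$ is an arrow of $\Gamma$ and thus nonzero. Second, $g_{\bs,\bt,i}^{(k)}$ is left minimal, which is exactly the lemma immediately preceding Lemma~\ref{lemm least}. Third, $[g_{\bs,\bt,i}^{(k)}]$ is the least element of $[f_{\bs,\bt,i}^{(k)} \backslash \cC]$, which is the first statement of Lemma~\ref{lemm least}. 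Proposition~\ref{prop cone} then furnishes a morphism $h$ completing
\[
Z_i^{(k)} \xrightarrow{f_{\bs,\bt,i}^{(k)}} \bigoplus_{p=1}^n X_{s_p,t_p}^{(k+1)} \xrightarrow{g_{\bs,\bt,i}^{(k)}} \Bigl( \bigoplus_{p=1}^n X_{s_{p-1},t_p}^{(k+1)} \Bigr) \oplus Z_{s_n}^{(k+1)} \xrightarrow{h} \Sigma Z_i^{(k)}
\]
to an exact triangle, and the conclusion $\cone(f_{\bs,\bt,i}^{(k)}) \simeq \bigl(\bigoplus_{p=1}^n X_{s_{p-1},t_p}^{(k+1)}\bigr) \oplus Z_{s_n}^{(k+1)}$ is then immediate from the identification $\cone (f) \simeq Z$ of Proposition~\ref{prop cone}.

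The argument for $f_{\bs,\bt,j,i}'^{(k)}$ proceeds identically, with the second statement of Lemma~\ref{lemm least} supplying the key input and the codomain of $g_{\bs,\bt,j,i}'^{(k)}$ giving the cone $\bigoplus_{p=1}^{n+1} X_{s_{p-1},t_p}^{(k+1)}$. The only slightly degenerate situation is the case $n = 0$ in the primed version, where the map reduces to $\beta_{(i+\delta_{k,r}m,j+\delta_{k,r}m-1),j}'^{(k+1)}$; the same appeal to Proposition~\ref{prop cone} still goes through, with left minimality being trivial since the codomain is indecomposable.

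There is essentially no real obstacle remaining, since the technical heart of the argument has already been discharged in Lemma~\ref{lemm least} and in the preceding left-minimality lemma; the proposition is at this point a formal consequence of stacking those results against Proposition~\ref{prop cone}. The only thing to verify is that the bookkeeping of which codomain has which summands is consistent with the definitions of $g_{\bs,\bt,i}^{(k)}$ and $g_{\bs,\bt,j,i}'^{(k)}$, which it is by inspection of the defining matrices.
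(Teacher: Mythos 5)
Your proposal is correct and follows exactly the paper's own route: the paper's proof is the one-line observation that both claims follow from Proposition~\ref{prop cone} combined with Lemma~\ref{lemm least} (with the left-minimality of $g_{\bs, \bt, i}^{(k)}$ and $g_{\bs, \bt, j, i}'^{(k)}$ supplied by the preceding lemma). Your write-up simply makes explicit the same hypothesis checks that the paper leaves implicit.
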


\begin{proof}
In both cases the claim follows from Proposition~\ref{prop cone} and Lemma~\ref{lemm least}.
\end{proof}

\subsection{Dimension vectors}

Now we apply considerations from subsection~\ref{subsect dimvect} to our category. For each $M \in \cC$ we define $\bdim_\cX M \in \bbN^{([1, r] \times \bbZ)}$ and $\bdim_\cZ M \in \bbN^{[1, r]}$ in the following way. First, if $M$ is indecomposable, then
\begin{align*}
(\bdim_\cX M) (k, i) & :=
\begin{cases}
1 & \text{if either $M = X_{s, t}^{(k)}$, for some $s \leq i \leq t$},
\\
& \quad \text{or $i < 0$ and $M = Z_s^{(k)}$, for some $s \leq i$},
\\
& \quad \text{or $i \geq \delta_{k, 1} m$ and $M = Z_s^{(k - 1)}$},
\\
& \qquad \text{for some $s \geq i - \delta_{k, 1} m + 1$},
\\
0 & \text{otherwise},
\end{cases}
\\
\intertext{and}
(\bdim_\cZ M) (k) & :=
\begin{cases}
1 & \text{if $M = Z_s^{(k)}$, for some $s \in \bbZ$},
\\
0 & \text{otherwise}.
\end{cases}
\end{align*}
Furthermore,
\begin{align*}
\bdim_\cX (M_1 \oplus M_2) & := \bdim_\cX M_1 + \bdim_\cX M_2,
\\
\intertext{and}
\bdim_\cZ (M_1 \oplus M_2) & := \bdim_\cZ M_1 + \bdim_\cZ M_2.
\end{align*}
Moreover, we put
\[
\ql_\cX (M) := |\bdim_\cX (M)| \qquad \text{and} \qquad \ql_\cZ (M) := |\bdim_\cZ (M)|.
\]
Finally,
\[
\bdim M := (\bdim_\cZ M, \bdim_\cX M) \qquad \text{and} \qquad \ql (M) := \ql_\cX (M) + \ql_\cZ (M).
\]
Obviously, if $M \simeq N$, then $\bdim M = \bdim N$.

The following lemmas will allow us to apply Proposition~\ref{prop sub}.

\begin{lemm} \label{lemm sub1}
If $\ql (M) = 1$ and $M \xrightarrow{f} L \to N \to \Sigma M$ is an exact triangle in $\cC$, then
\[
\bdim L \leq \bdim M + \bdim N.
\]
\end{lemm}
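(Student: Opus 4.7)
The starting point is that $\ql(M) = 1$ forces $M$ to be indecomposable, and inspection of the dimension vectors of the indecomposables of $\cC$ shows that either $M = X_i^{(k)}$ for some $k \in [1, r]$ and $i \in \bbZ$, or $M = Z_0^{(k)}$ for some $k \in [1, r]$. The case $f = 0$ is trivial, since then $N \simeq L \oplus \Sigma M$, and $\bdim L \leq \bdim N \leq \bdim M + \bdim N$, so we may assume $f$ is nonzero. The overall strategy, common to both cases, is to replace $f$ by a left minimal model $g \colon M \to Y$ with $[f] = [g]$: by Lemmas~\ref{lemm leftminimal} and~\ref{lemm cone} we then have $L \simeq Y \oplus K$ and $N \simeq \cone(g) \oplus K$ for some $K \in \cC$, so it suffices to prove
\[
\bdim Y \leq \bdim M + \bdim \cone(g).
\]

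If $M = X_i^{(k)}$, Lemma~\ref{lemm leftminX} supplies an arrow $\alpha \colon M \to Y$ in $\Gamma$ with $[f] = [\alpha]$, and $\alpha$ is left minimal by Lemma~\ref{lemm arrowmin}. Proposition~\ref{prop coneX} computes $\cone(\alpha)$ for each of the three types of arrow leaving $X_i^{(k)}$ (the degree-$0$ $\alpha_{(i, t), (i, i)}^{(k)}$, the degree-$1$ $\beta_{i, (i, i)}^{(k)}$, and the degree-$2$ $\gamma_{(s, i + \delta_{k, r} m - 1), (i, i)}^{(k)}$), and in each case the required inequality is a direct verification from the formulas defining $\bdim_\cX$ and $\bdim_\cZ$. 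For the $\alpha$ and $\gamma$ arrows it comes out as an equality of indicator functions of intervals; for the $\beta$ arrow it reduces to the componentwise comparison of $\bdim_\cX Z_i^{(k)}$ with $\bdim_\cX X_i^{(k)} + \bdim_\cX Z_{i + 1}^{(k)}$, which one checks separately in the regimes $i < 0$, $i = 0$, and $i > 0$.

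If $M = Z_0^{(k)}$, the lemma classifying nonzero maps out of $Z_0^{(k)}$ preceding Proposition~\ref{prop coneZ}, together with the left minimality of $f_{\bs, \bt, 0}^{(k)}$ and $f_{\bs, \bt, j, 0}'^{(k)}$, lets us take $g$ to be one of these two. In the first subcase $\cone(g)$ has $Z_{s_n}^{(k + 1)}$ as a direct summand while $Y = \bigoplus_{p = 1}^n X_{s_p, t_p}^{(k + 1)}$ has no $Z$-summand, so $|\bdim_\cZ Y| = 0$ while $|\bdim_\cZ M| + |\bdim_\cZ \cone(g)| \geq 2$; since the order on $\bdim$ is lexicographic with $\bdim_\cZ$ primary, the inequality is strict and immediate. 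In the second subcase $\bdim_\cZ$ matches on both sides, so one must verify the componentwise inequality $\bdim_\cX Y \leq \bdim_\cX M + \bdim_\cX \cone(g)$ at level $k + 1$ (indices modulo $r$). Here the identity $\mathbf{1}_{[s_p, t_p]} = \mathbf{1}_{[s_p, s_{p - 1} - 1]} + \mathbf{1}_{[s_{p - 1}, t_p]}$ together with the telescoping relation $\sum_{p = 1}^n \mathbf{1}_{[s_p, s_{p - 1} - 1]} = \mathbf{1}_{[s_n, s_0 - 1]}$ collapses the difference $\bdim_\cX Y - \bdim_\cX \cone(g)$ at level $k + 1$ to $-\mathbf{1}_{[s_0, t_{n + 1}]}$, where $s_0 = \delta_{k, r} m$ and $t_{n + 1} = j + \delta_{k, r} m - 1$; this is exactly the support of $\bdim_\cX Z_j^{(k)}$ at that level in both regimes $k \neq r$ and $k = r$, and we get equality. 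The main technical obstacle is precisely this bookkeeping in the second subcase, where the shift $s_0 = \delta_{k, r} m$, the range of $j$, and the two cases of the definition of $\bdim_\cX Z_j^{(k)}$ must be matched carefully.
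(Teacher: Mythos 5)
Your proof is correct and follows the paper's own argument essentially verbatim: the same two cases $M = X_i^{(k)}$ and $M = Z_0^{(k)}$, the same reduction to a left minimal representative via Lemmas~\ref{lemm leftminX}, \ref{lemm arrowmin}, \ref{lemm leftminimal} and~\ref{lemm cone}, and the same appeal to Propositions~\ref{prop coneX} and~\ref{prop coneZ}, with your telescoping computation merely making explicit the verification the paper leaves to the reader. One tiny slip: for the degree-$2$ arrows $\gamma$ the inequality $\bdim Y \leq \bdim M + \bdim \cone (\alpha)$ is strict rather than an equality (the right-hand side carries extra mass at $(k, i)$ and $(k + 1, i + \delta_{k, r} m)$), but this only strengthens the required conclusion.
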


\begin{proof}
We have two cases to consider: $M = X_i^{(k)}$ and $M = Z_0^{(k)}$.

Assume first that $M = X_i^{(k)}$. If $f = 0$, then $N = L \oplus \Sigma M$, hence the claim is clear. If $f \neq 0$, then using Lemmas~\ref{lemm leftminX}, \ref{lemm arrowmin}, \ref{lemm leftminimal}, and~\ref{lemm cone}, we may assume $L = Y \oplus Z$ and $N = \cone (\alpha) \oplus Z$, for an object $Z$ and an arrow $\alpha \colon M \to Y$. Thus it suffices to show
\begin{equation}
\label{ineq dim}
\bdim Y \leq \bdim M + \bdim \cone (\alpha),
\end{equation}
for each arrow $\alpha \colon M \to Y$. Using Proposition~\ref{prop coneX} we know we are in one of the following cases:
\begin{enumerate}

\item
$Y = X_{i, j}^{(k)}$ and $\cone (\alpha) = X_{i + 1, j}^{(k)}$, for $j \geq i$,

\item
$Y = Z_i^{(k)}$ and $\cone (\alpha) = Z_{i + 1}^{(k)}$,

\item
$Y = X_{j, i + \delta_{k, r} m - 1}^{(k + 1)}$ and $\cone (\alpha) = X_{j, i + \delta_{k, r} m}^{(k + 1)}$, for $j \leq i + \delta_{k, r} m - 1$.

\end{enumerate}
Now one easily verifies the inequality~\eqref{ineq dim} in all the above cases. In fact we have equality in case~(1) and for $i < 0$ in case~(2), while the inequality is strict in the remaining cases.

Now assume $M = Z_0^{(k)}$. Similarly as in the previous case we may assume that either $f = f_{\bs, \bt, 0}^{(k)}$ or $f = f_{\bs, \bt, j, 0}'^{(k)}$, where we use freely notation from subsection~\ref{subsect conesC}. We use Proposition~\ref{prop coneZ},
thus in the former case we get
\[
L = \bigoplus_{p = 1}^n X_{s_p, t_p}^{(k + 1)} \qquad \text{and} \qquad N = \bigl( \bigoplus_{p = 1}^n X_{s_{p - 1}, t_p}^{(k + 1)} \bigr) \oplus Z_{s_n}^{(k + 1)},
\]
and the inequality $\bdim L \leq \bdim M + \bdim N$ follows trivially (since $\bdim_\cZ L = 0$, while $\bdim_\cZ M \neq 0 \neq \bdim_\cZ N$). In the latter case, we have
\[
L = \bigl( \bigoplus_{p = 1}^n X_{s_p, t_p}^{(k + 1)} \bigr) \oplus Z_j^{(k)} \qquad \text{and} \qquad N = \bigoplus_{p = 1}^{n + 1} X_{s_{p - 1}, t_p}^{(k + 1)},
\]
and one verifies that $\bdim L = \bdim M + \bdim N$.
\end{proof}

\begin{lemm} \label{lemm sub2}
If $\ql (M) > 1$, then there exists an exact triangle $M' \to M \to M'' \to \Sigma M$ such that $\bdim M = \bdim M' + \bdim M''$.
\end{lemm}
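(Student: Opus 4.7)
The plan is to produce the required triangle either as a split triangle (when $M$ is decomposable) or, when $M$ is indecomposable, by invoking one of the cone computations from the previous subsection and arranging that $M$ sits in the middle of the resulting exact triangle.

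If $M$ is decomposable, write $M \simeq M' \oplus M''$ with $M', M'' \neq 0$ and take the split triangle $M' \to M \to M'' \to \Sigma M'$; additivity of $\bdim$ on direct sums gives $\bdim M = \bdim M' + \bdim M''$ at once. From now on I may assume $M$ is indecomposable. Inspecting the definition of $\bdim$ shows that $\ql (X_i^{(k)}) = 1 = \ql (Z_0^{(k)})$, so the hypothesis $\ql (M) > 1$ leaves exactly two possibilities: either $M = X_{i, j}^{(k)}$ with $i < j$, or $M = Z_s^{(k)}$ with $s \neq 0$.

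In the case $M = X_{i, j}^{(k)}$ with $i < j$, I apply Proposition~\ref{prop coneX}(1) to the arrow $\alpha^{(k)}_{(i, j), (i, i)} \colon X_i^{(k)} \to X_{i, j}^{(k)}$ to obtain the triangle
\[
X_i^{(k)} \to X_{i, j}^{(k)} \to X_{i + 1, j}^{(k)} \to \Sigma X_i^{(k)},
\]
and the identity $\bdim X_{i, j}^{(k)} = \bdim X_i^{(k)} + \bdim X_{i + 1, j}^{(k)}$ is immediate from splitting the support of $\bdim_\cX X_{i, j}^{(k)}$ at the coordinate $(k, i)$.

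In the case $M = Z_s^{(k)}$ with $s \neq 0$, I use the $n = 0$ specialisations of Proposition~\ref{prop coneZ}. When $s > 0$, the corresponding map $Z_0^{(k)} \to Z_s^{(k)}$ produces the triangle
\[
Z_0^{(k)} \to Z_s^{(k)} \to X_{\delta_{k, r} m, \, s + \delta_{k, r} m - 1}^{(k + 1)} \to \Sigma Z_0^{(k)}.
\]
When $s < 0$, the analogous map $Z_s^{(k)} \to Z_0^{(k)}$, together with the identification $\Sigma^{-1} X_{s + \delta_{k, r} m, \delta_{k, r} m - 1}^{(k + 1)} \simeq X_{s, -1}^{(k)}$ obtained from the action of $\Sigma$ on the $X$-vertices, yields after a backward rotation the triangle
\[
X_{s, -1}^{(k)} \to Z_s^{(k)} \to Z_0^{(k)} \to \Sigma X_{s, -1}^{(k)}.
\]
The only place where care is needed is the verification of the dimension identity in these two $Z$-subcases: the definition of $\bdim_\cX Z_s^{(k)}$ splits further according to the sign of $s$ and whether $k = r$, so one must track the shift by $\delta_{k, r} m$ carefully, but once one writes out which indices $(k', i')$ lie in the support, each resulting subcase collapses to a direct one-line check.
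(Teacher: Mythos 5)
Your proposal is correct. For the decomposable case and for $M = X_{i, j}^{(k)}$ it coincides with the paper's argument (same split triangle, same triangle $X_i^{(k)} \to X_{i, j}^{(k)} \to X_{i + 1, j}^{(k)} \to \Sigma X_i^{(k)}$ from Proposition~\ref{prop coneX}). The only divergence is in the case $M = Z_s^{(k)}$, $s \neq 0$: the paper peels off a single unit, taking $M' = X_s^{(k)}$, $M'' = Z_{s + 1}^{(k)}$ for $s < 0$ (via Proposition~\ref{prop coneX}(2)) and $M' = Z_{s - 1}^{(k)}$, $M'' = X_{s + \delta_{k, r} m - 1}^{(k + 1)}$ for $s > 0$, whereas you take one maximal step, splitting $Z_s^{(k)}$ as $Z_0^{(k)}$ plus a single interval object $X_{s, -1}^{(k)}$ or $X_{\delta_{k, r} m, s + \delta_{k, r} m - 1}^{(k + 1)}$, using the $n = 0$ instance of Proposition~\ref{prop coneZ}(2) together with a backward rotation when $s < 0$. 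Both choices rest on the same cone computations, produce nonzero end terms, and satisfy $\bdim M = \bdim M' + \bdim M''$ (your support computations for $\bdim_\cX Z_s^{(k)}$ check out, including the $\delta_{k, r} m$ shift), so either feeds correctly into the induction of Proposition~\ref{prop sub}; the paper's single-step version is marginally lighter to verify, while yours has the small aesthetic advantage of always exhibiting $Z_0^{(k)}$ as one of the two pieces.
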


\begin{proof}
If $M$ is decomposable, then we take any nontrivial decomposition $M = M' \oplus M''$. Next, if $M = X_{i, j}^{(k)}$, then we take $M' = X_i^{(k)}$ and $M'' = X_{i + 1, j}^{(k)}$ ($\ql (M) > 1$ implies $i < j$). Finally, assume $Z = Z_i^{(k)}$. Note that $i \neq 0$, since $\ql (M) > 1$. If $i < 0$, then we take $M' = X_i^{(k)}$ and $M'' := Z_{i + 1}^{(k)}$, while if $i > 0$, then $M' = Z_{i - 1}^{(k)}$ and $M'' = X_{i + \delta_{r, k} m - 1}^{(k + 1)}$.
\end{proof}

We have the following consequences.

\begin{coro} \label{coro dim vect C}
The pair $(\bdim_\cZ, \bdim_\cX)$ is a subadditive pair of functions.
\end{coro}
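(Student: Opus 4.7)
The plan is simply to apply Proposition~\ref{prop sub} to the pair $(\bdim_\cZ, \bdim_\cX)$, since essentially all the work has already been done in Lemmas~\ref{lemm sub1} and~\ref{lemm sub2}. Thus I only need to check that the three hypotheses \eqref{cond sub0}, \eqref{cond sub1}, \eqref{cond sub2} of that proposition are satisfied in our setting.

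First, condition~\eqref{cond sub0} is immediate from the definition: $\bdim_\cX$ and $\bdim_\cZ$ were defined on the indecomposable objects listed in the description of $\cC$ and extended additively to direct sums, so isomorphic objects (which decompose into the same indecomposable summands up to reordering, by the Krull--Schmidt property of $\cC$) have the same dimension vector. Condition~\eqref{cond sub1} is exactly the content of Lemma~\ref{lemm sub1}, so there is nothing further to verify.

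For condition~\eqref{cond sub2}, I would appeal to Lemma~\ref{lemm sub2}, but I should note one small point: the proposition additionally requires $M' \neq 0 \neq M''$, which is not stated in the lemma but is clear from the construction. In each branch of the proof of Lemma~\ref{lemm sub2} (a nontrivial direct sum decomposition, the split $X_{i,j}^{(k)} \to X_{i+1,j}^{(k)}$ of an $X$-object with $i < j$, or one of the two $Z$-object splittings according to the sign of $i$) both summands are manifestly nonzero indecomposable objects of $\cC$. Hence the hypothesis \eqref{cond sub2} holds as well.

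Having verified all three conditions, Proposition~\ref{prop sub} immediately yields the conclusion that $(\bdim_\cZ, \bdim_\cX)$ is a subadditive pair of functions. I do not anticipate any real obstacle here, since the corollary is essentially a bookkeeping statement that collects the structural results established in the preceding lemmas and repackages them in the form required by Proposition~\ref{prop sub}.
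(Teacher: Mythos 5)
Your proof is correct and follows exactly the same route as the paper, which simply cites Proposition~\ref{prop sub} together with Lemmas~\ref{lemm sub1} and~\ref{lemm sub2}; your extra remark that the nonvanishing $M' \neq 0 \neq M''$ required by condition~\eqref{cond sub2} is evident from each branch of the construction in Lemma~\ref{lemm sub2} is a fair observation but does not change the argument.
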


\begin{proof}
This follows immediately from Proposition~\ref{prop sub} and Lemmas~\ref{lemm sub1} and~\ref{lemm sub2}.
\end{proof}

Let $\cX := \cX (r, m)$ be the full additive subcategory of $\cC$ whose indecomposable objects are $X_{i, j}^{(k)}$, $i \leq j$, $k \in [1, r]$.

\begin{coro} \label{coro dim vect X}
$\cX$ is a triangulated subcategory of $\cC$ and $\bdim_\cX |_\cX$ is a subadditive function.
\end{coro}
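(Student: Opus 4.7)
The plan is to deduce this corollary directly from Corollary~\ref{coro sub prim} applied to the subadditive pair $(\bdim_\cZ, \bdim_\cX)$ provided by Corollary~\ref{coro dim vect C}, so the whole argument is a verification that the hypotheses of Corollary~\ref{coro sub prim} are met rather than any fresh triangulated-category work.

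First I would identify $\cX$ intrinsically as the full subcategory of $\cC$ consisting of those $M$ with $\bdim_\cZ M = 0$. This is immediate from the definition of $\bdim_\cZ$: on indecomposables it takes value $1$ exactly on the objects $Z_i^{(k)}$ and $0$ exactly on the objects $X_{i,j}^{(k)}$, and it is extended additively over direct sums, so an arbitrary $M \in \cC$ has $\bdim_\cZ M = 0$ iff no indecomposable summand of $M$ is of $Z$-type, iff $M$ lies in $\cX$.

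Second, I would verify that $\cX$ is closed under $\Sigma$. Using the prescribed action
\[
\Sigma X_{i, j}^{(k)} = X_{i + \delta_{k, r} m,\, j + \delta_{k, r} m}^{(k + 1)},
\]
the suspension sends every indecomposable of $\cX$ to an indecomposable of $\cX$, and $\Sigma$ commutes with finite direct sums, so $\Sigma \cX \subseteq \cX$.

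With these two observations in hand, Corollary~\ref{coro sub prim} applies with $\cT = \cC$, $I = [1, r]$ (the indexing set of $\bdim_\cZ$), $J = [1, r] \times \bbZ$ (the indexing set of $\bdim_\cX$), and $\cT' = \cX$, yielding simultaneously that $\cX$ is a triangulated subcategory of $\cC$ and that the restriction $\bdim_\cX |_\cX$ is a subadditive function. I do not anticipate any real obstacle here; the work has already been done in Corollary~\ref{coro dim vect C} and Corollary~\ref{coro sub prim}, and the only thing one must not forget is to point out the intrinsic description of $\cX$ via $\bdim_\cZ = 0$, which is what allows Corollary~\ref{coro sub prim} to be invoked.
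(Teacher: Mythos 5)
Your proposal is correct and follows exactly the paper's own argument: identify $\cX$ as the objects with $\bdim_\cZ M = 0$ and then invoke Corollary~\ref{coro sub prim} together with Corollary~\ref{coro dim vect C}. Your explicit check that $\cX$ is closed under $\Sigma$ is a hypothesis the paper leaves implicit, so spelling it out is a small but welcome addition.
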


\begin{proof}
Observe that $\cX$ consists of the $M$'s such that $\bdim_\cZ M = 0$, hence the claim follows from Corollaries~\ref{coro sub prim} and~\ref{coro dim vect C}.
\end{proof}

\section{Results. I} \label{sect res1}

In this section we describe the Hall algebra $\cH' := \cH (\cX)$ associated to the category $\cX$ introduced in Section~\ref{sect category}. By abuse of notation in this section we write $\bdim M := \bdim_\cX M$, for $M \in \cX$.

\subsection{Relations}

We introduce the following notation:
\[
\lambda_i^{(k, l)} := [X_0^{(k)}, X_i^{(l)}].
\]
The precise formulas for $\lambda_i^{(k, l)}$ depend on $r$ and $m$ and are a little bit cumbersome, but for concrete values of $i$, $k$ and $l$, $\lambda_i^{(k, l)}$ is relatively easy to calculate. For example,
\[
\lambda_i^{(k, k)} = \frakq^{\varepsilon' + \varepsilon''},
\]
where
\[
\varepsilon' :=
\begin{cases}
(-1)^{\tfrac{i}{m} r} & \text{if $\tfrac{i}{m} \in \bbN$},
\\
0 & \text{otherwise},
\end{cases}
\qquad \text{and} \qquad
\varepsilon'' :=
\begin{cases}
(-1)^{\tfrac{i + 1}{m} r - 1} & \text{if $\tfrac{i + 1}{m} \in \bbN_+$},
\\
0 & \text{otherwise}.
\end{cases}
\]
For the rest of the paper we put $\lambda_i := \lambda_i^{(1, 1)}$, $i \in \bbZ$. Observe that $\lambda_i = \lambda_i^{(k, k)}$, for any $k \in [1, r]$. Moreover, $[X_i^{(k)}, X_j^{(l)}] = \lambda_{j - i}^{(k, l)}$, for any $k, l \in [1, r]$ and $i, j \in \bbZ$.

Our first aim is to calculate $X_i^{(k)} X_j^{(l)}$, for $(k, i) \neq (l, j)$. The first step in this direction is the following observation.

\begin{lemm} \label{lemm possL}
Let $k, l \in [1, r]$ and $i, j \in \bbZ$. If $F_{X_i^{(k)}, X_j^{(l)}}^L \neq 0$, then we have the following possibilities:
\begin{enumerate}

\item
$L = X_i^{(k)} \oplus X_j^{(l)}$;

\item
$L = X_{i, i + 1}^{(k)}$, if $k = l$ and $j = i + 1$;

\item
$L = 0$, if $l = k + 1$ and $j = i + \delta_{k, r} m$.

\end{enumerate}
\end{lemm}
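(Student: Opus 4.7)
The plan is to unpack the condition $F_{X_i^{(k)}, X_j^{(l)}}^L \neq 0$: by definition this means there exists a map $f \colon X_i^{(k)} \to L$ in $\cX$ whose cone is isomorphic to $X_j^{(l)}$. I would then split on whether $f$ is zero. Since $\cX$ is a triangulated subcategory of $\cC$ (Corollary~\ref{coro dim vect X}), cones can be computed inside $\cC$ using the machinery of subsection~\ref{subsect conesC}.

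If $f = 0$, the exact triangle $X_i^{(k)} \xrightarrow{0} L \to X_j^{(l)} \to \Sigma X_i^{(k)}$ splits, so $X_j^{(l)} \simeq L \oplus \Sigma X_i^{(k)} = L \oplus X_{i + \delta_{k,r} m}^{(k+1)}$. Indecomposability of $X_j^{(l)}$ forces $L = 0$, and the equality $X_j^{(l)} \simeq X_{i + \delta_{k,r} m}^{(k+1)}$ yields $l = k+1$ and $j = i + \delta_{k,r} m$, giving case (3).

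If $f \neq 0$, I would apply Lemma~\ref{lemm leftminX} to obtain an arrow $\alpha \colon X_i^{(k)} \to Y$ with $[f] = [\alpha]$; note that the reduction in that lemma's proof forces $Y$ to be an indecomposable summand of $L$, hence $Y \in \cX$, so $\alpha$ must be one of the arrows $\alpha_{(i,t),(i,i)}^{(k)}$ or $\gamma_{(s, i + \delta_{k,r} m - 1), (i,i)}^{(k)}$ (the $\beta$-arrow leaves $\cX$ and is excluded). By Lemma~\ref{lemm arrowmin} each such arrow is left minimal, so Lemma~\ref{lemm leftminimal} gives $L \simeq Y \oplus Z$ with $f = \left[\begin{smallmatrix} \alpha \\ 0 \end{smallmatrix}\right]$, and Lemma~\ref{lemm cone} then yields $X_j^{(l)} \simeq \cone(f) \simeq \cone(\alpha) \oplus Z$.

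Since $X_j^{(l)}$ is indecomposable, either $\cone(\alpha) = 0$ (so $\alpha$ is an isomorphism, forcing $\alpha = \Id_{X_i^{(k)}}$ and $Z \simeq X_j^{(l)}$, yielding case (1)) or $Z = 0$ and $\cone(\alpha) \simeq X_j^{(l)}$. In the latter situation I would inspect Proposition~\ref{prop coneX}: the cone $X_{i+1,t}^{(k)}$ of $\alpha_{(i,t),(i,i)}^{(k)}$ is indecomposable of the form $X_{j'}^{(l)}$ precisely when $t = i+1$, giving $L = X_{i,i+1}^{(k)}$, $l = k$, $j = i+1$ (case (2)); the cone $X_{s, i+\delta_{k,r} m}^{(k+1)}$ of a $\gamma$-arrow has the constraint $s \leq i + \delta_{k,r} m - 1 < i + \delta_{k,r} m$, so it is never of the form $X_{j'}^{(j')}$. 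The main care point is ensuring that when applying Lemma~\ref{lemm leftminX} the target $Y$ of $\alpha$ lies in $\cX$ (so that the $\beta$-arrow does not intervene) and that the enumeration of arrows with indecomposable cone of $X$-type is exhaustive; both follow directly from the explicit list in Proposition~\ref{prop coneX}.
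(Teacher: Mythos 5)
Your proof is correct, but it runs the argument through the unrotated triangle, whereas the paper rotates. The paper observes that $L = \Sigma^{-1}(\cone(f))$ for the connecting map $f \colon X_j^{(l)} \to \Sigma X_i^{(k)} = X_{i + \delta_{k, r} m}^{(k + 1)}$; since both source and target are indecomposable ``point'' objects, the possible $f$ are enumerated in one line (zero, the identity when $l = k+1$ and $j = i + \delta_{k,r} m$, or the degree-two arrow $\gamma$ when $l = k$ and $j = i+1$), and a single application of Proposition~\ref{prop coneX} finishes the proof. You instead take $f \colon X_i^{(k)} \to L$ with $\cone(f) \simeq X_j^{(l)}$ and, because $L$ is an arbitrary object of $\cX$, you must invoke the full left-minimal machinery (Lemmas~\ref{lemm leftminX}, \ref{lemm arrowmin}, \ref{lemm leftminimal} and~\ref{lemm cone}) to split off an arrow before applying Proposition~\ref{prop coneX} and using indecomposability of $X_j^{(l)}$. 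Note the pleasant symmetry: your $f = 0$ case yields the paper's ``connecting map is an isomorphism'' case (giving $L = 0$) and vice versa. All your steps check out --- in particular the exclusion of the $\beta$-arrows because $Y$ is a summand of $L \in \cX$, and the observation that a $\gamma$-arrow's cone $X_{s, i + \delta_{k,r} m}^{(k+1)}$ with $s \leq i + \delta_{k,r} m - 1$ can never be a point object. The paper's route is shorter here, but yours is the same pattern the authors themselves use elsewhere (e.g.\ in Lemma~\ref{lemm sub1}), so nothing is lost beyond economy.
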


\begin{proof}
Note that $L = \Sigma^{-1} (\cone (f))$, where $f \colon X_j^{(l)} \to \Sigma X_i^{(k)} = X_{i + \delta_{k, r} m}^{(k + 1)}$. Obviously, if $f = 0$, then $L = X_i^{(k)} \oplus X_j^{(l)}$. If $f \neq 0$, then (up to an automorphism of $X_j^{(i)}$) we have two possibilities: either $l = k + 1$, $j = i + \delta_{k, r} m$ and $f = \Id_{X_j^{(l)}}$, or $l = k$, $j = i + 1$ and $f = \gamma_{(i + \delta_{k, r} m, i + \delta_{k, r} m), (i + 1, i + 1)}^{(k)}$. In the former case $L = 0$, while in the latter case $L = X_{i, i + 1}^{(k)}$ by Proposition~\ref{prop coneX}.
\end{proof}

The second step in calculating $X_i^{(k)} X_j^{(l)}$ is the following.

\begin{lemm} \label{lemm FXX}
Let $(k, i) \in [1, r] \times \bbZ$. If $(l, j) \neq (k, i)$, then
\[
F_{X_i^{(k)}, X_j^{(l)}}^{X_i^{(k)} \oplus X_j^{(l)}} = \lambda_{j - i}^{(k, l)}.
\]
Moreover,
\[
F_{X_i^{(k)}, X_{i + 1}^{(k)}}^{X_{i, i + 1}^{(k)}} = \lambda_1 \qquad \text{and} \qquad  F_{X_i^{(k)}, X_{i + \delta_{k, r} m}^{(k + 1)}}^{0} = \tfrac{\frakq}{\frakq - 1} \lambda_{\delta_{k, r} m}^{(k, k + 1)}.
\]
\end{lemm}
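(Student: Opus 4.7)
The first formula is immediate from Corollary~\ref{coro direct sum}: the hypothesis $(l, j) \ne (k, i)$ ensures that $X_i^{(k)}$ and $X_j^{(l)}$ are non-isomorphic indecomposables, so $F = [X_i^{(k)}, X_j^{(l)}] = \lambda_{j - i}^{(k, l)}$.

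For the second formula, set $M := X_i^{(k)}$, $N := X_{i + 1}^{(k)}$, and $L := X_{i, i + 1}^{(k)}$; the plan is to feed the right data into Lemma~\ref{lemm F}. The crucial preliminary observation is $\Hom (M, \Sigma M) = 0$. I would verify this by inspecting the arrows in $\Gamma$ starting at $X_{i, i}^{(k)}$: degree-$2$ arrows land in $X_{s, i + \delta_{k, r} m - 1}^{(k + 1)}$, never in $\Sigma M = X_{i + \delta_{k, r} m}^{(k + 1)}$, and no degree-$0$ or degree-$1$ arrow reaches $X^{(k + 1)}$ at all. Next, $\Hom (M, L)$ is one-dimensional, spanned by $\alpha := \alpha_{(i, i + 1), (i, i)}^{(k)}$, so $|\Hom (M, L)| = \frakq$; Proposition~\ref{prop coneX} gives $\cone (\lambda \alpha) \simeq N$ for every $\lambda \in \bbF^\times$, so $|\Hom (M, L)_N| = \frakq - 1$ (which also confirms $F \ne 0$); and since $\End M$ is a local $\bbF$-algebra with residue field $\bbF$ (its degree-$0$ part is $\bbF \cdot \Id$ and any higher-degree component is nilpotent), $|\End M| / |\Aut M| = \frakq / (\frakq - 1)$. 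With $F \ne 0$ and the vanishing of $\Hom (M, \Sigma M)$ in hand, Lemma~\ref{lemm mult}\eqref{point two} gives $[M, L] = [M, M] \cdot [M, N]$, so the factored form of Lemma~\ref{lemm F} applies with $v = 1$ and the factors of $\frakq$ and $\frakq - 1$ cancel, leaving $[M, N] = \lambda_1$.

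For the third formula, set $M := X_i^{(k)}$, $L := 0$, and $N := \Sigma M = X_{i + \delta_{k, r} m}^{(k + 1)}$. Here $|\Hom (M, 0)| = 1$, and since $\cone (0 \colon M \to 0) = \Sigma M = N$, we have $|\Hom (M, 0)_N| = 1$ and $F \ne 0$; trivially $[M, 0] = 1$. Lemma~\ref{lemm F} then yields $F = \tfrac{\frakq}{\frakq - 1} \cdot \tfrac{1}{\lambda_0}$. To rewrite $1/\lambda_0$ as $[M, \Sigma M] = \lambda_{\delta_{k, r} m}^{(k, k + 1)}$, I would apply Lemma~\ref{lemm mult}\eqref{point two} to the trivial triangle $M \xrightarrow{0} 0 \to \Sigma M \to \Sigma M$: the hypotheses $F \ne 0$ and $\Hom (M, \Sigma M) = 0$ both hold by the work above, and the conclusion reads $1 = [M, 0] = \lambda_0 \cdot \lambda_{\delta_{k, r} m}^{(k, k + 1)}$.

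The one non-routine ingredient is the vanishing $\Hom (X_i^{(k)}, \Sigma X_i^{(k)}) = 0$, which drives both nontrivial parts; the arrow description in Section~\ref{sect category} makes it a short combinatorial check. Once that is in place, everything else is straightforward assembly of Lemmas~\ref{lemm F} and~\ref{lemm mult}, Corollary~\ref{coro direct sum}, and Proposition~\ref{prop coneX}.
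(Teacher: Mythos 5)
Your overall strategy coincides with the paper's: Corollary~\ref{coro direct sum} for the first formula, then Lemma~\ref{lemm F} combined with Lemma~\ref{lemm mult}\eqref{point two} for the other two, reducing everything to the ratio $\tfrac{|\End (M)|}{|\Aut (M)|} \cdot \tfrac{|\Hom (M, L)_N|}{|\Hom (M, L)|}$. Your treatment of $\tfrac{|\End (M)|}{|\Aut (M)|}$ via locality is actually cleaner than a raw count, since it gives $\tfrac{\frakq}{\frakq - 1}$ uniformly without distinguishing whether $|\End (X_i^{(k)})|$ is $\frakq$ or $\frakq^2$ (the latter occurs for $(r, m) = (1, 1)$). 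Your explicit derivation of the third formula via the triangle $M \xrightarrow{0} 0 \to \Sigma M \to \Sigma M$ is correct and fills in what the paper dismisses with ``proved similarly.''

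There is, however, one concrete error in the second formula: the claim that $\Hom (X_i^{(k)}, X_{i, i + 1}^{(k)})$ is one-dimensional, spanned by $\alpha := \alpha_{(i, i + 1), (i, i)}^{(k)}$, fails when $(r, m) = (1, 2)$. In that case $k + 1 = k$ and $i + \delta_{k, r} m - 1 = i + 1$, so the degree-$2$ arrow $\gamma_{(i, i + 1), (i, i)}^{(k)} \colon X_{i, i}^{(k)} \to X_{i, i + 1}^{(k)}$ exists and $|\Hom (M, L)| = \frakq^2$; correspondingly $|\Hom (M, L)_N| = \frakq^2 - \frakq$, because the maps with cone $N$ are exactly those of the form $(\lambda \Id + \mu \gamma') \circ \alpha$ with $\lambda \neq 0$, where $\gamma'$ is the degree-$2$ endomorphism of $X_{i, i + 1}^{(k)}$. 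The ratio $\tfrac{|\Hom (M, L)_N|}{|\Hom (M, L)|}$ is still $\tfrac{\frakq - 1}{\frakq}$, so the final formula $F_{X_i^{(k)}, X_{i + 1}^{(k)}}^{X_{i, i + 1}^{(k)}} = \lambda_1$ survives, but as written your counts are false in this case; the paper's proof records exactly this case distinction. You should either add the case $(r, m) = (1, 2)$ explicitly or argue as you did for $\End (M)$ that the extra degree-$2$ component rescales numerator and denominator by the same factor of $\frakq$.
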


\begin{proof}
The first formula follows from Corollary~\ref{coro direct sum}. Next, Lemmas~\ref{lemm F} and~\ref{lemm mult}\eqref{point two} imply
\[
F_{X_i^{(k)}, X_{i + 1}^{(k)}}^{X_{i, i + 1}^{(k)}} = \tfrac{|\End (X_i^{(k)})|}{|\Aut (X_i^{(k)})|} \cdot \tfrac{|\Hom (X_i^{(k)}, X_{i, i + 1}^{(k)})_{X_{i + 1}^{(k)}}|}{|\Hom (X_i^{(k)}, X_{i, i + 1}^{(k)})|} \cdot \lambda_1.
\]
Observe that
\begin{align*}
|\End (X_i^{(k)})| & =
\begin{cases}
\frakq^2 & \text{if $(r, m) = (1, 1)$},
\\
\frakq & \text{otherwise},
\end{cases}
\\
\intertext{and}
|\Aut (X_i^{(k)})| & =
\begin{cases}
\frakq^2 - \frakq & \text{if $(r, m) = (1, 1)$},
\\
\frakq - 1 & \text{otherwise}.
\end{cases}
\end{align*}
Similarly
\begin{align*}
|\Hom (X_i^{(k)}, X_{i, i + 1}^{(k)})_{X_{i + 1}^{(k)}}| & =
\begin{cases}
\frakq^2 - \frakq & \text{if $(r, m) = (1, 2)$},
\\
\frakq - 1 & \text{otherwise},
\end{cases}
\\
\intertext{and}
|\Hom (X_i^{(k)}, X_{i, i + 1}^{(k)})| & =
\begin{cases}
\frakq^2 & \text{if $(r, m) = (1, 2)$},
\\
\frakq & \text{otherwise},
\end{cases}
\end{align*}
which gives the second formula. The third formula is proved similarly.
\end{proof}

We get the following consequences.

\begin{coro}
Let $(k, i) \in [1, r] \times \bbZ$.
\begin{enumerate}

\item
If $(l, j) \neq (k, i), (k, i + 1), (k + 1, i + \delta_{k, r} m)$, then
\[
X_i^{(k)} X_j^{(l)} = \lambda_{j - i}^{(k, l)} (X_i^{(k)} \oplus X_j^{(l)}).
\]

\item
If $(r, m) \neq (1, 1)$, then
\begin{align*}
X_i^{(k)} X_{i + 1}^{(k)} & = \lambda_1 (X_i^{(k)} \oplus X_{i + 1}^{(k)}) + \lambda_1 X_{i, i + 1}^{(k)},
\\
\intertext{and}
X_i^{(k)} X_{i + \delta_{k, r} m}^{(k + 1)} & = \lambda_{\delta_{k, r} m}^{(k, k + 1)} (X_i^{(k)} \oplus X_{i + \delta_{k, r} m}^{(k + 1)}) + \tfrac{\frakq}{\frakq - 1} \lambda_{\delta_{k, r} m}^{(k, k + 1)}.
\end{align*}

\item
If $(r, m) = (1, 1)$, then
\[
X_i^{(k)} X_{i + 1}^{(k)} = \lambda_1 (X_i^{(k)} \oplus X_{i + 1}^{(k)}) + \lambda_1 X_{i, i + 1}^{(k)} + \tfrac{\frakq}{\frakq - 1} \lambda_1. \eqno \qed
\]

\end{enumerate}
\end{coro}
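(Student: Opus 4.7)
The proof is a direct unpacking of the definition of the Hall algebra product in light of the two lemmas that precede the corollary. By definition,
\[
X_i^{(k)} \cdot X_j^{(l)} = \sum_{L \in \cX / \simeq} F_{X_i^{(k)}, X_j^{(l)}}^L \, L,
\]
so the plan is to determine (a) which $L$ give nonzero coefficients, and (b) what those coefficients are. Step (a) is handled directly by Lemma~\ref{lemm possL}, which restricts the possible $L$ to the decomposable object $X_i^{(k)} \oplus X_j^{(l)}$, plus at most two exceptional summands that occur only when $(l,j) = (k, i+1)$ or $(l,j) = (k+1, i + \delta_{k, r} m)$. Step (b) is then read off from Lemma~\ref{lemm FXX}.

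Concretely, in part~(1) the hypothesis on $(l,j)$ excludes the two exceptional cases of Lemma~\ref{lemm possL}, so the only $L$ with $F_{X_i^{(k)}, X_j^{(l)}}^L \neq 0$ is $L = X_i^{(k)} \oplus X_j^{(l)}$, and the first formula of Lemma~\ref{lemm FXX} yields the coefficient $\lambda_{j-i}^{(k,l)}$. In part~(2) the assumption $(r, m) \neq (1, 1)$ forces the two exceptional pairs $(k, i+1)$ and $(k+1, i + \delta_{k, r} m)$ to be distinct: if $r \geq 2$ they lie in different components of the upper index, while if $r = 1$ and $m \geq 2$ the lower indices $i+1$ and $i + m$ differ. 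Hence each of the two formulas in part~(2) involves only \emph{one} nontrivial summand, and reading off the appropriate $F$-value from Lemma~\ref{lemm FXX} (respectively $\lambda_1$ on $X_{i, i+1}^{(k)}$ and $\tfrac{\frakq}{\frakq - 1} \lambda_{\delta_{k,r} m}^{(k, k+1)}$ on the zero object) gives exactly the asserted expressions.

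Part~(3) is the only place where the two exceptional summands collide: when $(r, m) = (1, 1)$ we have $k = k + 1 = 1$ and $\delta_{k, r} m = 1$, so the pairs $(k, i + 1)$ and $(k+1, i + \delta_{k, r} m)$ coincide. Thus both exceptional terms of Lemma~\ref{lemm possL} contribute simultaneously to $X_i^{(k)} \cdot X_{i+1}^{(k)}$, and adding the three coefficients $\lambda_1$ on $X_i^{(k)} \oplus X_{i+1}^{(k)}$, $\lambda_1$ on $X_{i, i+1}^{(k)}$, and $\tfrac{\frakq}{\frakq - 1} \lambda_1$ on $0$ gives the third formula. There is no real obstacle here; the only mildly subtle point is keeping track of the coincidence of exceptional pairs in the collapsed case $(r, m) = (1, 1)$, which is exactly what separates parts~(2) and~(3).
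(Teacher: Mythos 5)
Your proposal is correct and follows exactly the route the paper intends: the corollary is stated with no written proof (it is marked as immediate), and the intended argument is precisely to combine Lemma~\ref{lemm possL} (which lists the possible $L$) with Lemma~\ref{lemm FXX} (which gives the coefficients), with the case distinction governed by whether the two exceptional pairs $(k, i+1)$ and $(k+1, i+\delta_{k,r}m)$ coincide, which happens exactly when $(r,m)=(1,1)$. Your handling of that coincidence is the right way to separate parts~(2) and~(3).
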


As a result of the above formulas we get the first two relations.

\begin{prop} \label{prop rel1}
Let $(k, i) \in [1, r] \times \bbZ$. If $(l, j) \neq (k, i), (k, i + 1), (k, i - 1), (k + 1, i + \delta_{k, r} m), (k - 1, i - \delta_{k, 1} m)$, then
\[
\tfrac{1}{\lambda_{j - i}^{(k, l)}} X_i^{(k)} X_j^{(l)} - \tfrac{1}{\lambda_{i - j}^{(k, l)}} X_j^{(l)} X_i^{(k)} = 0.
\]
Moreover, if $(r, m) \neq (1, 1)$, then
\[
\tfrac{1}{\lambda_{\delta_{k, r} m}^{(k, k + 1)}} X_i^{(k)} X_{i + \delta_{k, r} m}^{(k + 1)} - \tfrac{1}{\lambda_{- \delta_{k, r} m}^{(k + 1, k)}} X_{i + \delta_{k, r} m}^{(k + 1)} X_i^{(k)} = \tfrac{\frakq}{\frakq - 1}. \eqno \qed
\]
\end{prop}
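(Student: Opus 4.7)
The plan is to derive both relations as immediate consequences of the preceding corollary, by applying it in both orders and normalizing so that the direct-sum terms match. The only nontrivial step is a combinatorial check of which case of the corollary applies to each of the two products.

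For the first relation I will apply case~(1) of the corollary in both directions. Forward, it requires $(l,j)\notin\{(k,i),(k,i+1),(k+1,i+\delta_{k,r}m)\}$. Reversing the roles of the two pairs, the same case applied to $X_j^{(l)} X_i^{(k)}$ requires $(k,i)\notin\{(l,j),(l,j+1),(l+1,j+\delta_{l,r}m)\}$, which is equivalent to $(l,j)\notin\{(k,i),(k,i-1),(k-1,i-\delta_{k,1}m)\}$; here I use the modular convention on the upper index, so that $\delta_{k-1,r}=\delta_{k,1}$ (nonzero precisely when $k=1$). The union of the two forbidden lists is exactly the exclusion of the proposition, so under its hypothesis both products are scalar multiples of $X_i^{(k)}\oplus X_j^{(l)}$, with coefficients $\lambda_{j-i}^{(k,l)}$ and $\lambda_{i-j}^{(l,k)}$ respectively; dividing by these coefficients yields the commutation identity as stated.

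For the second relation, $(l,j)=(k+1,i+\delta_{k,r}m)$, so the forward product falls under the second formula of case~(2), giving $\lambda_{\delta_{k,r}m}^{(k,k+1)}(X_i^{(k)}\oplus X_{i+\delta_{k,r}m}^{(k+1)})+\tfrac{\frakq}{\frakq-1}\lambda_{\delta_{k,r}m}^{(k,k+1)}$. For the reverse product $X_{i+\delta_{k,r}m}^{(k+1)} X_i^{(k)}$, I will verify that case~(1) applies, i.e.\ that $(k,i)$ is distinct from $(k+1,i+\delta_{k,r}m)$, $(k+1,i+\delta_{k,r}m+1)$, and $(k+2,i+\delta_{k,r}m+\delta_{k+1,r}m)$. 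The first two pairs differ from $(k,i)$ either in the upper index (when $r\geq 2$) or in the lower index (when $r=1$, where $\delta_{k,r}m=m\geq 1$). The last pair would require $r\mid 2$ together with $\delta_{k,r}m+\delta_{k+1,r}m=0$, but for $r=1$ this sum equals $2m$ and for $r=2$ it equals $m$, neither of which vanishes. Hence case~(1) applies, the reverse product is $\lambda_{-\delta_{k,r}m}^{(k+1,k)}(X_i^{(k)}\oplus X_{i+\delta_{k,r}m}^{(k+1)})$, and the normalized difference yields the constant $\tfrac{\frakq}{\frakq-1}$.

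No substantive obstacle arises in either part; the only point requiring attention is the combinatorial check above that confirms admissibility of case~(1) of the corollary in each of the two orderings. The rest is bookkeeping of the $\lambda$-coefficients.
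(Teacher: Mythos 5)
Your proposal is correct and matches the paper's (implicit) argument: the proposition is stated with \qed precisely because it follows from the preceding corollary by applying case (1) in both orders for the first relation, and case (2) forward plus case (1) backward for the second, exactly as you check. Your observation that the second coefficient should read $\lambda_{i-j}^{(l,k)}$ rather than $\lambda_{i-j}^{(k,l)}$ is consistent with the form of relation (1) in Theorem~\ref{theo main}, so the superscript in the proposition as printed is a typo and your normalization is the intended one.
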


We also observe the following.

\begin{lemm} \label{lemm multiply}
Let $(k, i) \in [1, r] \times \bbZ$. If $(r, m) \neq (1, 1)$, then
\[
\tfrac{1}{\lambda_1} X_i^{(k)} X_{i + 1}^{(k)} - \tfrac{1}{\lambda_{-1}} X_{i + 1}^{(k)} X_i^{(k)} = X_{i, i + 1}^{(k)}.
\]
If $(r, m) = (1, 1)$, then
\[
\tfrac{1}{\lambda_1} X_i^{(k)} X_{i + 1}^{(k)} - \tfrac{1}{\lambda_{-1}} X_{i + 1}^{(k)} X_i^{(k)} = X_{i, i + 1}^{(k)} + \tfrac{\frakq}{\frakq - 1}. \eqno \qed
\]
\end{lemm}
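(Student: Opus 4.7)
The strategy is to compute both products $X_i^{(k)} X_{i+1}^{(k)}$ and $X_{i+1}^{(k)} X_i^{(k)}$ explicitly and then subtract after normalization. The first product has already been worked out in the corollary immediately preceding Proposition~\ref{prop rel1}; what remains is to compute the product taken in the opposite order and to perform the arithmetic.

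First I would recall the established formula
\[
X_i^{(k)} X_{i+1}^{(k)} = \lambda_1 (X_i^{(k)} \oplus X_{i+1}^{(k)}) + \lambda_1 X_{i,i+1}^{(k)}
\]
when $(r,m) \neq (1,1)$, together with the additional summand $\tfrac{\frakq}{\frakq-1} \lambda_1$ that appears in the degenerate case $(r,m) = (1,1)$. Dividing through by $\lambda_1$ expresses $\tfrac{1}{\lambda_1} X_i^{(k)} X_{i+1}^{(k)}$ as $X_i^{(k)} \oplus X_{i+1}^{(k)} + X_{i,i+1}^{(k)}$ in the generic case and with the extra scalar $\tfrac{\frakq}{\frakq-1}$ in the $(r,m)=(1,1)$ case.

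Second, I would compute $X_{i+1}^{(k)} X_i^{(k)}$ by reapplying Lemma~\ref{lemm possL} with the roles of the two factors interchanged. The two special cases of that lemma, read for the product $X_{i+1}^{(k)} X_i^{(k)}$, would require the right factor to be either $X_{i+2}^{(k)}$ (from the case $j = i'+1$) or $X_{i+1+\delta_{k,r}m}^{(k+1)}$ (from the case $l = k'+1$, $j = i'+\delta_{k,r} m$); neither equals $X_i^{(k)}$, as the corresponding index equalities $i = i+2$ and $i = i+1+\delta_{k,r}m$ are impossible. Hence only the trivial case of Lemma~\ref{lemm possL} contributes, so by Corollary~\ref{coro direct sum},
\[
X_{i+1}^{(k)} X_i^{(k)} = \lambda_{-1} (X_i^{(k)} \oplus X_{i+1}^{(k)}),
\]
and therefore $\tfrac{1}{\lambda_{-1}} X_{i+1}^{(k)} X_i^{(k)} = X_i^{(k)} \oplus X_{i+1}^{(k)}$.

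Finally I would subtract the two normalized identities: the direct-sum terms cancel and we are left with $X_{i,i+1}^{(k)}$ in the generic case, respectively $X_{i,i+1}^{(k)} + \tfrac{\frakq}{\frakq-1}$ in the $(r,m)=(1,1)$ case, which is the claim. I do not expect any genuine obstacle; the only point that requires care is verifying that the two exceptional cases in Lemma~\ref{lemm possL} really cannot occur when the order of factors is reversed, and this is a quick inspection of the index conditions.
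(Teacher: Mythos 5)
Your proposal is correct and is exactly the computation the paper intends: the lemma is stated with no written proof because it follows immediately from the preceding corollary (the formula for $X_i^{(k)} X_{i+1}^{(k)}$) together with the observation that $X_{i+1}^{(k)} X_i^{(k)} = \lambda_{-1}(X_i^{(k)} \oplus X_{i+1}^{(k)})$ since neither exceptional case of Lemma~\ref{lemm possL} can occur for the reversed order. Your index check ruling out those exceptional cases is the only point requiring care, and you handle it correctly.
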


Now we want to calculate $X_i^{(k)} X_{i, i + 1}^{(k)}$, $X_{i, i + 1}^{(k)} X_i^{(k)}$, $X_{i + 1}^{(k)} X_{i, i + 1}^{(k)}$, $X_{i, i + 1}^{(k)} X_{i + 1}^{(k)}$. We start with the following.

\begin{lemm}
Let $(k, i) \in [1, r] \times \bbZ$.
\begin{enumerate}

\item
If $F_{X_i^{(k)}, X_{i, i + 1}^{(k)}}^L \neq 0$, then either $L = X_i^{(k)} \oplus X_{i, i + 1}^{(k)}$ or, if $(r, m) = (1, 1)$, $L = X_i^{(k)}$.

\item
If $F_{X_{i, i + 1}^{(k)}, X_i^{(k)}}^L \neq 0$, then $L = X_i^{(k)} \oplus X_{i, i + 1}^{(k)}$.

\item
If $F_{X_{i + 1}^{(k)}, X_{i, i + 1}^{(k)}}^L \neq 0$, then $L = X_{i + 1}^{(k)} \oplus X_{i, i + 1}^{(k)}$.

\item
If $F_{X_{i, i + 1}^{(k)}, X_{i + 1}^{(k)}}^L \neq 0$, then either $L = X_{i + 1}^{(k)} \oplus X_{i, i + 1}^{(k)}$, or, if $(r, m) = (1, 1)$, $L = X_{i + 1}^{(k)}$.

\end{enumerate}
\end{lemm}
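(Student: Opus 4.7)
The strategy is uniform across the four parts: $F^L_{M,N}\ne 0$ means there is an exact triangle $M\to L\to N\xrightarrow{f}\Sigma M$, so $L$ is determined up to isomorphism by the equivalence class of $f\in\Hom(N,\Sigma M)$. If $f=0$ then $L\simeq M\oplus N$, which supplies the first alternative in each part; if $f\neq 0$, rotating gives $\Sigma L\simeq\cone(f)$, so the job reduces to enumerating nonzero $f$ up to left equivalence and computing their cones via Proposition~\ref{prop coneX}.

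For parts~(2) and~(4) the source of $f$ is $X^{(k)}_i$ respectively $X^{(k)}_{i+1}$, which is of the form $X^{(k)}_j$, so Lemma~\ref{lemm leftminX} allows me to replace $f$ by an arrow in $\Gamma$ landing in an indecomposable summand of $\Sigma M=X^{(k+1)}_{i+\delta_{k,r}m,\,i+\delta_{k,r}m+1}$. A direct inspection of the three types of arrows $\alpha,\beta,\gamma$ leaving $X^{(k)}_j$ shows that in part~(2) no arrow with this target exists for any $(r,m)$, and compositions through $Z$ or through an intermediate $X^{(k)}_{s,t}$ reduce to $0$ by the composition rule, since the putative summed-degree arrow is absent; hence $f=0$. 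In part~(4) the same enumeration leaves exactly one candidate, and only when $(r,m)=(1,1)$: the degree-$0$ arrow $\alpha^{(1)}_{(i+1,i+2),(i+1,i+1)}$. Its cone equals $X^{(1)}_{i+2}=\Sigma X^{(1)}_{i+1}$ by Proposition~\ref{prop coneX}(1), forcing $L\simeq X^{(1)}_{i+1}$.

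In parts~(1) and~(3) the source is $X^{(k)}_{i,i+1}$, which is not of the form $X^{(k)}_j$, so Lemma~\ref{lemm leftminX} does not apply. I will instead compute $\Hom(X^{(k)}_{i,i+1},\Sigma M)$ directly from the path-category description: by the composition rule of Section~\ref{sect category}, every composition of arrows between two indecomposables equals a single arrow of the summed degree or $0$, so it suffices to list arrows of degrees $0,1,2$ leaving $X^{(k)}_{i,i+1}$ and match them to the target. For part~(3), the target $X^{(k+1)}_{i+1+\delta_{k,r}m}$ lies strictly outside the range of every $\gamma$-arrow (which reaches only $X^{(k+1)}_{s,t}$ with $t\le i+\delta_{k,r}m$) and, likewise, no $\beta/\beta'$-composition through $Z$ survives; hence the Hom vanishes and $L\simeq M\oplus N$. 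For part~(1), the same analysis again vanishes except when $(r,m)=(1,1)$, where the lone surviving arrow is the degree-$0$ map $\alpha^{(1)}_{(i+1,i+1),(i,i+1)}\colon X^{(1)}_{i,i+1}\to X^{(1)}_{i+1}=\Sigma X^{(1)}_i$. Rotating the triangle of Proposition~\ref{prop coneX}(1) identifies its cone as $\Sigma X^{(1)}_i$, so $L\simeq X^{(1)}_i$.

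The main bookkeeping obstacle is the arrow enumeration for the length-two source $X^{(k)}_{i,i+1}$ in parts~(1) and~(3), since no convenient analogue of Lemma~\ref{lemm leftminX} is available; fortunately the composition rule reduces this to a short finite check. The exceptional behaviour of $(r,m)=(1,1)$ emerges naturally because there $\Sigma$ shifts indices by just one unit, so the targets of the rotated triangles fall within reach of a single degree-$0$ arrow.
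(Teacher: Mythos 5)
Your argument is correct and is essentially the paper's own proof: identify $L$ as $\Sigma^{-1}\cone(f)$ for $f\colon N\to\Sigma M$, enumerate the possible nonzero $f$ via the arrow/degree combinatorics of $\Gamma$ (finding the exceptional degree-$0$ arrow only when $(r,m)=(1,1)$), and compute the resulting cone from Proposition~\ref{prop coneX} and its rotation. The paper writes out only part~(1) and declares the rest analogous; your treatment of parts~(2)--(4) is exactly that analogous check, carried out correctly.
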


\begin{proof}
We only prove the first statement. The rest is proved analogously. We need to analyze the cones of the maps $f \colon X_{i, i + 1}^{(k)} \to \Sigma X_i^{(k)} = X_{i + \delta_{k, r} m}^{(k + 1)}$. If $(r, m) \neq (1, 1)$, the only possibility is the zero map (we use here that $m \neq 0$), hence $L = X_i^{(k)} \oplus X_{i, i + 1}^{(k)}$. If $(r, m) = (1, 1)$, then there are nonzero maps of the form $\lambda \alpha_{(i + 1, i + 1), (i, i + 1)}^{(k)} $, $\lambda \in \bbF^\times$, whose cone is $X_{i + \delta_{k, r} m}^{(k)}$ by the dual of Proposition~\ref{prop coneX}, thus we get $L = \Sigma^{-1} X_{i + \delta_{k, r} m}^{(k)} = X_i^{(k)}$ ($k - 1 = k$, since $r = 1$).
\end{proof}

According to the above lemma the next step is the following.

\begin{lemm}
Let $(k, i) \in [1, r] \times \bbZ$. Then
\begin{align*}
F_{X_i^{(k)}, X_{i, i + 1}^{(k)}}^{X_i^{(k)} \oplus X_{i, i + 1}^{(k)}} & = \lambda_0 \lambda_1, & F_{X_{i + 1}^{(k)}, X_{i, i + 1}^{(k)}}^{X_{i + 1}^{(k)} \oplus X_{i, i + 1}^{(k)}} & = \tfrac{1}{\frakq} \lambda_0 \lambda_{-1},
\\
F_{X_{i, i + 1}^{(k)}, X_i^{(k)}}^{X_i^{(k)} \oplus X_{i, i + 1}^{(k)}} & = \tfrac{1}{\frakq} \lambda_0 \lambda_{-1}, & F_{X_{i, i + 1}^{(k)}, X_{i + 1}^{(k)}}^{X_{i + 1}^{(k)} \oplus X_{i, i + 1}^{(k)}} & = \lambda_0 \lambda_1.
\end{align*}
Moreover, if $(r, m) = (1, 1)$, then
\[
F_{X_i^{(k)}, X_{i, i + 1}^{(k)}}^{X_i^{(k)}} = \tfrac{1}{\frakq} \qquad \text{and} \qquad F_{X_{i, i + 1}^{(k)}, X_{i + 1}^{(k)}}^{X_{i + 1}^{(k)}} = \tfrac{1}{\frakq}.
\]
\end{lemm}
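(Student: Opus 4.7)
My plan starts from the exact triangle $X_i^{(k)} \xrightarrow{\alpha_{(i, i+1), (i, i)}^{(k)}} X_{i, i+1}^{(k)} \to X_{i+1}^{(k)} \to \Sigma X_i^{(k)}$ from Proposition~\ref{prop coneX}; the corresponding structure constant $F_{X_i^{(k)}, X_{i+1}^{(k)}}^{X_{i, i+1}^{(k)}} = \lambda_1$ is nonzero by Lemma~\ref{lemm FXX}, which lets me feed this triangle into the multiplicative parts of Lemma~\ref{lemm mult}. Since $X_i^{(k)}$, $X_{i+1}^{(k)}$, and $X_{i, i+1}^{(k)}$ are pairwise non-isomorphic indecomposables, Corollary~\ref{coro direct sum} identifies each of the first four Hall numbers with a single bracket, $F_{M, N}^{M \oplus N} = [M, N]$, so the task reduces to evaluating four brackets. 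As preparation, scanning the arrows out of $X_{j, j}^{(k)}$ shows that the only possible degree-$2$ target in $\cC^{(k+1)}$ has second index $j + \delta_{k, r} m - 1$, so $\Hom (X_j^{(k)}, \Sigma X_j^{(k)}) = 0$ for $j \in \{i, i+1\}$ and every $(r, m)$.

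With these vanishings in hand, the ``clean'' halves of Lemma~\ref{lemm mult}\eqref{point two} and~\eqref{point three}, applied with $(M, N, L) = (X_i^{(k)}, X_{i+1}^{(k)}, X_{i, i+1}^{(k)})$, immediately give the first formula $[X_i^{(k)}, X_{i, i+1}^{(k)}] = \lambda_0 \lambda_1$ and the fourth formula $[X_{i, i+1}^{(k)}, X_{i+1}^{(k)}] = \lambda_0 \lambda_1$. For the second and third, the ``twisted'' halves of the same parts yield the brackets up to a multiplicative Hom-ratio, and the substantive point is that in every regime this ratio equals $1/\frakq$. For the second formula the identity to verify is
\[
|\Hom (X_{i+1}^{(k)}, \Sigma X_{i, i+1}^{(k)})| \cdot \frakq = |\Hom (X_{i+1}^{(k)}, \Sigma X_i^{(k)})|,
\]
and a case-by-case arrow count confirms it: when $(r, m) \neq (1, 1)$ the numerator is $1$ and the denominator $\frakq$ (a single $\gamma$-arrow hits $\Sigma X_i^{(k)} = X_{i + \delta_{k, r} m}^{(k+1)}$, while none hits $\Sigma X_{i, i+1}^{(k)} = X_{i + \delta_{k, r} m, i + 1 + \delta_{k, r} m}^{(k+1)}$); when $(r, m) = (1, 1)$ the wrap-around $k + 1 \equiv k$ supplies an extra degree-$0$ arrow in each Hom-space, multiplying both cardinalities by $\frakq$ and leaving the ratio intact. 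The third formula is symmetric. This uniform ratio verification across the two regimes is the main obstacle.

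For the last two formulas, specific to $(r, m) = (1, 1)$, the direct-sum shortcut is unavailable and I would invoke Lemma~\ref{lemm F} directly with $(M, N, L) = (X_i^{(k)}, X_{i, i+1}^{(k)}, X_i^{(k)})$. The values $|\End (X_i^{(k)})| = \frakq^2$ and $|\Aut (X_i^{(k)})| = \frakq (\frakq - 1)$ are already recorded in the proof of Lemma~\ref{lemm FXX}, and $[M, L] / [M, M] = 1$ trivially. The crucial count is $|\Hom (X_i^{(k)}, X_i^{(k)})_{X_{i, i+1}^{(k)}}| = \frakq - 1$: writing an endomorphism of $X_i^{(k)}$ as $a \cdot \Id + b \cdot \gamma$, with $\gamma := \gamma_{(i, i), (i, i)}^{(k)}$ the degree-$2$ self-loop (which satisfies $\gamma^2 = 0$ since $\cC$ has no maps of degree $4$), the map is invertible with cone $0$ when $a \neq 0$, vanishes with cone $X_i^{(k)} \oplus \Sigma X_i^{(k)}$ when $a = b = 0$, and equals a nonzero scalar multiple of $\gamma$ with cone $X_{i, i+1}^{(k)}$ (by Proposition~\ref{prop coneX}) exactly when $a = 0 \neq b$. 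Substituting into Lemma~\ref{lemm F} yields $F = 1/\frakq$, and the remaining formula follows by the symmetric argument.
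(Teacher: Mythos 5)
Your proof is correct and takes essentially the same route as the paper's: Corollary~\ref{coro direct sum} turns the first four Hall numbers into brackets, which are then evaluated by Lemma~\ref{lemm mult} applied to the triangle $X_i^{(k)} \to X_{i, i+1}^{(k)} \to X_{i+1}^{(k)} \to \Sigma X_i^{(k)}$ together with the uniform Hom-ratio $1/\frakq$ that you verify case by case, and the two $(r,m)=(1,1)$ values are obtained by direct counting (the paper details the sixth formula, you the fifth). One caution for your ``symmetric'' third formula: the second identity of Lemma~\ref{lemm mult}\eqref{point two} should read $[L, M] = \tfrac{|\Hom(L,\Sigma M)|}{|\Hom(N,\Sigma M)|}\cdot[M,M]\cdot[N,M]$ --- the factor $[N,M]$, not $[M,N]$, is what the long exact sequence for $\Hom(-,M)$ produces --- and only this corrected form gives the stated value $\tfrac{1}{\frakq}\lambda_0\lambda_{-1}$ rather than $\tfrac{1}{\frakq}\lambda_0\lambda_1$ (these differ, e.g., for $m=1$ and $r$ odd).
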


\begin{proof}
We prove the second formula. Observe that according to Corollary~\ref{coro direct sum}
\[
F_{X_{i + 1}^{(k)}, X_{i, i + 1}^{(k)}}^{X_{i + 1}^{(k)} \oplus X_{i, i + 1}^{(k)}} = [X_{i + 1}^{(k)}, X_{i, i + 1}^{(k)}].
\]
Moreover, by Lemma~\ref{lemm mult}\eqref{point three}
\[
[X_{i + 1}^{(k)}, X_{i, i + 1}^{(k)}] = \tfrac{|\Hom (X_{i + 1}^{(k)}, X_{i + \delta_{k, r} m, i + \delta_{k, r} m + 1}^{(k + 1)})|}{|\Hom (X_{i + 1}^{(k)}, X_{i + \delta_{k, r} m}^{(k + 1)})|} \cdot [X_{i + 1}^{(k)}, X_i^{(k)}] \cdot [X_{i + 1}^{(k)}, X_{i + 1}^{(k)}] = \tfrac{1}{\frakq} \lambda_0 \lambda_{-1}.
\]
The first, third and fourth formulas are proved similarly.

The fifth and sixth formulas are proved by direct calculations. For example,
\begin{gather*}
|\Hom (X_{i, i + 1}^{(k)}, X_{i + 1}^{(k)})_{X_{i + 1}^{(k)}}| = \frakq - 1, \qquad |\Aut (X_{i, i + 1}^{(k)})| = \frakq (\frakq - 1),
\\
\{ X_{i, i + 1}^{(k)}, X_{i + 1}^{(k)} \} = \tfrac{1}{\frakq} \qquad \text{and} \qquad \{ X_{i, i + 1}^{(k)}, X_{i, i + 1}^{(k)} \} = \tfrac{1}{\frakq},
\end{gather*}
hence $F_{X_{i, i + 1}^{(k)}, X_{i + 1}^{(k)}}^{X_{i + 1}^{(k)}} = \tfrac{1}{\frakq}$.
\end{proof}

Consequently we get.

\begin{coro} \label{coro mult}
Let $(k, i) \in [1, r] \times \bbZ$.
\begin{enumerate}

\item
We have
\[
X_{i, i + 1}^{(k)} X_i^{(k)} = \tfrac{1}{\frakq} \lambda_0 \lambda_{-1} (X_i^{(k)} \oplus X_{i, i + 1}^{(k)})
\quad \text{and} \quad
X_{i + 1}^{(k)} X_{i, i + 1}^{(k)} = \tfrac{1}{\frakq} \lambda_0 \lambda_{-1} (X_{i + 1}^{(k)} \oplus X_{i, i + 1}^{(k)}).
\]

\item
If $(r, m) \neq (1, 1)$, then
\[
X_i^{(k)} X_{i, i + 1}^{(k)} = \lambda_0 \lambda_1 (X_i^{(k)} \oplus X_{i, i + 1}^{(k)}),
\quad \text{and} \quad
X_{i, i + 1}^{(k)} X_{i + 1}^{(k)} = \lambda_0 \lambda_1 (X_{i + 1}^{(k)} \oplus X_{i, i + 1}^{(k)}).
\]

\item
If $(r, m) = (1, 1)$, then
\begin{align*}
X_i^{(k)} X_{i, i + 1}^{(k)} & = \lambda_0 \lambda_1 (X_i^{(k)} \oplus X_{i, i + 1}^{(k)}) + \tfrac{1}{\frakq} X_i^{(k)},
\\
\intertext{and}
X_{i, i + 1}^{(k)} X_{i + 1}^{(k)} & = \lambda_0 \lambda_1 (X_{i + 1}^{(k)} \oplus X_{i, i + 1}^{(k)}) + \tfrac{1}{\frakq} X_{i + 1}^{(k)}.
\end{align*}

\end{enumerate}
\end{coro}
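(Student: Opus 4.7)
The plan is to combine the two immediately preceding lemmas with the definition of the Hall algebra multiplication
\[
M \cdot N = \sum_{L \in \cX / \simeq} F_{M, N}^L L.
\]
For each of the four products $X_i^{(k)} X_{i, i + 1}^{(k)}$, $X_{i, i + 1}^{(k)} X_i^{(k)}$, $X_{i + 1}^{(k)} X_{i, i + 1}^{(k)}$, $X_{i, i + 1}^{(k)} X_{i + 1}^{(k)}$, the first of the two preceding lemmas (the one classifying the possible cones) tells us which $L \in \cX / \simeq$ can actually give a nonzero structure constant $F_{M, N}^L$. In particular, for $X_{i, i + 1}^{(k)} X_i^{(k)}$ and $X_{i + 1}^{(k)} X_{i, i + 1}^{(k)}$ only the split extension survives regardless of $(r, m)$, while for the remaining two products the split extension always survives and, in the exceptional case $(r, m) = (1, 1)$, a second term $X_i^{(k)}$ (respectively $X_{i + 1}^{(k)}$) also appears.

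Next I would simply insert the explicit evaluations of $F_{M, N}^L$ from the second of the two preceding lemmas. For $X_{i, i + 1}^{(k)} X_i^{(k)}$ this gives $\tfrac{1}{\frakq} \lambda_0 \lambda_{-1} (X_i^{(k)} \oplus X_{i, i + 1}^{(k)})$ and the analogous formula for $X_{i + 1}^{(k)} X_{i, i + 1}^{(k)}$. For $X_i^{(k)} X_{i, i + 1}^{(k)}$ the split term contributes $\lambda_0 \lambda_1 (X_i^{(k)} \oplus X_{i, i + 1}^{(k)})$; in the case $(r, m) \neq (1, 1)$ this is the whole sum, while in the case $(r, m) = (1, 1)$ one adds the contribution $\tfrac{1}{\frakq} X_i^{(k)}$, and similarly for $X_{i, i + 1}^{(k)} X_{i + 1}^{(k)}$. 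This is mechanical once the inputs are in hand.

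There is no genuine obstacle here: the corollary is a bookkeeping assembly of data produced by the two preceding lemmas, and the only thing to double-check is the case split on $(r, m) = (1, 1)$ versus $(r, m) \neq (1, 1)$, to make sure no exceptional extension is overlooked. In particular, one should verify that in the $(r, m) = (1, 1)$ case for $X_{i, i + 1}^{(k)} X_i^{(k)}$ and $X_{i + 1}^{(k)} X_{i, i + 1}^{(k)}$ no additional indecomposable cone appears (the first preceding lemma already rules this out, since the relevant map $X_i^{(k)} \to \Sigma X_{i, i + 1}^{(k)}$ vanishes by the degree count in $\cX$), so those two formulas are uniform in $(r, m)$.
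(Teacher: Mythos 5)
Your proposal is correct and coincides with the paper's own (implicit) argument: the corollary is stated as an immediate consequence of the two preceding lemmas, obtained exactly by feeding the classification of admissible $L$ and the computed values of $F_{M,N}^L$ into the defining formula for the Hall product. Your side remark that $\Hom(X_i^{(k)}, \Sigma X_{i,i+1}^{(k)}) = 0$ in all cases, so that no exceptional term arises for $X_{i,i+1}^{(k)} X_i^{(k)}$ and $X_{i+1}^{(k)} X_{i,i+1}^{(k)}$, is also accurate.
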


As a result we get the following relations.

\begin{prop} \label{prop rel2}
Let $(k, i) \in [1, r] \times \bbZ$.
\begin{enumerate}

\item
If $(r, m) \neq (1, 1)$, then
\begin{gather*}
\tfrac{1}{\lambda_1^2} X_i^{(k)} X_i^{(k)} X_{i + 1}^{(k)} - \tfrac{1}{\lambda_1 \lambda_{-1}} (\frakq + 1) X_i^{(k)} X_{i + 1}^{(k)} X_i^{(k)} + \tfrac{1}{\lambda_{-1}^2} \frakq X_{i + 1}^{(k)} X_i^{(k)} X_{i + 1}^{(k)} = 0
\\
\intertext{and}
\tfrac{1}{\lambda_1^2} X_i^{(k)} X_{i + 1}^{(k)} X_{i + 1}^{(k)} - \tfrac{1}{\lambda_1 \lambda_{-1}} (\frakq + 1) X_{i + 1}^{(k)} X_i^{(k)} X_{i + 1}^{(k)} + \tfrac{1}{\lambda_{-1}^2} \frakq X_{i + 1}^{(k)} X_{i + 1}^{(k)} X_i^{(k)} = 0.
\end{gather*}

\item
If $(r, m) = (1, 1)$, then
\begin{multline*}
\tfrac{1}{\lambda_1^2} X_i^{(k)} X_i^{(k)} X_{i + 1}^{(k)} - \tfrac{1}{\lambda_1 \lambda_{-1}} (\frakq + 1) X_i^{(k)} X_{i + 1}^{(k)} X_i^{(k)} + \tfrac{1}{\lambda_{-1}^2} \frakq X_{i + 1}^{(k)} X_i^{(k)} X_{i + 1}^{(k)}
\\
= \frakq (\frakq + 1) X_i^{(k)}
\end{multline*}
and
\begin{multline*}
\tfrac{1}{\lambda_1^2} X_i^{(k)} X_{i + 1}^{(k)} X_{i + 1}^{(k)} - \tfrac{1}{\lambda_1 \lambda_{-1}} (\frakq + 1) X_{i + 1}^{(k)} X_i^{(k)} X_{i + 1}^{(k)} + \tfrac{1}{\lambda_{-1}^2} \frakq X_{i + 1}^{(k)} X_{i + 1}^{(k)} X_i^{(k)}
\\
= \frakq (\frakq + 1) X_{i + 1}^{(k)}.
\end{multline*}

\end{enumerate}
\end{prop}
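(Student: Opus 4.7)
The plan is to derive both cubic relations by substituting the expression for $X_{i, i+1}^{(k)}$ given by Lemma~\ref{lemm multiply} into the commutation-type identities between $X_{i, i+1}^{(k)}$ and each of $X_i^{(k)}$, $X_{i+1}^{(k)}$ that follow from Corollary~\ref{coro mult}. Lemma~\ref{lemm multiply} realizes $X_{i, i+1}^{(k)}$ as a $\frakq$-commutator of $X_i^{(k)}$ and $X_{i+1}^{(k)}$, so after substitution all instances of $X_{i, i+1}^{(k)}$ are eliminated and only words of length three in $X_i^{(k)}$ and $X_{i+1}^{(k)}$ (plus possibly scalar corrections) remain.

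For the case $(r, m) \neq (1, 1)$, comparing the formulas in the first two items of Corollary~\ref{coro mult} yields the commutation identities
\begin{gather*}
\lambda_{-1} \, X_i^{(k)} X_{i, i+1}^{(k)} = \frakq \lambda_1 \, X_{i, i+1}^{(k)} X_i^{(k)},
\\
\frakq \lambda_1 \, X_{i+1}^{(k)} X_{i, i+1}^{(k)} = \lambda_{-1} \, X_{i, i+1}^{(k)} X_{i+1}^{(k)}.
\end{gather*}
Substituting $X_{i, i+1}^{(k)} = \tfrac{1}{\lambda_1} X_i^{(k)} X_{i+1}^{(k)} - \tfrac{1}{\lambda_{-1}} X_{i+1}^{(k)} X_i^{(k)}$ into both sides of each identity, expanding the four products of three elements that appear, collecting like terms, and finally dividing through by $\lambda_1 \lambda_{-1}$ produces the two desired homogeneous Serre-type relations.

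The case $(r, m) = (1, 1)$ follows the same strategy, but now careful bookkeeping of additional scalar terms is required: Lemma~\ref{lemm multiply} introduces the extra constant $\tfrac{\frakq}{\frakq - 1}$ on the right, and the third item of Corollary~\ref{coro mult} contributes extra summands $\tfrac{1}{\frakq} X_i^{(k)}$ and $\tfrac{1}{\frakq} X_{i+1}^{(k)}$ in the relevant products. When substitution and expansion are carried out as before, these correction terms no longer cancel; combined with the explicit values of $\lambda_1$ and $\lambda_{-1}$ in this case, they assemble into the claimed inhomogeneous right-hand sides $\frakq(\frakq + 1) X_i^{(k)}$ and $\frakq(\frakq + 1) X_{i+1}^{(k)}$. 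The only genuine obstacle is verifying this numerical coincidence of the correction contributions; the rest of the derivation is purely mechanical algebra.
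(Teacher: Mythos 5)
Your proposal is correct and is essentially the paper's own argument: the paper likewise factors the cubic combination as $\tfrac{1}{\lambda_1} X_i^{(k)} \bigl(\tfrac{1}{\lambda_1} X_i^{(k)} X_{i+1}^{(k)} - \tfrac{1}{\lambda_{-1}} X_{i+1}^{(k)} X_i^{(k)}\bigr) - \tfrac{\frakq}{\lambda_{-1}} \bigl(\cdots\bigr) X_i^{(k)}$, identifies the inner bracket with $X_{i,i+1}^{(k)}$ (plus $\tfrac{\frakq}{\frakq-1}$ when $(r,m)=(1,1)$) via Lemma~\ref{lemm multiply}, and then evaluates the resulting products by Corollary~\ref{coro mult}, using $\lambda_1 = \frakq^{-2}$, $\lambda_{-1}=1$ in the $(1,1)$ case. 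Reading this as a substitution into the commutation identity between $X_{i,i+1}^{(k)}$ and $X_i^{(k)}$ rather than as a factorization is only a cosmetic difference.
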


\begin{proof}
We prove the first formula. Observe that
\begin{align*}
& \tfrac{1}{\lambda_1^2} X_i^{(k)} X_i^{(k)} X_{i + 1}^{(k)} - \tfrac{1}{\lambda_1 \lambda_{-1}} (\frakq + 1) X_i^{(k)} X_{i + 1}^{(k)} X_i^{(k)} + \tfrac{1}{\lambda_{-1}^2} \frakq X_{i + 1}^{(k)} X_i^{(k)} X_i^{(k)}
\\
& \qquad = \tfrac{1}{\lambda_1} X_i^{(k)} (\tfrac{1}{\lambda_1} X_i^{(k)} X_{i + 1}^{(k)} - \tfrac{1}{\lambda_{-1}} X_{i + 1}^{(k)} X_i^{(k)})
\\
& \qquad \qquad - \tfrac{1}{\lambda_{-1}} \frakq (\tfrac{1}{\lambda_1} X_i^{(k)} X_{i + 1}^{(k)} - \tfrac{1}{\lambda_{-1}} X_{i + 1}^{(k)} X_i^{(k)}) X_i^{(k)}
\\
& \qquad = \tfrac{1}{\lambda_1} X_i^{(k)} X_{i, i + 1}^{(k)} - \tfrac{1}{\lambda_{-1}} \frakq X_{i, i + 1}^{(k)} X_i^{(k)},
\end{align*}
where the latter equality follows from Lemma~\ref{lemm multiply}. Now the formula follows from Corollary~\ref{coro mult}. The remaining formulas are proved analogously. For the third and forth formulas one also has to use that if $(r, m) = (1, 1)$, then $\lambda_1 = \tfrac{1}{\frakq^2}$ and $\lambda_{-1} = 1$.
\end{proof}

\subsection{Generators}

Let $\cA'$ be the algebra generated by $x_i^{(k)}$, $k \in [1, r]$, $i \in \bbZ$. Observe that, for $\Phi \in \cA'$, we have the degree vector $\bdeg \Phi \in \bbN^{([1, r] \times \bbZ)}$ defined by
\[
(\bdeg \Phi) (i, k) := \deg_{x_i^{(k)}} \Phi.
\]
We need the following.

\begin{lemm} \label{lemm poly}
There exist polynomials $\Phi_{i, j}^{(k)} \in \cA'$ such that $\bdeg \Phi_{i, j}^{(k)} \leq \bdim X_{i, j}^{(k)}$ and
\[
\Phi_{i, j}^{(k)} ((X_s^{(l)})) = X_{i, j}^{(k)},
\]
for all $k$ and $i \leq j$.
\end{lemm}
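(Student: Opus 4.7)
I would argue by induction on $n := j - i$. The base case $n = 0$ is immediate: take $\Phi_{i,i}^{(k)} := x_i^{(k)}$, so that $\bdeg \Phi_{i,i}^{(k)} = \bdim X_i^{(k)}$.

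For the inductive step $n \geq 1$, Proposition~\ref{prop coneX} tells us that the arrow $\alpha_{(i,j),(i,i)}^{(k)}$ has cone $X_{i+1,j}^{(k)}$, giving an exact triangle
\[
X_i^{(k)} \to X_{i,j}^{(k)} \to X_{i+1,j}^{(k)} \to \Sigma X_i^{(k)}.
\]
Consequently the coefficient $c := F_{X_i^{(k)}, X_{i+1,j}^{(k)}}^{X_{i,j}^{(k)}}$ is nonzero, and in the Hall algebra
\[
X_i^{(k)} \cdot X_{i+1,j}^{(k)} = c \cdot X_{i,j}^{(k)} + \sum_{L \neq X_{i,j}^{(k)}} F_L \cdot L,
\]
where the sum ranges over $L$ with $F_L := F_{X_i^{(k)}, X_{i+1,j}^{(k)}}^L \neq 0$. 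The plan is to solve for $X_{i,j}^{(k)}$ once each remaining $L$ has been converted into a polynomial.

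By Corollary~\ref{coro dim vect X}, each such $L$ satisfies $\bdim L \leq \bdim X_i^{(k)} + \bdim X_{i+1,j}^{(k)} = \bdim X_{i,j}^{(k)}$. Since $\cX$ is triangulated, $L \in \cX$, so every indecomposable summand of $L$ is some $X_{s,t}^{(k')}$; the comparison of dimension vectors forces $k' = k$ and $[s,t] \subseteq [i,j]$. For $L \neq X_{i,j}^{(k)}$, no summand can coincide with $X_{i,j}^{(k)}$ itself, so every summand has length strictly less than $n$ and thus possesses a polynomial $\Phi_{s,t}^{(k)}$ by the outer inductive hypothesis.

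To convert each decomposable $L$ into a polynomial $\Phi_L$ with $\bdeg \Phi_L \leq \bdim L$, I rerun (inside the current inductive step) the argument of Proposition~\ref{prop poly}, recursing on $|\End L|$: choose a nontrivial decomposition $L = L_1 \oplus L_2$, expand $L_1 \cdot L_2 = v_L L + \sum_{L' \neq L} v_{L'} L'$ with $v_L \neq 0$; then $\bdim L' \leq \bdim L$ and $|\End L'| < |\End L|$, while every indecomposable summand of $L'$ again has length $< n$ and is expressible by the outer hypothesis. Having produced each $\Phi_L$, set
\[
\Phi_{i,j}^{(k)} := \tfrac{1}{c}\Bigl( x_i^{(k)} \cdot \Phi_{i+1,j}^{(k)} - \sum_{L \neq X_{i,j}^{(k)}} F_L \cdot \Phi_L \Bigr);
\]
the bound $\bdeg \Phi_{i,j}^{(k)} \leq \bdim X_{i,j}^{(k)}$ follows from $\bdim X_i^{(k)} + \bdim X_{i+1,j}^{(k)} = \bdim X_{i,j}^{(k)}$ together with the established bounds on $\Phi_{i+1,j}^{(k)}$ and each $\Phi_L$. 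The only real delicacy is juggling the outer induction on $n$ with the inner one on $|\End L|$; the decisive fact that keeps both alive is that Corollary~\ref{coro dim vect X} forces every object met during the secondary recursion to have $\bdim \leq \bdim X_{i,j}^{(k)}$, so all its indecomposable summands have length strictly less than $n$ and so are already controlled by the outer hypothesis.
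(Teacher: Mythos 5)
Your overall strategy---induction on $n = j - i$, expanding the product $X_i^{(k)} \cdot X_{i+1,j}^{(k)}$ and solving for $X_{i,j}^{(k)}$---is the same as the paper's, and your use of the subadditivity of $\bdim_\cX$ to constrain the objects $L$ occurring in that product is correct. The gap lies in the secondary recursion on $|\End (L)|$. Among the $L \neq X_{i,j}^{(k)}$ with $F_{X_i^{(k)}, X_{i+1,j}^{(k)}}^L \neq 0$ is the split term $L = X_i^{(k)} \oplus X_{i+1,j}^{(k)}$, whose dimension vector equals $\bdim X_{i,j}^{(k)}$. When your inner recursion processes this $L$ with the decomposition $L_1 = X_i^{(k)}$, $L_2 = X_{i+1,j}^{(k)}$, the expansion of $L_1 \cdot L_2$ is exactly the product you started from, and it contains $L' = X_{i,j}^{(k)}$ with nonzero coefficient. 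Your assertion that ``every indecomposable summand of $L'$ again has length $< n$'' therefore fails: $X_{i,j}^{(k)}$ is an indecomposable of length exactly $n$, it is not covered by the outer hypothesis, and the recursion is circular rather than terminating.

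The paper avoids this by treating the split summand through the product in the opposite order: since $\Hom (X_i^{(k)}, \Sigma X_{i+1,j}^{(k)}) = 0$, one gets $X_{i+1,j}^{(k)} X_i^{(k)} = u_0 (X_i^{(k)} \oplus X_{i+1,j}^{(k)})$ with $u_0 \neq 0$ and \emph{no other terms}, so the direct sum is expressed by $\tfrac{1}{u_0}\Phi_{i+1,j}^{(k)} x_i^{(k)}$ without ever re-encountering $X_{i,j}^{(k)}$. (The paper also identifies the only remaining term as $v_2 X_{i+1,j-1}^{(k)}$, present exactly when $r = 1$ and $m = j - i$, which is an indecomposable of smaller length and hence handled by the outer induction.) Your argument can be repaired by inserting this observation in place of the generic appeal to Proposition~\ref{prop poly}, but as written the inner recursion does not close.
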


\begin{proof}
We prove the claim by induction of $j - i$. If $j = i$, the claim is obvious ($\Phi_{i, i}^{(k)} = x_i^{(k)}$). Similarly, if $j = i + 1$, then by Lemma~\ref{lemm multiply} we may take
\[
\Phi_{i, i + 1}^{(k)} := \tfrac{1}{\lambda_1} x_i^{(k)} x_{i + 1}^{(k)} - \tfrac{1}{\lambda_{-1}} x_{i + 1}^{(k)} x_i^{(k)} - \delta_{r m, 1} \tfrac{\frakq}{\frakq - 1}.
\]

Now assume $j > i + 1$. Similarly as in the previous subsection one shows there exist $v_0, v_1, v_2, u_0 \in \bbQ$ such that
\begin{gather*}
X_i^{(k)} X_{i + 1, j}^{(k)} = v_0 (X_i^{(k)} \oplus X_{i + 1, j}^{(k)}) + v_1 X_{i, j}^{(k)} + v_2 X_{i + 1, j - 1}^{(k)}
\\
\intertext{and}
X_{i + 1, j}^{(k)} X_i^{(k)} = u_0 (X_i^{(k)} \oplus X_{i + 1, j}^{(k)}),
\end{gather*}
where $v_0, v_1, u_0 \neq 0$ (and $v_2 \neq 0$ if and only if $r = 1$ and $m = j - i$). By the inductive hypothesis
\[
X_{i + 1, j}^{(k)} = \Phi_{i + 1, j}^{(k)} ((X_s^{(l)})) \qquad \text{and} \qquad X_{i + 1, j - 1}^{(k)} = \Phi_{i + 1, j - 1}^{(k)} ((X_s^{(l)}))
\]
for $\Phi_{i + 1, j}^{(k)}, \Phi_{i + 1, j - 1}^{(k)} \in \cA'$ such that $\bdeg \Phi_{i + 1, j}^{(k)} \leq \bdim X_{i + 1, j}^{(k)}$ and $\bdeg \Phi_{i + 1, j - 1}^{(k)} \leq \bdim \Phi_{i + 1, j - 1}^{(k)}$. Now, if we take
\[
\Phi_{i, j}^{(k)} := \tfrac{1}{v_1} x_i^{(k)} \Phi_{i + 1, j}^{(k)} - \tfrac{v_0}{v_1 u_0} \Phi_{i + 1, j}^{(k)} x_i^{(k)} - \tfrac{v_2}{v_1} \Phi_{i + 1, j - 1}^{(k)},
\]
the claim follows.
\end{proof}

Using Lemma~\ref{lemm poly} and Proposition~\ref{prop poly} we get the following.

\begin{prop} \label{prop epi}
For each $M \in \cX$ there exits a polynomial $\Phi_M \in \cA'$ such that $\bdeg \Phi_M \leq \bdim M$ and
\[
\Phi_M ((X_i^{(k)})) = M. \eqno \qed
\]
\end{prop}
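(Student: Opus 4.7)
The plan is to recognize that this proposition is essentially a direct invocation of Proposition~\ref{prop poly} applied to the category $\cX$, once the input hypothesis about indecomposables is in hand.

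First, I would set up Proposition~\ref{prop poly} for the category $\cX$ by taking $I := \emptyset$ and $J := [1,r] \times \bbZ$, so that there is no $z$-part and the free algebra $\cA$ of Proposition~\ref{prop poly} becomes identified with $\cA'$ via $x_{(k,i)} \leftrightarrow x_i^{(k)}$. The distinguished generators $X_j$ demanded by that proposition are in our setting exactly the simple objects $X_i^{(k)}$, each of which is uniquely determined (up to isomorphism) by its dimension vector. The subadditivity hypothesis required of $\bdim$ is guaranteed by Corollary~\ref{coro dim vect X}, and the additivity on direct sums, $\bdim_\cX(M \oplus N) = \bdim_\cX M + \bdim_\cX N$, is built into the very definition of $\bdim_\cX$.

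Next, I would verify that the indecomposable-input hypothesis of Proposition~\ref{prop poly} is met. Every indecomposable object of $\cX$ is of the form $X_{i,j}^{(k)}$ for some $k \in [1,r]$ and $i \leq j$, so Lemma~\ref{lemm poly} exactly supplies polynomials $\Phi_{i,j}^{(k)} \in \cA'$ with $\bdeg \Phi_{i,j}^{(k)} \leq \bdim X_{i,j}^{(k)}$ and $\Phi_{i,j}^{(k)}((X_s^{(l)})) = X_{i,j}^{(k)}$. This is precisely the hypothesis of Proposition~\ref{prop poly}.

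With both the general framework and the indecomposable case in place, Proposition~\ref{prop poly} then propagates the existence of $\Phi_M$ to every object $M \in \cX$ by induction on $|\End(M)|$, giving the desired $\Phi_M \in \cA'$ with $\bdeg \Phi_M \leq \bdim M$ and $\Phi_M((X_i^{(k)})) = M$. Since there is no genuine obstacle here beyond a bookkeeping translation between the abstract setup of Proposition~\ref{prop poly} and the concrete category $\cX$, the proof is essentially a one-line appeal to Proposition~\ref{prop poly} together with Lemma~\ref{lemm poly}.
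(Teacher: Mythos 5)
Your proof is correct and follows exactly the paper's route: the paper derives Proposition~\ref{prop epi} precisely by combining Lemma~\ref{lemm poly} (the indecomposable case) with Proposition~\ref{prop poly} (propagation to all objects), and your verification of the hypotheses of Proposition~\ref{prop poly} for $\cX$ (with $I = \emptyset$, subadditivity from Corollary~\ref{coro dim vect X}, and additivity on direct sums by definition) is the correct, if largely implicit, bookkeeping behind that one-line appeal.
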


Consequently, we we obtain the following.

\begin{coro} \label{coro gen}
The algebra $\cH'$ is generated by $X_i^{(k)}$, $k \in [1, r]$, $i \in \bbZ$. \qed
\end{coro}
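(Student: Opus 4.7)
The statement is essentially a direct consequence of Proposition~\ref{prop epi}, so the plan is very short. Recall that by the construction of the Hall algebra in subsection~\ref{subsect Hall}, $\cH'$ is, as a $\bbQ$-vector space, spanned by the isomorphism classes of objects of $\cX$, i.e.\ by the elements of $\cX / \simeq$. Consequently, to show that a subset $S \subseteq \cH'$ generates $\cH'$ as an algebra, it suffices to check that every $M \in \cX / \simeq$ lies in the subalgebra generated by $S$.

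Applying this to $S = \{ X_i^{(k)} : k \in [1, r],\, i \in \bbZ \}$, the plan is simply to invoke Proposition~\ref{prop epi}: for each $M \in \cX$ (in particular, for each $M \in \cX / \simeq$) there exists a polynomial $\Phi_M \in \cA'$ in the generators $x_i^{(k)}$ such that $\Phi_M((X_i^{(k)})) = M$ in $\cH'$. Hence $M$ belongs to the subalgebra generated by the $X_i^{(k)}$, and the corollary follows. The bound $\bdeg \Phi_M \leq \bdim M$ is not needed for the statement itself; it was the crucial ingredient for the inductive construction in Proposition~\ref{prop poly}, but here we only use the existence of $\Phi_M$.

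There is no real obstacle here: the work has already been done in Lemma~\ref{lemm poly} (where the polynomials $\Phi_{i,j}^{(k)}$ for the indecomposables of $\cX$ were built explicitly from Lemma~\ref{lemm multiply} together with the multiplication formulas for $X_i^{(k)} X_{i+1,j}^{(k)}$ and $X_{i+1,j}^{(k)} X_i^{(k)}$) and in Proposition~\ref{prop poly} (which upgrades polynomial expressions for indecomposables to polynomial expressions for arbitrary objects, using induction on $|\End(M)|$ and the multiplication in the Hall algebra). The corollary is therefore a one-line consequence.
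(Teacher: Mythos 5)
Your proof is correct and matches the paper's own reasoning: the corollary is stated with an immediate \qed precisely because it follows from Proposition~\ref{prop epi} in the way you describe, namely that every basis element $M \in \cX / \simeq$ of $\cH'$ is a polynomial in the $X_i^{(k)}$. Your remark that the degree bound $\bdeg \Phi_M \leq \bdim M$ is not needed for this particular corollary (though it is used later in the proof of Theorem~\ref{theo main}) is also accurate.
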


\subsection{Presentation} \label{sub pres X}

This subsection is devoted to the proof of the following.

\begin{theo} \label{theo main}
Let $\cA'$ be the algebra generated by $x_i^{(k)}$, $k \in [1, r]$, $i \in \bbZ$, and $\cI'$ be the ideal in $\cA'$ generated by the following elements:
\begin{enumerate}

\item
$\tfrac{1}{\lambda_{j - i}^{(k, l)}} x_i^{(k)} x_j^{(l)} - \tfrac{1}{\lambda_{i - j}^{(l, k)}} x_j^{(l)} x_i^{(k)}$, $(k, i), (l, j) \in [1, r] \times \bbZ$, $(l, j) \neq (k, i), (k, i + 1), (k, i - 1), (k + 1, i + \delta_{k, r} m), (k - 1, i - \delta_{k, 1} m)$,

\addtocounter{enumi}{-1}
\renewcommand{\theenumi}{\arabic{enumi}'}

\item
$\tfrac{1}{\lambda_{\delta_{k, r} m}^{(k, k + 1)}} x_i^{(k)} x_{i + \delta_{k, r} m}^{(k + 1)} - \tfrac{1}{\lambda_{- \delta_{k, r} m}^{(k + 1, k)}} x_{i + \delta_{k, r} m}^{(k + 1)} x_i^{(k)} - \tfrac{\frakq}{\frakq - 1}$, for $(k, i) \in [1, r] \times \bbZ$, if $(r, m) \neq (1, 1)$,

\renewcommand{\theenumi}{\arabic{enumi}}

\item
$\tfrac{1}{\lambda_1^2} x_i^{(k)} x_i^{(k)} x_{i + 1}^{(k)} - \tfrac{1}{\lambda_1 \lambda_{-1}} (\frakq + 1) x_i^{(k)} x_{i + 1}^{(k)} x_i^{(k)} + \tfrac{1}{\lambda_{-1}^2} \frakq x_{i + 1}^{(k)} x_i^{(k)} x_i^{(k)} - \delta_{r m, 1} \frakq (\frakq + 1) x_i^{(k)}$, for $(k, i) \in [1, r] \times \bbZ$,

\item
$\tfrac{1}{\lambda_1^2} x_i^{(k)} x_{i + 1}^{(k)} x_{i + 1}^{(k)} - \tfrac{1}{\lambda_1 \lambda_{-1}} (\frakq + 1) x_{i + 1}^{(k)} x_i^{(k)} x_{i + 1}^{(k)} + \tfrac{1}{\lambda_{-1}^2} \frakq x_{i + 1}^{(k)} x_{i + 1}^{(k)} x_i^{(k)} - \delta_{r m, 1} \frakq (\frakq + 1) x_{i + 1}^{(k)}$, for $(k, i) \in [1, r] \times \bbZ$.

\end{enumerate}
If $\cB' := \cA' / \cI'$, then the homomorphism $\xi \colon \cA' \to \cH'$ given by $\xi (x_i^{(k)}) := X_i^{(k)}$, $k \in [1, r]$, $i \in \bbZ$, induces an isomorphism $\ol{\xi} \colon \cB' \to \cH'$.
\end{theo}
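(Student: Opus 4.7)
The plan is to prove that $\ol\xi$ is an isomorphism in three stages: well-definedness, surjectivity, and injectivity. Well-definedness reduces to checking $\xi(\cI') = 0$, and each family of generators of $\cI'$ has been established as a genuine identity in $\cH'$: families (0) and (0') are Proposition~\ref{prop rel1}, while families (1) and (2) are Proposition~\ref{prop rel2}. Surjectivity is then immediate from Corollary~\ref{coro gen}, which states that $\cH'$ is generated by the $X_i^{(k)}$.

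The substance of the argument is injectivity. The key ingredients are already in place: by Proposition~\ref{prop epi}, for each $M \in \cX/\simeq$ there is a polynomial $\Phi_M \in \cA'$ with $\xi(\Phi_M) = M$ and $\bdeg \Phi_M \leq \bdim M$. Let $\ol\Phi_M$ denote its class in $\cB'$. Since $\ol\xi(\ol\Phi_M) = M$ and $\{M\}_{M \in \cX/\simeq}$ is a $\bbQ$-basis of $\cH'$, the family $\{\ol\Phi_M\}_{M \in \cX/\simeq}$ is automatically linearly independent in $\cB'$. Thus injectivity reduces to showing that this family spans $\cB'$, for then $\ol\xi$ carries a basis to a basis.

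To establish spanning, filter $\cA'$ by the degree vector $\bdeg$ with the componentwise order on $\bbN^{([1,r]\times\bbZ)}$, and let $\cB'_{\leq \bd}$ be the image of the corresponding subspace of $\cA'$ in $\cB'$. We prove by induction on $\bd$ that $\cB'_{\leq \bd}$ is spanned by those $\ol\Phi_M$ with $\bdim M \leq \bd$. The base case $\bd = 0$ is trivial. For the inductive step, take a monomial $\Psi$ with $\bdeg \Psi = \bd$. The commutation relations (0) and (0') allow us to transpose adjacent factors, producing error terms of strictly smaller degree (handled by induction) that are controlled by the subadditivity of $\bdim_\cX$ (Corollary~\ref{coro dim vect X}). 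After normal-ordering the factors, the only remaining obstruction is the patterns $x_i^{(k)} x_{i+1}^{(k)}$ for adjacent indices, which are reduced by relations (1) and (2); these were derived precisely to mimic the inductive construction of $\Phi_{i, i+1}^{(k)}$ and more generally of $\Phi_{i, j}^{(k)}$ in Lemma~\ref{lemm poly}. Matching the resulting normal form against the inductive construction of $\Phi_M$ in Propositions~\ref{prop poly} and~\ref{prop epi} identifies $\ol\Psi$ as $\ol\Phi_M$ for some $M$ with $\bdim M = \bd$, modulo terms of strictly smaller degree.

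The main obstacle is executing this normal-form reduction rigorously: one must verify that the rewriting system obtained from $\cI'$ actually terminates, and that it produces the polynomials $\Phi_M$ rather than creating extra linearly independent elements. The coefficients $\lambda_i^{(k,l)}$ make the bookkeeping delicate, but the match is essentially forced: both the construction of $\Phi_M$ (which proceeds by iteratively applying the identities for the products $X_i^{(k)} X_j^{(l)}$) and the rewriting process are governed by the same identities encoded in $\cI'$, so the two constructions agree step by step. Once the spanning is established, injectivity follows and $\ol\xi$ is the desired isomorphism.
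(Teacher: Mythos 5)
Your overall strategy coincides with the paper's: well-definedness from Propositions~\ref{prop rel1} and~\ref{prop rel2}, surjectivity from Corollary~\ref{coro gen}, and injectivity via a filtration of $\cB'$ by degree vectors together with a normal-form/spanning argument compared against the basis of $\cH'$ given by direct sums of the $X_{i,j}^{(k)}$. (A minor simplification the paper makes, which you could adopt: rather than identifying each reduced monomial with a specific $\ol\Phi_M$, it suffices to count --- the normal-form monomials of degree vector at most $\bd$ are in bijection with the basis of $\cH'_\bd$, so $\dim_\bbQ \cB'_\bd \leq \dim_\bbQ \cH'_\bd$, and since $\ol{\xi}_\bd$ is surjective and $\cH'_\bd$ is finite dimensional, injectivity follows. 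This avoids the delicate "matching" step you describe.)

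However, there is a genuine gap, and you have in effect flagged it yourself: the entire mathematical substance of the injectivity proof is the reduction to normal form, and your proposal only asserts that it works ("the match is essentially forced") rather than proving it. This is precisely the content of Proposition~\ref{prop span} and Proposition~\ref{prop reduction} in the paper, whose proof occupies the bulk of subsection~\ref{sub pres X}. Two points in particular cannot be waved away. First, termination: relations (1) and (2) replace a monomial by other monomials of the \emph{same} degree vector, so the induction on $\bdeg$ alone does not terminate; the paper introduces a second, inner well-order (the inverse lexicographic order on monomials of a fixed degree vector) and must check in every case that the new monomials are strictly smaller. Second, your claim that after commuting factors "the only remaining obstruction is the patterns $x_i^{(k)} x_{i+1}^{(k)}$" understates the combinatorics: the problematic products are of interval monomials $x_{i_1,j_1}^{(k)} x_{i_2,j_2}^{(k)}$ with overlapping or nested index ranges, and the cases $i_1 < i_2 < j_1$ and $i_1 = i_2$ require genuinely different manipulations (the latter needs the separate Lemma~\ref{lemm exchangeprim}, itself a nontrivial chain of applications of the cubic relations). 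Until this case analysis is carried out, the spanning claim --- and hence injectivity --- is unproven.
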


Propositions~\ref{prop rel1} and~\ref{prop rel2} imply that $\ol{\xi}$ is a well-defined homomorphism. Next, Corollary~\ref{coro gen} implies that $\ol{\xi}$ is an epimorphism. It remains to show that $\ol{\xi}$ is a monomorphism.

Let $\bd \in \bbN^{([1, r] \times \bbZ)}$. Denote by $\cA_\bd'$ the subspace of $\cA'$ consisting of $\Phi \in \cA'$ such that $\bdeg \Phi \leq \bd$. Let $\cB_\bd'$ be the image of $\cA_\bd'$ under the canonical epimorphism $\cA' \to \cB'$. Finally, let $\cH_\bd'$ be the subspace of $\cH'$ spanned by $M$ such that $\bdim M \leq \bd$. Corollary~\ref{coro dim vect X} implies that $\xi$ induces a map $\ol{\xi}_\bd \colon \cB_\bd' \to \cH_\bd'$.

Since $\cB' = \bigcup_\bd \cB_\bd'$, in order to show that $\ol{\xi}$ is a monomorphism, it suffices to show that $\ol{\xi}_\bd$ is a monomorphism, for each $\bd$. Note that $\ol{\xi}_\bd$ is an epimorphism by Proposition~\ref{prop epi}. Since $\dim_\bbQ \cH_\bd' < \infty$, we finish the proof if we show $\dim_\bbQ \cB_\bd' \leq \dim_\bbQ \cH_\bd'$.

For $k \in [1, r]$ and $i \leq j$, let
\[
x_{i, j}^{(k)} := x_i^{(k)} \cdots x_j^{(k)}.
\]
We also put $x_{i, i - 1}^{(k)} := 1$. Observe that $\cH_\bd'$ has a basis formed by the objects
\[
X_{i_1, j_1}^{(k_1)} \oplus X_{i_2, j_2}^{(k_2)} \oplus \cdots \oplus X_{i_n, j_n}^{(k_n)},
\]
$n \in \bbN$, such that
\[
\bdim (X_{i_1, j_1}^{(k_1)} \oplus X_{i_2, j_2}^{(k_2)} \oplus \cdots \oplus X_{i_n, j_n}^{(k_n)}) \leq \bd,
\]
$k_1 \geq k_2 \geq \cdots \geq k_n$, if $k_t = k_{t + 1}$, then $i_t \geq i_{t + 1}$, and if $k_t = k_{t + 1}$ and $i_t = i_{t + 1}$, then $j_t \geq j_{t + 1}$. Thus in order to show that $\dim_\bbQ \cB_\bd' \leq \dim_\bbQ \cH_\bd'$, it suffices to prove the following.

\begin{prop} \label{prop span}
Let $\bd \in \bbN^{(\bbZ)}$. Then $\cB_\bd'$ is spanned by the images of the monomials
\[
x_{i_1, j_1}^{(k_1)} x_{i_2, j_2}^{(k_2)} \cdots x_{i_n, j_n}^{(k_n)} \in \cA_\bd',
\]
$n \in \bbN$, such that $k_1 \geq k_2 \geq \cdots \geq k_n$, if $k_t = k_{t + 1}$, then $i_t \geq i_{t + 1}$, and if $k_t = k_{t + 1}$ and $i_t = i_{t + 1}$, then $j_t \geq j_{t + 1}$.
\end{prop}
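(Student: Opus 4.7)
The plan is a double induction: outer on $|\bd|$ and inner on a lexicographic monomial order $\preceq$ on the top component of $\cA'_\bd$ (the monomials of exact multidegree $\bd$, all of length $n = |\bd|$). The outer induction absorbs the lower-degree corrections produced by relation~$(1')$ (a constant) and, when $rm = 1$, by the cubic relations~(2) and~(3) (a linear term): any element of $\cA'_{\bd'}$ with $\bd' \leq \bd$ and $\bd' \neq \bd$ already lies, by the outer inductive hypothesis, in the span of canonical monomials inside $\cB'_{\bd'} \subseteq \cB'_\bd$.

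For the inner induction, I would define $\preceq$ as the left-to-right lexicographic order obtained from a total order on pairs $(b, a) \in [1, r] \times \bbZ$ arranged so that canonical monomials are the $\preceq$-minimal representatives of their $\cI'$-cosets on the top component; concretely, $(b, a) \prec (b', a')$ iff $b > b'$, or $b = b'$ and $a > a'$. It then suffices to show that every non-canonical monomial $m$ of multidegree $\bd$ is, modulo~$\cI'$, a linear combination of monomials strictly $\preceq$-smaller than $m$, together with elements of $\cA'_{\bd'}$ for some $\bd' \leq \bd$, $\bd' \neq \bd$.

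The reduction step is a case analysis on the leftmost position where the canonical block structure fails. When the leftmost failure is a pair $x_i^{(k)} x_j^{(l)}$ for which relation~(1) applies, a scalar swap transposes the pair and strictly decreases the key. When the pair is the exception covered by~$(1')$, one still has a scalar swap, now with an additional constant term absorbed by the outer induction. The subtle case is $l = k$ and $|j - i| = 1$, since both $x_i^{(k)} x_{i+1}^{(k)}$ and $x_{i+1}^{(k)} x_i^{(k)}$ occur inside canonical monomials --- as the interior of a block $[i, i+1]$ and as two adjacent singleton blocks, respectively. Here the block-level violation has length three, e.g.\ $x_i^{(k)} x_{i+1}^{(k)} x_{i+1}^{(k)}$ (blocks $[i, i+1]$ then $[i+1, i+1]$, in the wrong block order) or $x_i^{(k)} x_i^{(k)} x_{i+1}^{(k)}$ (blocks $[i, i]$ then $[i, i+1]$), and the cubic relation~(3) or~(2) rewrites it as a combination of two canonical cubics ($x_{i+1}^{(k)} x_i^{(k)} x_{i+1}^{(k)}$ and $x_{i+1}^{(k)} x_{i+1}^{(k)} x_i^{(k)}$, resp.\ $x_i^{(k)} x_{i+1}^{(k)} x_i^{(k)}$ and $x_{i+1}^{(k)} x_i^{(k)} x_i^{(k)}$) plus, when $rm = 1$, a lower-degree term.

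The principal obstacle is the calibration between the block-level canonicality condition and the generator-level lex order: one must verify that in each cubic relation the unique non-canonical summand is strictly $\preceq$-largest. This boils down to the observation that the non-canonical cubic begins with the $\prec$-largest pair $(k, i)$, while both canonical cubics in the same relation begin with the $\prec$-smaller pair $(k, i+1)$. Once this is in hand, termination of the reduction is automatic since $\preceq$ is a well-order on the finite set of multidegree-$\bd$ monomials, and the inner induction closes.
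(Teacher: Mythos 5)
Your skeleton coincides with the paper's: filter by the degree vector so that the constant and linear correction terms in relations (1'), (2), (3) are absorbed by induction on $\bd$, put the left-to-right lexicographic order on monomials of a fixed degree vector in which a larger generator index occurring earlier means a smaller monomial, and show that every non-canonical monomial is congruent, modulo lower degree, to a combination of strictly smaller ones. The cases you treat explicitly are also handled correctly: genuine commutations strictly decrease the order, and at a junction of the form $x_i^{(k)} x_i^{(k)} x_{i + 1}^{(k)}$ or $x_i^{(k)} x_{i + 1}^{(k)} x_{i + 1}^{(k)}$ both summands produced by the cubic relations are indeed strictly smaller (though your claim that both begin with the letter of index $i + 1$ fails for the first relation, whose summand $x_i^{(k)} x_{i + 1}^{(k)} x_i^{(k)}$ begins with $x_i^{(k)}$; it is still smaller, by the second position).

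The gap is in the remaining configurations, namely two blocks $x_{i_1, j_1}^{(k)} x_{i_2, j_2}^{(k)}$ that overlap: $i_1 < i_2 < j_1$, or $i_1 = i_2$ with $i_1 < j_1 < j_2$ (Cases 3 and 4 in the paper's proof of Proposition~\ref{prop reduction}). There the leftmost letter-level violation is a window $x_s^{(k)} x_{s + 1}^{(k)} x_s^{(k)}$, and the only applicable rewrite, $x_s^{(k)} x_{s + 1}^{(k)} x_s^{(k)} \equiv x_s^{(k)} x_s^{(k)} x_{s + 1}^{(k)} + x_{s + 1}^{(k)} x_s^{(k)} x_s^{(k)}$, produces a first summand that is strictly \emph{larger} than the left-hand side in your order (they first differ in the second letter, where the left-hand side carries the larger index). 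For instance $x_{1, 2}^{(k)} x_{1, 3}^{(k)} = x_1^{(k)} x_2^{(k)} x_1^{(k)} x_2^{(k)} x_3^{(k)}$ rewrites to a combination containing $x_1^{(k)} x_1^{(k)} x_2^{(k)} x_2^{(k)} x_3^{(k)}$, which is larger and still non-canonical, so your inner inductive hypothesis cannot be invoked and naive iteration need not terminate. This is exactly why the paper does not argue one window at a time: Cases 3 and 4 of Proposition~\ref{prop reduction}, together with the auxiliary Lemma~\ref{lemm exchangeprim}, carry out carefully ordered multi-step rewriting sequences whose intermediate terms may increase but whose end result is a sum of strictly smaller monomials. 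Your proposal contains no substitute for this, and the obstacle you single out (checking which summand of each cubic relation is largest) is not where the difficulty lies.
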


By convention the product $x_{i_1, j_1}^{(k_1)} x_{i_2, j_2}^{(k_2)} \cdots x_{i_n, j_n}^{(k_n)}$ equals $1$ if $n = 0$. We fix $\bd$ for the rest of the subsection.

We introduce the following notation. If $\Phi, \Phi_1, \ldots, \Phi_l \in \cA_\bd'$ are monomials of the same degree vector, then we write
\[
\Phi \equiv \Phi_1 + \cdots + \Phi_l,
\]
if there exist $v_1, \ldots, v_l \in \bbQ$ and $\Psi \in \cA$ such that $\bdeg \Psi < \bdeg \Phi$ and
\[
\Phi + \cI' = v_1 \Phi_1 + \cdots + v_l \Phi_l + \Psi + \cI'.
\]
Note that $\equiv$ behaves like a congruence with respect to the multiplication, i.e.\ if $\Phi \equiv \Phi_1 + \cdots + \Phi_l$, and $\Psi'$ and $\Psi''$ are monomials, then
\[
\Psi' \Phi \Psi'' \equiv \Psi' \Phi_1 \Psi'' + \cdots + \Psi' \Phi_l \Psi''.
\]
Moreover $\equiv$ is transitive in the following sense: if $\Phi \equiv \Phi_1 + \cdots + \Phi_l$ and, for each $1 \leq i \leq l$, $\Phi_i \equiv \Phi_{i, 1} + \cdots + \Phi_{i, l_i}$, then
\[
\Phi \equiv \sum_{i = 1}^l \sum_{j = 1}^{l_i} \Phi_{i, j}.
\]

Using this notation we may reformulate the relations from Theorem~\ref{theo main} in the following way.

\begin{lemm} \label{lemm exchange}
Let $k \in [1, r]$ and $i \in \bbZ$. Then we have the following:
\begin{enumerate}

\item \label{exchange one}
$x_i^{(k)} x_j^{(l)} \equiv x_j^{(l)} x_i^{(k)}$, if $(l, j) \neq (k, i \pm 1)$;

\item \label{exchange two}
$x_i^{(k)} x_i^{(k)} x_{i + 1}^{(k)} \equiv x_i^{(k)} x_{i + 1}^{(k)} x_i^{(k)} + x_{i + 1}^{(k)} x_i^{(k)} x_i^{(k)}$;

\addtocounter{enumi}{-1}
\renewcommand{\theenumi}{\arabic{enumi}'}

\item \label{exchange two prim}
$x_i^{(k)} x_{i + 1}^{(k)} x_i^{(k)} \equiv x_i^{(k)} x_i^{(k)} x_{i + 1}^{(k)} + x_{i + 1}^{(k)} x_i^{(k)} x_i^{(k)}$;

\renewcommand{\theenumi}{\arabic{enumi}}

\item \label{exchange three}
$x_i^{(k)} x_{i + 1}^{(k)} x_{i + 1}^{(k)} \equiv x_{i + 1}^{(k)} x_i^{(k)} x_{i + 1}^{(k)} + x_{i + 1}^{(k)} x_{i + 1}^{(k)} x_i^{(k)}$;

\addtocounter{enumi}{-1}
\renewcommand{\theenumi}{\arabic{enumi}'}

\item \label{exchange three prim}
$x_{i + 1}^{(k)} x_i^{(k)} x_{i + 1}^{(k)} \equiv x_i^{(k)} x_{i + 1}^{(k)} x_{i + 1}^{(k)} + x_{i + 1}^{(k)} x_{i + 1}^{(k)} x_i^{(k)}$. \qed

\end{enumerate}
\end{lemm}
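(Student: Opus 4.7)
The plan is that each of the five congruences reads off directly from a single generator of the ideal $\cI'$, simply by solving that relation for one of its top-degree monomials and observing that the leftover terms are strictly smaller in the componentwise order on degree vectors. Since $\equiv$ permits arbitrary rational rescaling on the right, the precise scalar factors that appear in the defining relations of $\cI'$ are irrelevant once one checks that they are nonzero.

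For part~\eqref{exchange one} three sub-cases arise. If $(l, j) = (k, i)$ the assertion is trivial. If $(l, j)$ avoids the full forbidden set $\{(k, i), (k, i \pm 1), (k + 1, i + \delta_{k, r} m), (k - 1, i - \delta_{k, 1} m)\}$, then the first family of generators of $\cI'$ yields $x_i^{(k)} x_j^{(l)} \equiv x_j^{(l)} x_i^{(k)}$ with no correction at all. The two remaining positions $(l, j) = (k \pm 1, i \pm \delta m)$ are covered, when $(r, m) \neq (1, 1)$, by the $(1')$ family applied at indices $(k - 1, i)$ or $(r, i - m)$, the only extra term being the constant $\tfrac{\frakq}{\frakq - 1}$, whose degree vector is $0$ and hence strictly below that of any length-two monomial. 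When $(r, m) = (1, 1)$ the exceptional positions collapse onto $(k, i \pm 1)$, which the hypothesis explicitly excludes.

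Parts \eqref{exchange two}, \eqref{exchange two prim}, \eqref{exchange three}, \eqref{exchange three prim} each come from a single cubic generator of $\cI'$. The coefficients of the three cubic monomials in each generator are manifestly nonzero rationals (products of powers of $\lambda_{\pm 1}$ with $\pm 1$, $\frakq$, or $\frakq + 1$), so the relation can be solved for any one of them, expressing that monomial as a $\bbQ$-linear combination of the other two up to a correction equal to $\delta_{r m, 1} \frakq (\frakq + 1) x_i^{(k)}$ or $\delta_{r m, 1} \frakq (\frakq + 1) x_{i + 1}^{(k)}$. This correction is either zero or a single generator, so its degree vector has $|\bdeg| = 1$ and is componentwise strictly dominated by the degree vector ($|\bdeg| = 3$) of each cubic monomial involved; the strict inequality $\bdeg \Psi < \bdeg \Phi$ required in the definition of $\equiv$ therefore holds.

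There is no essential difficulty; the argument is pure bookkeeping on degree vectors. The only point that deserves a moment of attention is, in part~\eqref{exchange one}, matching the $(1')$ family with the two exceptional positions, which reduces to the cyclic convention $k + 1 \equiv 1 \pmod r$ together with the fact that $\delta_{k, r}$ depends only on the cyclic index.
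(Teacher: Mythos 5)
Your proof is correct and follows exactly the route the paper intends: the paper gives no argument beyond a \qed and the remark that \eqref{exchange two prim} and \eqref{exchange three prim} come from the same generators as \eqref{exchange two} and \eqref{exchange three}, since the lemma is just a restatement of the generators of $\cI'$ after solving each relation for one top-degree monomial and noting that the leftover term (a constant or a single variable) has componentwise strictly smaller degree vector. Your handling of the two exceptional positions $(k\pm 1, i\pm\delta m)$ via the $(1')$ family, including the collapse onto $(k, i\pm 1)$ when $(r,m)=(1,1)$, is exactly the bookkeeping the paper leaves implicit.
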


Note that~\eqref{exchange two prim} and~\eqref{exchange three prim} follow from the same relations as~\eqref{exchange two} and~\eqref{exchange three}, respectively.

For each degree vector $\bd$ we introduce the inverse lexicographic order on the set of monomials of the degree vector $\bd$, i.e. if
\[
\Phi_1 = x_{i_1}^{(k_1)} \cdots x_{i_n}^{(k_n)} \qquad \text{and} \qquad \Phi_2 = x_{j_1}^{(l_1)} \cdots x_{j_n}^{(l_n)}
\]
are monomials of degree $\bd$, then $\Phi_1 < \Phi_2$ if and only if there exists $1 \leq s \leq n$ such that $(k_1, i_1) = (l_1, j_1)$, \dots, $(k_{s - 1}, i_{s - 1}) = (l_{s - 1}, j_{s - 1})$, and either $k_s > l_s$ or $k_s = l_s$ and $i_s > j_s$. Again this order behaves well with respect to the multiplication, i.e.\ if $\Phi_1 < \Phi_2$, then $\Psi' \Phi_1 \Psi'' < \Psi' \Phi_2 \Psi''$, for any monomials $\Psi'$ and $\Psi''$.

Obviously, $\cB_\bd'$ is spanned by the images of the monomials $x_{i_1, j_1}^{(k_1)} x_{i_2, j_2}^{(k_2)} \cdots x_{i_n, j_n}^{(k_n)}$ of degree vector at most $\bd$. Moreover, if $i_2 = j_1 + 1$, then $x_{i_1, j_1}^{(k)} x_{i_2, j_2}^{(k)} = x_{i_1, j_2}^{(k)}$. Consequently, if order to show Proposition~\ref{prop span} (and hence Theorem~\ref{theo main}), it is enough to prove the following.

\begin{prop} \label{prop reduction}
Assume $k_1, k_2 \in [1, r]$, $i_1 \leq j_1$ and $i_2 \leq j_2$. If either $k_1 < k_2$, or $k_1 = k_2$ and $i_1 < i_2$ or $k_1 = k_2$, $i_1 = i_2$ and $j_1 < j_2$, then either $k_1 = k_2$ and $i_2 = j_1 + 1$ or there exists monomials $\Phi_1$, \ldots, $\Phi_n$ of degree vector $\bdeg (x_{i_1, j_1}^{(k_1)} x_{i_2, j_2}^{(k_2)})$ such that $\Phi_s < x_{i_1, j_1}^{(k_1)} x_{i_2, j_2}^{(k_2)}$, for each $1 \leq s \leq n$, and
\[
x_{i_1, j_1}^{(k_1)} x_{i_2, j_2}^{(k_2)} \equiv \Phi_1 + \cdots + \Phi_n.
\]
\end{prop}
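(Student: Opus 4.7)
The plan is to proceed by case analysis on the relation between $(k_1, i_1, j_1)$ and $(k_2, i_2, j_2)$, combining elementary applications of relations~(1), (1'), (2), (2'), (3), (3') from Lemma~\ref{lemm exchange} with an induction on a complexity measure such as $(j_1 - i_1) + (j_2 - i_2)$.

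The easiest case is \textbf{Case A} ($k_1 < k_2$): iterated application of relation~(1), and of relation~(1') when the exceptional configuration $(k+1, i + \delta_{k, r} m)$ arises, commutes each letter $x_a^{(k_1)}$ past each letter $x_b^{(k_2)}$, yielding $x_{i_1, j_1}^{(k_1)} x_{i_2, j_2}^{(k_2)} \equiv c \cdot x_{i_2, j_2}^{(k_2)} x_{i_1, j_1}^{(k_1)}$ for a nonzero scalar $c$ (the constant summand from~(1') has strictly smaller degree vector and is absorbed in $\equiv$). Since $k_2 > k_1$, the reversed product has strictly larger upper index at position~1 and so is strictly smaller in the inverse lexicographic order.

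\textbf{Case B} ($k_1 = k_2 = k$, $i_1 < i_2$, $i_2 \ne j_1 + 1$) splits into three subcases. If $i_2 > j_1 + 1$, then $|i_2 - j_1| \geq 2$ and a single application of relation~(1) swaps $x_{j_1}^{(k)}$ with $x_{i_2}^{(k)}$ at the boundary, yielding a strictly smaller monomial at position $j_1 - i_1 + 1$. If $i_2 = j_1$ and $j_2 > j_1$, apply relation~(2) to the triple $x_{j_1}^{(k)} x_{j_1}^{(k)} x_{j_1 + 1}^{(k)}$ straddling the boundary; both summands place $x_{j_1+1}^{(k)}$ at a position where the original monomial has $x_{j_1}^{(k)}$, so both are strictly smaller. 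If $i_2 = j_1 = j_2$ then $j_1 > i_1$, and an analogous application of relation~(3) to the final triple $x_{j_1 - 1}^{(k)} x_{j_1}^{(k)} x_{j_1}^{(k)}$ works. The delicate subcase is $i_1 < i_2 < j_1$: here I would argue by induction on $(j_1 - i_1) + (j_2 - i_2)$, first commuting the second-block copy of $x_{i_2}^{(k)}$ leftward past $x_{j_1}^{(k)}, x_{j_1-1}^{(k)}, \ldots, x_{i_2+2}^{(k)}$ (each step valid via relation~(1) because the indices differ by at least two), then applying relation~(2') to the produced pattern $x_{i_2}^{(k)} x_{i_2+1}^{(k)} x_{i_2}^{(k)}$, and finally invoking the inductive hypothesis on the non-smaller summand together with a parallel manipulation in the first block. \textbf{Case C} ($k_1 = k_2$, $i_1 = i_2$, $j_1 < j_2$) is analogous: one decomposes $x_{i, j_2}^{(k)}$ into a product of shorter intervals to reduce to a Case~B configuration, with the base case $j_1 = i$, $j_2 = i+1$ handled directly via relation~(2) applied to $x_i^{(k)} x_i^{(k)} x_{i+1}^{(k)}$.

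\textbf{The main obstacle} is the overlapping subcase $i_1 < i_2 < j_1$ of Case~B (and its analogue in Case~C). A naive reduction — swap $x_{i_2}^{(k)}$ into position adjacent to its first-block counterpart, then apply relation~(2') — turns out to be circular: the scalar coefficients in relations~(2) and~(2') conspire to cancel exactly, reproducing the original monomial and giving only $0 \equiv 0$ modulo lower degree. Breaking this circularity is the key technical point, and I expect it will require a carefully designed nested induction applied to a product of strictly shorter intervals obtained by factoring one of the blocks (e.g.\ $x_{i_1, j_1}^{(k)} = x_{i_1}^{(k)} x_{i_1+1, j_1}^{(k)}$), using that any reduction of the shorter product yields, after pre-multiplication by the extracted letter $x_{i_1}^{(k)}$, a monomial still strictly smaller than the original because position~1 is unchanged while a reduction appears at a later position.
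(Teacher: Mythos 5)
Your case division and your handling of the easy cases are sound and essentially coincide with the paper's: the commutation argument for $k_1<k_2$, the single boundary swap for $i_2>j_1+1$ (producing any family of strictly smaller monomials suffices, so this variant of the paper's full commutation is fine), and the applications of Lemma~\ref{lemm exchange}\eqref{exchange two} and \eqref{exchange three} when $i_2=j_1$. The genuine gap is that the two hardest cases are exactly the ones you leave open. For the overlapping case $i_1<i_2<j_1$ you correctly diagnose the circularity: after commuting the second block's $x_{i_2}^{(k)}$ leftward and applying Lemma~\ref{lemm exchange}\eqref{exchange two prim}, one summand is $x_{i_1,i_2}^{(k)}x_{i_2,j_1}^{(k)}x_{i_2+1,j_2}^{(k)}$, which is \emph{larger} than the original, and bouncing between \eqref{exchange two} and \eqref{exchange two prim} at the same spot only reproduces it. But your proposed escape (factor off $x_{i_1}^{(k)}$ and run a nested induction on the shorter product) does not work: when $i_1+1=i_2$ and $j_1\geq j_2$ the shorter product $x_{i_2,j_1}^{(k)}x_{i_2,j_2}^{(k)}$ is already a normal-form monomial (it violates none of the ordering conditions of Proposition~\ref{prop span}), so it admits no further reduction and the induction stalls, while the full monomial $x_{i_1,j_1}^{(k)}x_{i_2,j_2}^{(k)}$ still must be reduced. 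The actual resolution is not an induction at all: the bad summand $x_{i_1,i_2}^{(k)}x_{i_2,j_1}^{(k)}x_{i_2+1,j_2}^{(k)}$ contains the pattern $x_{i_2-1}^{(k)}x_{i_2}^{(k)}x_{i_2}^{(k)}$ at positions $i_2-i_1,\dots,i_2-i_1+2$ (this is where the hypothesis $i_1<i_2$ is used), and rewriting \emph{that} pattern moves an $x_{i_2}^{(k)}$ into position $i_2-i_1$, where the original monomial carries $x_{i_2-1}^{(k)}$; both resulting summands are therefore strictly smaller than the original, and the loop is broken.

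The case $i_1=i_2$, $j_1<j_2$ is likewise not ``analogous'' in the way you suggest: decomposing $x_{i_1,j_2}^{(k)}$ into shorter intervals does not produce a configuration already covered. What is needed is a separate auxiliary statement (Lemma~\ref{lemm exchangeprim}): for $i_1<s<j_2$ one has $x_s^{(k)}x_{i_1,j_2}^{(k)}\equiv x_{i_1,j_2}^{(k)}x_s^{(k)}$ plus strictly smaller terms, proved by a specific chain of applications of \eqref{exchange one}, \eqref{exchange three prim} and \eqref{exchange two}. Iterating it pushes the tail $x_{i_1+1,j_1}^{(k)}$ of the first block past $x_{i_1,j_2}^{(k)}$ letter by letter, reducing to $x_{i_1}^{(k)}x_{i_1,j_2}^{(k)}x_{i_1+1,j_1}^{(k)}$, which one then finishes with \eqref{exchange two} applied to $x_{i_1}^{(k)}x_{i_1}^{(k)}x_{i_1+1}^{(k)}$ and a final commutation. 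Without these two concrete mechanisms the argument does not close.
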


The rest of the section is devoted to the proof of Proposition~\ref{prop reduction}. We have some cases to consider.

\vspace{1ex}

\textbf{Case 0$^\circ$}: $k_1 < k_2$.

In this case by an iterated use of Lemma~\ref{lemm exchange}\eqref{exchange one} we get
\[
x_{i_1, j_1}^{(k_1)} x_{i_2, j_2}^{(k_2)} \equiv x_{i_2, j_2}^{(k_2)} x_{i_1, j_1}^{(k_1)}.
\]
Moreover, $x_{i_2, j_2}^{(k_2)} x_{i_1, j_1}^{(k_1)} < x_{i_1, j_1}^{(k_1)} x_{i_2, j_2}^{(k_2)}$.

For the rest of the proof we assume $k_1 = k_2$ and denote this common value by $k$.

\vspace{1ex}

\textbf{Case 1$^\circ$}: $j_1 < i_2$.

In this case we may assume $i_2 > j_1 + 1$, hence we get
\[
x_{i_1, j_1}^{(k)} x_{i_2, j_2}^{(k)} \equiv x_{i_2, j_2}^{(k)} x_{i_1, j_1}^{(k)},
\]
again by an iterated use of Lemma~\ref{lemm exchange}\eqref{exchange one}. Moreover, $x_{i_2, j_2}^{(k)} x_{i_1, j_1}^{(k)} < x_{i_1, j_1}^{(k)} x_{i_2, j_2}^{(k)}$, since $i_2 > j_1 \geq i_1$.

\vspace{1ex}

\textbf{Case 2$^\circ$}: $j_1 = i_2$.

In this case we have either $i_1 < i_2 = j_1$ or $i_1 = i_2$, hence $i_2 = j_1 < j_2$. Assume first $i_1 < i_2 = j_1$. Then using Lemma~\ref{lemm exchange}\eqref{exchange three}, we have
\begin{multline*}
x_{i_1, j_1}^{(k)} x_{i_2, j_2}^{(k)} = x_{i_1, i_2 - 2}^{(k)} x_{i_2 - 1}^{(k)} x_{i_2}^{(k)} x_{i_2}^{(k)} x_{i_2 + 1, j_2}^{(k)}
\\
\equiv x_{i_1, i_2 - 2}^{(k)} x_{i_2}^{(k)} x_{i_2 - 1}^{(k)} x_{i_2}^{(k)} x_{i_2 + 1, j_2}^{(k)} + x_{i_1, i_2 - 2}^{(k)} x_{i_2}^{(k)} x_{i_2}^{(k)} x_{i_2 - 1}^{(k)} x_{i_2 + 1, j_2}^{(k)},
\end{multline*}
and the claim follows. Similarly, if $i_2 < j_2$, then
\[
x_{i_1, j_1}^{(k)} x_{i_2, j_2}^{(k)} \equiv x_{i_1, i_2 - 1}^{(k)} x_{i_2}^{(k)} x_{i_2 + 1}^{(k)} x_{i_2}^{(k)} x_{i_2 + 2, j_2}^{(k)} + x_{i_1, i_2 - 1}^{(k)} x_{i_2 + 1}^{(k)} x_{i_2}^{(k)} x_{i_2}^{(k)} x_{i_2 + 2, j_2}^{(k)}
\]
(here we use Lemma~\ref{lemm exchange}\eqref{exchange two}).

\vspace{1ex}

\textbf{Case 3$^\circ$}: $i_1 < i_2 < j_1$.

First, by an iterated use of Lemma~\ref{lemm exchange}\eqref{exchange one} we get
\[
x_{i_1, j_1}^{(k)} x_{i_2, j_2}^{(k)} \equiv x_{i_1, i_2 - 1}^{(k)} x_{i_2}^{(k)} x_{i_2 + 1}^{(k)} x_{i_2}^{(k)} x_{i_2 + 2, j_1}^{(k)} x_{i_2 + 1, j_2}^{(k)}.
\]
Next we use Lemma~\ref{lemm exchange}\eqref{exchange two prim} and get
\[
x_{i_1, j_1}^{(k)} x_{i_2, j_2}^{(k)} \equiv x_{i_1, i_2 - 1}^{(k)} x_{i_2 + 1}^{(k)} x_{i_2}^{(k)} x_{i_2}^{(k)} x_{i_2 + 2, j_1}^{(k)} x_{i_2 + 1, j_2}^{(k)} + x_{i_1, i_2}^{(k)} x_{i_2, j_1}^{(k)} x_{i_2 + 1, j_2}^{(k)}
\]
Finally we use Lemma~\ref{lemm exchange}\eqref{exchange two} (for $i = i_2 - 1$) and get
\begin{multline*}
x_{i_1, i_2}^{(k)} x_{i_2, j_1}^{(k)} x_{i_2 + 1, j_2}^{(k)} = x_{i_1, i_2 - 2}^{(k)} x_{i_2 - 1}^{(k)} x_{i_2}^{(k)} x_{i_2}^{(k)} x_{i_2 + 1, j_1}^{(k)} x_{i_2 + 1, j_2}^{(k)}
\\
\equiv x_{i_1, i_2 - 2}^{(k)} x_{i_2}^{(k)} x_{i_2 - 1}^{(k)} x_{i_2, j_1}^{(k)} x_{i_2 + 1, j_2}^{(k)} + x_{i_1, i_2 - 2}^{(k)} x_{i_2}^{(k)} x_{i_2}^{(k)} x_{i_2 - 1}^{(k)} x_{i_2 + 1, j_1}^{(k)} x_{i_2 + 1, j_2}^{(k)}.
\end{multline*}

\vspace{1ex}

\textbf{Case 4$^\circ$}: $i_1 = i_2$.

In this case $j_2 > j_1$. We start with the following lemma.

\begin{lemm} \label{lemm exchangeprim}
If $i_1 < s < j_2$, then there exist monomials $\Phi_1$, \ldots, $\Phi_n$ of degree vector $\bdeg (x_s^{(k)} x_{i_1, j_2}^{(k)})$ such that $\Phi_1, \ldots, \Phi_n < x_s^{(k)} x_{i_1, j_2}^{(k)}$ and
\[
x_s^{(k)} x_{i_1, j_2}^{(k)} \equiv x_{i_1, j_2}^{(k)} x_s^{(k)} + \Phi_1 + \cdots + \Phi_l.
\]
\end{lemm}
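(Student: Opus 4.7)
The strategy is to commute the leading $x_s^{(k)}$ past the factors of $x_{i_1, j_2}^{(k)}$ one by one, using \eqref{exchange one} to slide past every factor whose index differs from $s$ by at least $2$, and the cubic identities \eqref{exchange two} and \eqref{exchange three prim} to negotiate the two obstructions posed by $x_{s-1}^{(k)}$ and $x_{s+1}^{(k)}$, both of which occur inside $x_{i_1, j_2}^{(k)}$ because $i_1 < s < j_2$. The ``preferred branches'' of the two cubic relations will combine to yield the main term $x_{i_1, j_2}^{(k)} x_s^{(k)}$, while the ``other branches'' will constitute the correction terms $\Phi_i$ after a final straightening by \eqref{exchange one}.

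Concretely, I would first apply \eqref{exchange one} to obtain
$x_s^{(k)} x_{i_1, j_2}^{(k)} \equiv x_{i_1, s-2}^{(k)} x_s^{(k)} x_{s-1}^{(k)} x_s^{(k)} x_{s+1, j_2}^{(k)}$,
then use \eqref{exchange three prim} on the central $x_s^{(k)} x_{s-1}^{(k)} x_s^{(k)}$ to rewrite this as $x_{i_1, s-1}^{(k)} x_s^{(k)} x_s^{(k)} x_{s+1, j_2}^{(k)} + T_2$, where $T_2 := x_{i_1, s-2}^{(k)} x_s^{(k)} x_s^{(k)} x_{s-1}^{(k)} x_{s+1, j_2}^{(k)}$. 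Applying \eqref{exchange two} to the sub-expression $x_s^{(k)} x_s^{(k)} x_{s+1}^{(k)}$ inside the first piece splits it further as $x_{i_1, s+1}^{(k)} x_s^{(k)} x_{s+2, j_2}^{(k)} + T_3$, where $T_3 := x_{i_1, s-1}^{(k)} x_{s+1}^{(k)} x_s^{(k)} x_s^{(k)} x_{s+2, j_2}^{(k)}$. Finally, sliding the free $x_s^{(k)}$ in $x_{i_1, s+1}^{(k)} x_s^{(k)} x_{s+2, j_2}^{(k)}$ past $x_{s+2}^{(k)}, \ldots, x_{j_2}^{(k)}$ via \eqref{exchange one} produces precisely $x_{i_1, j_2}^{(k)} x_s^{(k)}$.

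The remaining, and most delicate, task is to check that both $T_2$ and $T_3$ are strictly less than $x_s^{(k)} x_{i_1, j_2}^{(k)}$ in the inverse lexicographic order; this is not immediate since each begins with $x_{i_1}^{(k)}$, whose index sits \emph{below} that of $x_s^{(k)}$ and therefore places it \emph{above} in the ordering. The key point is that in $T_3$ the factor $x_{s+1}^{(k)}$ commutes by \eqref{exchange one} with every factor of $x_{i_1, s-1}^{(k)}$, so it can be transported to the front, giving a monomial equivalent to $T_3$ whose leading index equals $s+1 > s$. Dually, in $T_2$ the leading $x_s^{(k)}$ commutes by \eqref{exchange one} with every factor of $x_{i_1, s-2}^{(k)}$, so after sliding it to the front the resulting monomial shares the prefix $x_s^{(k)} x_{i_1}^{(k)} \cdots x_{s-2}^{(k)}$ with $x_s^{(k)} x_{i_1, j_2}^{(k)}$ and disagrees at the next position, where it carries index $s$ while $x_s^{(k)} x_{i_1, j_2}^{(k)}$ carries index $s-1$, placing $T_2$ strictly below as required.

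The main obstacle is exactly this ordering verification: the cubic relations by themselves produce equivalences that do not respect the order, and one must make essential use of the freedom in \eqref{exchange one} to promote the ``outlier'' factors ($x_{s+1}^{(k)}$ in $T_3$ and the leading $x_s^{(k)}$ in $T_2$) to the leftmost position. The same argument covers the boundary cases $s = i_1 + 1$ (where $x_{i_1, s-2}^{(k)} = 1$ and the initial sliding is vacuous) and $j_2 = s + 1$ (where $x_{s+2, j_2}^{(k)} = 1$ and the final rightward slide is vacuous) without modification, since the ordering comparisons still go through verbatim.
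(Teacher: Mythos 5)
Your proposal is correct and follows essentially the same route as the paper: slide $x_s^{(k)}$ rightward by \eqref{exchange one}, resolve the $x_{s-1}^{(k)}$ obstruction with \eqref{exchange three prim} and the $x_{s+1}^{(k)}$ obstruction with \eqref{exchange two}, and verify that the two leftover monomials are smaller by transporting an outlier factor to the front (the paper moves both copies of $x_s^{(k)}$ to the front of your $T_2$ rather than one, but the comparison is the same). No gaps.
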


\begin{proof}
First, by an iterated use of Lemma~\ref{lemm exchange}\eqref{exchange one} we get
\[
x_s^{(k)} x_{i_1, j_2}^{(k)} \equiv x_{i_1, s - 2}^{(k)} x_s^{(k)} x_{s - 1, j_2}^{(k)}.
\]
Using Lemma~\ref{lemm exchange}\eqref{exchange three prim} we get
\[
x_{i_1, s - 2}^{(k)} x_s^{(k)} x_{s - 1, j_2}^{(k)} \equiv x_{i_1, s}^{(k)} x_{s, j_2}^{(k)} + x_{i_1, s - 2}^{(k)} x_s^{(k)} x_s^{(k)} x_{s - 1}^{(k)} x_{s + 1, j_2}^{(k)}.
\]
Now by an iterated use of Lemma~\ref{lemm exchange}\eqref{exchange one} we have
\[
x_{i_1, s - 2}^{(k)} x_s^{(k)} x_s^{(k)} x_{s - 1}^{(k)} x_{s + 1, j_2}^{(k)} \equiv x_s^{(k)} x_s^{(k)} x_{i_1, s - 2}^{(k)} x_{s - 1}^{(k)} x_{s + 1, j_2}^{(k)} < x_s^{(k)} x_{i_1, j_2}^{(k)}.
\]
On the other hand, using Lemma~\ref{lemm exchange}\eqref{exchange two} we obtain
\[
x_{i_1, s}^{(k)} x_{s, j_2}^{(k)} \equiv x_{i_1, s + 1}^{(k)} x_s^{(k)} x_{s + 2, j_2}^{(k)} + x_{i_1, s - 1}^{(k)} x_{s + 1}^{(k)} x_s^{(k)} x_s^{(k)} x_{s + 2, j_2}^{(k)}.
\]
Using iteratively Lemma~\ref{lemm exchange}\eqref{exchange one} again we get
\begin{gather*}
x_{i_1, s + 1}^{(k)} x_s^{(k)} x_{s + 2, j_2}^{(k)} \equiv x_{i_1, j_2}^{(k)} x_s^{(k)}
\\
\intertext{and}
x_{i_1, s - 1}^{(k)} x_{s + 1}^{(k)} x_s^{(k)} x_s^{(k)} x_{s + 2, j_2}^{(k)} \equiv x_{s + 1}^{(k)} x_{i_1, s}^{(k)} x_s^{(k)} x_{s + 2, j_2}^{(k)} < x_s^{(k)} x_{i_1, j_2}^{(k)},
\end{gather*}
which finishes the proof.
\end{proof}

Using iteratively Lemma~\ref{lemm exchangeprim} we obtain monomials $\Phi_1$, \ldots, $\Phi_l$ of degree vector $\bdeg (x_{i_1, j_1}^{(k)} x_{i_1, j_2}^{(k)})$ such that $\Phi_1, \ldots, \Phi_l < x_{i_1, j_1}^{(k)} x_{i_1, j_2}^{(k)}$ and
\[
x_{i_1, j_1}^{(k)} x_{i_1, j_2}^{(k)} \equiv x_{i_1}^{(k)} x_{i_1, j_2}^{(k)} x_{i_1 + 1, j_1}^{(k)} + \Phi_1 + \cdots + \Phi_l.
\]
Using Lemma~\ref{lemm exchange}\eqref{exchange two} we get
\[
x_{i_1}^{(k)} x_{i_1, j_2}^{(k)} x_{i_1 + 1, j_1}^{(k)} \equiv x_{i_1}^{(k)} x_{i_1 + 1}^{(k)} x_{i_1}^{(k)} x_{i_1 + 2, j_2}^{(k)} x_{i_1 + 1, j_1}^{(k)} + x_{i_1 + 1}^{(k)} x_{i_1}^{(k)} x_{i_1}^{(k)} x_{i_1 + 2, j_2}^{(k)} x_{i_1 + 1, j_1}^{(k)}.
\]
Finally, by an iterated use of Lemma~\ref{lemm exchange}\eqref{exchange one} we have
\[
x_{i_1}^{(k)} x_{i_1 + 1}^{(k)} x_{i_1}^{(k)} x_{i_1 + 2, j_2}^{(k)} x_{i_1 + 1, j_1}^{(k)} \equiv x_{i_1, j_2}^{(k)} x_{i_1 j_1}^{(k)}.
\]
This finishes the proof of Proposition~\ref{prop reduction}, and hence of Theorem~\ref{theo main}.

\section{Results. II} \label{sect res2}

In this section we describe the Hall algebra $\cH := \cH (\cC)$ associated to the category $\cC$ introduced in Section~\ref{sect category}.

\subsection{Relations}
We introduce the following notation:
\[
\mu_i^{(k, l)} := [X_0^{(k)}, Z_i^{(l)}], \qquad \nu_i^{(k, l)} := [Z_0^{(k)}, X_i^{(l)}], \qquad \kappa_i^{(k, l)} := [Z_0^{(k)}, Z_i^{(l)}].
\]
Moreover,
\begin{gather*}
\mu_i := \mu_i^{(1, 1)}, \qquad \mu_i' := \mu_{i - \delta_{1, r} m + 1}^{(2, 1)}, \qquad \nu_i := \nu_i^{(1, 1)}, \qquad \nu_i' := \nu_{i + \delta_{1, r} m - 1}^{(1, 2)},
\\
\intertext{and}
\kappa_i := \kappa_i^{(1, 1)}.
\end{gather*}
Similarly as for $\lambda_i^{(k, l)}$'s actual formulas depend on $r$ and $m$ and are cumbersome. We encourage the reader to write formulas for $\mu_i$, $\mu_i'$, $\nu_i$, $\nu_i'$ and $\kappa_i$. Note that
\[
\mu_{j - i}^{(k, l)} = [X_i^{(k)}, Z_j^{(l)}], \qquad \nu_{j - i}^{(k, l)} = [Z_i^{(k)}, X_j^{(l)}], \qquad \kappa_{j - i}^{(k, l)} = [Z_j^{(k)}, Z_i^{(l)}].
\]
Moreover,
\begin{gather*}
\mu_i = \mu_i^{(k, k)}, \qquad \mu_i' = \mu_{i - \delta_{k, r} m + 1}^{(k + 1, k)}, \qquad \nu_i = \nu_i^{(k, k)}, \qquad \nu_i' = \nu_{i + \delta_{k, r} m - 1}^{(k, k + 1)},
\\
\intertext{and}
\kappa_i = \kappa_i^{(k, k)}.
\end{gather*}

We calculate first the products involving $Z_i^{(k)}$'s. Recall that $X_{i, i - 1}^{(k)}$ denotes the zero object.

\begin{lemm} \label{lemm multZik}
Let $(k, i) \in [1, r] \times \bbZ$.
\begin{enumerate}



\item
If $(l, j) \neq (k, i)$ and either $l \neq k + 1$ or $l = k + 1$ and $j > i + \delta_{k, r} m$, then
\[
Z_i^{(k)} Z_j^{(l)} = \kappa_{j - i}^{(k, l)} (Z_i^{(k)} \oplus Z_j^{(l)}).
\]

\item
If $j \leq i + \delta_{k, r} m$ and either $r > 1$ or $r = 1$ and $j \neq i$, then
\[
Z_i^{(k)} Z_j^{(k + 1)} = \kappa_{j - i}^{(k, l)} (Z_i^{(k)} \oplus Z_j^{(k + 1)}) + X_{j, i + \delta_{k, r} m - 1}^{(k + 1)}.
\]

\item
If $(l, j) \neq (k, i - 1)$, then
\[
X_j^{(l)} Z_i^{(k)} = \mu_{i - j}^{(l, k)} (X_j^{(l)} \oplus Z_i^{(k)}).
\]
Moreover,
\[
X_{i - 1}^{(k)} Z_i^{(k)} = \mu_1 (X_{i - 1}^{(k)} \oplus Z_i^{(k)}) + \mu_1 Z_{i - 1}^{(k)}.
\]

\item
If $(l, j) \neq (k + 1, i + \delta_{k, r} m)$, then
\[
Z_i^{(k)} X_j^{(l)} = \nu_{j - i}^{(k, l)} (X_j^{(l)} \oplus Z_i^{(k)}).
\]
Moreover,
\[
Z_i^{(k)} X_{i + \delta_{k, r} m}^{(k + 1)} = \nu_1' (X_{i + \delta_{k, r} m}^{(k + 1)} \oplus Z_i^{(k)}) + \nu_1' Z_{i + 1}^{(k)}.
\]

\end{enumerate}

\end{lemm}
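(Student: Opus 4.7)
The plan is to follow the template of Lemma~\ref{lemm FXX}: for each product $M \cdot N$ in~(1)--(4), I would classify the nonzero maps $f \colon N \to \Sigma M$, use the identification $L \simeq \Sigma^{-1}\cone(f)$ to list the possible $L$'s, and then compute each $F_{M,N}^L$ directly via Lemma~\ref{lemm F}.

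To identify the cones, I would proceed case by case. In part~(1), the restrictions on $(l,j)$ force every map $Z_j^{(l)} \to \Sigma Z_i^{(k)} = Z_{i+\delta_{k,r}m}^{(k+1)}$ to vanish, since a nonzero such map would have to factor through a degree~$0$ arrow between $Z$-objects and hence require $l = k+1$ and $j \leq i+\delta_{k,r}m$; thus $L = Z_i^{(k)} \oplus Z_j^{(l)}$. In part~(2), the unique new nonzero option is $\alpha'^{(k+1)}_{i+\delta_{k,r}m, j}$, and Proposition~\ref{prop coneZ} at level $k+1$ with $n=0$ followed by $\Sigma^{-1}$ identifies the exceptional $L$ with $X_{j, i+\delta_{k,r}m-1}^{(k+1)}$. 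In part~(3), the only nonzero map $Z_i^{(k)} \to \Sigma X_j^{(l)}$ occurs when $(l,j) = (k, i-1)$ and equals $\beta'^{(k)}_{(i+\delta_{k,r}m-1, i+\delta_{k,r}m-1), i}$; Proposition~\ref{prop coneZ} with $n=1$ (in which the summand $X_{s_0, t_1}^{(k+1)}$ vanishes because $s_0 > t_1$) gives cone $\Sigma Z_{i-1}^{(k)}$, and hence $L = Z_{i-1}^{(k)}$. In part~(4) the only nonzero map $X_j^{(l)} \to \Sigma Z_i^{(k)}$ occurs when $(l,j) = (k+1, i+\delta_{k,r}m)$ and equals $\beta^{(k+1)}_{i+\delta_{k,r}m, (i+\delta_{k,r}m, i+\delta_{k,r}m)}$; Proposition~\ref{prop coneX}(2) yields cone $Z_{i+\delta_{k,r}m+1}^{(k+1)}$, so $L = Z_{i+1}^{(k)}$.

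For the split-cone coefficients, Corollary~\ref{coro direct sum} immediately delivers the values $\kappa_{j-i}^{(k,l)}$, $\mu_{i-j}^{(l,k)}$, and $\nu_{j-i}^{(k,l)}$. For each exceptional summand I would combine Lemma~\ref{lemm F} with Lemma~\ref{lemm mult}(\ref{point two}) or~(\ref{point three}) to factor $[M,L]$ or $[L,N]$ into products of indecomposable $[X,Y]$'s, and then count the subsets $\Hom(M,L)_N$ or $\Hom(L,N)_{\Sigma M}$ directly from the (at most) one-dimensional $\Hom$-spaces described in Section~\ref{sect category}; this mirrors the argument used for the exceptional formulas in Lemma~\ref{lemm FXX} and produces the coefficients $\mu_1$, $\nu_1'$, and~$1$ as claimed. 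The main obstacle is the bookkeeping around boundary configurations: one must verify from the arrow description that the vanishing hypotheses $\Hom(M, \Sigma M) = 0$ or $\Hom(N, \Sigma N) = 0$ required by Lemma~\ref{lemm mult} actually hold in each exceptional case, and handle the degenerate case $j = i + \delta_{k,r}m$ in part~(2), where $X_{j, i+\delta_{k,r}m-1}^{(k+1)} = 0$, so that the stated formula remains consistent with the direct computation of $F_{Z_i^{(k)}, \Sigma Z_i^{(k)}}^{0}$ in that limit.
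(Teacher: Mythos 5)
Your proposal follows exactly the strategy the paper itself indicates (its proof of this lemma is literally ``mimic the arguments from the proofs of Lemma~\ref{lemm possL} and~\ref{lemm FXX}; details left to the reader''): classify the maps $N \to \Sigma M$ using the arrow/degree description of $\cC$, identify the resulting cones via Propositions~\ref{prop coneX} and~\ref{prop coneZ}, and extract the coefficients via Corollary~\ref{coro direct sum} and Lemmas~\ref{lemm F} and~\ref{lemm mult}. Your concrete cone identifications (the shifted $X_{j,\, i+\delta_{k,r}m-1}^{(k+1)}$ in part~(2), $Z_{i-1}^{(k)}$ via the $n=1$ case of Proposition~\ref{prop coneZ} in part~(3), and $Z_{i+1}^{(k)}$ via Proposition~\ref{prop coneX}(2) in part~(4)) all check out, so this is essentially the paper's argument carried out in more detail, and your flagging of the degenerate case $j = i + \delta_{k,r}m$ is a legitimate point of care.
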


\begin{proof}
One has to mimic arguments from the proofs of Lemma~\ref{lemm possL} and~\ref{lemm FXX}. We leave details to the reader.
\end{proof}

Using the above we get the relations.

\begin{prop} \label{prop rel3}
Let $k \in [1, r]$.
\begin{enumerate}

\item
If $l \neq k - 1, k, k + 1$, then
\[
\tfrac{1}{\kappa_0^{(k, l)}} Z_0^{(k)} Z_0^{(l)} - \tfrac{1}{\kappa_0^{(l, k)}} Z_0^{(l)} Z_0^{(k)} = 0.
\]

\addtocounter{enumi}{-1}
\renewcommand{\theenumi}{\arabic{enumi}'}

\item
If $r \geq 3$, then
\[
\tfrac{1}{\kappa_0^{(k, k + 1)}} Z_0^{(k)} Z_0^{(k + 1)} - \tfrac{1}{\kappa_0^{(k + 1, k)}} Z_0^{(k + 1)} Z_0^{(k)} = \tfrac{1}{\kappa_0^{(k, k + 1)}} X_{0, \delta_{k, r} m - 1}^{(k + 1)}.
\]

\addtocounter{enumi}{-1}
\renewcommand{\theenumi}{\arabic{enumi}''}

\item
If $r = 2$, then
\[
\tfrac{1}{\kappa_0^{(1, 2)}} Z_0^{(1)} Z_0^{(2)} - \tfrac{1}{\kappa_0^{(2, 1)}} Z_0^{(2)} Z_0^{(1)} = \tfrac{1}{\kappa_0^{(1, 2)}} X_{0, -1}^{(2)} - \tfrac{1}{\kappa_0^{(2, 1)}} X_{0, m - 1}^{(1)}.
\]

\renewcommand{\theenumi}{\arabic{enumi}}

\item
If $(l, j) \neq (k, -1), (k + 1, \delta_{k, r} m)$, then
\[
\tfrac{1}{\mu_{-j}^{(l, k)}} X_j^{(l)} Z_0^{(k)} - \tfrac{1}{\nu_j^{(k, l)}} Z_0^{(k)} X_j^{(l)} = 0.
\]

\item \label{rel XZZ one}
If $r > 1$, then
\[
\tfrac{1}{\mu_1 \kappa_1} X_{-1}^{(k)} Z_0^{(k)} Z_0^{(k)} - (\tfrac{1}{\nu_{-1} \kappa_1} + \tfrac{1}{\mu_1 \kappa_{-1}}) Z_0^{(k)} X_{-1}^{(k)} Z_0^{(k)} + \tfrac{1}{\nu_{-1} \kappa_{-1}} Z_0^{(k)} Z_0^{(k)} X_{-1}^{(k)} = 0.
\]

\addtocounter{enumi}{-1}
\renewcommand{\theenumi}{\arabic{enumi}'}

\item
If $r = 1$, then
\begin{multline*}
\tfrac{1}{\mu_1 \kappa_1} X_{-1}^{(k)} Z_0^{(k)} Z_0^{(k)} - (\tfrac{1}{\nu_{-1} \kappa_1} + \tfrac{1}{\mu_1 \kappa_{-1}}) Z_0^{(k)} X_{-1}^{(k)} Z_0^{(k)}
\\
+ \tfrac{1}{\nu_{-1} \kappa_{-1}} Z_0^{(k)} Z_0^{(k)}X_{-1}^{(k)} = \tfrac{1}{\kappa_1} X_{0, \delta_{k, r} m - 2}^{(k)} - \tfrac{1}{\kappa_{-1}} X_{-1, \delta_{k, r} m - 1}^{(k)}.
\end{multline*}

\renewcommand{\theenumi}{\arabic{enumi}}

\item
If $r > 1$, then
\[
\tfrac{1}{\mu_{-1}' \kappa_{-1}} X_{\delta_{k, r} m}^{(k + 1)} Z_0^{(k)} Z_0^{(k)} - (\tfrac{1}{\nu_1' \kappa_{-1}} + \tfrac{1}{\mu_{-1}' \kappa_1}) Z_0^{(k)} X_{\delta_{k, r} m}^{(k + 1)} Z_0^{(k)} + \tfrac{1}{\nu_1' \kappa_1} Z_0^{(k)} Z_0^{(k)} X_{\delta_{k, r} m}^{(k + 1)} = 0.
\]

\addtocounter{enumi}{-1}
\renewcommand{\theenumi}{\arabic{enumi}'}

\item
If $r = 1$, then
\begin{multline*}
\tfrac{1}{\mu_{-1}' \kappa_{-1}} X_{\delta_{k, r} m}^{(k + 1)} Z_0^{(k)} Z_0^{(k)} - (\tfrac{1}{\nu_1' \kappa_{-1}} + \tfrac{1}{\mu_{-1}' \kappa_1}) Z_0^{(k)} X_{\delta_{k, r} m}^{(k + 1)} Z_0^{(k)}
\\
+ \tfrac{1}{\nu_1' \kappa_1} Z_0^{(k)} Z_0^{(k)} X_{\delta_{k, r} m}^{(k + 1)} = \tfrac{1}{\kappa_1} X_{1, \delta_{k, r} m - 1}^{(k)} - \tfrac{1}{\kappa_{-1}} X_{0, \delta_{k, r} m}^{(k)}.
\end{multline*}

\renewcommand{\theenumi}{\arabic{enumi}}

\item \label{rel XXZ one}
We have
\[
\tfrac{1}{\mu_0 \mu_1} X_{-1}^{(k)} X_{-1}^{(k)} Z_0^{(k)} - (\tfrac{1}{\mu_0 \nu_{-1}} + \tfrac{1}{\mu_1 \nu_0}) X_{-1}^{(k)} Z_0^{(k)} X_{-1}^{(k)} + \tfrac{1}{\nu_{-1} \nu_0} Z_0^{(k)} X_{-1}^{(k)} X_{-1}^{(k)} = 0.
\]

\item
We have
\begin{multline*}
\tfrac{1}{\mu_{-1} \mu_1} X_0^{(k)} X_{-1}^{(k)} Z_0^{(k)} - \tfrac{1}{\mu_{-1} \nu_{-1}} X_0^{(k)} Z_0^{(k)} X_{-1}^{(k)}
\\
- \tfrac{1}{\mu_1 \nu_1} X_{-1}^{(k)} Z_0^{(k)} X_0^{(k)} + \tfrac{1}{\nu_{-1} \nu_1} Z_0^{(k)} X_{-1}^{(k)} X_0^{(k)} = 0.
\end{multline*}

\item
We have
\begin{multline*}
\tfrac{1}{\mu_{-2}' \mu_1} X_{\delta_{k, r} m}^{(k + 1)} X_{-1}^{(k)} Z_0^{(k)} - \tfrac{1}{\mu_{-2}' \nu_{-1}} X_{\delta_{k, r} m}^{(k + 1)} Z_0^{(k)} X_{-1}^{(k)}
\\
- \tfrac{1}{\mu_1 \nu_2'} X_{-1}^{(k)} Z_0^{(k)} X_{\delta_{k, r} m}^{(k + 1)} + \tfrac{1}{\nu_{-1} \nu_2'} Z_0^{(k)} X_{-1}^{(k)} X_{\delta_{k, r} m}^{(k + 1)} = 0.
\end{multline*}

\item
We have
\begin{multline*}
\tfrac{1}{\mu_{-1}' \mu_1'} X_{\delta_{k, r} m - 1}^{(k + 1)} X_{\delta_{k, r} m}^{(k + 1)} Z_0^{(k)} - \tfrac{1}{\mu_1' \nu_1'} X_{\delta_{k, r} m - 1}^{(k + 1)} Z_0^{(k)} X_{\delta_{k, r} m}^{(k + 1)}
\\
- \tfrac{1}{\mu_{-1}' \nu_{-1}'} X_{\delta_{k, r} m}^{(k + 1)} Z_0^{(k)} X_{\delta_{k, r} m - 1}^{(k + 1)} + \tfrac{1}{\nu_{-1}' \nu_1'} Z_0^{(k)} X_{\delta_{k, r} m}^{(k + 1)} X_{\delta_{k, r} m - 1}^{(k + 1)} = 0.
\end{multline*}

\item \label{rel XXZ five}
We have
\begin{multline*}
\tfrac{1}{\mu_{-1}' \mu_0'} X_{\delta_{k, r} m}^{(k + 1)} X_{\delta_{k, r} m}^{(k + 1)} Z_0^{(k)}
\\
- (\tfrac{1}{\mu_0' \nu_1'} + \tfrac{1}{\mu_{-1}' \nu_0'}) X_{\delta_{k, r} m}^{(k + 1)} Z_0^{(k)} X_{\delta_{k, r} m}^{(k + 1)} + \tfrac{1}{\nu_0' \nu_1'} Z_0^{(k)} X_{\delta_{k, r} m}^{(k + 1)} X_{\delta_{k, r} m}^{(k + 1)} = 0.
\end{multline*}

\end{enumerate}
\end{prop}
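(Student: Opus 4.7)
The proof follows the same pattern as Propositions~\ref{prop rel1} and~\ref{prop rel2} in the previous section: every relation in Proposition~\ref{prop rel3} is obtained by computing both of its sides using the multiplication formulas collected in Lemma~\ref{lemm multZik}, combined with the commutation identities that appear earlier in the proposition itself.

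My plan is to dispose first of the purely $Z$-type relations~(1), (1') and~(1''). These are immediate from parts~(1) and~(2) of Lemma~\ref{lemm multZik}: when $l\neq k\pm 1$ both orders of $Z_0^{(k)}Z_0^{(l)}$ fall in case~(1), so after normalising by $\kappa_0^{(k,l)}$ and $\kappa_0^{(l,k)}$ the two products agree and the difference vanishes; when $l=k+1$ with $r\ge 3$ only the order $Z_0^{(k)}Z_0^{(k+1)}$ picks up an extra $X$-summand (case~(2)), giving the right-hand side of~(1'); when $r=2$ both orders pick up such a contribution, which accounts for the two $X$-terms in~(1''). The mixed commutation relation~(2) is obtained in the same way by comparing parts~(3) and~(4) of Lemma~\ref{lemm multZik}.

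Next I would handle the Serre-type relations (3)--(4') and the $XXZ$-relations (the last five items) by the same grouping trick used in the proof of Proposition~\ref{prop rel2}. In each case one rewrites the three-factor left-hand side so that a binary combination appears as an inner factor; for relation~(3), for instance, one groups
\[
\tfrac{1}{\mu_1\kappa_1}X_{-1}^{(k)}Z_0^{(k)}\cdot Z_0^{(k)}-\tfrac{1}{\nu_{-1}\kappa_1}Z_0^{(k)}X_{-1}^{(k)}\cdot Z_0^{(k)}
\]
and
\[
\tfrac{1}{\mu_1\kappa_{-1}}Z_0^{(k)}\cdot X_{-1}^{(k)}Z_0^{(k)}-\tfrac{1}{\nu_{-1}\kappa_{-1}}Z_0^{(k)}\cdot Z_0^{(k)}X_{-1}^{(k)},
\]
applies the $XZ$-commutator (relation~(2), which is zero if $r>1$ and contributes an $X$-term if $r=1$) to each inner bracket, and then evaluates the remaining binary product by Lemma~\ref{lemm multZik}. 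The same template gives the other $ZZX$-relations, with the appropriate indices. For the $XXZ$-relations one groups pairs of $X$-factors instead and invokes Proposition~\ref{prop rel1} in place of~(2).

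The main obstacle is bookkeeping rather than anything conceptual. Each relation demands one to decide which of the four parts of Lemma~\ref{lemm multZik} applies at each step, to keep track of the degeneracies (the case $r=1$, where residual $X$-terms survive and produce the right-hand sides of~(3'), (4'), and the case $r=2$, where $k+1$ and $k-1$ coincide modulo $r$), and to verify the numerical identities among $\mu_i^{(k,l)}$, $\nu_i^{(k,l)}$, $\kappa_i^{(k,l)}$ and $\lambda_i^{(k,l)}$ that are needed for the coefficients to cancel. These identities all descend from Lemma~\ref{lemm mult} applied to the exact triangles described in Propositions~\ref{prop coneX} and~\ref{prop coneZ}. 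In practice I would write up~(1), (1'), (1''), (2) in full detail, then present one representative Serre relation of each of the two shapes ($ZZX$ and $XXZ$) as a model computation, and note that the remaining cases are obtained by the identical rearrangement, the only difference being a shift of indices.
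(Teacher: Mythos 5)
Your overall architecture matches the paper's own (very terse) proof: items (1), (1'), (1'') and (2) are indeed immediate from Lemma~\ref{lemm multZik}, and the three-factor relations are handled by factoring out an inner binary commutator and then evaluating one more product with Lemma~\ref{lemm multZik}. The gap is in what you claim the inner bracket equals. The bracket $\tfrac{1}{\mu_1}X_{-1}^{(k)}Z_0^{(k)}-\tfrac{1}{\nu_{-1}}Z_0^{(k)}X_{-1}^{(k)}$ is \emph{not} an instance of relation~(2): the index pairs $(l,j)=(k,-1)$ and $(k+1,\delta_{k,r}m)$ are exactly the ones excluded there. By the ``Moreover'' clauses of Lemma~\ref{lemm multZik}(3) and (4) this bracket equals $Z_{-1}^{(k)}$ for \emph{every} $r$, and the companion bracket $\tfrac{1}{\mu_{-1}'}X_{\delta_{k,r}m}^{(k+1)}Z_0^{(k)}-\tfrac{1}{\nu_1'}Z_0^{(k)}X_{\delta_{k,r}m}^{(k+1)}$ equals $-Z_1^{(k)}$; these two identities are precisely what the paper's proof singles out. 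It is neither zero for $r>1$ nor ``an $X$-term'' for $r=1$. The $r$-dependence and the residual $X$-objects on the right-hand sides of (3') and (4') arise only at the \emph{next} step, from $\tfrac{1}{\kappa_1}Z_{-1}^{(k)}Z_0^{(k)}-\tfrac{1}{\kappa_{-1}}Z_0^{(k)}Z_{-1}^{(k)}$ via Lemma~\ref{lemm multZik}(1)--(2) (for $r=1$ one has $k+1=k$, so both of these $ZZ$-products acquire an $X$-summand). As written, your route would declare (3) true for $r>1$ for an invalid reason and would not produce the stated right-hand sides of (3') and (4') at all, since an $X$-term multiplied by the remaining $Z_0^{(k)}$ does not yield the pure $X$-objects appearing there.

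The plan for the last five relations is also off target: grouping the two $X$-factors and invoking Proposition~\ref{prop rel1} does not match the coefficient pattern, which consists of products of $\mu$'s and $\nu$'s (Hom-counts between $X$'s and $Z$'s), not of $\lambda$'s. The correct factorization is again around the same $XZ$-commutators; for instance the left-hand side of the relation labelled~\eqref{rel XXZ one} collapses to $\tfrac{1}{\mu_0}X_{-1}^{(k)}Z_{-1}^{(k)}-\tfrac{1}{\nu_0}Z_{-1}^{(k)}X_{-1}^{(k)}$, which vanishes by Lemma~\ref{lemm multZik}(3)--(4). So the skeleton of your argument is the right one, but the key displayed identity is misstated, and executing the sketch literally would fail on every item from (3) onward.
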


\begin{proof}
The above formulas follow by direct calculations using Lemma~\ref{lemm multZik}. For~\eqref{rel XZZ one}--\eqref{rel XXZ five} one also has to use either
\begin{gather*}
\tfrac{1}{\mu_1} X_{-1}^{(k)} Z_0^{(k)} - \tfrac{1}{\nu_{-1}} Z_0^{(k)} X_{-1}^{(k)} = Z_{-1}^{(k)}
\\
\intertext{or}
\tfrac{1}{\mu_{-1}'} X_{\delta_{k, r} m}^{(k + 1)} Z_0^{(k)} - \tfrac{1}{\nu_1'} Z_0^{(k)} X_{\delta_{k, r} m}^{(k + 1)} = - Z_1^{(k)},
\end{gather*}
which also follow from Lemma~\ref{lemm multZik}.
\end{proof}

\subsection{Generators}

Let $\cA$ be the algebra generated by $x_i^{(k)}$, $z^{(k)}$, $k \in [1, r]$, $i \in \bbZ$. We also put
\[
\bdeg \Phi := (\bdeg_\cZ \Phi, \bdeg_\cX \Phi) \in \bbN^{[1, r]} \times \bbN^{([1, r] \times \bbZ)},
\]
where, for $k \in [1, r]$ and $i \in \bbZ$,
\[
(\bdeg_\cZ \Phi) (k) := \deg_{z^{(k)}} \Phi \qquad \text{and} \qquad (\bdeg_\cX \Phi) (k, i) := \deg_{x_i^{(k)}} \Phi.
\]
Recall that $(\bd_\cZ, \bd_\cX) \leq (\bd'_\cZ, \bd'_\cX)$, for $\bd_\cZ, \bd'_\cZ  \in \bbN^{[1, r]}$ and $\bd_\cX, \bd'_\cX \in \bbN^{([1, r] \times \bbZ)}$, if and only if either $\bd_\cZ \Phi < \bd'_\cZ \Phi$ or $\bd_\cZ \Phi = \bd'_\cZ \Phi$ and $\bd_\cX \Phi \leq \bd'_\cX \Phi$.

We have the following.

\begin{lemm} \label{lemm poly prim}
There exists polynomials $\Phi_i^{(k)} \in \cA$ such that $\bdeg \Phi_i^{(k)} \leq \bdim Z_i^{(k)}$ and
\[
\Phi_i^{(k)} ((X_j^{(l)}), (Z_0^{(l)})) = Z_i^{(k)},
\]
for all $k \in [1, r]$ and $i \in \bbZ$.
\end{lemm}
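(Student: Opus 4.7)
The plan is induction on $|i|$. The base case $i = 0$ is immediate: take $\Phi_0^{(k)} := z^{(k)}$; reading off the definition of $\bdim_\cX$ from Section~\ref{sect category} one sees that both $\bdeg \Phi_0^{(k)}$ and $\bdim Z_0^{(k)}$ are the $k$-th unit vector in the $\cZ$-part with zero $\cX$-part, so in particular $\bdeg \Phi_0^{(k)} \leq \bdim Z_0^{(k)}$.

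For the inductive step the main tool is the pair of identities which translate, by shift of indices, the two auxiliary identities used at the end of the proof of Proposition~\ref{prop rel3}. Combining the exceptional and generic formulas in Lemma~\ref{lemm multZik}(3) and~(4) applied at $(k, i)$ (there is no accidental clash between the exceptional configurations because $m \geq 1$), one obtains
\begin{equation*}
Z_{i - 1}^{(k)} = \tfrac{1}{\mu_1} X_{i - 1}^{(k)} Z_i^{(k)} - \tfrac{1}{\nu_{-1}} Z_i^{(k)} X_{i - 1}^{(k)}
\end{equation*}
and
\begin{equation*}
Z_{i + 1}^{(k)} = - \tfrac{1}{\mu_{-1}'} X_{i + \delta_{k, r} m}^{(k + 1)} Z_i^{(k)} + \tfrac{1}{\nu_1'} Z_i^{(k)} X_{i + \delta_{k, r} m}^{(k + 1)}.
\end{equation*}
Accordingly, I define
\begin{equation*}
\Phi_{i - 1}^{(k)} := \tfrac{1}{\mu_1} x_{i - 1}^{(k)} \Phi_i^{(k)} - \tfrac{1}{\nu_{-1}} \Phi_i^{(k)} x_{i - 1}^{(k)} \qquad (i \leq 0)
\end{equation*}
and
\begin{equation*}
\Phi_{i + 1}^{(k)} := - \tfrac{1}{\mu_{-1}'} x_{i + \delta_{k, r} m}^{(k + 1)} \Phi_i^{(k)} + \tfrac{1}{\nu_1'} \Phi_i^{(k)} x_{i + \delta_{k, r} m}^{(k + 1)} \qquad (i \geq 0),
\end{equation*}
which by the inductive hypothesis evaluate at $((X_j^{(l)}), (Z_0^{(l)}))$ to $Z_{i \pm 1}^{(k)}$ via the displayed identities.

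The remaining task is the degree inequality. Inspecting the definition of $\bdim_\cX$: passing from $\bdim Z_i^{(k)}$ to $\bdim Z_{i - 1}^{(k)}$ (for $i \leq 0$) adds exactly the unit vector at coordinate $(k, i - 1)$, supplied by the second clause of the definition, while passing to $\bdim Z_{i + 1}^{(k)}$ (for $i \geq 0$) adds exactly the unit vector at $(k + 1, i + \delta_{k, r} m)$, supplied by the third clause; in both cases $\bdim_\cZ$ is unchanged. These increments match precisely the degree contribution of the new generator $x_{i - 1}^{(k)}$ or $x_{i + \delta_{k, r} m}^{(k + 1)}$ in the corresponding recursion, so combining with $\bdeg \Phi_i^{(k)} \leq \bdim Z_i^{(k)}$ yields $\bdeg \Phi_{i \pm 1}^{(k)} \leq \bdim Z_{i \pm 1}^{(k)}$. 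I do not foresee any genuine obstacle here: the recursion is dictated by already-established multiplication formulas, and the only point meriting some care is the matching of the single new $\bdim_\cX$-coordinate against the new generator appearing in the recursion, which is a direct read-off from the three clauses defining $\bdim_\cX$.
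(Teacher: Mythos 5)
Your proof is correct and follows essentially the same route as the paper's: induction on $|i|$ with base case $\Phi_0^{(k)} = z^{(k)}$, and a recursion obtained from the commutator identities $\tfrac{1}{\mu_1} X_{i-1}^{(k)} Z_i^{(k)} - \tfrac{1}{\nu_{-1}} Z_i^{(k)} X_{i-1}^{(k)} = Z_{i-1}^{(k)}$ and $\tfrac{1}{\mu_{-1}'} X_{i + \delta_{k,r} m}^{(k+1)} Z_i^{(k)} - \tfrac{1}{\nu_1'} Z_i^{(k)} X_{i + \delta_{k,r} m}^{(k+1)} = -Z_{i+1}^{(k)}$ supplied by Lemma~\ref{lemm multZik}. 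You are in fact somewhat more explicit than the paper, which only writes out the $i<0$ recursion and leaves the degree inequality unverified; your check that the single new $\bdim_\cX$-coordinate matches the new generator is exactly the missing detail.
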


\begin{proof}
We prove the claim by induction on $|i|$. Note that $\Phi_0^{(k)} = z^{(k)}$. Thus assume $|i| > 0$. If $i < 0$, then it follows from Lemma~\ref{lemm multZik}, that
\[
Z_i^{(k)} = \tfrac{1}{\mu_1} X_i^{(k)} Z_{i + 1}^{(k)} - \tfrac{1}{\nu_{-1}} Z_{i + 1}^{(k)} X_i^{(k)}.
\]
Thus in this case the claim follows if we take
\[
\Phi_i^{(k)} := \tfrac{1}{\mu_1} x_i^{(k)} \Phi_{i + 1}^{(k)} - \tfrac{1}{\nu_{-1}} \Phi_{i + 1}^{(k)} x_i^{(k)}.
\]
If $i > 0$, then we proceed similarly.
\end{proof}

Using Lemmas~\ref{lemm poly} and \ref{lemm poly prim}, and Proposition~\ref{prop poly} we get the following.

\begin{prop} \label{prop poly prim}
For each $M \in \cC$ there exits a polynomial $\Phi_M \in \cA$ such that $\bdeg \Phi_M \leq \bdim M$ and
\[
\Phi_M ((X_i^{(k)}), (Z_0^{(k)})) = M. \eqno \qed
\]
\end{prop}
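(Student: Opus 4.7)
The plan is to reduce to Proposition~\ref{prop poly} with $I := [1,r]$, $J := [1,r] \times \bbZ$, and the subadditive pair $\bdim = (\bdim_\cZ, \bdim_\cX)$; the generators $z^{(k)}$ and $x_i^{(k)}$ of $\cA$ then correspond to the designated objects $Z_0^{(k)}$ and $X_i^{(k)}$. The hypothesis that $(\bdim_\cZ, \bdim_\cX)$ is a subadditive pair comes from Corollary~\ref{coro dim vect C}, and additivity on direct sums is built into the definition of $\bdim$. Inspecting the explicit formulas for $\bdim$ on indecomposables, one checks that $Z_0^{(k)}$ is the unique indecomposable with $\bdim_\cZ$ equal to the $k$-th standard basis vector and $\bdim_\cX = 0$, and similarly $X_i^{(k)}$ is the unique indecomposable with $\bdim_\cX$ the standard basis vector at $(k,i)$ and $\bdim_\cZ = 0$, so the uniqueness requirements in Proposition~\ref{prop poly} are satisfied.

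With this setup, by Proposition~\ref{prop poly} it suffices to produce, for every indecomposable $M \in \cC$, a polynomial $\Phi_M \in \cA$ with
\[
\Phi_M((X_i^{(k)}), (Z_0^{(k)})) = M \quad \text{and} \quad \bdeg \Phi_M \leq \bdim M.
\]
The indecomposables of $\cC$ are precisely the $X_{i,j}^{(k)}$ with $i \leq j$ and the $Z_i^{(k)}$. For the former family, Lemma~\ref{lemm poly} provides polynomials $\Phi_{i,j}^{(k)} \in \cA' \subset \cA$ involving only the generators $x_s^{(l)}$, and satisfying $\bdeg_\cX \Phi_{i,j}^{(k)} \leq \bdim_\cX X_{i,j}^{(k)}$; since these polynomials have $\bdeg_\cZ = 0 = \bdim_\cZ X_{i,j}^{(k)}$, the full inequality $\bdeg \Phi_{i,j}^{(k)} \leq \bdim X_{i,j}^{(k)}$ holds. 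For the latter family, Lemma~\ref{lemm poly prim} produces polynomials $\Phi_i^{(k)} \in \cA$ with $\bdeg \Phi_i^{(k)} \leq \bdim Z_i^{(k)}$ directly.

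Combining these two families of polynomials yields the required input to Proposition~\ref{prop poly}, which then upgrades the statement from indecomposables to all objects of $\cC$. There is no real obstacle: all the nontrivial content, namely the inductive constructions of $\Phi_{i,j}^{(k)}$ and $\Phi_i^{(k)}$ together with the bookkeeping on degree vectors, has already been done in Section~\ref{sect res1} and Lemma~\ref{lemm poly prim}. The only point requiring a brief check is that the ordering convention on $\bbN^{(I)} \times \bbN^{(J)}$ from subsection~\ref{subsect dimvect} is compatible with the convention on $\bdeg$ used here, which is immediate from the definitions.
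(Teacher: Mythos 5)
Your proposal is correct and follows exactly the route the paper takes: its proof of Proposition~\ref{prop poly prim} is precisely "Using Lemmas~\ref{lemm poly} and~\ref{lemm poly prim}, and Proposition~\ref{prop poly}." Your additional verification of the hypotheses of Proposition~\ref{prop poly} (the subadditive pair via Corollary~\ref{coro dim vect C}, additivity on direct sums, and the uniqueness of $Z_0^{(k)}$ and $X_i^{(k)}$ among indecomposables with the prescribed dimension vectors) is accurate and merely makes explicit what the paper leaves implicit.
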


Consequently, we we obtain the following.

\begin{coro} \label{coro gen prim}
The algebra $\cH$ is generated by $X_i^{(k)}$, $Z_0^{(k)}$, $k \in [1, r]$, $i \in \bbZ$. \qed
\end{coro}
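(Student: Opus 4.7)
The plan is to deduce this corollary directly from Proposition~\ref{prop poly prim}. By definition $\cH = \cH(\cC)$ is a $\bbQ$-vector space with basis $\cC/\simeq$, so in order to show that a subalgebra $\cS \subseteq \cH$ coincides with $\cH$ it is enough to verify that each $M \in \cC/\simeq$ belongs to $\cS$. Let $\cS$ be the subalgebra generated by the elements $X_i^{(k)}$ (for $k \in [1,r]$, $i \in \bbZ$) and $Z_0^{(k)}$ (for $k \in [1,r]$). For an arbitrary $M$, I would invoke Proposition~\ref{prop poly prim} to obtain a noncommutative polynomial $\Phi_M \in \cA$ with $\Phi_M((X_i^{(k)}),(Z_0^{(k)})) = M$ in $\cH$; this equality exhibits $M$ as an element of $\cS$. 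Running over all basis elements, one concludes $\cS = \cH$.

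There is no real obstacle, because the substantive content has already been packaged into Proposition~\ref{prop poly prim}. That proposition itself rests on three ingredients that are all in place: Lemma~\ref{lemm poly}, which expresses each $X_{i,j}^{(k)}$ as a polynomial in the $X_s^{(l)}$; Lemma~\ref{lemm poly prim}, which writes each $Z_i^{(k)}$ as a polynomial in the $X_j^{(l)}$ and $Z_0^{(l)}$ by induction on $|i|$, via the commutator identity $Z_i^{(k)} = \tfrac{1}{\mu_1} X_i^{(k)} Z_{i+1}^{(k)} - \tfrac{1}{\nu_{-1}} Z_{i+1}^{(k)} X_i^{(k)}$ (for $i<0$) and its analogue for $i>0$; and Proposition~\ref{prop poly}, which bootstraps from indecomposables to arbitrary objects using the subadditive pair $(\bdim_\cZ, \bdim_\cX)$ established in Corollary~\ref{coro dim vect C}. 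Once these are assembled, the corollary is a one-line consequence, entirely parallel to how Corollary~\ref{coro gen} was derived from Proposition~\ref{prop epi} in the restricted subcategory $\cX$.
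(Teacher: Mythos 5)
Your argument is exactly the paper's: the corollary is stated with an immediate \qed because it follows directly from Proposition~\ref{prop poly prim}, which supplies for each basis element $M$ of $\cH$ a polynomial expression in the $X_i^{(k)}$ and $Z_0^{(k)}$. Your identification of the supporting ingredients (Lemmas~\ref{lemm poly} and~\ref{lemm poly prim} together with Proposition~\ref{prop poly}) also matches how the paper assembles that proposition, so the proposal is correct and takes the same route.
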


\subsection{Presentation}

Recall from Lemma~\ref{lemm poly} that if $k \in [1, r]$ and $i \leq j$, then there exists $\Phi_{i, j}^{(k)} \in \cA$ such that $\Phi_{i, j}^{(k)} ((X_s^{(l)}), (Z_0^{(l)})) = X_{i, j}^{(k)}$ and $\bdeg \Phi_{i, j}^{(k)} \leq \bdim X_{i, j}^{(k)}$. Moreover, we put $\Phi_{i, i - 1}^{(k)} := 0$.

This subsection is devoted to the proof of the following.

\begin{theo} \label{theo main prim}
Let $\cA$ be the algebra generated by $x_i^{(k)}$, $z^{(k)}$, $k \in [1, r]$, $i \in \bbZ$.  Let $\cI$ be the ideal in $\cA$ generated by the following elements:
\begin{enumerate}

\item
$\tfrac{1}{\lambda_{j - i}^{(k, l)}} x_i^{(k)} x_j^{(l)} - \tfrac{1}{\lambda_{i - j}^{(l, k)}} x_j^{(l)} x_i^{(k)}$, $(k, i), (l, j) \in [1, r] \times \bbZ$, $(l, j) \neq (k, i), (k, i + 1), (k, i - 1), (k + 1, i + \delta_{k, r} m), (k - 1, i - \delta_{k, 1} m)$,

\addtocounter{enumi}{-1}
\renewcommand{\theenumi}{\arabic{enumi}'}

\item
$\tfrac{1}{\lambda_{\delta_{k, r} m}^{(k, k + 1)}} x_i^{(k)} x_{i + \delta_{k, r} m}^{(k + 1)} - \tfrac{1}{\lambda_{- \delta_{k, r} m}^{(k + 1, k)}} x_{i + \delta_{k, r} m}^{(k + 1)} x_i^{(k)} - \tfrac{\frakq}{\frakq - 1}$, for $(k, i) \in [1, r] \times \bbZ$, if $(r, m) \neq (1, 1)$,

\renewcommand{\theenumi}{\arabic{enumi}}

\item
$\tfrac{1}{\lambda_1^2} x_i^{(k)} x_i^{(k)} x_{i + 1}^{(k)} - \tfrac{1}{\lambda_1 \lambda_{-1}} (\frakq + 1) x_i^{(k)} x_{i + 1}^{(k)} x_i^{(k)} + \tfrac{1}{\lambda_{-1}^2} \frakq x_{i + 1}^{(k)} x_i^{(k)} x_i^{(k)} - \delta_{r m, 1} \frakq (\frakq + 1) x_i^{(k)}$, for $(k, i) \in [1, r] \times \bbZ$,

\item
$\tfrac{1}{\lambda_1^2} x_i^{(k)} x_{i + 1}^{(k)} x_{i + 1}^{(k)} - \tfrac{1}{\lambda_1 \lambda_{-1}} (\frakq + 1) x_{i + 1}^{(k)} x_i^{(k)} x_{i + 1}^{(k)} + \tfrac{1}{\lambda_{-1}^2} \frakq x_{i + 1}^{(k)} x_{i + 1}^{(k)} x_i^{(k)} - \delta_{r m, 1} \frakq (\frakq + 1) x_{i + 1}^{(k)}$, for $(k, i) \in [1, r] \times \bbZ$.

\item
$\tfrac{1}{\kappa_0^{(k, l)}} z^{(k)} z^{(l)} - \tfrac{1}{\kappa_0^{(l, k)}} z^{(l)} z^{(k)}$, $k, l \in [1, r]$, $l \neq k - 1, k, k + 1$.

\addtocounter{enumi}{-1}
\renewcommand{\theenumi}{\arabic{enumi}'}

\item
$\tfrac{1}{\kappa_0^{(k, k + 1)}} z^{(k)} z^{(k + 1)} - \tfrac{1}{\kappa_0^{(k + 1, k)}} z^{(k + 1)} z^{(k)} - \tfrac{1}{\kappa_0^{(k, k + 1)}} \Phi_{0, \delta_{k, r} m - 1}^{(k + 1)} + \delta_{r, 2} \tfrac{1}{\kappa_0^{(k + 1, k)}} \Phi_{0, \delta_{k + 1, r} m - 1}^{(k)}$, $k \in [1, r]$, $r \geq 2$.

\renewcommand{\theenumi}{\arabic{enumi}}

\item
$\tfrac{1}{\mu_{-j}^{(l, k)}} x_j^{(l)} z^{(k)} - \tfrac{1}{\nu_j^{(k, l)}} z^{(k)} x_j^{(l)}$, $k, l \in [1, r]$, $j \in \bbZ$, $(l, j) \neq (k, -1), (k + 1, \delta_{k, r} m)$.

\item
$\tfrac{1}{\mu_1 \kappa_1} x_{-1}^{(k)} z^{(k)} z^{(k)} - (\tfrac{1}{\nu_{-1} \kappa_1} + \tfrac{1}{\mu_1 \kappa_{-1}}) z^{(k)} x_{-1}^{(k)} z^{(k)} + \tfrac{1}{\nu_{-1} \kappa_{-1}} z^{(k)} z^{(k)} x_{-1}^{(k)} - \delta_{r, 1} \tfrac{1}{\kappa_1} \Phi_{0, \delta_{k, r} m - 2}^{(k)} + \delta_{r, 1} \tfrac{1}{\kappa_{-1}} \Phi_{-1, \delta_{k, r} m - 1}^{(k)}$, $k \in [1, r]$.

\item
$\tfrac{1}{\mu_{-1}' \kappa_{-1}} x_{\delta_{k, r} m}^{(k + 1)} z^{(k)} z^{(k)} - (\tfrac{1}{\nu_1' \kappa_{-1}} + \tfrac{1}{\mu_{-1}' \kappa_1}) z^{(k)} x_{\delta_{k, r} m}^{(k + 1)} z^{(k)} + \tfrac{1}{\nu_1' \kappa_1} z^{(k)} z^{(k)} x_{\delta_{k, r} m}^{(k + 1)} - \delta_{r, 1} \tfrac{1}{\kappa_1} x_{1, \delta_{k, r} m - 1}^{(k)} + \delta_{r, 1} \tfrac{1}{\kappa_{-1}} x_{0, \delta_{k, r} m}^{(k)}$, $k \in [1, r]$.

\item
$\tfrac{1}{\mu_0 \mu_1} x_{-1}^{(k)} x_{-1}^{(k)} z^{(k)} - (\tfrac{1}{\mu_0 \nu_{-1}} + \tfrac{1}{\mu_1 \nu_0}) x_{-1}^{(k)} z^{(k)} x_{-1}^{(k)} + \tfrac{1}{\nu_{-1} \nu_0} z^{(k)} x_{-1}^{(k)} x_{-1}^{(k)}$, $k \in [1, r]$.

\item
$\tfrac{1}{\mu_{-1} \mu_1} x_0^{(k)} x_{-1}^{(k)} z^{(k)} - \tfrac{1}{\mu_{-1} \nu_{-1}} x_0^{(k)} z^{(k)} x_{-1}^{(k)} - \tfrac{1}{\mu_1 \nu_1} x_{-1}^{(k)} z^{(k)} x_0^{(k)} + \tfrac{1}{\nu_{-1} \nu_1} z^{(k)} x_{-1}^{(k)} x_0^{(k)}$, $k \in [1, r]$.

\item
$\tfrac{1}{\mu_{-2}' \mu_1} x_{\delta_{k, r} m}^{(k + 1)} x_{-1}^{(k)} z^{(k)} - \tfrac{1}{\mu_{-2}' \nu_{-1}} x_{\delta_{k, r} m}^{(k + 1)} z^{(k)} x_{-1}^{(k)} - \tfrac{1}{\mu_1 \nu_2'} x_{-1}^{(k)} z^{(k)} x_{\delta_{k, r} m}^{(k + 1)} + \tfrac{1}{\nu_{-1} \nu_2'} z^{(k)} x_{-1}^{(k)} x_{\delta_{k, r} m}^{(k + 1)}$, $k \in [1, r]$.

\item
$\tfrac{1}{\mu_{-1}' \mu_1'} x_{\delta_{k, r} m - 1}^{(k + 1)} x_{\delta_{k, r} m}^{(k + 1)} z^{(k)} - \tfrac{1}{\mu_1' \nu_1'} x_{\delta_{k, r} m - 1}^{(k + 1)} z^{(k)} x_{\delta_{k, r} m}^{(k + 1)} - \tfrac{1}{\mu_{-1}' \nu_{-1}'} x_{\delta_{k, r} m}^{(k + 1)} z^{(k)} x_{\delta_{k, r} m - 1}^{(k + 1)} + \tfrac{1}{\nu_{-1}' \nu_1'} z^{(k)} x_{\delta_{k, r} m}^{(k + 1)} x_{\delta_{k, r} m - 1}^{(k + 1)}$, $k \in [1, r]$.

\item
$\tfrac{1}{\mu_{-1}' \mu_0'} x_{\delta_{k, r} m}^{(k + 1)} x_{\delta_{k, r} m}^{(k + 1)} z^{(k)} - (\tfrac{1}{\mu_0' \nu_1'} + \tfrac{1}{\mu_{-1}' \nu_0'}) x_{\delta_{k, r} m}^{(k + 1)} z^{(k)} x_{\delta_{k, r} m}^{(k + 1)} + \tfrac{1}{\nu_0' \nu_1'} z^{(k)} x_{\delta_{k, r} m}^{(k + 1)} x_{\delta_{k, r} m}^{(k + 1)}$, $k \in [1, r]$.

\end{enumerate}
If $\cB := \cA / \cI$, then the homomorphism $\xi \colon \cA \to \cH$ given by
\[
\xi (x_i^{(k)}) := X_i^{(k)} \qquad \text{and} \qquad \xi (z^{(k)}) := Z_0^{(k)},
\]
$k \in [1, r]$, $i \in \bbZ$, induces an isomorphism $\ol{\xi} \colon \cB \to \cH$.
\end{theo}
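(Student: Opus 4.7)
The plan is to mirror the proof of Theorem~\ref{theo main}. First, $\ol{\xi}$ is a well-defined homomorphism: the $x$-only relations in $\cI$ generate a subideal containing the image of $\cI'$ under the inclusion $\cA' \hookrightarrow \cA$, so by Propositions~\ref{prop rel1} and~\ref{prop rel2} these are killed by $\xi$; the remaining $z$-involving relations are killed by $\xi$ thanks to Proposition~\ref{prop rel3}, using the identifications $\Phi_{0, \delta_{k,r}m - 1}^{(k+1)} \mapsto X_{0, \delta_{k,r}m - 1}^{(k+1)}$ provided by Lemma~\ref{lemm poly}. Surjectivity of $\ol{\xi}$ is Corollary~\ref{coro gen prim}.

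For injectivity, I would use a dimension vector filtration. Let $\cA_\bd$ be the span of monomials $\Phi$ with $\bdeg \Phi \leq \bd$ (for $\bd \in \bbN^{[1,r]} \times \bbN^{([1,r] \times \bbZ)}$ with the lex refinement from Subsection~\ref{subsect dimvect}), let $\cB_\bd$ be its image in $\cB$, and let $\cH_\bd$ be the span of objects $M \in \cC/\simeq$ with $\bdim M \leq \bd$. Corollary~\ref{coro dim vect C} ensures $(\bdim_\cZ, \bdim_\cX)$ is subadditive, so multiplication in $\cH$ respects the filtration and $\ol{\xi}$ restricts to $\ol{\xi}_\bd \colon \cB_\bd \to \cH_\bd$; this map is surjective by Proposition~\ref{prop poly prim}. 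Since $\cB = \bigcup_\bd \cB_\bd$ and each $\cH_\bd$ is finite-dimensional, it suffices to prove $\dim_\bbQ \cB_\bd \leq \dim_\bbQ \cH_\bd$ for every $\bd$.

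A basis of $\cH_\bd$ consists of canonical direct sums with the $Z_i^{(k)}$-summands listed first (ordered lexicographically in $(k, i)$) and then the $X_{i, j}^{(k)}$-summands (ordered lexicographically in $(k, i, j)$), of total dimension $\leq \bd$. By Lemmas~\ref{lemm poly} and~\ref{lemm poly prim}, each such direct sum is the image under $\ol{\xi}$ of the corresponding ordered product $\Phi_{i_1}^{(k_1)} \cdots \Phi_{i_a}^{(k_a)} \cdot \Phi_{p_1, q_1}^{(l_1)} \cdots \Phi_{p_b, q_b}^{(l_b)}$. It therefore suffices to show that these ordered products span $\cB_\bd$, using the congruence $\equiv$ from Subsection~\ref{sub pres X} adapted to $\cI$ and to the refined lex order on $(\bdeg_\cZ, \bdeg_\cX)$.

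The work divides into three exchange phases: (a) reorder the $z^{(k)}$-generators among themselves using the quadratic $zz$ relations, where the generic case commutes cleanly and the adjacent-cycle case introduces only a $\Phi_{0, \delta_{k,r}m - 1}^{(k+1)}$ correction of strictly smaller $\bdeg_\cZ$; (b) commute each $z^{(k)}$ past each $x_j^{(l)}$ so that all $z$'s sit to the left of all $x$'s, using the quadratic $xz$ relation in the generic case; (c) once all $z$'s are on the left and all $x$'s on the right, reorder the $x$-monomial by invoking Proposition~\ref{prop reduction} inside the constant-$\bdeg_\cZ$ slice. The main obstacle will be phase (b) for the exceptional pairs $(l, j) = (k, -1)$ and $(l, j) = (k + 1, \delta_{k, r} m)$, where the quadratic $xz$ relation is absent: here one must combine the cubic $xzz$ and $xxz$ relations with the inductive definition of $\Phi_i^{(k)}$ from Lemma~\ref{lemm poly prim} to push $x_{-1}^{(k)}$ (respectively $x_{\delta_{k,r}m}^{(k+1)}$) through an arbitrary power of $z^{(k)}$, producing only error terms of strictly smaller $\bdeg_\cZ$. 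Once (a)--(c) are established, iterating them reduces any monomial to a linear combination of canonical ordered products modulo strictly smaller $\bd$, and the required dimension inequality follows by induction on $\bd$.
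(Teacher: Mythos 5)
Your outline of well-definedness, surjectivity, and the spanning argument (phases (a)--(c), which correspond to Proposition~\ref{prop change} and Corollary~\ref{coro span} in the paper) is essentially the right content, and you correctly isolate the hard exceptional pairs $(l,j) = (k,-1)$ and $(k+1, \delta_{k,r}m)$. However, the final step of your injectivity argument has a genuine gap: the claim that each $\cH_\bd$ is finite-dimensional is false. By the definition of the order on $\bbN^{[1,r]} \times \bbN^{([1,r]\times\bbZ)}$ from subsection~\ref{subsect dimvect}, $(\bd_1,\bd_2) \leq (\bd_1',\bd_2')$ already holds whenever $\bd_1 < \bd_1'$, with no constraint on $\bd_2$. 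Hence as soon as $\bd_\cZ \neq 0$, every object $M$ with $\bdim_\cZ M = 0$ --- that is, all of $\cX$ --- satisfies $\bdim M \leq \bd$, so $\cH_\bd$ contains the infinite-dimensional space $\cH(\cX)$. (This is exactly why the count worked in Theorem~\ref{theo main}, where there is no $\cZ$-component, but fails here.) Consequently ``spanning set of the right cardinality'' plus surjectivity does not yield injectivity, and the inequality $\dim_\bbQ \cB_\bd \leq \dim_\bbQ \cH_\bd$ is vacuous.

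The paper repairs this with a two-level filtration that your proposal is missing: set $d := |\bd_\cZ|$, define $\cB_d$, $\cH_d$ (sums over all $\bd$ with $|\bd_\cZ| \leq d$) and the refinements $\cB_{d,\bd'}$, $\cH_{d,\bd'}$, and pass to the associated graded pieces $\cH_{d,\bd'}/\cH_{d-1}$. These quotients are spanned by the finitely many objects with exactly $d$ summands of type $Z$ and $\bdim_\cX \leq \bd'$, so they \emph{are} finite-dimensional, and the dimension count applies there (Lemma~\ref{lemm mono}). One then assembles injectivity of $\ol{\xi}$ from injectivity on each graded piece via the elementary filtration criterion of Proposition~\ref{prop crit mono}. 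Your argument needs this extra layer; without it the reduction ``it suffices to prove $\dim_\bbQ \cB_\bd \leq \dim_\bbQ \cH_\bd$'' does not follow.
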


We proceed similarly as in the proof of Theorem~\ref{theo main}. Again we know from Propositions~\ref{prop rel1}, \ref{prop rel2} and~\ref{prop rel3} that $\ol{\xi}$ is a well-defined homomorphism, which is an epimorphism by Corollary~\ref{coro gen prim}. In the rest of the subsection we show that $\ol{\xi}$ is a monomorphism.

For $\bd \in \bbN^{[1, r]} \times \bbN^{([1, r] \times \bbZ)}$, let $\cA_\bd$ be the subspace of $\cA$ spanned by the monomials $\Phi$ such that $\bdeg \Phi \leq \bd$. If $\Phi, \Phi_1, \ldots, \Phi_l \in \cA_\bd$ are monomials of the same degree vector, then we write
\[
\Phi \equiv \Phi_1 + \cdots + \Phi_l,
\]
if there exist $v_1, \ldots, v_l \in \bbQ$ and $\Psi \in \cA$ such that $\bdeg \Psi < \bdeg \Phi$ and
\[
\Phi + \cI = v_1 \Phi_1 + \cdots + v_l \Phi_l + \Psi + \cI.
\]
Using this notation we may reformulate the relations from Theorem~\ref{theo main prim} in the following way.

\begin{lemm} \label{lemm change}
Let $k \in [1, r]$ and $i \in \bbZ$. Then we have the following:
\begin{enumerate}

\item \label{change one}
$x_i^{(k)} x_j^{(l)} \equiv x_j^{(l)} x_i^{(k)}$, if $(l, j) \neq (k, i \pm 1)$;

\item \label{change two}
$x_i^{(k)} x_i^{(k)} x_{i + 1}^{(k)} \equiv x_i^{(k)} x_{i + 1}^{(k)} x_i^{(k)} + x_{i + 1}^{(k)} x_i^{(k)} x_i^{(k)}$;

\addtocounter{enumi}{-1}
\renewcommand{\theenumi}{\arabic{enumi}'}

\item \label{change two prim}
$x_i^{(k)} x_{i + 1}^{(k)} x_i^{(k)} \equiv x_i^{(k)} x_i^{(k)} x_{i + 1}^{(k)} + x_{i + 1}^{(k)} x_i^{(k)} x_i^{(k)}$;

\renewcommand{\theenumi}{\arabic{enumi}}

\item \label{change three}
$x_i^{(k)} x_{i + 1}^{(k)} x_{i + 1}^{(k)} \equiv x_{i + 1}^{(k)} x_i^{(k)} x_{i + 1}^{(k)} + x_{i + 1}^{(k)} x_{i + 1}^{(k)} x_i^{(k)}$;

\addtocounter{enumi}{-1}
\renewcommand{\theenumi}{\arabic{enumi}'}

\item \label{change three prim}
$x_{i + 1}^{(k)} x_i^{(k)} x_{i + 1}^{(k)} \equiv x_i^{(k)} x_{i + 1}^{(k)} x_{i + 1}^{(k)} + x_{i + 1}^{(k)} x_{i + 1}^{(k)} x_i^{(k)}$;

\addtocounter{enumi}{-1}
\renewcommand{\theenumi}{\arabic{enumi}''}

\item \label{change three bis}
$x_{i + 1}^{(k)} x_{i + 1}^{(k)} x_i^{(k)} \equiv x_i^{(k)} x_{i + 1}^{(k)} x_{i + 1}^{(k)} + x_{i + 1}^{(k)} x_i^{(k)} x_{i + 1}^{(k)}$;

\renewcommand{\theenumi}{\arabic{enumi}}

\item \label{change four}
$z^{(k)} z^{(l)} \equiv z^{(l)} z^{(k)}$, for any $l$;

\item \label{change five}
$x_j^{(l)} z^{(k)} \equiv z^{(k)} x_j^{(l)}$, if $(l, j) \neq (k, -1), (k + 1, \delta_{k, r} m)$;

\addtocounter{enumi}{-1}
\renewcommand{\theenumi}{\arabic{enumi}'}

\item \label{change five prim}
$z^{(k)} x_j^{(l)} \equiv x_j^{(l)} z^{(k)}$, if $(l, j) \neq (k, -1), (k + 1, \delta_{k, r} m)$;

\renewcommand{\theenumi}{\arabic{enumi}}

\item \label{change six}
$x_{-1}^{(k)} z^{(k)} z^{(k)} \equiv z^{(k)} x_{-1}^{(k)} z^{(k)} + z^{(k)} z^{(k)} x_{-1}^{(k)}$;

\addtocounter{enumi}{-1}
\renewcommand{\theenumi}{\arabic{enumi}'}

\item \label{change six prim}
$z^{(k)} x_{-1}^{(k)} z^{(k)} \equiv x_{-1}^{(k)} z^{(k)} z^{(k)} + z^{(k)} z^{(k)} x_{-1}^{(k)}$;

\renewcommand{\theenumi}{\arabic{enumi}}

\item \label{change seven}
$x_{\delta_{k, r} m}^{(k + 1)} z^{(k)} z^{(k)} \equiv z^{(k)} x_{\delta_{k, r} m}^{(k + 1)} z^{(k)} + z^{(k)} z^{(k)} x_{\delta_{k, r} m}^{(k + 1)}$;

\addtocounter{enumi}{-1}
\renewcommand{\theenumi}{\arabic{enumi}'}

\item \label{change seven prim}
$z^{(k)} x_{\delta_{k, r} m}^{(k + 1)} z^{(k)} \equiv x_{\delta_{k, r} m}^{(k + 1)} z^{(k)} z^{(k)} + z^{(k)} z^{(k)} x_{\delta_{k, r} m}^{(k + 1)}$;

\renewcommand{\theenumi}{\arabic{enumi}}

\item \label{change eight}
$x_{-1}^{(k)} x_{-1}^{(k)} z^{(k)} \equiv x_{-1}^{(k)} z^{(k)} x_{-1}^{(k)} + z^{(k)} x_{-1}^{(k)} x_{-1}^{(k)}$;

\item \label{change nine}
$x_0^{(k)} x_{-1}^{(k)} z^{(k)} \equiv x_0^{(k)} z^{(k)} x_{-1}^{(k)} + x_{-1}^{(k)} z^{(k)} x_0^{(k)} + z^{(k)} x_{-1}^{(k)} x_0^{(k)}$;

\item \label{change ten}
$x_{\delta_{k, r} m}^{(k + 1)} x_{-1}^{(k)} z^{(k)} \equiv x_{\delta_{k, r} m}^{(k + 1)} z^{(k)} x_{-1}^{(k)} + x_{-1}^{(k)} z^{(k)} x_{\delta_{k, r} m}^{(k + 1)} + z^{(k)} x_{-1}^{(k)} x_{\delta_{k, r} m}^{(k + 1)}$;

\item \label{change eleven}
$x_{\delta_{k, r} m - 1}^{(k + 1)} x_{\delta_{k, r} m}^{(k + 1)} z^{(k)} \equiv x_{\delta_{k, r} m - 1}^{(k + 1)} z^{(k)} x_{\delta_{k, r} m}^{(k + 1)} + x_{\delta_{k, r} m}^{(k + 1)} z^{(k)} x_{\delta_{k, r} m - 1}^{(k + 1)} + z^{(k)} x_{\delta_{k, r} m}^{(k + 1)} x_{\delta_{k, r} m - 1}^{(k + 1)}$;

\item \label{change twelve}
$x_{\delta_{k, r} m}^{(k + 1)} x_{\delta_{k, r} m}^{(k + 1)} z^{(k)} \equiv x_{\delta_{k, r} m}^{(k + 1)} z^{(k)} x_{\delta_{k, r} m}^{(k + 1)} + z^{(k)} x_{\delta_{k, r} m}^{(k + 1)} x_{\delta_{k, r} m}^{(k + 1)}$.

\end{enumerate}
\end{lemm}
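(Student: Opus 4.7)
The plan is to derive each of the twelve congruences (and their primed variants) from a single defining generator of $\cI$ listed in Theorem~\ref{theo main prim}, by solving that generator for one of its monomial summands and verifying that every remaining term has $\bdeg$ strictly smaller than the main monomials on the two sides of the resulting congruence.

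Items (1)--(3$''$) will be handled first, since the relevant generators of $\cI$ coincide with those of $\cI'$ from Theorem~\ref{theo main}, so the argument from Lemma~\ref{lemm exchange} applies verbatim. For (1) when $(l,j) = (k+1, i + \delta_{k,r}m)$ and $(r,m) \neq (1,1)$, the extra constant $\tfrac{\frakq}{\frakq-1}$ appearing in generator~(1$'$) has $\bdeg = 0$, strictly below $\bdeg(x_i^{(k)} x_{i+\delta_{k,r}m}^{(k+1)})$. For (2)--(3$''$) I will solve the cubic generators~(2) and~(3) for each of the three displayed monomials in turn---the scalar coefficients are manifestly nonzero---and note that the correction $\delta_{rm,1}\frakq(\frakq+1) x_\bullet^{(k)}$ is linear, hence has strictly smaller degree vector than any cubic in $x_i^{(k)}$ and $x_{i+1}^{(k)}$.

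Items (4)--(12) are treated identically using generators (4)--(12) of $\cI$. The crucial point for the congruences involving $z^{(k)}$ is that whenever a correction of the form $\Phi_{i,j}^{(k)}$ appears---in generators~(4$'$), (6), and~(7)---Lemma~\ref{lemm poly} gives $\bdeg \Phi_{i,j}^{(k)} \leq \bdim X_{i,j}^{(k)}$, and in particular $\bdeg_\cZ \Phi_{i,j}^{(k)} = 0$. Since the main monomials in each of these congruences have $\bdeg_\cZ > 0$, these corrections sit strictly below them in the lexicographic ordering on $\bbN^{[1,r]} \times \bbN^{([1,r]\times\bbZ)}$ from subsection~\ref{subsect dimvect}. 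The primed variants (5$'$), (6$'$), (7$'$) follow by solving the same unprimed generator for a different monomial, which is legitimate because every displayed coefficient in generators~(5), (6), (7) is nonzero.

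The only obstacle, modest as it is, is to confirm that each scalar one divides by is indeed nonzero and that the correction terms really fall strictly below $\bdeg \Phi$ in the ordering. Both reduce to elementary bookkeeping on the $\bdim$ values recorded in subsection~\ref{subsect dimvect} and on the explicit rationals $\lambda_\bullet, \mu_\bullet, \mu_\bullet', \nu_\bullet, \nu_\bullet', \kappa_\bullet$, which never vanish.
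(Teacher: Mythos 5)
Your proposal is correct and is exactly the argument the paper intends: the paper states this lemma with no proof at all, introducing it with the phrase that one may ``reformulate the relations from Theorem~\ref{theo main prim}'' in this notation, i.e.\ each congruence is obtained by solving the corresponding generator of $\cI$ for one of its monomials and discarding the correction terms, which (as you verify) have strictly smaller degree vector because they either are constants, have lower $\cX$-degree, or have $\bdeg_\cZ = 0$ while the main monomials do not. Your checks that the coefficients are nonzero (being positive rational powers and sums of such) and that the $\Phi_{i,j}^{(k)}$ corrections satisfy $\bdeg_\cZ \Phi_{i,j}^{(k)} = 0$ supply precisely the bookkeeping the paper leaves implicit.
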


Recall, that for $k \in [1, r]$ and $i \leq j$,
\[
x_{i, j}^{(k)} := x_i^{(k)} \cdots x_j^{(k)}.
\]
Moreover, $x_{i, i - 1}^{(k)} := 1$. Next, for $k \in [1, r]$ and $i \in \bbZ$, we put
\[
z_i^{(k)} :=
\begin{cases}
x_i^{(k)} \cdots x_{-1}^{(k)} z^{(k)} & \text{if $i < 0$},
\\
z^{(k)} & \text{if $i = 0$},
\\
x_{i + \delta_{k, r} m - 1}^{(k + 1)} \cdots x_{\delta_{k, r} m}^{(k + 1)} z^{(k)} & \text{if $i > 0$}.
\end{cases}
\]

Given $\bd \in \bbN^{[1, r]} \times \bbN^{([1, r] \times \bbZ)}$, we introduce an order on the set of monomials of the degree vector $\bd$ in the following way. Let
\[
\Phi_1 = a_1 \cdots a_n \qquad \text{and} \qquad \Phi_2 = b_1 \cdots b_n
\]
be two monomials of degree vector $\bd$, where
\[
a_1, \ldots, a_n, b_1, \ldots, b_n \in \{ x_i^{(k)} : \text{$k \in [1, r]$, $i \in \bbZ$} \} \cup \{ z^{(k)} : \text{$k \in [1, r]$} \}.
\]
Let
\begin{gather*}
\{ s_1 < \cdots < s_l \} := \{ s \in [1, n] : \text{$a_s = z^{(k)}$ for some $k \in [1, r]$} \}
\\
\intertext{and}
\{ t_1 < \cdots < t_l \} := \{ t \in [1, n] : \text{$b_t = z^{(k)}$ for some $k \in [1, r]$} \}.
\end{gather*}
Moreover,
\begin{gather*}
\{ s_1' < \cdots < s_{n - l}' \} := [1, n] \setminus \{ s_1 < \cdots < s_l \}
\\
\intertext{and}
\{ t_1' < \cdots < t_{n - l}' \} := [1, n] \setminus \{ t_1 < \cdots < t_l \}.
\end{gather*}
Then we write $\Phi_1 < \Phi_2$ if:
\begin{enumerate}

\item
either, there exists $j \in [1, l]$ such that $a_{s_1} = b_{t_1}$, \ldots, $a_{s_{j - 1}} = b_{t_{j - 1}}$, and $a_{s_j} = z^{(k_1)}$ and $b_{t_j} = z^{(k_2)}$, for $k_1 > k_2$, i.e.\ $a_{s_1} \cdots a_{s_l}$ is lexicographically bigger then $b_{t_1} \cdots b_{t_l}$,

\item
or, $a_{s_1} \cdots a_{s_l} = b_{t_1} \cdots b_{t_l}$, and there exists $j \in [1, l]$ such that $s_1 = t_1$, \ldots, $s_{j - 1} = t_{j - 1}$, and $s_j < t_j$,

\item
or, $a_{s_1} \cdots a_{s_l} = b_{t_1} \cdots b_{t_l}$, $\{ s_1, \ldots, s_l \} = \{ t_1, \ldots, t_l \}$, and $a_{s_1'} \cdots a_{s_{n - l}'} < b_{t_1'} \cdots b_{t_{n - l}'}$ in the sense of the relation defined in subsection~\ref{sub pres X}.

\end{enumerate}

The following will be crucial.

\begin{prop} \label{prop change}
\begin{enumerate}

\item \label{prop change one}
Assume $k_1, k_2 \in [1, r]$ and $i_1, i_2 \in \bbZ$. If either $k_1 < k_2$, or $k_1 = k_2$ and $|i_1| > |i_2|$, or $k_1 = k_2$, $|i_1| = |i_2|$, and $i_1 < i_2$, then there exist monomials $\Phi_1$, \ldots, $\Phi_n$ of degree vector $\bdim (z_{i_1}^{(k_1)} z_{i_2}^{(k_2)})$ such that $\Phi_s < z_{i_1}^{(k_1)} z_{i_2}^{(k_2)}$, for each $1 \leq s \leq n$, and
\[
z_{i_1}^{(k_1)} z_{i_2}^{(k_2)} \equiv \Phi_1 + \cdots + \Phi_n.
\]

\item \label{prop change two}
Assume $k_1, k_2 \in [1, r]$, $i_1 \leq j_1$, and $i_2 \in \bbZ$. Then
\begin{enumerate}

\item \label{prop change twoa}
either $k_1 = k_2$ and $j_1 = i_2 - 1 < 0$,

\item \label{prop change twob}
or $k_1 = k_2 + 1$ and $i_1 = j_1 = i_2 + \delta_{k, r} m \geq \delta_{k, r} m$,

\item
or there exist monomials $\Phi_1$, \ldots, $\Phi_n$ of degree vector $\bdim (x_{i_1, j_1}^{(k_1)} z_{i_2}^{(k_2)})$ such that $\Phi_s < x_{i_1, j_1}^{(k_1)} z_{i_2}^{(k_2)}$, for each $1 \leq s \leq n$, and
\[
x_{i_1, j_1}^{(k_1)} z_{i_2}^{(k_2)} \equiv \Phi_1 + \cdots + \Phi_n.
\]

\end{enumerate}
\end{enumerate}
\end{prop}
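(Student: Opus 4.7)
The plan is to follow closely the pattern used in the proof of Proposition~\ref{prop reduction}, replacing the role of Lemma~\ref{lemm exchange} by Lemma~\ref{lemm change}. The argument is a case analysis in which the commutation and Serre-type relations of Lemma~\ref{lemm change} are applied so as to rewrite the given monomial as a combination of monomials that are strictly smaller in the triple ordering (by $z$-subsequence, then by the positions of the $z$-letters, then by the $x$-order from subsection~\ref{sub pres X}), modulo terms of strictly smaller degree vector.

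For part~\eqref{prop change one}, the first step is to expand both factors by the definition of $z_i^{(k)}$, obtaining a word of the shape $(x\text{-prefix}) \cdot z^{(k_1)} \cdot (x\text{-prefix}) \cdot z^{(k_2)}$. When $k_1 < k_2$, the two $z$-letters commute via Lemma~\ref{lemm change}\eqref{change four}, and the remaining task is to commute the intervening $x$-letters past $z^{(k_1)}$. Every such commutation is allowed by Lemma~\ref{lemm change}\eqref{change five}/\eqref{change five prim} except for the critical pairs involving $x_{-1}^{(l)}$ or $x_{\delta_{l, r} m}^{(l + 1)}$; in those obstructed cases the Serre-type relations Lemma~\ref{lemm change}\eqref{change six}--\eqref{change seven prim} are invoked, each of which strictly decreases the $z$-position data while introducing only strictly smaller correction monomials. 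In the case $k_1 = k_2 = k$, the hypothesis on $(i_1, i_2)$ guarantees that $z_{i_2}^{(k)}$ has a strictly shorter $x$-prefix than $z_{i_1}^{(k)}$, and I would proceed by induction on $|i_1| + |i_2|$: the base cases (pairs with $|i_1| + |i_2| \leq 2$) are direct applications of Lemma~\ref{lemm change}\eqref{change six}--\eqref{change seven prim}, and the inductive step peels off an outer $x$-letter using Lemma~\ref{lemm change}\eqref{change five}/\eqref{change five prim} to reduce to a pair of smaller total absolute value.

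For part~\eqref{prop change two}, the monomial $x_{i_1, j_1}^{(k_1)} z_{i_2}^{(k_2)}$ contains exactly one $z$-letter, and the aim is to move it further to the left, which strictly decreases the ordering. The letters $x_s^{(k_1)}$ for $s \in [i_1, j_1]$ are commuted past $z^{(k_2)}$ using Lemma~\ref{lemm change}\eqref{change five prim}; obstructions occur only when $(k_1, s) = (k_2, -1)$ or $(k_1, s) = (k_2 + 1, \delta_{k_2, r} m)$, and they are resolved by Lemma~\ref{lemm change}\eqref{change eight}--\eqref{change twelve}, each of which expresses the offending triple $(x, x, z)$ as a sum of triples with $z$ in a strictly earlier position (plus lower-degree terms). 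The exceptional possibilities~\eqref{prop change twoa} and~\eqref{prop change twob} correspond precisely to the cases where $x_{i_1, j_1}^{(k_1)} z_{i_2}^{(k_2)}$ literally coincides with a single block $z_{i'}^{(k_2)}$, namely $z_{i_1}^{(k_2)}$ in case~\eqref{prop change twoa} and $z_{i_2 + 1}^{(k_2)}$ in case~\eqref{prop change twob}; since no further decomposition of a single-block monomial is available, these must remain as exceptions (and they are, in fact, subsumed by part~\eqref{prop change one} when they next appear in a longer word).

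The main obstacle is the exhaustive bookkeeping in part~\eqref{prop change one}, particularly when $k_1 = k_2 = k$ and both $x$-prefixes straddle the transitional indices $-1$ and $\delta_{k, r} m$ where the commutation rules change character: each application of a Serre-type relation produces several correction terms, and each one must be verified to be strictly smaller in the triple ordering. A secondary subtlety lies in part~\eqref{prop change two}, where one must confirm that after every application of Lemma~\ref{lemm change}\eqref{change eight}--\eqref{change twelve} the resulting monomials either are strictly smaller or fall into one of the two listed exceptional configurations; this requires tracing the degree-vector hypothesis $\bdim (x_{i_1, j_1}^{(k_1)} z_{i_2}^{(k_2)})$ through every substitution.
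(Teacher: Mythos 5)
Your high-level strategy is the same as the paper's (rewrite via Lemma~\ref{lemm change}, case analysis, compare monomials in the three-tier order), but the plan breaks down exactly at the hardest point of part~\eqref{prop change one}, and the claim you rely on there is false. When $k_1 = k_2 = k$ and, say, $i_2 < 0$, the obstructed letter is the $x_{-1}^{(k)}$ immediately to the left of the second $z^{(k)}$, and the relation you must apply is Lemma~\ref{lemm change}\eqref{change six prim}: $z^{(k)} x_{-1}^{(k)} z^{(k)} \equiv x_{-1}^{(k)} z^{(k)} z^{(k)} + z^{(k)} z^{(k)} x_{-1}^{(k)}$. The first correction term moves the leading $z$ to the \emph{right}, so it is strictly \emph{larger} in the position order, contradicting your assertion that each Serre-type relation ``strictly decreases the $z$-position data while introducing only strictly smaller correction monomials.'' The paper has to treat this larger term $a_{|i_1|} \cdots a_1 b_{|i_2|} \cdots b_1 z^{(k)} z^{(k)}$ by a separate mechanism: it first normalizes the $x$-prefix using Proposition~\ref{prop span} (the spanning result already proved for the $\cX$-part), then analyzes the resulting standard monomials $x_{s_1, t_1}^{(k)} \cdots x_{s_l, t_l}^{(k)}$ to show that the block ending at $-1$ sits in front ($d_1 = 1$) and that $s_1 > i_1$, so that after re-chunking the word begins with a strictly shorter block $z_{s_1}^{(k)}$ and is therefore smaller. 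This appeal to Proposition~\ref{prop span} and the accompanying degree-vector argument is the key idea of the proof and is entirely absent from your plan. Your proposed induction on $|i_1| + |i_2|$ also fails to launch: peeling a letter off $z_{-2}^{(k)} z_{-1}^{(k)}$ leads to $z_{-1}^{(k)} z_{-1}^{(k)}$, which does not satisfy the hypothesis of part~\eqref{prop change one}, so the inductive hypothesis does not apply.

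In part~\eqref{prop change two} your sketch treats $z_{i_2}^{(k_2)}$ as if it were the bare letter $z^{(k_2)}$; when $i_2 \neq 0$ the block carries its own $x$-prefix, and moving the letters of $x_{i_1, j_1}^{(k_1)}$ past that prefix requires the purely $x$-type Serre relations (Lemma~\ref{lemm change}\eqref{change two}, \eqref{change two prim}, \eqref{change three prim}, \eqref{change three bis}), not only \eqref{change eight}--\eqref{change twelve}; the paper's Subcases~1.1$^\circ$, 1.2$^\circ$ and Case~2$^\circ$ consist largely of exactly this interaction. Your identification of the exceptional configurations~\eqref{prop change twoa} and~\eqref{prop change twob} as the single-block products $z_{i_1}^{(k_2)}$ and $z_{i_2 + 1}^{(k_2)}$ is correct and matches the remark following the proposition, but as it stands the proposal does not constitute a proof of either part.
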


\begin{proof}
\eqref{prop change one}~Write
\[
z_{i_1}^{(k_1)} = a_{|i_1|} \cdots a_1 z^{(k_1)} \qquad \text{and} \qquad z_{i_2}^{(k_2)} = b_{|i_2|} \cdots b_1 z^{(k_2)},
\]
for $a_1, \ldots, a_{|i_1|}, b_1, \cdots, b_{|i_2|} \in \{ x_i^{(l)} : \text{$l \in [1, r]$, $i \in \bbZ$} \}$. Similarly as in the proof of Proposition~\ref{prop reduction}, we have some cases to consider.

\vspace{1ex}

\textbf{Case 0$^\circ$}: $k_1 < k_2$.

Since $k_1 < k_2$, $b_s \neq x_{-1}^{(k_1)}, x_{\delta_{k_1, r} m}^{(k_1 + 1)}$, for each $s \in [1, |i_2|]$. Consequently, using iteratively Lemma~\ref{lemm change}\eqref{change five prim} and then~Lemma~\ref{lemm change}\eqref{change four}, we get
\[
z_{i_1}^{(k_1)} z_{i_2}^{(k_2)} \equiv a_{|i_1|} \cdots a_1 b_{|i_2|} \cdots b_1 z^{(k_1)} z^{(k_2)} \equiv a_{|i_1|} \cdots a_1 b_{|i_2|} \cdots b_1 z^{(k_2)} z^{(k_1)},
\]
Since
\[
a_{|i_1|} \cdots a_1 b_{|i_2|} \cdots b_1 z^{(k_2)} z^{(k_1)} < z_{i_1}^{(k_1)} z_{i_2}^{(k_2)},
\]
the claim follows in this case.

For the rest of the proof we assume $k_1 = k_2$ and denote this common value by $k$. Observe that under this assumption $i_1 \neq 0$.

\vspace{1ex}

\textbf{Case 1$^\circ$}: $i_2 = 0$.

Since $i_1 \neq 0$, using either Lemma~\ref{lemm change}\eqref{change six} (if $i_1 < 0$) or Lemma~\ref{lemm change}\eqref{change seven} (if $i_1 > 0$), we get
\[
z_{i_1}^{(k)} z_{i_2}^{(k)} \equiv a_{|i_1|} \cdots a_2 z^{(k)} a_1 z^{(k)} + a_{|i_1|} \cdots a_2 z^{(k)} z^{(k)} a_1,
\]
hence the claim follows.

\vspace{1ex}

\textbf{Case 2$^\circ$}: $i_2 < 0$.

In this case $b_1 = x_{-1}^{(k)}$. By an iterated use of Lemma~\ref{lemm change}\eqref{change five prim} we get
\[
z_{i_1}^{(k)} z_{i_2}^{(k)} \equiv a_{|i_1|} \cdots a_1 b_{|i_2|} \cdots b_2 z^{(k)} b_1 z^{(k)}.
\]
Now we use Lemma~\ref{lemm change}\eqref{change six prim} and obtain
\[
z_{i_1}^{(k)} z_{i_2}^{(k)} \equiv a_{|i_1|} \cdots a_1 b_{|i_2|} \cdots b_1 z^{(k)} z^{(k)} + a_{|i_1|} \cdots a_1 b_{|i_2|} \cdots b_2 z^{(k)} z^{(k)} b_1.
\]
Moreover, by iterated use of Lemma~\ref{lemm change}\eqref{change five} we have
\[
a_{|i_1|} \cdots a_1 b_{|i_2|} \cdots b_2 z^{(k)} z^{(k)} b_1 \equiv a_{|i_1|} \cdots a_1 z^{(k)} z^{(k)} b_{|i_2|} \cdots b_1 < z_{i_1}^{(k)} z_{i_2}^{(k)}.
\]
Thus we only have to deal with $a_{|i_1|} \cdots a_1 b_{|i_2|} \cdots b_1 z^{(k)} z^{(k)}$.

Assume first that $i_1 > 0$. We may use iteratively Lemma~\ref{lemm change}\eqref{change one} and get
\[
a_{|i_1|} \cdots a_1 b_{|i_2|} \cdots b_1 z^{(k)} z^{(k)} \equiv b_{|i_2|} \cdots b_1 a_{|i_1|} \cdots a_1 z^{(k)} z^{(k)}.
\]
Since $a_1 = x_{\delta_{k, r} m}^{(k + 1)}$, we use Lemma~\ref{lemm change}\eqref{change seven} and get
\begin{multline*}
b_{|i_2|} \cdots b_1 a_{|i_1|} \cdots a_1 z^{(k)} z^{(k)} \equiv b_{|i_2|} \cdots b_1 a_{|i_1|} \cdots a_2 z^{(k)} a_1 z^{(k)}
\\
+ b_{|i_2|} \cdots b_1 a_{|i_1|} \cdots a_2 z^{(k)} z^{(k)} a_1.
\end{multline*}
Finally, by an iterated use of Lemma~\ref{lemm change}\eqref{change five} we have
\begin{gather*}
b_{|i_2|} \cdots b_1 a_{|i_1|} \cdots a_2 z^{(k)} a_1 z^{(k)} \equiv b_{|i_2|} \cdots b_1 z^{(k)} a_{|i_1|} \cdots a_1 z^{(k)} < z_{i_1}^{(k)} z_{i_2}^{(k)}
\\
\intertext{and}
b_{|i_2|} \cdots b_1 a_{|i_1|} \cdots a_2 z^{(k)} z^{(k)} a_1 \equiv b_{|i_2|} \cdots b_1 z^{(k)} z^{(k)} a_{|i_1|} \cdots a_1 < z_{i_1}^{(k)} z_{i_2}^{(k)},
\end{gather*}
where for the inequalities we use that $|i_2| < |i_1|$.

Now assume $i_1 < 0$, hence in particular $i_1 < i_2$. In this case Proposition~\ref{prop span} implies that
\[
a_{|i_1|} \cdots a_1 b_{|i_2|} \cdots b_1 \equiv \Phi_1 + \cdots + \Phi_n,
\]
where, for each $j$,
\begin{equation}
\label{eq dim vect}
\bdim_\cX \Phi_j = \bdim_\cX (x_{i_1, -1}^{(k)} x_{i_2, -1}^{(k)})
\end{equation}
and
\[
\Phi_j = x_{s_1, t_1}^{(k)} x_{s_2, t_2}^{(k)} \cdots x_{s_l, t_l}^{(k)},
\]
$l \in \bbN$, such that either $s_d > s_{d + 1}$ or $s_d = s_{d + 1}$ and $t_d \geq t_{d + 1}$. Fix $j \in [1, l]$. There exist $1 \leq d_1 < d_2 \leq l$ such that $t_{d_1} = -1 = t_{d_2}$. If $d_1 > 1$, then $\bdim_{x_{s_1}^{(k)}} \Phi \geq 3$, since $s_{d_2} \leq s_{d_1} \leq s_1 \leq -1 = t_{d_1} = t_{d_2}$. This contradicts~\eqref{eq dim vect}, hence $d_1 = 1$. Similarly, $s_1 > i_1$, since otherwise $\deg_{x_{i_1}^{(k)}} \Phi_j \geq 2$, which again contradicts~\eqref{eq dim vect}, as $i_1 < i_2$. Now, using iteratively Lemma~\ref{lemm change}\eqref{change five} and Lemma~\ref{lemm change}\eqref{change six} (in an appropriate order), we get
\[
\Phi_j z^{(k)} z^{(k)} \equiv z_{s_1}^{(k)} z^{(k)} x_{s_2, t_2}^{(k)} \cdots x_{s_l, t_l}^{(k)} + z_{s_1}^{(k)} x_{s_2, t_2}^{(k)} \cdots x_{s_{d_2}, t_{d_2}}^{(k)} z^{(k)} x_{s_{d_2 + 1}, t_{d_2 + 1}}^{(k)} \cdots x_{s_l, t_l}^{(k)}.
\]
Since $s_1 > i_1$,
\[
z_{s_1}^{(k)} z^{(k)} x_{s_2, t_2}^{(k)} \cdots x_{s_l, t_l}^{(k)}, z_{s_1}^{(k)} x_{s_2, t_2}^{(k)} \cdots x_{s_{d_2}, t_{d_2}}^{(k)} z^{(k)} x_{s_{d_2 + 1}, t_{d_2 + 1}}^{(k)} \cdots x_{s_l, t_l}^{(k)} < z_{i_1}^{(k)} z_{i_2}^{(k)},
\]
hence the claim follows in this case.

\vspace{1ex}

\textbf{Case 3$^\circ$}: $i_2 > 0$.

This case is dual to Case~2$^\circ$. The only difference is that when $i_1 < 0$, we only have $|i_2| \leq |i_1|$, but we also use $i_2 > i_1$, if $|i_2| = |i_1|$.

\vspace{1ex}

\eqref{prop change two}
Again we need to consider some cases.

\vspace{1ex}

\textbf{Case 0$^\circ$}: $k_2 \neq k_1, k_1 - 1$ (in particular, $r \geq 3$).

In this case, by an iterated use of Lemma~\ref{lemm change}\eqref{change one} and Lemma~\ref{lemm change}\eqref{change five} we get
\[
x_{i_1, j_1}^{(k_1)} z_{i_2}^{(k_2)} \equiv z_{i_2}^{(k_2)}x_{i_1, j_1}^{(k_1)} < x_{i_1, j_1}^{(k_1)} z_{i_2}^{(k_2)},
\]
hence the claim follows in this case.

\vspace{1ex}

\textbf{Case 1$^\circ$}: $k_1 = k_2$. We denote this common value by $k$.

There are some subcases in this case.

\vspace{0.5ex}

\textbf{Subcase 1.0$^\circ$}: $i_2 = 0$.

Condition~\eqref{prop change twoa} implies that $j_1 \neq -1$. Similarly, condition~\eqref{prop change twob} means that if $r = 1$, then either $i_1 < j_1$ or $j_1 \neq \delta_{k, r} m$. Consequently, if either $r \neq 1$ or $j_1 \neq \delta_{k, r} m$, we use Lemma~\ref{lemm change}\eqref{change five} and get
\[
x_{i_1, j_1}^{(k)} z_{i_2}^{(k)} = x_{i_1, j_1 - 1}^{(k)} x_{j_1}^{(k)} z^{(k)} \equiv x_{i_1, j_1 - 1}^{(k)} z^{(k)} x_{j_1}^{(k)} < x_{i_1, j_1}^{(k)} z_{i_2}^{(k)}.
\]
On the other hand, if $r = 1$ and $j_1 = \delta_{k, r} m$, then $i_1 < j_1$, hence Lemma~\ref{lemm change}\eqref{change eleven} gives
\[
x_{i_1, j_1}^{(k)} z_{i_2}^{(k)} \equiv x_{i_1, j_1 - 2}^{(k)} x_{j_1 - 1}^{(k)} z^{(k)} x_{j_1}^{(k)} + x_{i_1, j_1 - 2}^{(k)} x_{j_1}^{(k)} z^{(k)} x_{j_1 - 1}^{(k)} + x_{i_1, j_1 - 2}^{(k)} z^{(k)} x_{j_1}^{(k)} x_{j_1- 1}^{(k)},
\]
which finishes proof in this case.

\vspace{0.5ex}

\textbf{Subcase 1.1$^\circ$}: $i_2 < 0$.

Note that by condition~\eqref{prop change twoa} $j_1 \neq i_2 - 1$.

Assume either $j_1 < i_2 - 1$ or $j_1 > 0$. Moreover, if $r = 1$, we also assume $j_1 \neq \delta_{k, r} m$. Then an iterated use of Lemma~\ref{lemm change}\eqref{change one} and~\ref{lemm change}\eqref{change five} gives
\[
x_{i_1, j_1}^{(k)} z_{i_2}^{(k)} \equiv x_{i_1, j_1 - 1}^{(k)} z_{i_2}^{(k)} x_{j_1}^{(k)} < x_{i_1, j_1}^{(k)} z_{i_2}^{(k)}.
\]

Next assume that either $j_1 = 0$ or $r = 1$ and $j_1 = \delta_{k, r} m$. Using iteratively Lemma~\ref{lemm change}\eqref{change one} we get
\[
x_{i_1, j_1}^{(k)} z_{i_2}^{(k)} \equiv x_{i_1, j_1 - 1}^{(k)} x_{i_2, -2}^{(k)} x_{j_1}^{(k)} x_{-1}^{(k)} z^{(k)}.
\]
Now we use either Lemma~\ref{lemm change}\eqref{change nine} (if $j_1 = 0$) or Lemma~\ref{lemm change}\eqref{change ten} (if $r = 1$ and $j_1 = \delta_{k, r} m$) and get
\begin{multline*}
x_{i_1, j_1}^{(k)} z_{i_2}^{(k)} \equiv x_{i_1, j_1 - 1}^{(k)} x_{i_2, -2}^{(k)} x_{j_1}^{(k)} z^{(k)} x_{-1}^{(k)}
\\
+ x_{i_1, j_1 - 1}^{(k)} x_{i_2, -2}^{(k)} x_{-1}^{(k)} z^{(k)} x_{j_1}^{(k)} + x_{i_1, j_1 - 1}^{(k)} x_{i_2, -2}^{(k)} z^{(k)} x_{-1}^{(k)} x_{j_1}^{(k)},
\end{multline*}
hence the claim follows in this case.

Now assume $j_1 = i_2$. If $i_2 = -1$, then we use Lemma~\ref{lemm change}\eqref{change eight} and get
\[
x_{i_1, j_1}^{(k)} z_{i_2}^{(k)} \equiv x_{i_1, j_1 - 1}^{(k)} x_{-1}^{(k)} z^{(k)} x_{-1}^{(k)} + x_{i_1, j_1 - 1}^{(k)} z^{(k)} x_{-1}^{(k)} x_{-1}^{(k)}.
\]
On the other hand, if $i_2 < -1$, then using Lemma~\ref{lemm change}\eqref{change two} we get
\[
x_{i_1, j_1}^{(k)} z_{i_2}^{(k)} \equiv x_{i_1, j_1 - 1}^{(k)} x_{j_1}^{(k)} x_{j_1 + 1}^{(k)} x_{j_1}^{(k)} z_{i_2 + 2}^{(k)} + x_{i_1, j_1 - 1}^{(k)} x_{j_1 + 1}^{(k)} x_{j_1}^{(k)} x_{j_1}^{(k)} z_{i_2 + 2}^{(k)},
\]
hence the claim follows in this case.

Finally assume $j_1 \in [i_2 + 1, -1]$ (in particular, $i_2 < -1$). We use (if $j_1 > i_2 + 1$) iteratively Lemma~\ref{lemm change}\eqref{change one} and get
\[
x_{i_1, j_1}^{(k)} z_{i_2}^{(k)} \equiv x_{i_1, j_1 - 1}^{(k)} x_{i_2, j_1 - 2}^{(k)} x_{j_1}^{(k)} x_{j_1 - 1}^{(k)} x_{j_1}^{(k)} z_{j_1 + 1}^{(k)}.
\]
Next we use Lemma~\ref{lemm change}\eqref{change three prim} and get
\[
x_{i_1, j_1}^{(k)} z_{i_2}^{(k)} \equiv x_{i_1, j_1 - 1}^{(k)} x_{i_2, j_2 - 2}^{(k)} x_{j_2 - 1}^{(k)} x_{j_2}^{(k)} x_{j_2}^{(k)} z_{j_2 + 1}^{(k)} + x_{i_1, j_1 - 1}^{(k)} x_{i_2, j_2 - 2}^{(k)} x_{j_2}^{(k)} x_{j_2}^{(k)} x_{j_2 - 1}^{(k)} z_{j_2 + 1}^{(k)}.
\]
Using iteratively Lemma~\ref{lemm change}\eqref{change one} and Lemma~\ref{lemm change}\eqref{change five} we have
\[
x_{i_1, j_1 - 1}^{(k)} x_{i_2, j_2 - 2}^{(k)} x_{j_2}^{(k)} x_{j_2}^{(k)} x_{j_2 - 1}^{(k)} z_{j_2 + 1}^{(k)} \equiv x_{i_1, j_1 - 1}^{(k)} x_{i_2, j_2 - 2}^{(k)} x_{j_2}^{(k)} x_{j_2}^{(k)} z_{j_2 + 1}^{(k)} x_{j_2 - 1}^{(k)} < x_{i_1, j_1}^{(k)} z_{i_2}^{(k)}.
\]
Now, if $j_2 = -1$, then we use Lemma~\ref{lemm change}\eqref{change eight} and get
\[
x_{i_1, j_1 - 1}^{(k)} x_{i_2, j_2 - 2}^{(k)} x_{j_2 - 1}^{(k)} x_{j_2}^{(k)} x_{j_2}^{(k)} z_{j_2 + 1}^{(k)} \equiv x_{i_1, j_1 - 1}^{(k)} z_{i_2}^{(k)} x_{-1}^{(k)} + x_{i_1, j_1 - 1}^{(k)} x_{i_2, j_2 - 1}^{(k)} z_{j_2 + 1}^{(k)} x_{-1}^{(k)} x_{-1}^{(k)}.
\]
On the other hand, if $j_2 < -1$, then we use Lemma~\ref{lemm change}\eqref{change two} and obtain
\begin{multline*}
x_{i_1, j_1 - 1}^{(k)} x_{i_2, j_2 - 2}^{(k)} x_{j_2 - 1}^{(k)} x_{j_2}^{(k)} x_{j_2}^{(k)} z_{j_2 + 1}^{(k)} \equiv x_{i_1, j_1 - 1}^{(k)} x_{i_2, j_2 + 1}^{(k)} x_{j_2}^{(k)} z_{j_2 + 2}^{(k)}
\\
+ x_{i_1, j_1 - 1}^{(k)} x_{i_2, j_2 - 1}^{(k)} x_{j_2 + 1}^{(k)} x_{j_2}^{(k)} x_{j_2}^{(k)} z_{j_2 + 2}^{(k)}.
\end{multline*}
Furthermore, we iteratively use Lemma~\ref{lemm change}\eqref{change one} and Lemma~\ref{lemm change}\eqref{change five} and have
\begin{gather*}
x_{i_1, j_1 - 1}^{(k)} x_{i_2, j_2 + 1}^{(k)} x_{j_2}^{(k)} z_{j_2 + 2}^{(k)} \equiv x_{i_1, j_1 - 1}^{(k)} z_{i_2}^{(k)} x_{j_2}^{(k)}
\\
\intertext{and}
x_{i_1, j_1 - 1}^{(k)} x_{i_2, j_2 - 1}^{(k)} x_{j_2 + 1}^{(k)} x_{j_2}^{(k)} x_{j_2}^{(k)} z_{j_2 + 2}^{(k)} \equiv x_{i_1, j_1 - 1}^{(k)} x_{i_2, j_2 - 1}^{(k)} z_{j_2 + 1}^{(k)} x_{j_2}^{(k)} x_{j_2}^{(k)},
\end{gather*}
hence the claim follows in this case.
\vspace{0.5ex}

\textbf{Subcase 1.2$^\circ$}: $i_2 > 0$.

In this case condition~\eqref{prop change twob} implies that, if $r = 1$, then either $i_1 < j_1$ or $j_1 \neq i_2 + \delta_{k, r} m$.

First assume $r > 1$. In this case, if $j_1 \neq -1$, then we iteratively use Lemma~\ref{lemm change}\eqref{change one} and Lemma~\ref{lemm change}\eqref{change five} and obtain
\[
x_{i_1, j_1}^{(k)} z_{i_2}^{(k)} \equiv x_{i_1, j_1 - 1}^{(k)} z_{i_2}^{(k)} x_{j_1}^{(k)} < x_{i_1, j_1}^{(k)} z_{i_2}^{(k)}.
\]
On the other hand, if $j_1 = -1$, then we iteratively use Lemma~\ref{lemm change}\eqref{change one} again and get
\[
x_{i_1, j_1}^{(k)} z_{i_2}^{(k)} \equiv x_{i_1, j_1 - 1}^{(k)} x_{i_2 + \delta_{k, r} m - 1}^{(k + 1)} \cdots x_{\delta_{k, r} m + 1}^{(k + 1)} x_{\delta_{k, r} m}^{(k + 1)} x_{-1}^{(k)} z^{(k)}.
\]
Now we use Lemma~\ref{lemm change}\eqref{change ten} and get
\begin{align*}
\lefteqn{x_{i_1, j_1 - 1}^{(k)} x_{i_2 + \delta_{k, r} m - 1}^{(k + 1)} \cdots x_{\delta_{k, r} m + 1}^{(k + 1)} x_{\delta_{k, r} m}^{(k + 1)} x_{-1}^{(k)} z^{(k)}} \qquad &
\\
& \equiv x_{i_1, j_1 - 1}^{(k)} x_{i_2 + \delta_{k, r} m - 1}^{(k + 1)} \cdots x_{\delta_{k, r} m + 1}^{(k + 1)} x_{\delta_{k, r} m}^{(k + 1)} z^{(k)} x_{-1}^{(k)}
\\
& + x_{i_1, j_1 - 1}^{(k)} x_{i_2 + \delta_{k, r} m - 1}^{(k + 1)} \cdots x_{\delta_{k, r} m + 1}^{(k + 1)} x_{-1}^{(k)} z^{(k)} x_{\delta_{k, r} m}^{(k + 1)}
\\
& + x_{i_1, j_1 - 1}^{(k)} x_{i_2 + \delta_{k, r} m - 1}^{(k + 1)} \cdots x_{\delta_{k, r} m + 1}^{(k + 1)} z^{(k)} x_{-1}^{(k)} x_{\delta_{k, r} m}^{(k + 1)}.
\end{align*}
Thus for the rest of the proof of Subcase~1.2$^\circ$ we assume $r = 1$, thus in particular $k + 1 = k$.

Under the assumption $r = 1$, if $j_1 < i_2 + \delta_{k, r} m - 2$, then we iteratively use Lemma~\ref{lemm change}\eqref{change one} and get
\[
x_{i_1, j_1}^{(k)} z_{i_2}^{(k)} \equiv x_{i_2 + \delta_{k, r} m - 1}^{(k)} x_{i_1, j_1}^{(k)} z_{i_2 - 1}^{(k)} < x_{i_1, j_1}^{(k)} z_{i_2}^{(k)}.
\]
Similarly, if $j_1 > i_2 + \delta_{k, r} m$, then we iteratively use Lemma~\ref{lemm change}\eqref{change one} and Lemma~\ref{lemm change}\eqref{change five} and obtain
\[
x_{i_1, j_1}^{(k)} z_{i_2}^{(k)} \equiv x_{i_1, j_1 - 1}^{(k)} z_{i_2}^{(k)} x_{j_1}^{(k)} < x_{i_1, j_1}^{(k)} z_{i_2}^{(k)}.
\]

Now assume $j_1 = i_2 + \delta_{k, r} m - 2$. If additionally $i_2 = 1$, then we use Lemma~\ref{lemm change}\eqref{change eleven} and get
\[
x_{i_1, j_1}^{(k)} z_{i_2}^{(k)} \equiv x_{i_1, j_1 - 1}^{(k)} x_{j_1}^{(k)} z^{(k)} x_{j_1 + 1}^{(k)} + x_{i_1, j_1 - 1}^{(k)} x_{j_1 + 1}^{(k)} z^{(k)} x_{j_1}^{(k)} + x_{i_1, j_1 - 1}^{(k)} z^{(k)} x_{j_1 + 1}^{(k)} x_{j_1}^{(k)}.
\]
On the other hand, if $i_2 > 1$, then we use Lemma~\ref{lemm change}\eqref{change two prim} and get
\[
x_{i_1, j_1}^{(k)} z_{i_2}^{(k)} \equiv x_{i_1, j_1 - 1}^{(k)} x_{j_1 + 1}^{(k)} x_{j_1}^{(k)} x_{j_1}^{(k)} z_{i_2 - 2}^{(k)} + x_{i_1, j_1 - 1}^{(k)} x_{j_1}^{(k)} x_{j_1}^{(k)} x_{j_1 + 1}^{(k)} z_{i_2 - 2}^{(k)}.
\]
Observe that
\[
x_{i_1, j_1 - 1}^{(k)} x_{j_1 + 1}^{(k)} x_{j_1}^{(k)} x_{j_1}^{(k)} z_{i_2 - 2}^{(k)} < x_{i_1, j_1}^{(k)} z_{i_2}^{(k)}.
\]
On the other hand, by an iterated use of Lemma~\ref{lemm change}\eqref{change one} and Lemma~\ref{lemm change}\eqref{change five} we get
\[
x_{i_1, j_1 - 1}^{(k)} x_{j_1}^{(k)} x_{j_1}^{(k)} x_{j_1 + 1}^{(k)} z_{i_2 - 2}^{(k)} \equiv x_{i_1, j_1}^{(k)} z_{i_2 - 1}^{(k)} x_{j_1 + 1}^{(k)} < x_{i_1, j_1}^{(k)} z_{i_2}^{(k)}.
\]

Next assume $j_1 = i_2 + \delta_{k, r} m - 1$. In this case, if $i_2 = 1$, then we use Lemma~\ref{lemm change}\eqref{change twelve} and get
\[
x_{i_1, j_1}^{(k)} z_{i_2}^{(k)} \equiv x_{i_1, j_1 - 1}^{(k)} x_{j_1}^{(k)} z^{(k)} x_{j_1}^{(k)} + x_{i_1, j_1 - 1}^{(k)} z^{(k)} x_{j_1}^{(k)} x_{j_1}^{(k)}.
\]
On the other hand, if $i_2 > 1$, then we use Lemma~\ref{lemm change}\eqref{change three bis} and get
\[
x_{i_1, j_1}^{(k)} z_{i_2}^{(k)} \equiv x_{i_1, j_1 - 1}^{(k)} x_{j_1}^{(k)} x_{j_1 - 1}^{(k)} x_{j_1}^{(k)} z_{i_2 - 2}^{(k)} + x_{i_1, j_1 - 1}^{(k)} x_{j_1 - 1}^{(k)} x_{j_1}^{(k)} x_{j_1}^{(k)} z_{i_2 - 2}^{(k)}.
\]
Moreover, if we iteratively use Lemma~\ref{lemm change}\eqref{change one} and Lemma~\ref{lemm change}\eqref{change five}, then
\begin{gather*}
x_{i_1, j_1 - 1}^{(k)} x_{j_1}^{(k)} x_{j_1 - 1}^{(k)} x_{j_1}^{(k)} z_{i_2 - 2}^{(k)} \equiv x_{i_1, j_1}^{(k)} z_{i_2 - 1}^{(k)} x_{j_1}^{(k)} < x_{i_1, j_1}^{(k)} z_{i_2}^{(k)}
\\
\intertext{and}
x_{i_1, j_1 - 1}^{(k)} x_{j_1 - 1}^{(k)} x_{j_1}^{(k)} x_{j_1}^{(k)} z_{i_2 - 2}^{(k)} \equiv x_{i_1, j_1 - 1}^{(k)} z_{i_2 - 1}^{(k)} x_{j_1}^{(k)} x_{j_1}^{(k)} < x_{i_1, j_1}^{(k)} z_{i_2}^{(k)}.
\end{gather*}

Finally assume $j_1 = i_2 + \delta_{k, r} m$. In this case $i_1 < j_1$ by condition~\eqref{prop change twob}, hence we may apply Lemma~\ref{lemm change}\eqref{change two prim} and get
\[
x_{i_1, j_1}^{(k)} z_{i_2}^{(k)} \equiv x_{i_1, j_1 - 2}^{(k)} x_{j_1 - 1}^{(k)} x_{j_1 - 1}^{(k)} x_{j_1}^{(k)} z_{i_2 - 1}^{(k)} + x_{i_1, j_1 - 2}^{(k)} x_{j_1}^{(k)} x_{j_1 - 1}^{(k)} x_{j_1 - 1}^{(k)} z_{i_2 - 1}^{(k)}.
\]
Observe that
\[
x_{i_1, j_1 - 2}^{(k)} x_{j_1}^{(k)} x_{j_1 - 1}^{(k)} x_{j_1 - 1}^{(k)} z_{i_2 - 1}^{(k)} < x_{i_1, j_1}^{(k)} z_{i_2}^{(k)}.
\]
On the other hand, if we iteratively apply Lemma~\ref{lemm change}\eqref{change one} and Lemma~\ref{lemm change}\eqref{change five}, then we get
\[
x_{i_1, j_1 - 2}^{(k)} x_{j_1 - 1}^{(k)} x_{j_1 - 1}^{(k)} x_{j_1}^{(k)} z_{i_2 - 1}^{(k)} \equiv x_{i_1, j_1 - 1}^{(k)} z_{i_2}^{(k)} x_{j_1}^{(k)} < x_{i_1, j_1}^{(k)} z_{i_2}^{(k)}.
\]

\vspace{1ex}

\textbf{Case 2$^\circ$}: $k_1 \neq k_2 = k_1 - 1$ (in particular, $r \geq 2$). Put $k := k_2$, thus $k_1 = k + 1$.

If either $i_2 \leq 0$ and $j_1 \neq \delta_{k, r} m$, or $i_2 > 0$ and $j_1 \not \in [\delta_{k, r} m - 1, i_2 + \delta_{k, r} m]$, then by an iterated use of Lemma~\ref{lemm change}\eqref{change one} and Lemma~\ref{lemm change}\eqref{change five} we get
\[
x_{i_1, j_1}^{(k + 1)} z_{i_2}^{(k)} \equiv x_{i_1, j_1 - 1}^{(k + 1)} z_{i_2}^{(k)} x_{j_1}^{(k + 1)} < x_{i_1, j_1}^{(k + 1)} z_{i_2}^{(k)}.
\]
Next, if $i_2 < 0$ and $j_1 = \delta_{k, r} m$, then we iteratively use Lemma~\ref{lemm change}\eqref{change one} and get
\[
x_{i_1, j_1}^{(k + 1)} z_{i_2}^{(k)} \equiv x_{i_1, j_1 - 1}^{(k + 1)} x_{i_2, -2}^{(k)} x_{j_1}^{(k + 1)} x_{-1}^{(k)} z^{(k)}.
\]
Furthermore, we apply Lemma~\ref{lemm change}\eqref{change ten} and get
\begin{multline*}
x_{i_1, j_1}^{(k + 1)} z_{i_2}^{(k)} \equiv x_{i_1, j_1 - 1}^{(k + 1)} x_{i_2, -2}^{(k)} x_{j_1}^{(k + 1)} z^{(k)} x_{-1}^{(k)}
\\
+ x_{i_1, j_1 - 1}^{(k + 1)} z_{i_2}^{(k)} x_{j_1}^{(k + 1)} + x_{i_1, j_1 - 1}^{(k + 1)} x_{i_2, -2}^{(k)} z^{(k)} x_{-1}^{(k)} x_{j_1}^{(k + 1)}.
\end{multline*}
On the other hand, if $i_2 = 0$ and $j_1 = \delta_{k, r} m$, then $i_1 < j_1$ by condition~\eqref{prop change twob}, thus we may use Lemma~\ref{lemm change}\eqref{change eleven} and get
\[
x_{i_1, j_1}^{(k + 1)} z_{i_2}^{(k)} \equiv x_{i_1, j_1 - 1}^{(k + 1)} z^{(k)} x_{j_1}^{(k + 1)} + x_{i_1, j_1 - 2}^{(k + 1)} x_{j_1}^{(k + 1)} z^{(k)} x_{j_1 - 1}^{(k + 1)} + x_{i_1, j_1 - 2}^{(k + 1)} z^{(k)} x_{j_1}^{(k + 1)} x_{j_1 - 1}^{(k + 1)}.
\]
Consequently, we may assume $i_2 > 0$ and $j_1 \in [\delta_{k, r} m - 1, i_2 + \delta_{k, r} m]$.

First assume $j_1 \in [\delta_{k, r} m - 1, i_2 + \delta_{k, r} m - 3]$. Using Lemma~\ref{lemm change}\eqref{change one} we get
\[
x_{i_1, j_1}^{(k + 1)} z_{i_2}^{(k)} \equiv x_{i_1, j_1 - 1}^{(k + 1)} x_{i_2 + \delta_{k, r} m - 1}^{(k + 1)} x_{j_1}^{(k + 1)} z_{i_2 - 1}^{(k)} < x_{i_1, j_1}^{(k + 1)} z_{i_2}^{(k)},
\]
hence the claim follows in this case.

Next assume $j_1 = i_2 + \delta_{k, r} m - 2$. If $i_2 = 1$, i.e.\ $j_1 = \delta_{k, r} m - 1$, then using Lemma~\ref{lemm change}\eqref{change eleven} we have
\begin{multline*}
x_{i_1, j_1}^{(k + 1)} z_{i_2}^{(k)} \equiv x_{i_1, j_1 - 1}^{(k + 1)} x_{\delta_{k, r} m - 1}^{(k + 1)} z^{(k)} x_{\delta_{k, r} m}^{(k + 1)}
\\
+ x_{i_1, j_1 - 1}^{(k + 1)} x_{\delta_{k, r} m}^{(k + 1)} z^{(k)} x_{\delta_{k, r} m - 1}^{(k + 1)} + x_{i_1, j_1 - 1}^{(k + 1)} z^{(k)} x_{\delta_{k, r} m}^{(k + 1)} x_{\delta_{k, r} m - 1}^{(k + 1)},
\end{multline*}
thus the claim follows. On the other hand, if $i_2 > 1$, then we use Lemma~\ref{lemm change}\eqref{change two prim} and get
\[
x_{i_1, j_1}^{(k + 1)} z_{i_2}^{(k)} \equiv x_{i_1, j_1 - 1}^{(k + 1)} x_{j_1}^{(k + 1)} x_{j_1}^{(k + 1)} x_{j_1 + 1}^{(k + 1)} z_{i_2 - 2}^{(k)} + x_{i_1, j_1 - 1}^{(k + 1)} x_{j_1 + 1}^{(k + 1)} x_{j_1}^{(k + 1)} x_{j_1}^{(k + 1)} z_{i_2 - 2}^{(k)}.
\]
Observe that
\[
x_{i_1, j_1 - 1}^{(k + 1)} x_{j_1 + 1}^{(k + 1)} x_{j_1}^{(k + 1)} x_{j_1}^{(k + 1)} z_{i_2 - 2}^{(k)} < x_{i_1, j_1}^{(k + 1)} z_{i_2}^{(k)}.
\]
Moreover,
\[
x_{i_1, j_1 - 1}^{(k + 1)} x_{j_1}^{(k + 1)} x_{j_1}^{(k + 1)} x_{j_1 + 1}^{(k + 1)} z_{i_2 - 2}^{(k)} \equiv x_{i_1, j_1}^{(k + 1)} x_{j_1}^{(k + 1)} z_{i_2 - 2}^{(k)} x_{j_1 + 1}^{(k + 1)}
\]
by an iterated use of Lemma~\ref{lemm change}\eqref{change one} and Lemma~\ref{lemm change}\eqref{change five}, and this finishes the proof in this case.

Now assume $j_1 = i_2 + \delta_{k, r} m - 1$. If $i_2 = 1$, i.e.\ $j_1 = \delta_{k, r} m$, then we use Lemma~\ref{lemm change}\eqref{change twelve} and get
\[
x_{i_1, j_1}^{(k + 1)} z_{i_2}^{(k)} \equiv x_{i_1, j_1 - 1}^{(k + 1)} x_{j_1}^{(k + 1)} z^{(k)} x_{j_1}^{(k + 1)} + x_{i_1, j_1 - 1}^{(k + 1)}  z^{(k)}x_{j_1}^{(k + 1)} x_{j_1}^{(k + 1)},
\]
thus the claim follows. If $i_2 > 1$, then we use Lemma~\ref{lemm change}\eqref{change three bis} and have
\[
x_{i_1, j_1}^{(k + 1)} z_{i_2}^{(k)} \equiv x_{i_1, j_1 - 1}^{(k + 1)} x_{j_1}^{(k + 1)} x_{j_1 - 1}^{(k + 1)} x_{j_1}^{(k + 1)} z_{i_2 - 2}^{(k)} + x_{i_1, j_1 - 1}^{(k + 1)} x_{j_1 - 1}^{(k + 1)} x_{j_1}^{(k + 1)} x_{j_1}^{(k + 1)} z_{i_2 - 2}^{(k)}.
\]
Furthermore,
\begin{gather*}
x_{i_1, j_1 - 1}^{(k + 1)} x_{j_1}^{(k + 1)} x_{j_1 - 1}^{(k + 1)} x_{j_1}^{(k + 1)} z_{i_2 - 2}^{(k)} \equiv x_{i_1, j_1}^{(k + 1)} z_{i_2 - 1}^{(k)} x_{j_1}^{(k + 1)}
\\
\intertext{and}
x_{i_1, j_1 - 1}^{(k + 1)} x_{j_1 - 1}^{(k + 1)} x_{j_1}^{(k + 1)} x_{j_1}^{(k + 1)} z_{i_2 - 2}^{(k)} \equiv x_{i_1, j_1 - 1}^{(k + 1)} z_{i_2 - 1}^{(k)} x_{j_1}^{(k + 1)} x_{j_1}^{(k + 1)}
\end{gather*}
by an iterated use of Lemma~\ref{lemm change}\eqref{change one} and Lemma~\ref{lemm change}\eqref{change five}.

Finally assume $j_1 = i_2 + \delta_{k, r} m$. In this case we may assume that $i_1 < j_1$ by condition~\eqref{prop change twob}. Consequently, if we use Lemma~\ref{lemm change}\eqref{change two prim}, then we get
\[
x_{i_1, j_1}^{(k + 1)} z_{i_2}^{(k)} \equiv x_{i_1, j_1 - 2}^{(k + 1)} x_{j_1 - 1}^{(k + 1)} x_{j_1 - 1}^{(k + 1)} x_{j_1}^{(k + 1)} z_{i_2 - 1}^{(k)} + x_{i_1, j_1 - 2}^{(k + 1)} x_{j_1}^{(k + 1)} x_{j_1 - 1}^{(k + 1)} x_{j_1 - 1}^{(k + 1)} z_{i_2 - 1}^{(k)}.
\]
Now
\[
x_{i_1, j_1 - 2}^{(k + 1)} x_{j_1}^{(k + 1)} x_{j_1 - 1}^{(k + 1)} x_{j_1 - 1}^{(k + 1)} z_{i_2 - 1}^{(k)} < x_{i_1, j_1}^{(k + 1)} z_{i_2}^{(k)}.
\]
On the other hand,
\[
x_{i_1, j_1 - 2}^{(k + 1)} x_{j_1 - 1}^{(k + 1)} x_{j_1 - 1}^{(k + 1)} x_{j_1}^{(k + 1)} z_{i_2 - 1}^{(k)} \equiv x_{i_1, j_1 - 1}^{(k + 1)} z_{i_2}^{(k)} x_{j_1}^{(k + 1)}
\]
by an iterated use of Lemma~\ref{lemm change}\eqref{change one} and Lemma~\ref{lemm change}\eqref{change five}, and this finishes the proof of Case~$2^\circ$.
\end{proof}

Let $\cB_\bd$ be the image of $\cA_\bd$ under the canonical epimorphism $\cA \to \cB$. Observe that if $k_1 = k_2$ and $j_1 = i_2 - 1 < 0$, then $x_{i_1, j_1}^{(k_1)} z_{i_2}^{(k_2)} = z_{i_1}^{(k_2)}$. Similarly, if $k_1 = k_2 + 1$ and $i_1 = j_1 = i_2 + \delta_{k, r} m \geq \delta_{k, r} m$, then $x_{i_1, j_1}^{(k_1)} z_{i_2}^{(k_2)} = z_{i_2 + 1}^{(k_2)}$. Thus we have the following consequence of Propositions~\ref{prop change} and~\ref{prop reduction}.

\begin{coro} \label{coro span}
$\cB_\bd$ is spanned by the images of the monomials
\[
z_{i_1}^{(k_1)} z_{i_2}^{(k_2)} \cdots z_{i_d}^{(k_d)} x_{s_1, t_1}^{(l_1)} x_{s_2, t_2}^{(l_2)} \cdots x_{s_n, t_n}^{(l_n)} \in \cA_\bd,
\]
$d, n \in \bbN$, such that
\begin{enumerate}

\item
$k_1 \geq k_2 \geq \cdots \geq k_d$,

\item
if $k_j = k_{j + 1}$, then either $|i_j| < |i_{j + 1}|$ or $|i_j| = |i_{j + 1}|$ and $i_j > i_{j + 1}$,

\item
$l_1 \geq l_2 \geq \cdots \geq l_n$,

\item
if $l_j = l_{j + 1}$, then either $s_j > s_{j + 1}$ or $s_j = s_{j + 1}$ and $t_j \geq t_{j + 1}$. \qed

\end{enumerate}
\end{coro}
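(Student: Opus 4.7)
The plan is to prove, by a nested induction on $\bdeg$ (outer) and on the ordering $<$ (inner, within a fixed degree vector), that every monomial $\Phi \in \cA_\bd$ has image in $\cB_\bd$ expressible as a $\bbQ$-linear combination of images of the normal-form monomials. The outer induction is well-founded on $\bbN^{[1,r]} \times \bbN^{([1,r] \times \bbZ)}$, and since $\bd$ has finite support, only finitely many monomials have degree at most $\bd$, so $<$ is a well-order on each fixed-degree fibre.

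Any $\Phi \in \cA_\bd$ is a product of the generators $x_i^{(k)} = x_{i,i}^{(k)}$ and $z^{(k)} = z_0^{(k)}$, hence it decomposes naturally into an alternating sequence of $x$-blocks and $z$-blocks. Working from the rightmost $z$-block leftward, we apply Proposition~\ref{prop change}\eqref{prop change two} to each adjacent pair consisting of a preceding $x$-block and a $z$-block. In the generic case the product is replaced by a $\bbQ$-linear combination of strictly $<$-smaller monomials, modulo a lower-degree remainder handled by the outer induction. In the exceptional cases~\eqref{prop change twoa} and~\eqref{prop change twob}, the product is literally equal as a monomial to a single merged $z$-block, so we simply regroup blocks and continue; since each such merge strictly decreases the total number of blocks (bounded by the degree), only finitely many merges can occur consecutively. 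The process therefore terminates with a monomial of the shape $z_{i_1}^{(k_1)} \cdots z_{i_d}^{(k_d)} x_{s_1,t_1}^{(l_1)} \cdots x_{s_n,t_n}^{(l_n)}$.

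Once this shape is reached, Proposition~\ref{prop change}\eqref{prop change one} is used to sort the $z$-blocks into the required order, and Proposition~\ref{prop reduction} (applied inside $\cA$; its proof is valid here because the relations among the $x_i^{(k)}$ defining $\cI$ are exactly those defining $\cI'$) is used to sort the $x$-blocks. Because $<$ gives priority to $z$-data over $x$-data, reordering the $x$-blocks at the end cannot disrupt the already-sorted $z$-prefix, so the two sorting phases are genuinely sequential; each swap produces either a strictly $<$-smaller monomial or a strictly lower-degree remainder, and transitivity of $\equiv$ together with both induction hypotheses closes the argument.

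The main obstacle will be the bookkeeping around the merge steps~\eqref{prop change twoa}--\eqref{prop change twob}, which do not themselves decrease the $<$-value of $\Phi$ but only collapse two adjacent blocks into one. The auxiliary termination measure (block count of $\Phi$, $<$-position), ordered lexicographically, resolves this: generic moves strictly decrease the second component while merges strictly decrease the first, so the combined process terminates. A second mild subtlety is that the $z$-sorting phase in Proposition~\ref{prop change}\eqref{prop change one} can produce remainder terms containing unsorted $x$-blocks, but these are strictly $<$-smaller and thus fall under the inner induction hypothesis.
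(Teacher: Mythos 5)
Your argument is correct and is exactly the argument the paper leaves implicit: the corollary is stated with a bare \qed as a ``consequence of Propositions~\ref{prop change} and~\ref{prop reduction}'' together with the two merge identities $x_{i_2-1,i_2-1}^{(k)}z_{i_2}^{(k)}=z_{i_2-1}^{(k)}$ and $x_{i_2+\delta_{k,r}m}^{(k+1)}z_{i_2}^{(k)}=z_{i_2+1}^{(k)}$ recorded just before it, and your double induction (on $\bdeg$, then on the finite $<$-ordered fibre of a fixed degree vector), with merges handled by the decreasing block count and generic steps handed to the inner induction hypothesis, is precisely the standard way to make that ``consequence'' precise. The only cosmetic remark is that your lexicographic measure (block count, $<$-position) is not needed for the generic steps --- those are discharged directly by the inner induction hypothesis, as you in fact do --- so the measure only has to control the consecutive merges, where it works.
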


Let $\cH_\bd$ be the subspace of $\cH$ spanned by the $M$'s such that $\bdim M \leq \bd$. Obviously, $\cH_\bd$ has a basis formed by the objects
\[
Z_{i_1}^{(k_1)} \oplus Z_{i_2}^{(k_2)} \oplus \cdots \oplus Z_{i_d}^{(k_d)} \oplus X_{s_1, t_1}^{(l_1)} \oplus X_{s_2, t_2}^{(l_2)} \cdots \oplus X_{s_n, t_n}^{(l_n)} \in \cH_\bd,
\]
such that the conditions from Corollary~\ref{coro span} are satisfied. Thus we have a natural bijection between the above spanning sets of $\cB_\bd$ and $\cH_\bd$. However these spaces are in general infinite dimensional, hence this does not imply immediately that $\ol{\xi}$ is a monomorphism.

If $\bd = (\bd_1, \bd_2)$, for $\bd_1 \in \bbN^{[1, r]}$ and $\bd_2 \in \bbN^{([1, r] \times \bbZ)}$, then we write $\bd_\cZ := \bd_1$ and $\bd_\cX := \bd_2$. For $d \in \bbN$, let $\cB_d$ be the sum of all $\cB_\bd$ such that $|(\bd_\cZ)| \leq d$. Moreover, if $\bd' \in \bbN^{([1, r] \times \bbZ)}$, then $\cB_{d, \bd'}$ is the sum of $\cB_\bd$ such that either $|(\bd_\cZ)| < d$ or $|(\bd_\cZ)| = d$ and $\bd_\cX \leq \bd'$. We define $\cH_d$ and $\cH_{d, \bd'}$ analogously.

Observe that Corollary~\ref{coro dim vect C} implies that $\xi$ induces maps $\ol{\xi}_d \colon \cB_d \to \cH_d$ and $\ol{\xi}_{d, \bd'} \colon \cB_{d, \bd'} \to \cH_{d, \bd'}$. Consequently, $\xi$ induces maps $\ol{\ol{\xi}}_d \colon \cB_d / \cB_{d - 1} \to \cH_d / \cH_{d - 1}$ and $\ol{\ol{\xi}}_{d, \bd'} \colon \cB_{d, \bd'}  / \cB_{d - 1} \to \cH_{d, \bd'} / \cH_{d - 1}$, where $\cB_{-1} := 0$ and $\cH_{-1} := 0$.

The next step in the proof is the following.

\begin{lemm} \label{lemm mono}
$\ol{\ol{\xi}}_{d, \bd}$ is a monomorphism.
\end{lemm}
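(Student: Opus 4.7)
The plan is to show that $\ol{\ol{\xi}}_{d,\bd}$ is surjective and that $\dim_\bbQ \cB_{d,\bd}/\cB_{d-1} \leq \dim_\bbQ \cH_{d,\bd}/\cH_{d-1} < \infty$; then a surjective linear map between finite-dimensional spaces satisfying such an inequality is automatically an isomorphism, hence injective, and the lemma follows.

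For surjectivity I would argue as follows. A basis element of $\cH_{d,\bd}/\cH_{d-1}$ is the residue of an object $M \in \cC$ with $|\bdim_\cZ M| = d$ and $\bdim_\cX M \leq \bd_\cX$. Proposition~\ref{prop poly prim} supplies $\Phi_M \in \cA$ with $\xi(\Phi_M) = M$ and $\bdeg \Phi_M \leq \bdim M$. Setting $\bd' := \bdim M$ one has $|\bd'_\cZ| = d$ and $\bd'_\cX \leq \bd_\cX$, so $\bd'$ satisfies the defining condition of $\cH_{d,\bd}$, and $\cB_{\bd'} \subseteq \cB_{d,\bd}$. Thus the image of $\Phi_M$ in $\cB$ lies in $\cB_{d,\bd}$, and passing to the quotient shows that $\ol{\ol{\xi}}_{d,\bd}$ is surjective.

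For the dimension bound I would invoke Corollary~\ref{coro span}. After quotienting by $\cB_{d-1}$, only the ordered monomials
\[
m = z_{i_1}^{(k_1)} \cdots z_{i_d}^{(k_d)} x_{s_1,t_1}^{(l_1)} \cdots x_{s_n,t_n}^{(l_n)}
\]
with $|\bdeg_\cZ m| = d$ and $\bdeg_\cX m \leq \bd_\cX$ contribute; call this set $S$. The assignment
\[
m \longmapsto M_m := Z_{i_1}^{(k_1)} \oplus \cdots \oplus Z_{i_d}^{(k_d)} \oplus X_{s_1,t_1}^{(l_1)} \oplus \cdots \oplus X_{s_n,t_n}^{(l_n)}
\]
is a bijection between $S$ and the basis of $\cH_{d,\bd}/\cH_{d-1}$, since the ordering conditions in Corollary~\ref{coro span} single out a unique representative of each isomorphism class. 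Unfolding the definition of $z_i^{(k)}$ as a concatenation of specific $x$-generators with $z^{(k)}$, and comparing with the formula for $\bdim_\cX Z_i^{(k)}$ given in Section~\ref{sect category}, one verifies $\bdeg m = \bdim M_m$. Hence $|S| = \dim_\bbQ \cH_{d,\bd}/\cH_{d-1}$, and the required bound follows.

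The main, albeit routine, obstacle is the final degree identification: one must check that for $i < 0$ the $x$-generators $x_i^{(k)}, \ldots, x_{-1}^{(k)}$ occurring in $z_i^{(k)}$ account precisely for the $\cX$-coordinates $(k, i), \ldots, (k, -1)$ in the support of $\bdim_\cX Z_i^{(k)}$, and analogously for $i > 0$ the coordinates $(k+1, \delta_{k,r} m), \ldots, (k+1, i + \delta_{k,r} m - 1)$. This is a straightforward case check, but it is the only nontrivial computation in the proof.
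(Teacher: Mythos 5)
Your proof is correct and follows essentially the same route as the paper's: both arguments combine the spanning set from Corollary~\ref{coro span} with the corresponding basis of $\cH_{d, \bd'} / \cH_{d - 1}$ to get $\dim_\bbQ \cB_{d, \bd'} / \cB_{d - 1} \leq \dim_\bbQ \cH_{d, \bd'} / \cH_{d - 1} < \infty$, and then conclude injectivity from surjectivity (Proposition~\ref{prop poly prim}). You merely spell out two details the paper leaves implicit, namely why the image of $\Phi_M$ lands in $\cB_{d, \bd}$ and the identity $\bdeg m = \bdim M_m$ for the ordered monomials; both checks are correct.
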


\begin{proof}
Corollary~\ref{coro span} implies that $\cB_{d, \bd'} / \cB_{d - 1}$ is spanned by the classes of the images of the monomials
\[
\Phi = z_{i_1}^{(k_1)} z_{i_2}^{(k_2)} \cdots z_{i_d}^{(k_d)} x_{s_1, t_1}^{(l_1)} x_{s_2, t_2}^{(l_2)} \cdots x_{s_n, t_n}^{(l_n)},
\]
$n \in \bbN$, such that $\bdeg_\cX \Phi \leq \bd'$,
\begin{enumerate}

\item
$k_1 \geq k_2 \geq \cdots \geq k_d$,

\item
if $k_j = k_{j + 1}$, then either $|i_j| < |i_{j + 1}|$ or $|i_j| = |i_{j + 1}|$ and $i_j > i_{j + 1}$,

\item
$l_1 \geq l_2 \geq \cdots \geq l_n$,

\item
if $l_j = l_{j + 1}$, then either $s_j > s_{j + 1}$ or $s_j = s_{j + 1}$ and $t_j \geq t_{j + 1}$.

\end{enumerate}
Similarly, $\cH_{d, \bd} / \cH_{d - 1}$ has a basis consisting of the objects
\[
M = Z_{i_1}^{(k_1)} \oplus Z_{i_2}^{(k_2)} \oplus \cdots \oplus Z_{i_d}^{(k_d)} \oplus X_{s_1, t_1}^{(l_1)} \oplus X_{s_2, t_2}^{(l_2)} \cdots \oplus X_{s_n, t_n}^{(l_n)},
\]
such that $\bdim_\cX M \leq \bd'$ and the above conditions are satisfied. Thus
\[
\dim_\bbQ \cB_{d, \bd'} / \cB_{d - 1} \leq \dim_\bbQ \cH_{d, \bd'} / \cH_{d - 1}.
\]
Since these dimensions are finite and $\ol{\ol{\xi}}_{d, \bd}$ is an epimorphism by Proposition~\ref{prop poly prim}, the claim follows.
\end{proof}

We obtain the following consequence.

\begin{coro} \label{coro mono}
$\ol{\ol{\xi}}_d$ is a monomorphism.
\end{coro}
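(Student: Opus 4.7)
The plan is to derive injectivity of $\ol{\ol{\xi}}_d$ from the finite-stratum statement in Lemma~\ref{lemm mono} by a directed-union argument on the second filtration index $\bd' \in \bbN^{([1, r] \times \bbZ)}$.

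First I would record the compatibility. Directly from the definitions, $\cH_{d - 1} \subseteq \cH_{d, \bd'}$ for every $\bd'$ (and analogously on the $\cB$-side), since $|\bd_\cZ| \leq d - 1$ forces $|\bd_\cZ| < d$. Consequently, for each $\bd'$ we have natural inclusions $\cB_{d, \bd'} / \cB_{d - 1} \hookrightarrow \cB_d / \cB_{d - 1}$ and $\cH_{d, \bd'} / \cH_{d - 1} \hookrightarrow \cH_d / \cH_{d - 1}$, and these fit into a commutative square whose vertical arrows are $\ol{\ol{\xi}}_{d, \bd'}$ and $\ol{\ol{\xi}}_d$, respectively. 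Moreover, by construction $\cB_d = \bigcup_{\bd'} \cB_{d, \bd'}$, hence $\cB_d / \cB_{d - 1} = \bigcup_{\bd'} \cB_{d, \bd'} / \cB_{d - 1}$.

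Next I would chase an element through the diagram. Suppose $\Phi \in \cB_d$ satisfies $\ol{\ol{\xi}}_d (\Phi + \cB_{d - 1}) = 0$, i.e.\ $\xi(\Phi) \in \cH_{d - 1}$. Choose $\bd'$ large enough that $\Phi \in \cB_{d, \bd'}$; such a $\bd'$ exists by the directed-union description above. By Corollary~\ref{coro dim vect C}, the homomorphism $\xi$ respects the bi-filtration, so $\xi(\Phi) \in \cH_{d, \bd'}$, and the inclusion $\cH_{d - 1} \subseteq \cH_{d, \bd'}$ shows that the class of $\Phi$ in $\cB_{d, \bd'} / \cB_{d - 1}$ maps to zero in $\cH_{d, \bd'} / \cH_{d - 1}$. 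Injectivity of $\ol{\ol{\xi}}_{d, \bd'}$ (Lemma~\ref{lemm mono}) then forces $\Phi \in \cB_{d - 1}$, which is the desired conclusion.

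I do not foresee any real obstacle, since the substantive content---comparing dimensions of $\cB_{d, \bd'} / \cB_{d - 1}$ and $\cH_{d, \bd'} / \cH_{d - 1}$ via the explicit monomial and decomposable-object bases---has already been handled in Lemma~\ref{lemm mono}. The corollary is just the standard observation that an epimorphism of filtered vector spaces which is injective on each finite-dimensional stratum is injective on the directed union.
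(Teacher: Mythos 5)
Your argument is correct and coincides with the paper's own proof, which likewise deduces the claim from Lemma~\ref{lemm mono} via the directed-union decomposition $\cB_d / \cB_{d - 1} = \bigcup_{\bd'} \cB_{d, \bd'} / \cB_{d - 1}$; you have merely spelled out the diagram chase that the paper leaves implicit.
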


\begin{proof}
The claim follows from Lemma~\ref{lemm mono}, since
\[
\cB_d / \cB_{d - 1} = \bigcup_{\bd' \in \bbN^{([1, r] \times \bbZ)}} \cB_{d, \bd'} / \cB_{d - 1}. \qedhere
\]
\end{proof}

In order to apply Corollary~\ref{coro mono} we need a general observation.

\begin{prop} \label{prop crit mono}
Let $\zeta \colon V \to W$ be a linear map. Assume that, for each $d \in \bbN$, we have subspaces $V_d \subseteq V$ and $W_d \subseteq W$ such that the following conditions are satisfied:
\begin{enumerate}

\item
$V_d \subseteq V_{d + 1}$ and $W_d \subseteq W_{d + 1}$, for each $d \in \bbN$,

\item
$V = \bigcup_{d \in \bbN} V_d$ and $W = \bigcup_{d \in \bbN} W_d$,

\item
$f (V_d) \subseteq W_d$, for each $d \in \bbN$.

\end{enumerate}
If, for each $d \in \bbN$, the induced map $V_d / V_{d - 1} \to W_d / W_{d - 1}$ is a monomorphism, where $V_{-1} := 0$ and $W_{-1} := 0$, then $\zeta$ is a monomorphism.
\end{prop}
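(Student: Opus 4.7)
The plan is a straightforward induction on $d$, showing that the restriction $\zeta|_{V_d}$ is a monomorphism for every $d \in \bbN$, and then concluding because $V = \bigcup_d V_d$.

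First I would handle the base case $d = 0$: since $V_{-1} = 0 = W_{-1}$, the induced map $V_0/V_{-1} \to W_0/W_{-1}$ is just $\zeta|_{V_0} \colon V_0 \to W_0$, which is a monomorphism by hypothesis. For the inductive step, assume that $\zeta|_{V_{d-1}}$ is a monomorphism, and let $v \in V_d$ satisfy $\zeta(v) = 0$. The inclusions $\zeta(V_d) \subseteq W_d$ and $\zeta(V_{d-1}) \subseteq W_{d-1}$ guarantee that $\zeta$ descends to a well-defined linear map $V_d / V_{d-1} \to W_d / W_{d-1}$, namely the map assumed to be injective. Applying it to the class of $v$ gives $0$, so $v \in V_{d-1}$, and the inductive hypothesis forces $v = 0$.

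To finish, I would take an arbitrary $v \in V$ with $\zeta(v) = 0$. By the covering assumption $V = \bigcup_{d \in \bbN} V_d$ there exists some $d$ with $v \in V_d$, and the step above yields $v = 0$. Hence $\zeta$ is a monomorphism.

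There is no genuine obstacle here; the statement is a generic filtered-map lemma and the proof is essentially bookkeeping. The only subtle point worth stating explicitly is that $\zeta$ does induce a map on the successive quotients, which relies precisely on the condition $\zeta(V_d) \subseteq W_d$ for every $d$ (so in particular for $d-1$).
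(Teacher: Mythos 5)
Your proof is correct and follows essentially the same route as the paper's: induction on $d$ showing $\zeta|_{V_d}$ is injective, using the injectivity of the induced map on quotients to push $v$ down into $V_{d-1}$, and then concluding via $V = \bigcup_d V_d$. The only difference is that you spell out the base case and the well-definedness of the induced map, which the paper leaves implicit.
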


\begin{proof}
By induction on $d$ we prove that $f |_{V_d}$ is a monomorphism. The claim is obvious for $d = 0$.  Thus assume $v \in V_d$, for $d > 0$, and $f (v) = 0$. Then in particular, $f (v) \in W_{d - 1}$. Since the induced map $V_d / V_{d - 1} \to W_d / W_{d - 1}$ is a monomorphism, $v \in V_{d - 1}$. Consequently, $v = 0$, as by the inductive hypothesis $f |_{V_{d - 1}}$ is a monomorphism.
\end{proof}

We finish now the proof the Theorem~\ref{theo main prim}. As pointed out after the formulation of Theorem~\ref{theo main prim} we have to show that $\ol{\xi}$ is a monomorphism. However this follows from Proposition~\ref{prop crit mono} and Corollary~\ref{coro mono}.

\section{An application to one-cycle gentle algebras} \label{sect gentle}

By a gentle bound quiver we mean a pair $(Q, R)$, where $Q$ is a connected quiver and $R$ is a set of paths of length $2$ in $Q$, such that the following conditions are satisfied:
\begin{enumerate}

\item
for each $x \in Q_0$, there are at most two $\alpha \in Q_1$ such that $s \alpha = x$ (i.e.\ $\alpha$ starts at $x$),

\item
for each $x \in Q_0$, there are at most two $\alpha \in Q_1$ such that $t \alpha = x$ (i.e.\ $\alpha$ terminates at $x$),

\item
for each $\alpha \in Q_1$, there is at most one $\alpha' \in Q_1$ such that $s \alpha' = t \alpha$ and $\alpha' \alpha \not \in R$, and at most one $\alpha' \in Q_1$ such that $t \alpha' = s \alpha$ and $\alpha \alpha' \not \in R$,

\item
for each $\alpha \in Q_1$, there is at most one $\alpha' \in Q_1$ such that $s \alpha' = t \alpha$ and $\alpha' \alpha \in R$, and at most one $\alpha' \in Q_1$ such that $t \alpha' = s \alpha$ and $\alpha \alpha' \in R$.

\end{enumerate}
A gentle bound quiver $(Q, R)$ is called one-cycle if $|Q_0| = |Q_1|$. A finite dimensional algebra $\Lambda$ is called a one-cycle gentle algebra, if $\Lambda$ is Morita equivalent to the path algebra of a one-cycle gentle bound quiver.

A derived classification of the one-cycle gentle algebras has been obtained in~\cites{AssemSkowronski, BobinskiGeissSkowronski}. In order to present this classification we introduce some notation.

First, let $p \in \bbN_+$ and $q \in \bbN$. We denote by $Q (p, q)$ the quiver
\[
\vcenter{\xymatrix{%
& & & & & \vertexD{1} \ar@/^/[]+R;[rd]+U &
\\
\vertexU{-q} \ar[r]^{\alpha_{-q}} & \vertexU{- q + 1} \ar@{-}[r] &
\cdots \ar[r] & \vertexU{-1} \ar[r]^{\alpha_{-1}} & \vertexL{0}
\ar@/^/[]+U;[ru]+L^{\alpha_0} & & \raisebox{-0.5em}[1em][1em]{\vdots} \ar@/^/[]+D;[ld]+R
\\
& & & & & \vertexU{p - 1} \ar@/^/[]+L;[lu]+D^{\alpha_{p - 1}} &}}
\]
If $r \in [1, p]$, then we denote by $\Lambda (p, q, r)$ the path algebra of the quiver $Q (p, q)$ bound by relations
\[
\alpha_0 \alpha_{p - 1}, \; \alpha_{p - 1} \alpha_{p - 2}, \; \ldots, \; \alpha_{p - r + 1} \alpha_{p - r}.
\]
Similarly, if $p, q \in \bbN_+$, then $\Lambda (p, q)$ is the path algebra of the quiver
\[
\vcenter{\xymatrix{%
& \vertexD{1} \ar@/_/[]+L;[ld]+U_{\alpha_0} & \ar[l] \cdots & \vertexD{p - 1} \ar[l]
\\
\vertexR{0} & & & & \vertexL{-q} \ar@/_/[]+U;[lu]+R_{\alpha_p} \ar@/^/[]+D;[ld]+R^{\alpha_{-q}}
\\
& \vertexU{-1} \ar@/^/[]+L;[lu]+D^{\alpha_{-1}} & \ar[l] \cdots & \vertexU{-q + 1} \ar[l]
}}
\]

The following is a consequence of~\cites{AssemSkowronski, Vossieck}.

\begin{theo} \label{theo onecycle}
Let $\Lambda$ be a gentle one-cycle algebra. Then there exists $p \in \bbN_+$ and $q \in \bbN$ such that either $\Lambda$ is derived equivalent to $\Lambda (p, q, r)$, for some $r \in [1, p]$, or $q > 0$ and $\Lambda$ is derived equivalent to $\Lambda (p, q)$. Moreover, $\gldim \Lambda = \infty$ if and only if $\Lambda$ is derived equivalent to $\Lambda (p, q, p)$, for some $p \in \bbN_+$ and $q \in \bbN$.  \qed
\end{theo}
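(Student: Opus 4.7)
The plan is essentially to assemble the statement from known classification results. The first assertion is a combination of two derived-classification theorems, while the global dimension characterization is a direct computation on the normal forms.

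First I would dispose of the existence of a derived-equivalent normal form by distinguishing on the clock condition mentioned in the introduction. If $\Lambda$ satisfies the clock condition, the Assem--Skowro\'nski theorem~\cite{AssemSkowronski} says $\Lambda$ is derived equivalent to a hereditary algebra of Euclidean type $\tilde{\bbA}$; after reading off the number of arrows pointing in each direction from the cycle of $\Lambda$ one identifies this hereditary algebra with $\Lambda (p, q)$ for some $p \in \bbN_+$ and $q \in \bbN_+$ (with $q > 0$ forced, since $\tilde{\bbA}$ needs at least two vertices on the cycle). If $\Lambda$ does not satisfy the clock condition, then by Vossieck's theorem~\cite{Vossieck} $\Lambda$ is derived discrete but not derived representation finite, and the normal form classification of such algebras obtained in~\cite{BobinskiGeissSkowronski} asserts that every such algebra is derived equivalent to some $\Lambda (p, q, r)$ with $p \in \bbN_+$, $q \in \bbN$ and $r \in [1, p]$. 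Together these give the first part of the theorem.

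For the global dimension statement I would first recall that finite global dimension is invariant under derived equivalence between finite dimensional algebras (Happel), so the property depends only on the normal form. Since $\Lambda (p, q)$ is given by a quiver without relations, it is hereditary and hence of finite global dimension. For $\Lambda (p, q, r)$ with $r < p$, the arrow $\alpha_{p - r - 1}$ does not end any relation, so projective resolutions of simple modules terminate after finitely many steps by a direct gentle-algebra computation, giving finite global dimension. When $r = p$, every consecutive pair of arrows on the cycle gives a relation, producing a full cycle of relations; the simple module at vertex $0$ (say) admits a periodic minimal projective resolution, forcing $\gldim \Lambda (p, q, p) = \infty$. Combining these gives the required equivalence.

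The only genuine obstacle is the classification step for the non-clock case, which relies on the nontrivial normal form theorem of~\cite{BobinskiGeissSkowronski} for derived discrete algebras; everything else is routine invocation of Assem--Skowro\'nski and Vossieck together with an elementary projective resolution calculation on the explicit quivers with relations.
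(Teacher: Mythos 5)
Your proposal is correct and matches the paper's approach: the paper states Theorem~\ref{theo onecycle} with no proof at all, presenting it as a consequence of the cited classification results (Assem--Skowro\'nski for the clock case, Vossieck together with the normal forms of Bobi\'nski--Gei\ss--Skowro\'nski for the non-clock case), which is exactly the assembly you carry out. Your global-dimension computation on the normal forms --- hereditary for $\Lambda(p,q)$, finite global dimension for $\Lambda(p,q,r)$ with $r<p$ since the cycle of relations is broken, and infinite precisely when $r=p$ because the relations then form a full oriented cycle --- is the standard gentle-algebra argument and correctly supplies the only part the citations do not literally cover.
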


Now let $\Lambda$ be a gentle one-cycle algebra of infinite global dimension. Our aim to describe the Hall algebras
\[
\cH (\cK^b (\proj \Lambda)) \qquad \text{and} \qquad \cH (\cD^b (\mod \Lambda)).
\]
Theorem~\ref{theo onecycle} implies that we may assume $\Lambda = \Lambda (p, q, p)$, for some $p \in \bbN_+$ and $q \in \bbN$. The categories $\cK^b (\proj \Lambda)$ and $\cD^b (\proj \Lambda)$ are known (see for example~\cites{Bobinski, BroomheadPauksztelloPloogI}). Namely, $\cK^b (\proj \Lambda)$ is equivalent to the category $\cX (p, p + q)$ described in Section~\ref{sect category}, while $\cD^b (\mod \Lambda)$ is equivalent to $\cC (p, p + q)$. Consequently, we get the following, which is the main result of the paper.

\begin{theo}
Let $\Lambda$ be an algebra derived equivalent to $\Lambda (p, q, p)$, for $p \in \bbN_+$ and $q \in \bbN$. Then Theorem~\ref{theo main} with $r = p$ and $m = p + q$ describes the Hall algebra $\cH (\cK^b (\proj \Lambda))$. Similarly, Theorem~\ref{theo main prim} with $r = p$ and $m = p + q$ describes the Hall algebra $\cH (\cD^b (\mod \Lambda))$. \qed
\end{theo}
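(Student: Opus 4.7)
The plan is to reduce the theorem to a combination of (i) the derived invariance of the Hall algebra, (ii) the known description of the derived category and the perfect derived category of $\Lambda(p,q,p)$ from the literature, and (iii) the presentation theorems proved in Sections~\ref{sect res1} and~\ref{sect res2}.

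First I would observe that the derived Hall algebra depends only on the triangulated category, so if $\Lambda$ is derived equivalent to $\Lambda(p,q,p)$, then both $\cH(\cK^b(\proj \Lambda)) \cong \cH(\cK^b(\proj \Lambda(p,q,p)))$ and $\cH(\cD^b(\mod \Lambda)) \cong \cH(\cD^b(\mod \Lambda(p,q,p)))$, since perfect complexes and bounded complexes of finitely generated modules are preserved by derived equivalence. This reduces the problem to $\Lambda = \Lambda(p,q,p)$.

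Next I would invoke the explicit classification of indecomposables and morphisms in these derived categories, as worked out for one-cycle gentle algebras in the references cited in the introduction (in particular~\cites{Bobinski, BroomheadPauksztelloPloogI}). The key step is to identify an $\bbF$-linear, triangulated equivalence
\[
\cK^b(\proj \Lambda(p,q,p)) \simeq \cX(p,p+q) \qquad \text{and} \qquad \cD^b(\mod \Lambda(p,q,p)) \simeq \cC(p,p+q),
\]
matching the indecomposables, the homomorphism spaces, and the suspension functor given in Section~\ref{sect category}. Concretely, the indecomposable perfect complexes correspond bijectively to the $X^{(k)}_{i,j}$, whereas the indecomposables in $\cD^b(\mod \Lambda)$ that are not perfect (coming from the infinite global dimension) correspond to the $Z^{(k)}_i$; the action of $\Sigma$ matches the prescribed shift $X^{(k)}_{i,j}\mapsto X^{(k+1)}_{i+\delta_{k,r}m,j+\delta_{k,r}m}$ and $Z^{(k)}_i\mapsto Z^{(k+1)}_{i+\delta_{k,r}m}$ after setting $r=p$ and $m=p+q$.

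Once these equivalences are in place, the conclusion is formal: plugging $r=p$ and $m=p+q$ into Theorem~\ref{theo main} gives the presentation of $\cH(\cX(p,p+q)) = \cH(\cK^b(\proj \Lambda))$, and similarly Theorem~\ref{theo main prim} yields the presentation of $\cH(\cC(p,p+q)) = \cH(\cD^b(\mod \Lambda))$. The main obstacle is the verification of the triangulated equivalence, i.e.\ checking that the combinatorially defined triangulated structure on $\cC(r,m)$ (postulated at the end of subsection~\ref{subsect conesC}) actually coincides with the one induced from $\cD^b(\mod \Lambda(p,q,p))$; in practice this is done by comparing the Auslander--Reiten quivers and the list of distinguished triangles (equivalently, the cones computed in Propositions~\ref{prop coneX} and~\ref{prop coneZ}) against the known triangles in $\cD^b(\mod \Lambda(p,q,p))$, but the match has essentially been arranged by the choice of $\Gamma$, its relations, and the shift formulas in Section~\ref{sect category}.
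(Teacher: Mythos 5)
Your proposal follows essentially the same route as the paper: reduce to $\Lambda(p,q,p)$ using the derived classification (Theorem~\ref{theo onecycle}) and the derived invariance of the Hall algebra, cite~\cites{Bobinski, BroomheadPauksztelloPloogI} for the triangulated equivalences $\cK^b(\proj \Lambda) \simeq \cX(p,p+q)$ and $\cD^b(\mod \Lambda) \simeq \cC(p,p+q)$, and then apply Theorems~\ref{theo main} and~\ref{theo main prim}. The paper likewise treats the identification of the triangulated structures as known from the cited literature rather than verifying it, so your flagged ``main obstacle'' is handled there exactly as you anticipate.
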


\bibsection

\begin{biblist}

\bib{AssemHappel}{article}{
   author={Assem, I.},
   author={Happel, D.},
   title={Generalized tilted algebras of type $A_{n}$},
   journal={Comm. Algebra},
   volume={9},
   date={1981},
   number={20},
   pages={2101--2125},
}

\bib{AssemSkowronski}{article}{
   author={Assem, I.},
   author={Skowro\'{n}ski, A.},
   title={Iterated tilted algebras of type $\tilde{\bf A}_n$},
   journal={Math. Z.},
   volume={195},
   date={1987},
   number={2},
   pages={269--290},
}

\bib{AuslanderReitenSmalo}{book}{
   author={Auslander, M.},
   author={Reiten, I.},
   author={Smal\o , S. O.},
   title={Representation Theory of Artin Algebras},
   series={Cambridge Stud. Adv. Math. },
   volume={36},
   publisher={Cambridge Univ. Press, Cambridge},
   date={1997},
   pages={xiv+425},
}

\bib{Bobinski}{article}{
   author={Bobi\'{n}ski, G.},
   title={The graded centers of derived discrete algebras},
   journal={J. Algebra},
   volume={333},
   date={2011},
   pages={55--66},
}

\bib{BobinskiGeissSkowronski}{article}{
   author={Bobi\'{n}ski, G.},
   author={Gei\ss , Ch.},
   author={Skowro\'{n}ski, A.},
   title={Classification of discrete derived categories},
   journal={Cent. Eur. J. Math.},
   volume={2},
   date={2004},
   number={1},
   pages={19--49},
}

\bib{BobinskiKrause}{article}{
   author={Bobi\'{n}ski, G.},
   author={Krause, H.},
   title={The Krull-Gabriel dimension of discrete derived categories},
   journal={Bull. Sci. Math.},
   volume={139},
   date={2015},
   number={3},
   pages={269--282},
}

\bib{Broomhead}{article}{
   author={Broomhead},
   title={Thick subcategories of discrete derived categories},
   journal={Adv. Math.},
   volume={336},
   date={2018},
   pages={242--298},
}

\bib{BroomheadPauksztelloPloogI}{article}{
   author={Broomhead, N.},
   author={Pauksztello, D.},
   author={Ploog, D.},
   title={Discrete derived categories I: homomorphisms, autoequivalences and t-structures},
   journal={Math. Z.},
   volume={285},
   date={2017},
   number={1-2},
   pages={39--89},
}

\bib{BroomheadPauksztelloPloogII}{article}{
   author={Broomhead, N.},
   author={Pauksztello, D.},
   author={Ploog, D.},
   title={Discrete derived categories II: the silting pairs CW complex and the stability manifold},
   journal={J. Lond. Math. Soc. (2)},
   volume={93},
   date={2016},
   number={2},
   pages={273--300},
}

\bib{DengXiao}{article}{
   author={Deng, B.},
   author={Xiao, J.},
   title={On Ringel-Hall algebras},
   book={
      title={Representations of Finite Dimensional Algebras and Related Topics in Lie Theory and Geometry},
      editor={Dlab, V.},
      editor={Ringel, C. M.},
      series={Fields Inst. Commun.},
      volume={40},
      publisher={Amer. Math. Soc., Providence, RI},
   },
   date={2004},
   pages={319--348},
}

\bib{KellerYangZhou}{article}{
   author={Keller, B.},
   author={Yang, D.},
   author={Zhou, G.},
   title={The Hall algebra of a spherical object},
   journal={J. Lond. Math. Soc. (2)},
   volume={80},
   date={2009},
   number={3},
   pages={771--784},
}

\bib{Macdonald}{book}{
   author={Macdonald, I. G.},
   title={Symmetric Functions and Hall Polynomials},
   series={Oxford Math. Monogr.},
   publisher={Oxford Univ. Press, New York},
   date={1995},
   pages={x+475},
}

\bib{PengXiao}{article}{
   author={Peng, L.},
   author={Xiao, J.},
   title={Root categories and simple Lie algebras},
   journal={J. Algebra},
   volume={198},
   date={1997},
   number={1},
   pages={19--56},
}

\bib{Ringel}{article}{
   author={Ringel, C. M.},
   title={Hall algebras},
   book={
      title={Topics in Algebra, Part 1},
      editor={Balcerzyk, S.},
      editor={J\'{o}zefiak, T.},
      editor={Krempa, J.},
      editor={Simson, D.},
      editor={Vogel, W.},
      series={Banach Center Publ.},
      volume={26},
      publisher={PWN, Warsaw},
   },
   date={1990},
   pages={433--447},
}

\bib{Toen}{article}{
   author={To\"{e}n, B.},
   title={Derived Hall algebras},
   journal={Duke Math. J.},
   volume={135},
   date={2006},
   number={3},
   pages={587--615},
}

\bib{XiaoXu}{article}{
   author={Xiao, J.},
   author={Xu, F.},
   title={Hall algebras associated to triangulated categories},
   journal={Duke Math. J.},
   volume={143},
   date={2008},
   number={2},
   pages={357--373},
}

\bib{Vossieck}{article}{
   author={Vossieck, D.},
   title={The algebras with discrete derived category},
   journal={J. Algebra},
   volume={243},
   date={2001},
   number={1},
   pages={168--176},
}

\end{biblist}

\end{document}